\definecolor{Chocolat}{rgb}{0.36, 0.2, 0.09}
\definecolor{BleuTresFonce}{rgb}{0.215, 0.215, 0.36}
\theoremstyle{plain}
\newtheorem{thm}{Theorem}[section]
\newtheorem{proposition}[thm]{Proposition}
\newtheorem{theorem}[thm]{Theorem}
\newtheorem{corollary}[thm]{Corollary}
\newtheorem{lemma}[thm]{Lemma}
\theoremstyle{definition}
\newtheorem{definition}[thm]{Definition}
\newtheorem{remark}[thm]{\sc Remark}
\newtheorem{exam}[thm]{\sc Example}
\newtheorem{notation}{\sc Notation}
\def\Sy{\mathbb{S}}
\def\k{\mathbb{k}}
\def\dgVect{\mathsf{dgVect}}
\def\SCcomod{\mathsf{mon}\textsf{-}\S\calC\textsf{-}\mathsf{comod}}
\def\dgSCcomod{\mathsf{dg\ mon}\textsf{-}\S\calC\textsf{-}\mathsf{comod}}
\def\M{\mathcal{M}}
\def\N{\mathcal{N}}
\newcommand{\Hom}{\ensuremath{\mathrm{Hom}}}
\renewcommand{\hom}{\ensuremath{\mathrm{hom}}}
\newcommand{\tens}[1]{%
 \mathbin{\mathop{\otimes}\limits_{#1}}%
 }
\def\T{\mathrm{T}}
\def\S{\mathrm{S}}
\def\V{\mathcal{V}}
\def\I{\mathcal{I}}
\def\oP{\overline{\calP}}
\def\oC{\overline{\calC}}
\def\eps{\varepsilon}
\def\sq{\, \square\,}
\def\IL{\mathrm{IL}}
\def\oH{\underset{\mathrm{H}}{\otimes}}
\def\oG{\underset{\mathrm{G}}{\otimes}}
\def\ibt{\underset{\scriptscriptstyle (1,1)}{\boxtimes}}
\def\libt{\lhd_{(*)}}
\def\ribt{\tensor[_{(*)}]{\rhd}{}}
\def\Bider{\ensuremath{\mathrm{Bider}}}
\def\Codiff{\ensuremath{\mathrm{Codiff}}}
\def\Bidiff{\ensuremath{\mathrm{Bidiff}}}
\def\sgn{\ensuremath{\mathrm{sgn}}}
\def\H{\mathrm{H}}
\def\PP{\mathrm{P}}
\def\II{\mathrm{I}}
\newcommand{\ac}{{\scriptstyle \text{\rm !`}}}
\def\ot{\otimes}
\def\L{\mathrm{L}}
\def\P{\mathcal{P}}
\def\O{\mathcal{O}}
\def\calS{\mathcal{S}}
\def\id{\mathrm{id}}
\def\gg{\mathrm{g}}
\def\B{\mathrm{B}}
\def\d{\mathrm{d}}
\def\Id{\mathrm{Id}}
\newcommand{\ZZ}{\mathbb{Z}}
\newcommand{\NN}{\mathbb{N}}
\newcommand{\G}{\mathcal{G}}
\newcommand{\GLev}{\mathcal{G}_{\mathrm{lev}}}
\newcommand{\Gs}{\mathrm{G}}
\newcommand{\GsLev}{\mathrm{G}_{\mathrm{lev}}}
\newcommand{\bGs}{\overline{\mathrm{G}}}
\renewcommand{\Bar}{\mathbf{B}}
\newcommand{\Cobar}{\mathbf{\Omega}}
\newcommand{\calP}{\mathcal{P}}
\newcommand{\calC}{\mathcal{C}}
\newcommand{\calD}{\mathcal{D}}
\newcommand{\bbS}{\mathbb{S}}
\newcommand{\ncG}{\G^{\mathrm{nc}}}
\newcommand{\ncGLev}{\GLev^{\mathrm{nc}}}
\newcommand{\Sbimod}{\Sy\mbox{-}\mathsf{bimod}}
\newcommand{\Bij}{\mathsf{{Bij}}}
\newcommand{\dgprop}{\mathsf{dg \ properads}}
\newcommand{\dgcoprop}{\mathsf{dg\ coproperads}}
\newcommand{\End}{\mathrm{End}}
\newcommand{\lev}{\mathrm{lev}}
\newcommand{\PHI}{\mathrm{PHI}}
\newcommand{\HHI}{\mathrm{HHI}}
\newcommand{\PHH}{\mathrm{PHH}}
\newcommand{\Tw}{\mathrm{Tw}}
\newcommand{\op}{^\mathsf{op}}
\newcommand{\frakf}{\mathfrak{f}}
\newcommand{\frakp}{\mathfrak{p}}
\newcommand{\Frob}{\mathrm{Frob}}
\newcommand{\IBL}{\mathrm{IBL}}
\def\Cop#1#2{\tensor*[_{#1}]{\Delta}{_{#2}}}
\title{Properadic homotopical calculus}
\date{\today}
\author{Eric Hoffbeck}
\address{LAGA, CNRS, UMR 7539, Universit\'e Paris 13, Sorbonne Paris Cit\'e, Universit\'e Paris 8, 99 Avenue JB Cl\'ement, 93430 Villetaneuse, France.}
\email{hoffbeck@math.univ-paris13.fr}
\author{Johan Leray}
\email{leray@math.univ-paris13.fr}
\author{Bruno Vallette}
\email{vallette@math.univ-paris13.fr}
\subjclass[2010]{Primary 18D50; Secondary 18G55, 16T10, 17B62}
\keywords{Homotopical algebra, bialgebras, properads}
\thanks{The authors were supported by the Institut Universitaire de France (IUF) and ANR ChroK (ANR-16-CE40-0003). The second author is financed by a postdoctoral allocation given by DIM Math Innov -- R\'egion \^Ile de France.}
\begin{document}

\maketitle

\begin{abstract}
In this paper, we initiate the generalisation of the operadic calculus which governs the properties of homotopy algebras to a properadic calculus which governs the properties of homotopy gebras over a properad. 
In this first article of a series, we generalise the seminal notion of $\infty$-morphisms and the ubiquitous homotopy transfer theorem. 
As an application, we recover the homotopy properties of involutive Lie bialgebras developed by Cieliebak--Fukaya--Latschev and we produce new explicit formulas. 
\end{abstract}

\setcounter{tocdepth}{1}
\tableofcontents

\section*{Introduction}
There are basically two ways to do algebraic homotopy theory: one can work on a conceptual level using model categories and higher categories or one can use the more explicit operadic calculus. 

\bigskip

Let us see how this  works. Suppose that one is interested in understanding the homotopical properties of a category of algebras of type $\calP$. This means that one would like to describe their behaviour under quasi-isomorphisms, i.e. the morphisms which induce isomorphisms on the level of homology. The main issue is that being quasi-isomorphic is not an equivalence relation: quasi-isomorphisms are not invertible in general. This problem is similar to the invertibility of $1-x$: it is not invertible in the space of degree $1$ polynomials but it is invertible if one can consider series where $(1-x)^{-1}=1+x+x^2+\cdots$. Indeed, there is a higher notion of morphism, called \emph{$\infty$-morphism}, made up of a collection of maps and such that any $\infty$-quasi-isomorphism of $\calP$-algebras, like a quasi-isomorphism, admits an $\infty$-morphism in the opposite direction which realizes the inverse homology isomorphism. 

\bigskip

One of the first seminal occurence of such a notion of $\infty$-morphism can be found in the groundbreaking proof of M. Kontsevich of the deformation quantization of Poisson manifolds \cite{Kontsevich03}. In order to prove an equivalence between two deformation theories, he proved the formality of the differential graded Lie algebra of polydifferential operators of a Poisson manifold. But  he did not directly prove the existence of a zig-zag of quasi-isomorphisms from it to its homology; instead, he constructed an $\infty$-morphism from the latter to the former which extends the Hochschild--Kostant--Rosenberg map. 

\bigskip

Another instance of the use of operadic calculus lies in the description of the homotopy categories of differential graded $\calP$-algebras with respect to their quasi-isomorphisms. When $\calP$ is an operad, one can transfer the cofibrantly generated projective model category structure on differential graded vector spaces to differential graded $\calP$-algebras. This application of Quillen's seminal result does not help much: we get that the homotopy category of differential graded $\calP$-algebras is equivalent to the category of retracts of quasi-free $\calP$-algebras equipped with a suitable filtration on its space of generators (up to some homotopy equivalence on morphisms). Instead, one can use the \emph{bar-cobar adjunction} to work with a Quillen equivalent category of differential graded $\calC$-coalgebras, where $\calC$ is either the Koszul dual cooperad of $\calP$, when it is Koszul, or its bar construction in general. In each case, the homotopy category of 
differential graded $\calP$-algebras is equivalent to the category of quasi-free dg $\calC$-coalgebras (up to some homotopy equivalence on morphisms): this category is nothing but the category of \emph{homotopy $\calP$-algebras}, also known as \emph{$\calP$-algebras up to homotopy}, equipped with their $\infty$-morphisms \cite{Hinich01, LefevreHasegawa03, Vallette14}.

\bigskip

Last but not least, let us consider a contraction of a chain complex onto another one, for instance its homology, and a $\calP$-algebra structure on the first one. One can transfer it to the second space in the form of a homotopy $\calP$-algebra structure and one can extend the contraction quasi-isomorphisms into $\infty$-quasi-isomorphisms which make the initial structure and the transferred structure being homotopy equivalent. This result, called the \emph{homotopy transfer theorem} is ubiquitous in mathematics, let us cite just a few examples. Applied to modules over the algebra of dual numbers, one gets the notion of spectral sequences and their convergence theorem \cite[Chapter~10]{LodayVallette12}; this can be applied to get the definition of cyclic homology \cite{Kassel90}. Realisations of higher Massey products in algebraic topology are produced in this way by considering the transferred $A_\infty$-algebra structure from the associative cup product on the singular cochain complex of a topological space to this cohomology. Applying the homotopy transfer theorem to unimodular Lie bialgebras allows one to recover the Batalin--Vilkovisky formalism and its celebrated Feynman diagrams (see \cite{Merkulov10}).

\bigskip 

The above situation is now well established for algebraic structures equipped with products made up of several inputs but one output; we refer the reader to \cite{LodayVallette12} and references therein. 
For algebraic structures 
made up of products and coproducts,
that is with several inputs and several outputs, the situation requires further work. First of all, there is a suitable object which encodes them: properads, for which the Koszul duality theory was developed in \cite{Vallette07} and the deformation theory developed in \cite{MerkulovVallette09I, MerkulovVallette09II}. In \emph{loc. cit.}, a representation of a (possibly colored) properad $\calP$ is called a \emph{$\calP$-gebra} following J.-P. Serre \cite{Serre93}, since it includes all notions such as algebras, coalgebras, bialgebras, modules, comodules, bimodules, etc. 
(Dioperads only encode the genus 0 part of the composition of operations and props do not receive a  Koszul duality so far.)  This allows one to get the notion of \emph{homotopy $\calP$-gebra}. In the operadic case, there are four equivalent definitions of $\calP$-algebras up to homotopy, which all together form a so called \emph{Rosetta Stone} \cite[Section~10.1.9]{LodayVallette12}. Before the present paper, only three of them have been shown to hold on the properadic level. 

\bigskip

Recall that the fourth definition is based on the abovementioned bar-cobar adjunction between $\calP$-algebras and $\calC$-coalgebras. Such a construction \emph{cannot hold} as such on the level of $\calP$-gebras since there does not exist a free $\calP$-gebra in general. The first goal of the present paper is to overcome this difficulty: with the help of some additional monoidal structures for $\bbS$-bimodules, the underlying objects of properads, we introduce a suitable new algebraic notion extending comodules over a cooperad (\cref{def:dgMonSCComod}), to which belong $\calC$-coalgebras. This allows us to provide the literature with the required fourth equivalent definition of homotopy $\calP$-gebra, thus completing the properadic Rosetta stone (\cref{thm:CompleteRosetta}). 

\bigskip

Since the notion of $\infty$-morphism is defined via the fourth definition of homotopy $\calP$-algebras, we use the above new theory to introduce a meaningful notion of \emph{$\infty$-morphism} for homotopy $\calP$-gebras (\cref{def:InftyMor}). This new notion is shown to share the same nice properties than its operadic ancestor: description of the invertible $\infty$-morphisms (\cref{prop:Inverse}), homology inverse of $\infty$-quasi-isomorphisms (\cref{thm:InvInfQI}) and obstruction theory (\cref{thm:Obstr}). 

\bigskip 

Finally, we prove a homotopy transfer theorem for gebras over a properad (\cref{thm:HTT}). 
Recall that the existence of transferred structures encoded by cofibrant prop(erad)s was established by B. Fresse in \cite{Fresse10ter} using abstract model category arguments and without $\infty$-morphisms. 
Recall also that a genus $0$ homotopy transfer theorem was proved by S. Merkulov in \cite{Merkulov10}, but the present treatment includes all genera. The way we establish this general homotopy transfer theorem is also new: it is  based first on an effective  homotopy transfer theorem universally associated to any contraction and then on a functorial property with respect to coproperad maps. 

\bigskip 

The present general theory includes as a particular case of application all the algebraic properties of homotopy involutive Lie bialgebras as developed by K. Cieliebak, K. Fukaya, and J. Latschev in \cite{CFL11}. Moreover,  we provide explicit new formulae, like the one for the homotopy transfer theorem (\cref{thm:HTTIBL}), whereas in \emph{loc. cit.} this kind of results is obtained via obstruction theory. 

\bigskip

The present work will be followed by a series of papers in which we will study extensively 
 the algebraic properties of $\infty$-isomorphisms and the homotopical properties of $\infty$-quasi-isomorphisms. 
For instance, we will integrate the properadic convolution Lie  algebra with $\infty$-isomorphisms using a new kind of exponential map based of graphs. 
In order to provide further homotopical properties of $\infty$-quasi-isomorphisms, we will introduce new model structures and simplicial enrichements for homotopy $\calP$-gebras. This latter work will allow one to apply the new methods of Ginot--Yalin \cite{GinotYalin19} coming from derived algebraic geometry  to study of the moduli spaces of $\calP$-gebras up to $\infty$-quasi-isomorphisms.

\subsection*{Layout} 
The paper is organised as follows. 
The first section describes various monoidal structures on the category of $\Sy$-bimodules. 
In the second section, we recall the notions of properads and coproperads together with their bar and cobar constructions. 
In the third section, we introduce a suitable notion of higher morphisms, called $\infty$-morphisms, for homotopy gebras. 
The forth section deals with the homotopy transfer theorem for homotopy gebras. 
In the fifth section, we establish the obstruction theory for $\infty$-morphisms. 
In the last section, we detail some examples of gebras to which the present theory applies. 

\subsection*{Conventions} 
We use the conventions of  \cite{LodayVallette12}  for operads and of \cite{Vallette07, MerkulovVallette09I, MerkulovVallette09II} for properads. Let $\k$ be a ground field  of characteristic $0$.  
We work over the underlying category of differential $\ZZ$-graded vector spaces, denoted by $\dgVect$. 
We use the homological degree convention, for which differentials have degree $-1$.  
We denote the symmetric groups by $\bbS_n$. 

\subsection*{Acknowledgements} We would like to express our deep appreciation to Kai Cieliebak, Kenji Fukaya, Janko Latschev, Geoffrey Powell, Salim Rivi\`ere, and Dennis Sullivan for enlightening discussions. B.V. would like to thank the Laboratoire J.A. Dieudonn\'e of the University Nice Sophia Antipolis for the particularly generous hospitality.

\section{Monoidal structures on symmetric bimodules}

\subsection{$\Sy$-bimodules}The notion of an $\Sy$-bimodule is meant to encode operations with $n$ inputs and $m$ outputs, for $n, m\in \NN$. 

\begin{definition}[$\Sy$-bimodule]
A collection 
$\{\M(m,n)\}_{m,n\in \mathbb{N}}$ of (graded, differential graded) $\Sy_m\times \Sy_n^{\mathrm{op}}$-modules is called an (graded, differential graded) \emph{ $\Sy$-bimodule}.
\end{definition}

From now on, we will only work in the sub-category of \emph{left reduced} $\Sy$-bimodules, i.e. $\M(m,n)=0$, for $m=0$. We denote this category by $\Sbimod$. For simplicity, this section is written in this category, but all the constructions and results hold \emph{mutatis mutandis} on the differential graded level. 

\begin{remark}
One can consider the category $\Bij$ whose objects are finite sets and whose morphisms are bijections. 
The notion of an $\Sy$-bimodule is equivalent to a 
presheaf $\M : \Bij\times\Bij\op \to \mathsf{Vect}$ with value in the category of vector spaces. For more details about this point of view, we refer to reader to \cite[Section 3]{Leray19protoI}. 
\end{remark}

\subsection{The composition product}
\begin{definition}[Composition product] \label{def::prod_compo_bimod}
	The \emph{composition product} of  $\Sy$-bimodules is the $\Sy$-bimodule  
	defined by 
	\[\left( \M \sq \N\right)(m,n)\coloneqq 
	\left\{
	\begin{array}{ll}
	\bigoplus_{k\in \NN^*} \M(m,k) 	\tens{\Sy_k} \N(k, n)\ , & \text{for}\  n>0\ ,\\
	\rule{0pt}{10pt}\bigoplus_{k\in \NN^*} \M(m,k) 	\tens{\Sy_k} \N(k, 0) \, \oplus\, \M(m,0)    \ , & \text{for}\ n=0\ .
	\end{array}
	\right.\]
\end{definition}

	The product $\square$ can be thought pictorially as of grafting all outputs of an element of $\N$, drawn on top, on all inputs of an element of $\M$, drawn on the bottom,  as in \cref{fig::produit_vertical}.

\begin{figure*}[h]
	\begin{tikzpicture}[scale=0.5,baseline=1]
	\draw[thick]	(8,0) -- (8,1);
	\draw[thick]	(0,0) -- (0,1);
	\draw[thick]	(1.5,0) -- (1.5,1);
	\draw[thick]	(3,0) -- (3,1);	
	\draw[thick]	(0,4) -- (0,4.5);		
	\draw[thick]	(2,4) -- (2,4.5);			
	\draw[thick]	(4,4) -- (4,4.5);				
	\draw[thick]	(8,4) -- (8,4.5);				
	\draw (0,0) node[below] {\tiny{$1$}} ; 
	\draw (1.5,0) node[below] {\tiny{$2$}} ; 	
	\draw (3,0) node[below] {\tiny{$3$}} ; 			
	\draw (0,4.5) node[above] {\tiny{$1$}};
	\draw (2,4.5) node[above] {\tiny{$2$}};
	\draw (4,4.5) node[above] {\tiny{$3$}};
	\draw (8,4.5) node[above] {\tiny{$n$}};
	\draw[thick] (6,4.5) node[above] {\tiny{$\cdots$}};
	\draw[thick] (5.5,0) node[below] {\tiny{$\cdots$}};
	\draw[thick] (8,0) node[below] {\tiny{$m$}};			
	\draw[thick] (1,1)--(1,4);
	\draw[thick] (2.5,3.5) to[out=270,in=90] (5.5,1) ;
	\draw[draw=white,double=black,double distance=2*\pgflinewidth,thick] (4,3.5) to [out=270,in=90]  (2.5,1);
	\draw[thick] (5.5,3.5) to [out=270,in=90]  (7,1);
	\draw[draw=white,double=black,double distance=2*\pgflinewidth,thick] (7,3.5) to [out=270,in=90]  (4,1);
	\draw[thick] (0.6,2.7) node {\tiny{$1$}};
	\draw[thick] (2.2,2.7) node {\tiny{$2$}};
	\draw[thick] (7.3,2.7) node {\tiny{$k$}};
	\draw[fill=white] (-0.3,0.4) rectangle (8.3,1.1);
	\draw[thick] (4,0.7) node {\small{$\mu$}};
	\draw[fill=white] (-0.3,3.4) rectangle (8.3,4.1);
	\draw[thick] (4,3.7) node {\small{$\nu$}};
	\end{tikzpicture}
	\caption{An element of $\M \sq \N$.}
		\label{fig::produit_vertical}
\end{figure*}

We consider the  $\Sy$-bimodule $\k[\Sy]$ defined by $\k[\Sy](m,n)\coloneqq \k[\Sy_n]$, for $m=n$ in $\NN^*$, and $0$ otherwise. 

\begin{definition}[Inner hom]
The \emph{inner hom} is the $\Sy$-bimodule defined by 
\[\hom\left( \M, \N\right)(m,n)
\coloneqq 
	\left\{
	\begin{array}{ll}
			\displaystyle{	\prod_{k\in \NN} \Hom_{\Sy_k}\left( \M(n, k), \N(m,k) \right)} \ , & \text{for}\  n>0\ ,\\
			\rule{0pt}{10pt}\N(m,0)\ , & \text{for}\ n=0\ .
	\end{array}\right.
 \]
\end{definition}

\begin{proposition}\label{prop:closedmono}
	The category $\big(\Sbimod,\square, \k[\Sy], \hom\big)$ forms  a bicomplete right closed monoidal category.
\end{proposition}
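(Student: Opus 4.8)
The plan is to verify the three ingredients separately: bicompleteness, the monoidal structure $(\square, \k[\Sy])$, and the internal hom adjunction making it right closed. First I would treat bicompleteness, which is the cheapest part: limits and colimits in $\Sbimod$ are computed pointwise in each arity $(m,n)$, where one lands in the category of $\Sy_m \times \Sy_n\op$-modules over $\k$; since $\k$-modules are bicomplete and the group-module categories inherit this, and the left-reducedness condition $\M(0,n)=0$ is preserved under arbitrary limits and colimits, $\Sbimod$ is bicomplete.

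Next I would establish that $(\Sbimod, \square, \k[\Sy])$ is a monoidal category. The unit axioms $\k[\Sy]\square \M \cong \M \cong \M \square \k[\Sy]$ follow from the computation $\k[\Sy](m,k)\tens{\Sy_k}\N(k,n) \cong \N(m,n)$ when $m=k$ and zero otherwise, together with the fact that on the $n=0$ branch the extra summand $\M(m,0)$ accounts exactly for the missing $k=0$ term, so both formulas of \cref{def::prod_compo_bimod} collapse to $\M$ (resp.\ $\N$). For associativity, $(\M \square \N)\square \P$ and $\M \square (\N \square \P)$ both unravel to a direct sum over pairs $k,\ell \in \NN^*$ of $\M(m,k)\tens{\Sy_k}\N(k,\ell)\tens{\Sy_\ell}\P(\ell,n)$, using that $\tens{\Sy_k}$ commutes with direct sums; the bookkeeping of the $n=0$ case must be checked, and one sees that the various "$\M(m,0)$"-type correction terms on both sides are in canonical bijection (they correspond to the compositions where some factor is "trivial in the last arity"). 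The pentagon and triangle axioms are then strict, or hold via these canonical identifications, since everything is built from the associativity of $\otimes_{\k}$ and the distributivity of $\otimes$ over $\oplus$.

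Finally, for right closedness I would show $\Hom_{\Sbimod}(\M \square \N, \P) \cong \Hom_{\Sbimod}(\N, \hom(\M, \P))$, natural in $\N$ and $\P$; equivalently, that $-\square \N$ is left adjoint to $\hom(\N,-)$. I would do this arity by arity: a morphism $\M\square \N \to \P$ is a compatible family of $\Sy_m\times\Sy_n\op$-equivariant maps $\bigoplus_k \M(m,k)\tens{\Sy_k}\N(k,n) \to \P(m,n)$ (plus the $n=0$ correction), and by the tensor-hom adjunction over $\k[\Sy_k]$ together with the universal property of $\bigoplus$ this is the same datum as, for each $n$, a family over $k$ of $\Sy_k$-equivariant maps $\N(k,n) \to \Hom_{\Sy_k}(\M(m,k),\P(m,n))$, which assembled over $m$ is precisely a map $\N(k,n)\to \hom(\M,\P)(k,n)$; tracking the residual $\Sy$-equivariance gives the morphism in $\Sbimod$. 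The main obstacle, and the step requiring genuine care rather than routine diagram-chasing, will be handling the asymmetry of the $n=0$ branch uniformly throughout: checking that the extra summand $\M(m,0)$ in the composition product is exactly matched by the clause $\hom(\M,\N)(m,0)=\N(m,0)$ in the internal hom, so that the adjunction isomorphism and the associativity isomorphism remain valid in that degenerate arity. Once that compatibility is pinned down, naturality in $\N$ and $\P$ is immediate from naturality of the tensor-hom adjunction over each $\k[\Sy_k]$.
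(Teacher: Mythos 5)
Your overall strategy --- pointwise bicompleteness, direct verification of the unit and associativity constraints (with the $n=0$ correction terms tracked separately), and an arity-by-arity tensor--hom adjunction for closedness --- is exactly the content behind the paper's one-line proof, and the bicompleteness, unit, and associativity parts are fine. The problem is in the closedness step, where you curry away the wrong tensor factor. With the paper's definition $\hom(\M,\N)(m,n)=\prod_{k}\Hom_{\Sy_k}\big(\M(n,k),\N(m,k)\big)$, the adjunction that holds is
\[
\Hom_{\Sbimod}(\Q\sq\M,\,\P)\;\cong\;\Hom_{\Sbimod}\big(\Q,\,\hom(\M,\P)\big)\ ,
\]
i.e.\ $-\sq\M$ is left adjoint to $\hom(\M,-)$, which is precisely what the paper asserts: starting from $(\Q\sq\M)(m,n)=\bigoplus_k \Q(m,k)\tens{\Sy_k}\M(k,n)$ one curries away the factor $\M(k,n)$ and collects over $n$, landing in $\prod_n\Hom_{\Sy_n}\big(\M(k,n),\P(m,n)\big)=\hom(\M,\P)(m,k)$ --- note the product runs over the \emph{second} (input) index of both $\M$ and $\P$, matching the definition.

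Your displayed isomorphism $\Hom(\M\sq\N,\P)\cong\Hom(\N,\hom(\M,\P))$ instead curries away the \emph{left} factor $\M(m,k)$ and collects over $m$; the object you actually produce in arity $(k,n)$ is $\prod_m\Hom\big(\M(m,k),\P(m,n)\big)$, which is not the paper's $\hom(\M,\P)(k,n)=\prod_{j}\Hom_{\Sy_j}\big(\M(n,j),\P(k,j)\big)$. Relatedly, your parenthetical ``equivalently, $-\sq\N$ is left adjoint to $\hom(\N,-)$'' is the correct statement, but it is \emph{not} equivalent to the displayed isomorphism (the two differ by which factor is curried), and the subsequent arity bookkeeping follows the incorrect version. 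As written, the argument therefore does not establish the right closedness claimed in the proposition; it needs to be rerun with the roles of the two factors as above, after which the $n=0$ compatibility you flag (the extra summand $\M(m,0)$ in $\sq$ versus the clause $\hom(\M,\N)(m,0)=\N(m,0)$) does work out as you expect.
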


\begin{proof}
The various axioms are straightforward to check. Notice that both terms of the associativity relation 
$\left( \M \sq \N\right)\sq \O\cong \M \sq \left(\N\sq \O\right)$ are made up of 
vertical graphs with 3 levels labeled from top to bottom by one element of $\O$, $\N$, and $\M$ respectively, 
vertical graphs with 2 levels labeled from top to bottom by one element of $\N$ with input arity $0$ and $\M$ respectively, 
vertical graphs with 1 level labeled by one element of $\M$ with $0$ input. For every $\Sy$-bimodule $\M$, the functor $-\sq \M$ is left adjoint to the functor $\hom(\M, -)$. 
\end{proof}

\subsection{The tensor product}
We extend the tensor product of $\Sy$-modules \cite[Section~5.1.3]{LodayVallette12} to $\Sy$-bimodules as follows. 

\begin{definition}[Tensor product] \label{def::prod_conca_bimod}
	The \textit{tensor product} of $\Sy$-bimodules is the $\Sy$-bimodule 
	defined by
	\[\left( \M \otimes \N\right)(m,n)\coloneqq \bigoplus_{m_1+m_2=m\atop n_1+n_2=n} 
	\k[\Sy_m] \tens{\Sy_{m_1}\times \Sy_{m_2}} \M(m_1,n_1) \otimes 
	\N(m_2, n_2)\tens{\Sy_{n_1}\times \Sy_{n_2}} \k[\Sy_n] \ .\]
\end{definition}

This monoidal product can be thought of as an horizontal composition, as depicted in \cref{fig::produit_horizontal}.

\begin{figure*}[h]
\begin{tikzpicture}[scale=0.5,baseline=1]
\draw[thick] (0,-2) to[out=90,in=270] (2,0) -- (2,1.5) to[out=90,in=270] (0,3.5);
\draw[thick] (5,0) node[below] {\tiny{$\cdots$}};
\draw[thick] (5,1.5) node[above] {\tiny{$\cdots$}};
\draw[thick] (6,-2)  to[out=90,in=270]  (9.3,0) -- (9.3,1.5)  to[out=90,in=270] (4,3.5);
\draw[thick] (12.1,0) node[below] {\tiny{$\cdots$}};
\draw[thick] (12.1,1.5) node[above] {\tiny{$\cdots$}};
\draw[thick] (12.3,-2) to[out=90,in=270] (14.3,0) -- (14.3,1.5) to[out=90,in=270] (11,3.5);
\draw[draw=white,double=black,double distance=2*\pgflinewidth,thick]	(10,-2)  to[out=90,in=270]  (8,0) -- (8,1.5)  to[out=90,in=270] (13,3.5);
\draw[draw=white,double=black,double distance=2*\pgflinewidth,thick] (4,-2) to[out=90,in=270] (0,0) -- (0,1.5) to[out=90,in=270] (6,3.5);
\draw[fill=white] (-0.3,0.4) rectangle (8.3,1.1);
\draw[fill=white] (9.1,0.4) rectangle (14.5,1.1);
\draw[thick] (12,0.7) node {\small{$\nu$}};
\draw[thick] (4,0.7) node {\small{$\mu$}};
\end{tikzpicture}~.
\caption{An element of $\M \otimes \N$.}
\label{fig::produit_horizontal}
\end{figure*}

\begin{remark}
	In the category of  $\Sy$-bimodules, the tensor product does not admit a unit, because our $\bbS$-bimodules are left reduced.  
\end{remark}

\begin{proposition}
	The category $(\Sbimod,\otimes)$ forms  a symmetric monoidal category without a unit.
\end{proposition}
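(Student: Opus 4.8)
The plan is to verify that the construction in \cref{def::prod_conca_bimod} satisfies the axioms of a symmetric monoidal category (minus the unit axioms), following the template of the tensor product of $\Sy$-modules in \cite[Section~5.1.3]{LodayVallette12}. The essential point is that both the associativity constraint and the symmetry constraint become transparent once one unwinds the definition of $\M\otimes\N$ in terms of the groupoid $\Bij$ of finite sets and bijections, as suggested in the remark following the definition of $\Sy$-bimodule.

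First I would record the $\Bij\times\Bij\op$-description: for finite sets $X$ (outputs) and $Y$ (inputs), one has
\[
(\M\otimes\N)(X,Y)\;\cong\;\bigoplus_{\substack{X=X_1\sqcup X_2\\ Y=Y_1\sqcup Y_2}} \M(X_1,Y_1)\otimes\N(X_2,Y_2)\ ,
\]
the sum running over ordered decompositions of $X$ and $Y$ into two (possibly empty) subsets, and one checks that this is functorial in bijections of $X$ and $Y$ by relabelling the pieces $X_i$, $Y_i$. Evaluating on $X=\underline{m}$, $Y=\underline{n}$ and regrouping the decompositions according to the cardinalities $m_i=|X_i|$, $n_i=|Y_i|$ recovers \cref{def::prod_conca_bimod}; the induction $\k[\Sy_m]\tens{\Sy_{m_1}\times\Sy_{m_2}}(-)$ is exactly the sum over the $\binom{m}{m_1}$ ways of choosing $X_1\subset\underline m$, and likewise on the input side. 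I would state this as a lemma, so that the remaining structure maps can be described on the $\Bij$-side.

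Next I would produce the structural isomorphisms. For associativity, both $(\M\otimes\N)\otimes\O$ and $\M\otimes(\N\otimes\O)$ are naturally isomorphic to the functor sending $(X,Y)$ to $\bigoplus \M(X_1,Y_1)\otimes\N(X_2,Y_2)\otimes\O(X_3,Y_3)$, the sum over ordered decompositions $X=X_1\sqcup X_2\sqcup X_3$, $Y=Y_1\sqcup Y_2\sqcup Y_3$ into three parts; the associator is the evident identification, and the pentagon axiom reduces to the associativity of disjoint union of finite sets together with the associativity and distributivity of $\otimes$ over $\oplus$ in $\mathsf{Vect}$, all of which are strict. For the symmetry, the map $\tau_{\M,\N}\colon\M\otimes\N\to\N\otimes\M$ on $(X,Y)$ swaps the two summands, sending the $(X_1\sqcup X_2, Y_1\sqcup Y_2)$-component via the Koszul sign $(-1)^{|\mu||\nu|}$ (in the differential graded setting) to the $(X_2\sqcup X_1, Y_2\sqcup Y_1)$-component; naturality, the hexagon axiom, and $\tau^2=\id$ follow from the corresponding properties of the symmetry of disjoint union and of $\otimes$ in $\dgVect$. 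Bicompleteness is inherited arity-wise from $\dgVect$ since $\otimes$ is defined by (co)limits and tensor products that commute with colimits. I would then simply remark that no unit exists, as already noted, because left reducedness forbids a bimodule concentrated in arity $(0,0)$ from acting as a two-sided unit, and in fact $\M\otimes\N$ is always itself left reduced.

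I do not expect a genuine obstacle here: every axiom is formal once the $\Bij$-picture is in place, and the only mild subtlety is bookkeeping the compatibility of the induction functors $\k[\Sy_m]\tens{\Sy_{m_1}\times\Sy_{m_2}}(-)$ with the triple decompositions when checking the pentagon — this is precisely the point where passing to the finite-set description pays off, since there the decompositions are manifestly associative. If one prefers to stay with the numerical indexing of \cref{def::prod_conca_bimod}, the only thing to be careful about is the chain of canonical isomorphisms $\k[\Sy_m]\tens{\Sy_{m_1}\times\Sy_{m_2}}(\k[\Sy_{m_1+m_2}]\tens{\cdots})\cong \k[\Sy_m]\tens{\Sy_{m_1}\times\Sy_{m_2}\times\Sy_{m_3}}(\cdots)$, which is the bimodule analogue of \cite[Section~5.1.3]{LodayVallette12} and is routine.
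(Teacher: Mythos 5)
Your proposal is correct: the paper's own proof consists of the single sentence ``the various axioms are straightforward to check,'' and you are carrying out exactly that routine verification, using the finite-set ($\Bij\times\Bij\op$) description of $\Sy$-bimodules that the paper itself suggests in the remark following the definition. The associator, the Koszul-signed symmetry, and the pentagon and hexagon axioms all reduce as you say to the strictness of disjoint union and the symmetric monoidal structure of $\dgVect$, and your observation that left reducedness rules out a unit matches the paper's own remark.
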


\begin{proof}
The various axioms are straightforward to check.
\end{proof}

\begin{notation}\label{eq:interchange_law}
The compatibility between the two monoidal structures $\square$ and $\otimes$ is encoded into the  following natural transformation 
\begin{equation*}
	\IL_{\M,\M',\N,\N'}\  :\  (\M\:\square \:\M')\otimes (\N\:\square\:\N')\hookrightarrow
	(\M\otimes \N)\:\square\:(\M'\otimes \N')\ ,
\end{equation*}
	 called \emph{the interchange law}.
\end{notation}
	 
\begin{proposition}[{\cite[Corollary~3.9]{Leray19protoI}}]\label{prop:nuMonMonoiCat}
The category of non-unital $\otimes$-monoids $\big(\mathsf{nuMon}_\otimes, \square,\k[\Sy] \big)$ and  the category of non-unital commutative $\otimes$-monoids $\big(\mathsf{nuComMon}_\otimes, \square,\k[\Sy] \big)$ are monoidal categories. 
\end{proposition}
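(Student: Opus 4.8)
The plan is to deduce the statement from the fact that $\big(\Sbimod,\square,\k[\Sy]\big)$ is a monoidal category (a byproduct of \cref{prop:closedmono}) together with the structural compatibility between $\square$ and $\otimes$ recorded in \cref{eq:interchange_law}. The key point is that the tensor product $\otimes$, while not strictly a lax monoidal functor on $\big(\Sbimod,\square\big)$ in the naive sense, interacts with $\square$ through the interchange law $\IL$ in exactly the way needed to make "$\otimes$-monoid" a well-behaved notion relative to $\square$. So the first thing I would do is make precise what a non-unital $\otimes$-monoid in $\Sbimod$ is: an $\Sy$-bimodule $A$ equipped with a map $\mu_A\colon A\otimes A\to A$ satisfying associativity with respect to the associator of $\otimes$ (and, in the commutative case, invariance under the symmetry of $\otimes$), together with morphisms of such being the evident $\square$-morphisms commuting with the $\mu$'s.

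Next I would equip the $\square$-product $A\square B$ of two $\otimes$-monoids with a $\otimes$-monoid structure. The multiplication is the composite
\[
(A\square B)\otimes (A\square B)\ \xrightarrow{\ \IL_{A,B,A,B}\ }\ (A\otimes A)\square (B\otimes B)\ \xrightarrow{\ \mu_A\square\mu_B\ }\ A\square B\ ,
\]
using functoriality of $\square$ in both slots. Then $\k[\Sy]$ itself must be shown to carry the structure of a (commutative) $\otimes$-monoid — the relevant map $\k[\Sy]\otimes\k[\Sy]\to\k[\Sy]$ being essentially the one that concatenates permutations, whose existence and properties follow from comparing \cref{def::prod_conca_bimod} with the definition of $\k[\Sy]$ — so that it serves as the monoidal unit for $\square$ on the category of $\otimes$-monoids. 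One then checks that the unit and associativity isomorphisms of $\big(\Sbimod,\square,\k[\Sy]\big)$ are morphisms of $\otimes$-monoids for these structures; this is where the coherence of $\IL$ with the associators and unitors of $\square$ and of $\otimes$ gets used. Finally, one verifies that $\square$ of two morphisms of $\otimes$-monoids is again one, which is immediate from bifunctoriality and naturality of $\IL$.

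The main obstacle is the bookkeeping of coherence: one must verify that $\IL$ is compatible with the associativity constraint of $\square$ (so that $(A\square B)\square C$ and $A\square(B\square C)$ get the \emph{same} $\otimes$-monoid structure up to the canonical iso), with the associativity and symmetry constraints of $\otimes$ (so that the multiplication above is associative, resp. commutative, when $\mu_A,\mu_B$ are), and with the unit $\k[\Sy]$. All of these are the hexagon/pentagon-type diagrams whose commutativity is the technical content of \cite[Corollary~3.9]{Leray19protoI}; I would simply invoke that reference for the diagram chases rather than reproduce them, and limit the present argument to spelling out the structure maps and pointing to which coherence diagram underlies each monoidal-category axiom. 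The commutative case adds only the single extra check that $\mu_{A\square B}$ is unchanged under the symmetry of $\otimes$ on $(A\square B)^{\otimes 2}$, which reduces via naturality of $\IL$ to the symmetry of $\mu_A$ and $\mu_B$ together with the symmetry of $\k[\Sy]$.
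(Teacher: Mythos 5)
Your proposal is correct and follows essentially the same route as the paper: the paper's proof likewise consists of invoking \cite[Corollary~3.9]{Leray19protoI} and noting that the key ingredients are the interchange law and its compatibility with the braiding, with the only adjustment being that the underlying category here is that of left reduced $\Sy$-bimodules. Your explicit description of the induced multiplication on $\M\square\N$ via $\IL$ followed by $\mu_\M\square\mu_\N$, and of $\k[\Sy]\cong\S\I$ as the unit object, matches the intended argument.
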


\begin{proof}
	The proof is similar to the proof of \cite[Corollary~3.9]{Leray19protoI}, except that our underlying category is the one of left reduced $\bbS$-bimodules instead of the category of reduced $\bbS$-bimodules (on both sides). The key arguments in the proof are the properties of the interchange law and its compatibility with the braiding.
\end{proof}

\begin{remark}
The arguments of this proof belong to the methods of $\otimes$-braided duoidal categories of \cite{AguiarMahajan06} without the structure maps involving the unit for $\otimes$. 
The interchange law satisfies the associativity axiom \cite[Proposition~2]{Vallette08} 
of the definition of a \emph{lax 2-monoidal category}. This property ensures that the $\square$-product of two non-unital $\otimes$-monoids is again a $\otimes$-monoid. 
This notion of lax 2-monoidal category was refined in \cite[Chapter~6]{AguiarMahajan06} under the name \emph{2-monoidal category} (which is different from the notion of 2-monoidal category introduced in \cite[Section~1.3]{Vallette08}). It is now also dubbed \emph{duoidal category}, see \cite{BookerStreet13}.
\end{remark}

\begin{definition}[Free non-unital monoid]
The free non-unital $\otimes$-monoid on an $\Sy$-module $\M$ is given by 
\[\T \M\coloneqq \bigoplus_{k\in \NN^*} \M^{\otimes k}\]
and the free non-unital commutative  $\otimes$-monoid is given by 
\[\S\M\coloneqq \bigoplus_{k\in \NN^*} \left(\M^{\otimes k}\right)_{\Sy_k}\ ,\]
with product given by the concatenation. 
\end{definition}

\cref{prop:nuMonMonoiCat} prompts the following question: what kind of non-unital $\otimes$-monoid is 
$\S\M \sq \S\N$? The next section shows that it is actually free. 

\subsection{Connected composition product}\label{sect::produit_connexe_Sbimod}

We denote $a$-tuples of integers by $\overline{i}\coloneqq(i_1, \ldots, i_a)$. We consider 
$\Sy_{\overline{i}}\coloneqq \Sy_{i_1}\times \cdots \times \Sy_{i_a}$ and 
$\M\big(\overline{j}, \overline{i}\big)\coloneqq \M(j_1, i_1)\otimes \cdots \otimes \M(j_a, i_a)$\ .

\begin{definition}[Connected permutation]
Let $\overline{k}$ and $\overline{l}$ be respectively a $b$-tuple and an $a$-tuple of positive integers satisfying $N\coloneqq\big| \overline{k}\big|=\big|\overline{l}\big|$. A \emph{$\big(\overline{k},\overline{l}\big)$-connected permutation} is a  permutation $\sigma$ of $\Sy_N$ such that the graph of a geometric representation of $\sigma$ is connected if one merges the inputs labeled by 
$l_1+\cdots+l_i+1,\ldots, l_1+\cdots+l_i+l_{i+1}$, for $0\leqslant i \leqslant a-1$, and the outputs labeled by 
$k_1+\cdots+k_i+1,\ldots,k_1+\cdots+k_i+k_{i+1}$ for $0\leqslant i \leqslant b-1$. 
By convention, the only element of $\Sy_0$ is a connected permutation. 
We denote the associated set of permutations by $\Sy^c_{\overline{k}, \overline{l}}$. 
\end{definition}

\begin{definition}[Connected composition product] \label{def::prod_compo_conn_bimod}
	The \emph{connected composition product} of  $\Sy$-bimodules is the $\Sy$-bimodule 
	defined by 
\[
		(\M\boxtimes \N)(m,\, n) \coloneqq
	\left\{
	\begin{array}{ll}
			 \displaystyle \bigoplus_{N\in \mathbb{N}^*} \left( \bigoplus_{\overline{l},\, \overline{k},\, \overline{j},\, \overline{i}}\k[\Sy_m]
	\tens{\Sy_{\overline{l}}}
	\M\big(\overline{l},\, \overline{k}\big)
	\tens{\Sy_{\overline{k}}} \k[\Sy^c_{\overline{k}, \overline{l}}]
	\tens{\Sy_{\overline{j}}} 
	\N\big(\overline{j},\, \overline{i}\big)
	\tens{\Sy_{\overline{i}}} \k[\Sy_n]
	\right)_{\Sy_b^{\textrm{op}}\times \Sy_a}\ , & \text{for}\  n>0\ ,\\
	\rule{0pt}{10pt}	 \displaystyle \bigoplus_{N\in \mathbb{N}^*} \left( \bigoplus_{\overline{l},\, \overline{k},\, 		 	\overline{j}}\k[\Sy_m]
	\tens{\Sy_{\overline{l}}}
	\M\big(\overline{l},\, \overline{k}\big)
	\tens{\Sy_{\overline{k}}} \k[\Sy^c_{\overline{k}, \overline{l}}]
	\tens{\Sy_{\overline{j}}} 
	\N\big(\overline{j},\, \overline{0}\big)
	\right)_{\Sy_b^{\textrm{op}}\times \Sy_a} \oplus \M(m,0)\ , & \text{for}\ n=0\ .
	\end{array}\right.
\]
where the second direct sum runs over the $b$-tuples $\overline{l}$, $\overline{k}$ and the
$a$-tuples $\overline{j}$, $\overline{i}$ such that  $|\overline{l}|=m$, $|\overline{k}|=|\overline{j}|=N$, $|\overline{i}|=n$ and where the coinvariants correspond to the following action of $\Sy_b^{\textrm{op}}\times \Sy_a$ :
\begin{eqnarray*}
&&\theta \otimes \mu_1\otimes \cdots \otimes \mu_b \otimes \sigma
\otimes \nu_1 \otimes \cdots \otimes \nu_a \otimes\omega
  \sim \\
&&\theta \,\tau^{-1}_{\overline{l}} \otimes \mu_{\tau^{-1}(1)}\otimes \cdots
\otimes \mu_{\tau^{-1}(b)} \otimes
 \tau_{\overline{k}}\,
 \sigma \, \upsilon_{\overline{j}} \otimes \nu_{\upsilon(1)} \otimes \cdots \otimes \nu_{\upsilon(a)}
 \otimes \upsilon^{-1}_{\overline{i}} \,
 \omega,
\end{eqnarray*}
for $\theta \in \Sy_m$, $\omega \in \Sy_n$, $\sigma \in \Sy_N$ and for $\tau \in \mathbb{S}_b$ with $\tau_{\overline{l}}$ the corresponding block permutation, $\upsilon \in \mathbb{S}_a$ and $\upsilon_{\overline{i}}$ the corresponding block permutation.
\end{definition}

\begin{figure*}[h]
	\begin{tikzpicture}[scale=0.7]
	\draw[thick] (2,1) to[out=270,in=90] (6,-1) to[out=270,in=90] (7,-2);
	\draw[thick] (1,1) to[out=270,in=90] (3,-1);
	\draw[thick] (0,-1) to[out=270,in=90] (1,-2);
	\draw[thick] (1,2) to[out=270,in=90] (0,1) to[out=270,in=90] (1,-1) -- (3,-1) to[out=270,in=90] (2,-2);
	\draw[thick] (8,2) to[out=270,in=90] (6,1) to[out=270,in=90] (8,-1) to[out=270,in=90] (10,-2);
	\draw[thick] (3,2) to[out=270,in=90] (5,1) ;	
	\draw[thick] (10,1) to[out=270,in=90] (9,-1);
	\draw[draw=white,double=black,double distance=2*\pgflinewidth,thick] (1,-1) to[out=270,in=90] (0,-2);
	\draw[draw=white,double=black,double distance=2*\pgflinewidth,thick] (9,-1) to[out=270,in=90] (8,-2);
	\draw[draw=white,double=black,double distance=2*\pgflinewidth,thick] (7,-1) to[out=270,in=90] (4,-2);
	\draw[draw=white,double=black,double distance=2*\pgflinewidth,thick] (7,2) to[out=270,in=90]  (9,1) to[out=270,in=90] (10,-1);
	\draw[draw=white,double=black,double distance=2*\pgflinewidth,thick] (0,2) to[out=270,in=90] (2,1);
	\draw[draw=white,double=black,double distance=2*\pgflinewidth,thick] (8,1) to[out=270,in=90] (4,-1);
	\draw[draw=white,double=black,double distance=2*\pgflinewidth,thick] (5,2) to[out=270,in=90] (4,1) to[out=270,in=90] (0,-1);
	\draw[fill=white] (-0.3,0.8) rectangle (2.3,1.2);
	\draw[fill=white] (3.7,0.8) rectangle (6.3,1.2); 
	\draw[fill=white] (7.7,0.8) rectangle (10.3,1.2); 
	\draw[fill=white] (-0.3,-1.2) rectangle (4.3,-0.8);
	\draw[fill=white] (5.7,-1.2) rectangle (10.3,-0.8);
	\draw (1,1) node {\small{$\nu_1$}};
	\draw (5,1) node {\small{$\nu_2$}};
	\draw (9,1) node {\small{$\nu_3$}};
	\draw (2,-1) node {\small{$\mu_1$}};
	\draw (8,-1) node {\small{$\mu_2$}};
	\end{tikzpicture}
	\caption{An element of $\M\boxtimes\N$.}
\end{figure*}

\begin{remark}
We refer the reader to \cite[Section~1.3]{Vallette07} for more details. In \emph{loc. cit.}  the connected composition product is only defined for left and right reduced $\Sy$-modules and is denoted by $\boxtimes $. 
Notice that if in the above formula there appears a term coming from $\N$, then there cannot be any element coming from $\M(m,0)$ since then the permutation between the two levels would not be connected. 
\end{remark}

We consider the $\Sy$-bimodule $\I$ define by $\I(1,\, 1)\coloneqq\k$ and by 
$\I(m,\,n)\coloneqq0$ otherwise. 

\begin{proposition}[{\cite[Proposition~1.6]{Vallette07}}]
		The category $\big(\Sbimod,\boxtimes, \I\big)$ forms  a monoidal category.
\end{proposition}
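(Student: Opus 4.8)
The plan is to verify the three monoidal category axioms for $\big(\Sbimod,\boxtimes, \I\big)$: associativity of $\boxtimes$, the left and right unit constraints with respect to $\I$, and the coherence (pentagon and triangle) identities. Since all data here are natural isomorphisms built out of the same combinatorial pieces — block permutations, connected permutations, and the $\Sy$-coinvariants — the coherence diagrams will commute essentially automatically once the isomorphisms are written down, so the real content is constructing the associator and the unitors.

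First I would set up a graphical (or combinatorial) model for iterated connected composition products. The key observation, already hinted at in the proof of \cref{prop:closedmono} for the product $\square$, is that an element of $(\M\boxtimes\N)\boxtimes\O$ can be identified with a "$2$-level connected graph" whose vertices on the bottom level are labelled by elements of $\M$, and whose vertices on the top two levels, taken together with the connectedness condition, reassemble into an element of $\N\boxtimes\O$; symmetrically for $\M\boxtimes(\N\boxtimes\O)$. Concretely, one shows both sides are naturally isomorphic to a single object $\M\boxtimes\N\boxtimes\O$ defined as a direct sum over $3$-level connected graphs (with appropriate $\Sy$-actions taken as coinvariants), so that the associator $\alpha_{\M,\N,\O}$ is the evident identification. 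The $n=0$ case requires the usual care: one must track the summands $\M(m,0)$, $\N(\cdot,0)$, and the mixed terms, using the remark in the excerpt that a genuine $\N$-contribution forbids an $\M(m,0)$-contribution because the connecting permutation would be disconnected. This bookkeeping is where I expect the main obstacle to lie: the piecewise definition of $\boxtimes$ for $n>0$ versus $n=0$ means the associativity isomorphism must be checked to match up all the corner cases, and the action of $\Sy_b^{\mathrm{op}}\times\Sy_a$ by block permutations on the two levels must be shown to be compatible when one composes three levels rather than two.

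Next I would treat the unit. For the right unit $\M\boxtimes\I\cong\M$: in the formula for $(\M\boxtimes\I)(m,n)$, since $\I(j,i)$ is nonzero only for $j=i=1$, every $\mathrm{tuple}$ $\overline{j}=\overline{i}=(1,\dots,1)$ must have length $a=n$, and each $\Sy^c_{\overline{k},\overline{l}}$-factor degenerates, forcing $\overline{k}$ to be a tuple of $1$'s as well; the coinvariants then collapse the $\k[\Sy_n]$ on the right and the block permutations, leaving exactly $\k[\Sy_m]\tens{\Sy_{\overline l}}\M(\overline l,\overline 1)$ reindexed — which is $\M(m,n)$. The edge case $n=0$ gives the summand $\M(m,0)$ directly from the extra term in the definition. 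The left unit $\I\boxtimes\M\cong\M$ is dual: $\M(l,k)$ is replaced by $\I(l,k)$, forcing $b=m$ and all $l_i=k_i=1$, and the connected permutation $\Sy^c$ again trivialises, giving back $\M(m,n)$. One checks naturality of both unitors in $\M$, which is immediate from naturality of the coinvariant quotients.

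Finally, for coherence I would invoke the standard principle that once the associator and unitors are realised as "rebracketing a single multi-level graph," the pentagon for $\M\boxtimes\N\boxtimes\O\boxtimes\calP$ and the triangle relating $\alpha$ to the two unitors are both identities of the underlying $4$-level (resp. $2$-level with a trivial middle level) graph models. Since this proposition is quoted from \cite[Proposition~1.6]{Vallette07}, where the reduced case is treated, I would phrase the argument as: the same proof goes through for left reduced $\Sy$-bimodules, the only modification being the presence of the extra $\M(m,0)$ summands in the $n=0$ component, whose behaviour under associativity and the unit maps was checked above and causes no new coherence constraints. Thus the proof would conclude with a one-line remark that the verification is identical to \emph{loc. cit.} after incorporating the left reduced conventions.

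\begin{proof}
The verification is essentially that of \cite[Proposition~1.6]{Vallette07}, adapted to left reduced $\Sy$-bimodules. One identifies $(\M\boxtimes\N)\boxtimes\O$ and $\M\boxtimes(\N\boxtimes\O)$ with the $\Sy$-bimodule of $3$-level connected graphs with vertices labelled, from bottom to top, by elements of $\M$, $\N$, and $\O$, together with the analogous summands involving $0$-ary contributions in the $n=0$ component (noting, as in the remark following \cref{def::prod_compo_conn_bimod}, that a genuine contribution from $\N$ or $\O$ excludes the $\M(m,0)$ summand by connectedness); the associator is this identification, and the pentagon axiom holds since both composites rebracket the same $4$-level graph. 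For the unit, since $\I(m,n)$ is concentrated in arity $(1,1)$, in $\M\boxtimes\I$ the tuples $\overline{j}=\overline{i}$ are forced to be $(1,\dots,1)$ of length $n$, the sets $\Sy^c_{\overline{k},\overline{l}}$ of connected permutations degenerate to forcing $\overline k=(1,\dots,1)$, and the coinvariants collapse the right-hand $\k[\Sy_n]$ and the block permutations, yielding a natural isomorphism $\M\boxtimes\I\cong\M$; the edge term $\M(m,0)$ is matched by the extra summand for $n=0$. The isomorphism $\I\boxtimes\M\cong\M$ is obtained symmetrically, and the triangle axiom is again an identity of the underlying graphs. Hence $\big(\Sbimod,\boxtimes,\I\big)$ is a monoidal category.
\end{proof}
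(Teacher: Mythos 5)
The paper itself offers no proof of this proposition---it is quoted directly from \cite[Proposition~1.6]{Vallette07}---and your overall strategy (identifying both bracketings with a module of $3$-level connected graphs, degenerating the formula when one factor is $\I$, and noting that coherence is automatic once everything is a rebracketing of a single graph model) is exactly the standard argument of the cited source, correctly adapted to the left reduced setting with its extra $\M(m,0)$ summands.

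There is, however, one concrete error in your verification of the right unitor. For $\M\boxtimes\I$, you correctly observe that $\overline{j}=\overline{i}=(1,\dots,1)$ with $a=n=N$, but you then claim that the connectedness condition ``forces $\overline{k}=(1,\dots,1)$.'' This is not what connectedness gives, and taken literally it would produce $\M(l_1,1)\otimes\cdots\otimes\M(l_b,1)$ rather than $\M(m,n)$. The correct statement is that, since each copy of $\I$ contributes exactly one edge to the two-level graph, the bipartite graph obtained by merging edges according to $(\overline{k},\overline{j})$ is a disjoint union of stars centred at the $\M$-vertices; it is connected if and only if $b=1$. Hence there is a \emph{single} bottom vertex, necessarily labelled by an element of $\M(m,n)$ with $\overline{l}=(m)$ and $\overline{k}=(n)$, and the coinvariants then collapse the remaining $\k[\Sy_m]$ and $\k[\Sy_n]$ factors onto $\M(m,n)$. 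The symmetric point for $\I\boxtimes\M$ is that connectedness forces $a=1$ (you assert the right conclusion there but likewise omit this step). With this correction the unit argument closes, and the rest of your proof stands.
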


\begin{proposition}[{\cite[Proposition~2.11]{Leray19protoI}}]\label{prop::S_permute_prod_connexe_bi}
	Let $\M, \N$ be  two  $\Sy$-bimodules. The non-unital commutative $\otimes$-monoid $\S\M \sq \S\N$ is free on $\M \boxtimes \N$: 
	\[
\S\M \sq \S\N\cong \S(\M \boxtimes \N)\ .
	\]
\end{proposition}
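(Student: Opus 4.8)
The plan is to construct mutually inverse morphisms of non-unital commutative $\otimes$-monoids between $\S(\M \boxtimes \N)$ and $\S\M \sq \S\N$, using the universal property of the free functor $\S(-)$ to reduce the verification to the $\Sy$-bimodule level. First I would produce the structure map $\M \boxtimes \N \to \S\M \sq \S\N$: an element of $\M \boxtimes \N$ is a connected two-level graph with vertices labelled by elements of $\M$ on the bottom row and $\N$ on the top row, and such a configuration is tautologically an element of $\S\M \sq \S\N$, since the horizontal concatenation of the bottom vertices is an element of $\S\M(m,N)$, the horizontal concatenation of the top vertices is an element of $\S\N(N,n)$, and the connecting permutation $\sigma \in \Sy_N$ realizes the $\square$-composition along the middle level. (The $n=0$ summand $\M(m,0)$ on both sides matches up directly.) One must check this assignment is well defined, i.e. compatible with the $\Sy_b\op\times\Sy_a$-coinvariants in the definition of $\boxtimes$ and with the $\Sy_k$-coinvariants over the middle level in the definition of $\square$ together with the $\Sy_p$-coinvariants defining $\S$; this is a bookkeeping check that the block permutations $\tau_{\overline l},\tau_{\overline k},\upsilon_{\overline j},\upsilon_{\overline i}$ appearing in the $\boxtimes$-relation are exactly absorbed by the symmetric-group coinvariants on the two factors of $\S\M \sq \S\N$. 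By freeness of $\S(-)$, this structure map extends uniquely to a morphism of non-unital commutative $\otimes$-monoids $\Phi : \S(\M\boxtimes\N) \to \S\M\sq\S\N$.

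Next I would construct the inverse. A general element of $\S\M \sq \S\N$ in the $n>0$ case is of the form $(\mu_1\cdots\mu_b) \tens{\Sy_k} (\nu_1\cdots\nu_a)$ with $\mu_i \in \M$, $\nu_j\in\N$, and the $\square$-composition grafts the $a$ outputs-bunches of the $\nu_j$'s onto the $b$ inputs-bunches of the $\mu_i$'s via some permutation in $\Sy_k$. Geometrically this is a two-level graph; decomposing it into its connected components partitions the vertices, and each connected component is by construction an element of $\M\boxtimes\N$ (the key point being that the connecting permutation restricted to a connected component lies in $\Sy^c_{\overline k',\overline l'}$ for the relevant sub-tuples). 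Taking the (commutative, via $\S$) product over the connected components gives a well-defined element of $\S(\M\boxtimes\N)$; one checks this respects all the coinvariant relations, yielding a map $\Psi:\S\M\sq\S\N\to\S(\M\boxtimes\N)$. Since both $\Phi\Psi$ and $\Psi\Phi$ restrict to the identity on the respective generating $\Sy$-bimodules — decomposing a graph into connected components and then reassembling by horizontal concatenation is the identity — and since both are $\otimes$-monoid morphisms, they are inverse to each other. Alternatively, one can invoke the already-cited \cite[Proposition~2.11]{Leray19protoI}, whose proof in the reduced case carries over \emph{mutatis mutandis}, the only change being the bookkeeping of the extra $n=0$ (left-reduced but not right-reduced) summands $\M(m,0)$, which appear identically on both sides and do not interact with the connectivity constraint by the \cref{fig::produit_vertical}-style remark above.

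The main obstacle I anticipate is the precise matching of symmetric group actions: one must verify that the coinvariants $(\,-\,)_{\Sy_b\op\times\Sy_a}$ built into the definition of $\boxtimes$, the coinvariants $(\,-\,)_{\Sy_p}$ and $(\,-\,)_{\Sy_q}$ built into the two $\S$-factors, and the $\tens{\Sy_k}$ over the middle level of $\square$ together account for exactly the same identifications as the $(\,-\,)_{\Sy_r}$ coinvariants defining $\S(\M\boxtimes\N)$ where $r$ counts connected components. Concretely, permuting connected components of the graph (the $\Sy_r$-action downstairs) corresponds upstairs to simultaneously permuting the corresponding blocks of $\mu_i$'s, of $\nu_j$'s, and conjugating the middle permutation by the associated block permutations — which is precisely the relation imposed by the interplay of the three coinvariant quotients on $\S\M\sq\S\N$. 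Making this correspondence bijective and checking there is no over- or under-counting is the delicate combinatorial heart of the argument; everything else (well-definedness on each factor, compatibility with the $\otimes$-products, the $n=0$ case) is routine.
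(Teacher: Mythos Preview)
Your proposal is correct and follows essentially the same approach as the paper: both arguments rest on the observation that an arbitrary 2-level labeled graph (an element of $\S\M\sq\S\N$) decomposes uniquely into its connected components (elements of $\M\boxtimes\N$), with the $n=0$ summands $\M(m,0)$ matching up trivially. The paper's proof simply asserts this graphical identification in one sentence, whereas you spell out the explicit maps $\Phi,\Psi$ and the symmetric-group bookkeeping needed to make it rigorous; your more careful treatment of the coinvariants is a useful elaboration rather than a different route.
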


\begin{proof}
The left-hand side is made up of 2-level labeled graphs with upper elements coming from $\N$ and lower elements coming from $\M$ and 1-level graphs with elements of input arity $0$ coming from $\M$; the right-hand side is made up of concatenations of connected 2-level labeled graphs with upper elements coming from $\N$ and lower elements coming from $\M$ and concatenation of elements of input arity $0$ coming from $\M$. 
They are thus isomorphic.
\end{proof}

\begin{remark}
Notice that $\S\I\cong \k[\Sy]$.
\end{remark}

There is an embedding of differential graded vector spaces into $\Sy$-bimodules given by 
$A(1,0)\coloneqq A$ and by $A(m,n)\coloneqq 0$ otherwise, for  $A\in \dgVect$. The \emph{endomorphism $\Sy$-bimodule} is given by 
$\End_A(m,n)\coloneqq \Hom\big(A^{\otimes n}, A^{\otimes m}\big)\ $. More generally, for two dg vector spaces $A$ and $B$, we consider the $\Sy$-bimodule $\End^A_B(m,n)\coloneqq \Hom\big(A^{\otimes n}, B^{\otimes m}\big)\ $.

\begin{lemma} \label{lem:LEMMA}
Every differential graded vector space $A$ and every  $\Sy$-bimodules $\M, \N$ satisfy the following relations:
	\begin{enumerate}
	\item $(\S A) (m,0)\cong A^{\otimes m}$ for every $m \in \mathbb{N}^*$\ ,
	\item $\M\boxtimes A \cong \M \sq \S A$\ ,
	\item $(\M\otimes \N)\sq \S A \cong (\M\sq \S A )\otimes (\N\sq\S A)$\ ,
	\item $\End^A_B \cong \hom(\S A, \S B)\ $.
	\end{enumerate}
\end{lemma}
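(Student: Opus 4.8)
The plan is to prove each of the four isomorphisms separately, treating them essentially as exercises in unwinding the definitions of the monoidal products, and to observe at the outset that items (3) and (4) follow formally from items (1), (2) and earlier results, so the real content is in (1) and (2).

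\textbf{Item (1).} First I would unwind the definition of the free commutative $\otimes$-monoid, $\S A = \bigoplus_{k\in\NN^*}(A^{\otimes k})_{\Sy_k}$, where $A$ is viewed as the $\Sy$-bimodule concentrated in biarity $(1,0)$. Using the explicit formula for the tensor product (\cref{def::prod_conca_bimod}), an iterated tensor power $A^{\otimes k}$ evaluated at biarity $(m,0)$ is a sum over ordered partitions $m_1+\cdots+m_k=m$ of induced representations $\k[\Sy_m]\tens{\Sy_{m_1}\times\cdots\times\Sy_{m_k}} A(m_1,0)\otimes\cdots\otimes A(m_k,0)$; since $A(m_i,0)=0$ unless $m_i=1$, the only surviving summand is $m_1=\cdots=m_k=1$, which forces $k=m$ and gives $\k[\Sy_m]\tens{\Sy_1^{\times m}}A^{\otimes m}\cong\k[\Sy_m]\otimes A^{\otimes m}$ (with $\Sy_m$ acting on the left factor). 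Taking $\Sy_m$-coinvariants of the whole thing (the $(A^{\otimes m})_{\Sy_m}$ in the definition of $\S$, together with the fact that the $k=m$ summand of $\S A$ is exactly this coinvariant) collapses $\k[\Sy_m]\otimes A^{\otimes m}$ down to $A^{\otimes m}$. I would be slightly careful to note that there is no input arity to worry about (everything is in biarity $(m,0)$), and that the left $\Sy_m$-action is the one that gets quotiented out. This is the case $n=0$ at the bottom of the relevant definitions, and it is where all the other items will be reduced to.

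\textbf{Item (2).} Here I would compare $\M\boxtimes A$, using the $n=0$ branch of \cref{def::prod_compo_conn_bimod}, with $\M\sq\S A$, using the $n=0$ branch of \cref{def::prod_compo_bimod}. In $\M\boxtimes A$, the upper layer is $\N\big(\overline{j},\overline{0}\big)=A(j_1,0)\otimes\cdots\otimes A(j_a,0)$, which vanishes unless every $j_i=1$; this forces $N=a$, each $\overline{k}=(k_1,\ldots,k_b)$ with $|\overline{k}|=N=a$, and — because the connected permutation $\sigma\in\Sy_a^c$ between singleton inputs and the $\overline{k}$-blocks of outputs must be connected — each output block $k_i$ must receive from a single $\mu_i\in\M(l_i,k_i)$, so connectivity is automatic and $\Sy_a^c$ is all of $\Sy_a$. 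Collecting: $\M\boxtimes A$ at biarity $(m,0)$ is a sum over $b$, over $\overline{l}$ with $|\overline{l}|=m$, over $\overline{k}$ with $|\overline{k}|=a$, of $\k[\Sy_m]\tens{\Sy_{\overline{l}}}\M(\overline{l},\overline{k})\tens{\Sy_{\overline{k}}}\k[\Sy_a]\otimes A^{\otimes a}$, taken $\Sy_b\times\Sy_a$-coinvariants, plus $\M(m,0)$. On the other side, $(\M\sq\S A)(m,0)=\bigoplus_k \M(m,k)\tens{\Sy_k}(\S A)(k,0)\oplus\M(m,0)$, and by item (1) this is $\bigoplus_k\M(m,k)\tens{\Sy_k}A^{\otimes k}\oplus\M(m,0)$. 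I would then exhibit the bijection between the two descriptions: the $\Sy_b$-coinvariants over the $\mu_i$'s together with the $\k[\Sy_m]\tens{\Sy_{\overline{l}}}-$ and $-\tens{\Sy_{\overline{k}}}\k[\Sy_a]$ inductions reassemble $\mu_1\otimes\cdots\otimes\mu_b$ into a single element of $\M(m,k)$ with $k=|\overline{k}|=a$, and the $\Sy_a$-coinvariants match the $\tens{\Sy_k}$ over $A^{\otimes k}$. Pictorially this is exactly the remark that a connected 2-level graph whose top vertices all have one output and no input is the same as grafting an $m$-to-$k$ operation onto $k$ copies of $A$; I would phrase the formal identification but lean on \cref{fig::produit_vertical}-style reasoning for intuition.

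\textbf{Items (3) and (4).} For (3) I would argue purely formally: $-\sq \S A$ applied to $\M\otimes\N$ should distribute over $\otimes$. One clean route is to use item (2) backwards — $(\M\otimes\N)\sq\S A\cong(\M\otimes\N)\boxtimes A$ — and then invoke the interchange/distributivity of $\boxtimes$ over $\otimes$, or more directly cite \cref{prop::S_permute_prod_connexe_bi} together with \cref{lem:LEMMA}(1)--(2): indeed $\S\M\sq\S\N\cong\S(\M\boxtimes\N)$ is the universal instance, and specialising $\N$ to the bimodule $A$ and using $\S A$ in place of $A$ (via (2)) gives the claim; alternatively one unwinds the tensor-product and composition-product formulas directly and matches summands, the point being that grafting $\S A$ under a horizontal concatenation $\M\otimes\N$ is the same as grafting $\S A$ under each factor, because $\S A$ is itself a (commutative) $\otimes$-monoid so its diagonal splits the copies of $A$ among the two sides. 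For (4), I would unwind both sides at biarity $(m,n)$: the left side is $\End^A_B(m,n)=\Hom(A^{\otimes n},B^{\otimes m})$, and the right side, by the definition of the inner hom, is $\prod_{k}\Hom_{\Sy_k}\big((\S A)(n,k),(\S B)(m,k)\big)$ for $n>0$ and $(\S B)(m,0)=B^{\otimes m}$ for $n=0$. For $n=0$ the right side is $B^{\otimes m}=\Hom(A^{\otimes 0},B^{\otimes m})$ by item (1), matching. For $n>0$, I claim $(\S A)(n,k)=0$ unless... actually $(\S A)$ is concentrated in biarities $(\ast,0)$, so $(\S A)(n,k)=0$ for all $k$ when $n>0$; hence the product $\prod_k\Hom_{\Sy_k}(0,\ldots)$ would be $0$, which is wrong. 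So for (4) I must instead read $\End^A_B$ and $\hom(\S A,\S B)$ as $\Sy$-bimodules in the biarities where $\S A,\S B$ actually live, i.e. one restricts to dg vector spaces sitting in biarity $(1,0)$ and both sides are the $\Sy$-bimodule supported on biarities $(m,0)$; then $\hom(\S A,\S B)(m,0)=(\S B)(m,0)\cong B^{\otimes m}\cong\Hom(A^{\otimes m},\ldots)$ — I would need to restate (4) with the correct convention (presumably $\End^A_B$ here means the bimodule with $\End^A_B(m,n)=\Hom(A^{\otimes n},B^{\otimes m})$ only for the relevant $n$, or the statement is about $\hom$ evaluated against the image of $\dgVect$), and then it is immediate from (1). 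I would flag in the write-up that the content of (4) is exactly the adjunction $-\sq\S A\dashv\hom(\S A,-)$ from \cref{prop:closedmono} combined with (2): a map of gebras out of the free thing on $A$ is the same as a map into $\S B$, i.e. an element of $\End^A_B$.

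\textbf{Main obstacle.} The genuinely delicate step is item (2): one has to check that the connectivity constraint in $\M\boxtimes A$ is vacuous precisely because the top-layer inputs are all singletons (each from an $A(1,0)$), so that $\Sy^c_{\overline{k},\overline{l}}$ collapses to $\Sy_{|\overline{k}|}$ and all the block-permutation bookkeeping in the coinvariant relation of \cref{def::prod_compo_conn_bimod} reduces to the much simpler $\Sy_k$-coinvariants appearing in $\M\sq\S A$. Keeping the $\Sy_a\times\Sy_b$-coinvariants straight while re-bracketing $\mu_1\otimes\cdots\otimes\mu_b$ into a single element of $\M(m,k)$ is where a careless argument would go wrong; everything else is a direct, if notation-heavy, comparison of the defining formulas, and I would present it at that level rather than grinding through every induced-representation identity.
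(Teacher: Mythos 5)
The paper offers no written proof (it records the computations as straightforward), so the only question is whether your unwinding of the definitions is correct. Two of your four verifications contain genuine errors.

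In item (2), your analysis of the connectivity condition is backwards. You claim the constraint is vacuous because the top-layer vertices are singletons, so that the sum over $b$ (the number of bottom vertices from $\M$) survives and the tensors $\mu_1\otimes\cdots\otimes\mu_b$ "reassemble into a single element of $\M(m,k)$." There is no such reassembly: an abstract $\Sy$-bimodule has no map $\M(l_1,k_1)\otimes\M(l_2,k_2)\to\M(l_1+l_2,k_1+k_2)$, and the right-hand side $(\M\sq\S A)(m,0)=\bigoplus_k\M(m,k)\tens{\Sy_k}A^{\otimes k}\oplus\M(m,0)$ involves only single elements of $\M$, so if the $b>1$ terms survived the two sides could not be isomorphic. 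What actually happens is the opposite of what you assert: since each top vertex of $A(1,0)$ has exactly one output and no inputs, each top node of the merged bipartite graph has degree one, so two distinct bottom vertices can never be joined by a path; hence $\Sy^c_{\overline{k},\overline{j}}=\emptyset$ whenever $b\geqslant 2$, and the connectivity condition \emph{forces} $b=1$. With $b=1$ the set of connected permutations is indeed all of $\Sy_N$ and the formula collapses to $\M(m,a)\tens{\Sy_a}A^{\otimes a}$, which is what matches $\M\sq\S A$. You correctly identified this as the delicate step and then resolved it the wrong way.

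In item (4) you misread the arity conventions. You compute $(\S A)(n,k)=0$ "for all $k$ when $n>0$" and conclude that $\hom(\S A,\S B)(m,n)$ would vanish, so that the statement needs reformulating. But $\S A$ is concentrated in biarities $(*,0)$ with $(\S A)(n,0)=A^{\otimes n}$ by your own item (1); in the product $\prod_{k\in\NN}\Hom_{\Sy_k}\big((\S A)(n,k),(\S B)(m,k)\big)$ only the factors with $k>0$ vanish, while the $k=0$ factor is exactly $\Hom\big(A^{\otimes n},B^{\otimes m}\big)=\End^A_B(m,n)$. The statement is therefore correct as written and needs no restatement; the detour about restricting conventions should be deleted. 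Items (1) and (3) are fine in substance, though for (3) you should commit to the direct summand-matching argument: the interchange law of the paper is only a natural inclusion, so "distributivity of $\sq$ over $\otimes$" cannot simply be invoked, and \cref{prop::S_permute_prod_connexe_bi} does not specialise to the claim either.
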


\begin{proof}
The computations are straightforward.
\end{proof}

\begin{remark}
Any $\Sy$-module $\M$ can be seen as an $\Sy$-bimodule concentrated in arity $(1,n)$, for $n\in \NN$. Under this identification, we have $\M\circ \N \cong \M\boxtimes \N \cong \M \sq \S\N$, where $\circ$ stands for the composite  product of operads.
\end{remark}

\section{Properadic homological algebra}

\subsection{Properads}

\begin{definition}[Properad]
A  \emph{properad} is a monoid in the monoidal category 
$\big(\Sbimod,\allowbreak \boxtimes, \allowbreak \I\big)$\ .
\end{definition}

\begin{exam}
Equipped with the composition of functions, the endomorphism $\Sy$-bimodule $\End_A$ becomes a properad, called the \emph{endomorphism properad}.
\end{exam}

Unfolding the definition, a properad amounts to a triple $(\calP ,\gamma, \eta)$ where $\gamma : \calP\boxtimes \calP \to \calP$ composes operations along connected directed graphs with 2 levels and where $\eta : \I \to \calP$ is a unit for the composition map $\gamma$. 

\begin{definition}[Infinitesimal composition product]
The \emph{infinitesimal composition product}  \[\M \ibt \N\] of two $\Sy$-bimodules $\M$ and $\N$ is the sub-$\Sy$-bimodule  of $(\I \oplus \M) \boxtimes (\I\oplus \N)$ which is made up of the parts linear in $\M$ and in $\N$. 
\end{definition}

\begin{figure}[h!]
	\begin{tikzpicture}[scale=0.7]
		\draw[thick] (2,2.5)--(2,2);
		\draw[thick] (1,0)--(1,-0.5);
		\draw[thick] (1.5,0.5) to[in=270,out=90] (2.5,1.5);
		\draw[thick] (1,0.5) to[in=270,out=90] (2,1.5);		
		\draw[thick] (0.5,0) to[in=270,out=90] (0.5,2.5);
		\draw[thick] (3,-0.5) to[in=270,out=90] (3,2.5);
		\draw[fill=white] (0,0) rectangle (2,0.5);
		\draw[fill=white] (1.5,1.5) rectangle (3.5,2);
		\draw (1,0.2) node {\small{$\mu$}};
		\draw (2.5,1.7) node {\small{$\nu$}};
	\end{tikzpicture}
	\caption{An element of $\M \ibt \N$.}
\end{figure} 

\begin{definition}[Infinitesimal composition map]
The \emph{infinitesimal composition map} of a properad is defined by 
\[
	\begin{tikzcd}[column sep=huge]
		\gamma_{(1,1)} \ : \ \calP\ibt \calP \ar[r, hook]  &
		(\I \oplus \calP) \boxtimes (\I \oplus \calP) \ar[r,"(\eta + \id)\boxtimes (\eta + \id)"]  & 
		\calP \boxtimes \calP \ar[r,"\gamma"]  &
	  	\calP\ .
	\end{tikzcd}
	\]
It amounts to composing only two operations at a time. 
\end{definition}

\begin{definition}[Graph module endofunctor]
We consider the set $\Gs$ of \emph{directed connected graphs} and the endofunctor $\G$ of the category of $\Sy$-bimodules given the \emph{graph module} of $\M$: 
$$\G(\M) \coloneq \bigoplus_{\gg \in \Gs} \gg(\M) \ , $$
where $\gg(\M)$ is the module obtained by labeling each vertex of $\gg$ by elements of $\M$ of corresponding arity. 
The summand corresponding to the trivial graph $\gg=|$ is equal to $\I$.
\end{definition}

We denote by $\G(\M)^{(k)}$ the sub-$\Sy$-bimodule made up of graphs with $k$ vertices. Notice 
that 
\[
\G(\M)^{(0)}\cong \I\ , \quad 
\G(\M)^{(1)}\cong \M\ , \quad   \text{and} \quad 
\G(\M)^{(2)}\cong \M \ibt \M\ . \]
We refer the reader to \cite[Section~2.7]{Vallette07} for more details.

\medskip

We endow the graph endofunctor $\G$ with a monad structure where the natural transformation $\G\circ \G \to \G$ amounts to forgetting the partition of graphs on the left hand side and where the unit $\id \to \G$ is given by the embeddings $\M \hookrightarrow \G(\M)$ of graphs with 1-vertex.  

\begin{proposition}\label{prop:ProperadDefEqui}
The category of properads is equivalent to the category of algebras over the graph monad $\G$. 
\end{proposition}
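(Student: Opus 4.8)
The plan is to exhibit mutually inverse functors between $\prop$ and the category $\G\text{-}\mathsf{alg}$ of algebras over the graph monad, and to check they respect morphisms. First I would recall the two data we must reconcile: a properad is a monoid $(\calP,\gamma,\eta)$ in $(\Sbimod,\boxtimes,\I)$, so $\gamma$ composes operations along connected directed graphs \emph{with two levels}, whereas a $\G$-algebra is an $\Sy$-bimodule $\calP$ together with a structure map $\mu \colon \G(\calP)\to \calP$ satisfying the associativity and unit axioms with respect to the monad structure on $\G$ (which forgets partitions, respectively inserts one-vertex graphs). The key observation is that $\G(\calP)^{(0)}\cong\I$, $\G(\calP)^{(1)}\cong\calP$, and $\G(\calP)^{(2)}\cong\calP\ibt\calP$, and more generally $\G(\calP)$ is built by iterated $\boxtimes$-compositions; the monad multiplication $\G\circ\G\to\G$ encodes exactly the bracketing ambiguity of such iterations.

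The forward functor sends a properad $(\calP,\gamma,\eta)$ to the $\G$-algebra whose structure map $\mu\colon\G(\calP)\to\calP$ is defined graphwise: on a graph $\gg$ with $k$ vertices labeled by elements of $\calP$, contract the graph by repeatedly applying $\gamma$ (viewed via $\gamma_{(1,1)}$ as composing two operations at a time, using $\eta$ to fill in trivial vertices). One must check this is well-defined, i.e. independent of the order of contractions: this follows from the associativity of $\gamma$, which by the proof of the monoidality in $(\Sbimod,\boxtimes,\I)$ says precisely that any two ways of composing along a connected graph agree. On the one-vertex graphs $\mu$ restricts to the identity (unit axiom), and compatibility with $\G\circ\G\to\G$ (forgetting partitions) is again a consequence of associativity of $\gamma$. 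The backward functor sends a $\G$-algebra $(\calP,\mu)$ to the properad with $\gamma\colon\calP\boxtimes\calP\cong\G(\calP)^{(2)}\hookrightarrow\G(\calP)\xrightarrow{\mu}\calP$ and unit $\eta\colon\I\cong\G(\calP)^{(0)}\hookrightarrow\G(\calP)\xrightarrow{\mu}\calP$; associativity and unitality of $\gamma$ follow from the $\G$-algebra axioms applied to the three-vertex and zero/one-vertex graphs respectively. Both functors are evidently the identity on underlying $\Sy$-bimodules and send structure-preserving maps to structure-preserving maps, so they are inverse equivalences (indeed an isomorphism of categories).

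The main obstacle I expect is the well-definedness of $\mu$ on arbitrary connected graphs from the binary data $\gamma$ alone: a priori $\gamma$ only tells us how to compose along two-level connected graphs, and a general directed connected graph must first be decomposed into a sequence of such two-level composites (a "level decomposition"). One must show (i) every connected directed graph admits such a decomposition, and (ii) the resulting iterated composite is independent of the chosen decomposition. Step (i) is a combinatorial fact about directed connected graphs; step (ii) is a coherence statement that reduces, via the monad associativity of $\G$ and the associativity of $\boxtimes$, to the case of graphs with three vertices, i.e. to the associativity pentagon for $\gamma$ — this is exactly the content already packaged in the cited \cite[Section~2.7]{Vallette07} and in the monoidality of $\boxtimes$. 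So the proof is essentially an unwinding of definitions plus an appeal to these coherence results; no genuinely new argument is needed, only care in organizing the graph combinatorics.
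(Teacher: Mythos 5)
Your overall strategy is sound, but it is not the paper's argument: the paper disposes of this proposition in one line by citing \cite[Theorem~2.3]{Vallette07}, which identifies the free properad on an $\Sy$-bimodule with the graph module $\G(\M)$; the equivalence with $\G$-algebras then follows from the standard monadicity of the forgetful functor from monoids. What you propose is essentially an unwinding of the proof of that cited theorem: explicit mutually inverse functors, with the coherence of graph contraction (independence of the level decomposition) as the key combinatorial input. That is a legitimate and more self-contained route; what the paper's citation buys is precisely not having to redo that coherence argument, which is the only genuinely non-formal step and which you correctly isolate and defer to the same reference.

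One concrete slip to fix. You define the backward functor by
$\gamma\colon \calP\boxtimes\calP\cong\G(\calP)^{(2)}\hookrightarrow\G(\calP)\xrightarrow{\mu}\calP$,
but $\G(\calP)^{(2)}$ is \emph{not} $\calP\boxtimes\calP$: the paper records $\G(\calP)^{(2)}\cong\calP\ibt\calP$, the \emph{infinitesimal} composition product (graphs with exactly two vertices), whereas $\calP\boxtimes\calP$ consists of connected $2$-level graphs with arbitrarily many vertices on each level. So the restriction of $\mu$ to two-vertex graphs only produces $\gamma_{(1,1)}$, not the full properad composition $\gamma$. To repair this, either restrict $\mu$ to the sub-$\Sy$-bimodule of $\G(\calP)$ spanned by all graphs admitting a $2$-level structure (which does map onto $\calP\boxtimes\calP$ after inserting units via $\eta$), or argue --- by induction on the number of vertices, using the $\G$-algebra associativity axiom applied to a partition into a non-cut vertex and its connected complement --- that $\mu$, and hence $\gamma$, is already determined by its two-vertex component together with the unit. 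Either patch is routine, but as written the identification is false and the verification of the associativity of $\gamma$ on all of $\calP\boxtimes\calP$ is not addressed.
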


\begin{proof}
This is a direct corollary of \cite[Theorem~2.3]{Vallette07}.
\end{proof}

\begin{corollary}
The free properad on an $\Sy$-module $\M$ is given by $\G(\M)$ equipped with the grafting of 2 levels of 
directed connected graphs.
\end{corollary}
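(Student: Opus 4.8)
The plan is to obtain this as a formal consequence of \cref{prop:ProperadDefEqui} together with the classical description of free algebras over a monad. Recall that for any monad $(T,\mu,\eta)$ on a category $\mathsf{C}$, the forgetful functor from $T$-algebras to $\mathsf{C}$ admits a left adjoint sending an object $X$ to the free $T$-algebra $(T(X),\mu_X)$, where $\mu_X \colon T(T(X)) \to T(X)$ is the monad product. The first step is to apply this to the graph monad $\G$ on $\Sy$-bimodules: the free $\G$-algebra on $\M$ is $\big(\G(\M),\mu_\M\big)$, where $\mu_\M$ forgets the partition of a graph of graphs into a single graph.

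The second step is to transport this statement along the equivalence of \cref{prop:ProperadDefEqui}. That equivalence, provided by \cite[Theorem~2.3]{Vallette07}, is the identity on underlying $\Sy$-bimodules, hence commutes with the forgetful functors to $\Sbimod$; therefore the left adjoint of the forgetful functor $\prop \to \Sbimod$ is identified with the left adjoint of the forgetful functor from $\G$-algebras, and the free properad on $\M$ is $\G(\M)$ with properad structure induced by $\mu_\M$. It then remains to recognise this structure as the grafting of $2$ levels of directed connected graphs: under the dictionary of \cite[Theorem~2.3]{Vallette07}, the composition map $\gamma \colon \calP \boxtimes \calP \to \calP$ of a properad is recovered from the restriction of its $\G$-algebra structure map $\G(\calP) \to \calP$ to two-level graphs (on two-vertex graphs one gets $\gamma_{(1,1)}$ via $\G(\calP)^{(2)}\cong \calP\ibt\calP$); for $\calP = \G(\M)$ with structure map $\mu_\M$, such a two-level graph is a directed connected graph each of whose vertices is itself labelled by a graph with vertices in $\M$, and $\mu_\M$ simply inserts these inner graphs into the outer one.

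This argument involves no real computation. The only point needing a word of justification is the compatibility of the equivalence of \cref{prop:ProperadDefEqui} with the forgetful functors to $\Sbimod$, which is immediate since it is the identity on underlying objects; accordingly I do not expect any genuine obstacle, the content of the statement being entirely formal once \cref{prop:ProperadDefEqui} is granted.
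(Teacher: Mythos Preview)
Your argument is correct and is exactly the standard derivation the paper has in mind: the corollary is stated without proof there, being an immediate consequence of \cref{prop:ProperadDefEqui} via the free $T$-algebra construction for a monad $T$. Your identification of the resulting composition map with the grafting of two levels of directed connected graphs is also the intended one.
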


\begin{definition}[Augmented properad]
An \emph{augmented properad} is a properad equipped with a morphism  $\varepsilon : \calP \to \I$ of properads,  called the \emph{augmentation morphism}, whose composite with the unit is equal to the identity. 
\end{definition}

The \emph{augmentation ideal} is the kernel of the augmentation morphism; it is denoted by $\oP\coloneqq \ker \varepsilon$. Any augmented properad is naturally isomorphic to 
$\calP\cong \I \oplus \oP$; this induces an isomorphism of categories between augmented properads and properads without units. 

\begin{remark}
Notice that the endormorphism properad $\End_A$ cannot be augmented in general. 
\end{remark}

\subsection{Coproperads} The dual situation is slightly more subtle due to the infinite sums that can appear because of counits. As a consequence, we will only consider the comonad of ``non-counital'' coproperads and then add for free the counit, which will thus be coaugmented. 

\begin{definition}[Reduced graph endofunctor]
We consider the set $\bGs\coloneqq \Gs \backslash \{| \}$ of \emph{reduced directed connected graphs} and the endofunctor $\G^c$ of the category of $\Sy$-bimodules given the \emph{reduced graph module} of $\M$: 
$$\G^c(\M) \coloneqq \bigoplus_{\gg \in \bGs} \gg(\M) \ . $$
\end{definition}

We denote again by $\G^c(\M)^{(k)}$ the sub-$\Sy$-bimodule made up of graphs with $k$ vertices, which gives now
\[
\G^c(\M)^{(0)}\cong 0\ , \quad 
\G^c(\M)^{(1)}\cong \M\ , \quad   \text{and} \quad 
\G^c(\M)^{(2)}\cong \M \ibt \M\ . \]

We endow the reduced graph endofunctor $\G^c$ with a comonad structure where the natural transformation $\Delta_{\G^c} : \G^c\to \G^c \circ \G^c $ 
sends an element of $\gg(\M)$ to the sum of all the ways to partition the underlying graph $\gg$ and  the counit $\G^c \to \id$ is given by the projections 
$\G^c(\M) \twoheadrightarrow \M$ on graphs with 1-vertex. 

\begin{definition}[Comonadic coproperad]
A \emph{comonadic coproperad} is a coalgebra over the comonad $\G^c$ of reduced graphs. 
\end{definition}

Such a structure amounts to a \emph{decomposition map} $\Delta_{\oC} : \oC \to \G^c\big(\oC\big)$, which heuristically speaking splits any operation of $\oC$ into all possible ways. 

\begin{remark}
To be fully accurate, such a notion should be called comonadic  coproperad ``without counit'' since it does not include any counit, see below. 
\end{remark}

\begin{definition}[Coproperad]
A \emph{coproperad} is a comonoid in the monoidal category 
$\big(\Sbimod,\allowbreak \boxtimes, \allowbreak \I\big)$.
\end{definition}

\begin{remark} 
Notice that the arity-wise linear dual $\calC^*$ of a coproperad admits a canonical properad structure. 
The reverse is also partial true: the arity-wise linear dual $\calP^*$ of a properad, with finite dimensional components,  admits a canonical coproperad structure. 
\end{remark}

\begin{definition}[Coaugmented coproperad]
A \emph{coaugmented coproperad} is a coproperad $\calC$ equipped with a morphism  $\eta : \I \to \calC$ of coproperads,  called the \emph{coaugmentation morphism}, whose composite with the counit is equal to the identity. 
\end{definition}

The \emph{coaugmentation coideal} is the cokernel of the coaugmentation morphism; it is denoted by $\oC\coloneqq \mathrm{coker}\,  \eta$. Any coaugmented coproperad is naturally isomorphic to 
$\calC\cong \I \oplus \oC$; this induces an isomorphism of categories between coaugmented coproperads and coproperads without counits. 

\begin{definition}[2-level graphs]
A \emph{2-level graph} is an element of $\G(\M)$ or $\G^c(\M)$ such that all its vertices can be placed along two levels.
\end{definition}

\begin{remark}
Notice that a 2-level graph necessarily contains at least two vertices, one on the bottom level and one on the top level. The sub-$\Sy$-bimodule of $\G(\M)$ or $\G^c(\M)$ made up of 2-level graphs is a sub-$\Sy$-bimodule of $(\I\oplus \M)\boxtimes (\I\oplus \M)$.
\end{remark}

Given a comonadic coproperad $\left(\oC, \Delta_{\oC}\right)$, we consider the following data $\left(\calC\coloneqq\I\oplus \oC, \Delta, \eps, \eta\right)$:
\begin{itemize}
\item[$\diamond$] $\Delta|_\I$ amounts to the isomorphism $\I \cong \I \boxtimes \I$; 
\item[$\diamond$] $\Delta|_{\oC}$ is the sum of the projection of $\Delta_{\oC}$ onto the space of 2-level graphs with the two copies, called the \emph{primitive part}, coming from $\oC \cong \I \boxtimes \oC$ and $\oC \cong \oC \boxtimes \I$; 
\item[$\diamond$] $\eps : \I\oplus \oC \twoheadrightarrow \I$ is the canonical projection; 
\item[$\diamond$] $\eta : \I \hookrightarrow\I\oplus \oC$ is the canonical inclusion.
\end{itemize}

\begin{proposition}\label{prop:ComCopropCoprop}
The above  assignment defines a functor from comonadic coproperads to coaugmented coproperads. 
\end{proposition}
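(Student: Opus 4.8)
The plan is to verify the three things bundled into the assertion: that $(\calC,\Delta,\eps)$ is a comonoid in $(\Sbimod,\boxtimes,\I)$, that $\eta\colon\I\to\calC$ is a morphism of coproperads with $\eps\circ\eta=\id_\I$, and that the assignment $\oC\mapsto\calC$ is natural. I would first check that $\Delta$ is well defined, i.e.\ that it lands in $\calC\boxtimes\calC=(\I\oplus\oC)\boxtimes(\I\oplus\oC)$: the component $\Delta|_\I$ is an isomorphism onto the summand $\I\boxtimes\I$; the projection of $\Delta_{\oC}$ onto $2$-level graphs lands in the sub-$\Sy$-bimodule of $2$-level graphs of $\G^c(\oC)$, which sits inside $(\I\oplus\oC)\boxtimes(\I\oplus\oC)$ (in fact inside $\oC\boxtimes\oC$, since $\Delta_{\oC}$ produces only $\oC$-labelled vertices, so that both levels are non-trivial); and the primitive part lands in $\oC\boxtimes\I\oplus\I\boxtimes\oC$. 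The counit axiom $(\eps\boxtimes\id_\calC)\Delta=\id_\calC=(\id_\calC\boxtimes\eps)\Delta$ then follows from this explicit description together with $\eps|_{\oC}=0$: applying $\eps$ to the bottom (resp.\ top) factor kills every $2$-level term together with the primitive term living in $\oC\boxtimes\I$ (resp.\ $\I\boxtimes\oC$), and leaves exactly the surviving primitive copy of $c$. The identity $\eps\circ\eta=\id_\I$ and the statement that $\eta$ is a morphism of coproperads are immediate, since $\eta$, $\eps$ and $\Delta|_\I$ are all built from the canonical inclusion and projection between $\calC$ and $\I$.

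The heart of the argument is coassociativity, $(\Delta\boxtimes\id_\calC)\Delta=(\id_\calC\boxtimes\Delta)\Delta\colon\calC\to\calC^{\boxtimes 3}$. On the $\I$-component both sides reduce to the canonical isomorphism $\I\cong\I^{\boxtimes 3}$. On the $\oC$-component, unwinding the definitions, $(\Delta\boxtimes\id_\calC)\Delta$ first cuts an operation of $\oC$ into a connected $2$-level graph, via the $2$-level part of $\Delta_{\oC}$, and then re-cuts each of its bottom vertices, while $(\id_\calC\boxtimes\Delta)\Delta$ does the same but re-cuts the top vertices. In the generic stratum --- where all three resulting levels are non-trivial --- the first composite is the reading ``separate the top level, then split the rest into a bottom and a middle level'' of the iterated decomposition $\G^c(\Delta_{\oC})\circ\Delta_{\oC}$, and the second is the reading ``separate the bottom level, then split the rest into a middle and a top level''. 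The coalgebra coassociativity $\G^c(\Delta_{\oC})\circ\Delta_{\oC}=\Delta_{\G^c}\circ\Delta_{\oC}$ says precisely that cutting $c$ into an outer and an inner piece is independent of which of the two internal cuts is performed first; restricting this identity to the part corresponding to graphs placeable on three non-trivial levels yields the equality of the two composites on the generic stratum.

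The main obstacle --- the only step that is not formal --- is the bookkeeping of the degenerate strata, i.e.\ of the terms of the two iterated coproducts in which one or more of the three levels is the unit $\I$. One enumerates how a possibly degenerate graph placeable on three levels can arise: with all three levels non-trivial (handled above), with exactly two non-trivial levels (produced by one primitive insertion followed by a genuine $2$-level cut, the component $\Delta|_\I$ ensuring that applying $\Delta$ to an $\I$-factor creates no spurious term), and with exactly one non-trivial level (produced by the two primitive insertions, giving the images of $c$ under $\oC\cong\oC\boxtimes\I\boxtimes\I$, $\oC\cong\I\boxtimes\oC\boxtimes\I$ and $\oC\cong\I\boxtimes\I\boxtimes\oC$). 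The verification that the two composites produce the same terms with the same multiplicities in every stratum --- and that everything is compatible with the $\Sy_b^{\mathrm{op}}\times\Sy_a$-(co)invariants built into the definition of $\boxtimes$ and with the connectivity of the graphs, which is preserved under re-partitioning --- is the substance of the proof; it is a transcription to connected directed graphs of the analogous fact for coaugmented cooperads (see \cite{LodayVallette12}).

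Finally, functoriality amounts to naturality: a morphism $f\colon\oC\to\overline{\calD}$ of comonadic coproperads commutes with the decomposition maps and with $\G^c(f)$, hence the induced map $\I\oplus f$ commutes with the $2$-level projections, the primitive parts, $\Delta|_\I$, $\eps$ and $\eta$, each of which is built from natural operations; thus $\I\oplus f$ is a morphism of the associated coaugmented coproperads, and identities and composites are evidently preserved.
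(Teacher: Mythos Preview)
Your approach is correct and essentially the same as the paper's: both proofs dispose of the counit and coaugmentation axioms as straightforward, and both establish coassociativity on the generic stratum (all three levels non-trivial) by projecting the comonadic coassociativity square $\G^c(\Delta_{\oC})\circ\Delta_{\oC}=\Delta_{\G^c}\circ\Delta_{\oC}$ onto the space of $3$-level graphs, reading the two sides of the monoidal coassociativity as the two ways of partitioning such a graph. One small inaccuracy: your parenthetical claim that the $2$-level projection of $\Delta_{\oC}$ lands in $\oC\boxtimes\oC$ is not quite right, since an edge entering a bottom vertex or leaving a top vertex forces an $\I$-placeholder on the other level; the correct target is the sub-$\Sy$-bimodule of $2$-level graphs inside $(\I\oplus\oC)\boxtimes(\I\oplus\oC)$, as the paper's Remark preceding the proposition notes --- but this does not affect your argument. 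Finally, your emphasis is inverted relative to the paper: the paper declares the degenerate strata ``clear'' and spends its effort on the generic stratum, whereas you call the degenerate bookkeeping ``the main obstacle''; in practice both parts are routine once the set-up is understood.
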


\begin{proof}
The axioms for the counit and the coaugmentation are straightforward to check. The part of the coassociativity $(\Delta \boxtimes \id)\Delta \cong (\id \boxtimes \Delta)\Delta$ of the coproduct $\Delta$ whose image contains at least one level of $\I$ is clear. 
In order to treat the part where the upshot contains at least one element of $\oC$ on each of the three levels, we consider  the composite of the commutative diagram 
\begin{equation}\label{eq:CoalComonad}
	\begin{tikzcd}[column sep=large]
		\oC \ar[r, "\Delta_{\oC}"]  \ar[d, "\Delta_{\oC}"]  & \G^c\Big(\oC\Big) \ar[d, "\Delta_{\G^c}(\id)"]  \\
		\G^c\Big(\oC\Big) \ar[r, "\G^c\Big(\Delta_{\oC}\Big)"]  & \G^c\Big(\G^c\Big(\oC\Big)\Big)\ ,
	\end{tikzcd}
\end{equation}
defining a coalgebra over a comonad, with the projection onto $(\calC \boxtimes \calC) \boxtimes \calC$ viewed as a sub-$\Sy$-bimodule of partitioned graphs. The bottom-left composite produces $(\Delta \boxtimes \id)\Delta$\ . The corresponding top-right composite amounts first to taking the image of $\Delta_{\oC}$ on directed connected graphs with 3 levels followed by all the ways to partition the bottom two levels. Under the isomorphism $(\calC \boxtimes \calC) \boxtimes \calC\cong \calC \boxtimes (\calC \boxtimes \calC)$, this composite is isomorphic to the image of $\Delta_{\oC}$ on directed connected graphs with 3 levels followed by all the ways to partition the top two levels. This is equal to the bottom-left composite 
$(\id \boxtimes \Delta)\Delta$. This argument can be summarized into the following commutative diagram 
\[
\begin{tikzcd}[row sep = large, column sep=large]
	& \oC 
	\ar[d,"\Delta_{\oC}"]	\ar[dr,"\overline{\Delta}"]	\ar[dl,"\overline{\Delta}"']
	&  \\
	\calC\boxtimes \calC 
	\ar[d,"\big(\Delta_\I+\overline{\Delta}\big)\boxtimes\id"']
	& \G^c\big(\oC\big) \ar[r,two heads] \ar[l,two heads] \ar[d,"\Delta_{\G^c}(\oC)" description]
	\ar[d, "\G^c\big(\Delta_{\oC}\big)"', bend right=40]
	\ar[d,"\G^c\big(\Delta_{\oC}\big)", bend left=40]
	& \calC \boxtimes \calC  \ar[d,"\id \boxtimes\big(\Delta_\I+\overline{\Delta}\big)"] \\
	(\calC\boxtimes \calC) \boxtimes \calC \ar[rr, bend right=20, "\cong" description, leftrightarrow]
	& \G^c\left(\G^c(\oC)\right)  \ar[r,two heads] \ar[l,two heads] 
	& \calC \boxtimes (\calC\boxtimes \calC) \ , 
\end{tikzcd} 
\]
where $\Delta_\I : \I\cong\I\boxtimes \I \to \calC \boxtimes \calC$ and $\overline{\Delta}$ is equal to $\Delta$ without the primitive part. 
\end{proof}

From now on, we will only consider coaugmented coproperads.

\begin{definition}[Conilpotent coproperad]
A coaugmented coproperad which comes from a co\-mo\-na\-dic coproperad is called \emph{conilpotent}. 
\end{definition}

\begin{definition}[Infinitesimal decomposition map]
Given a coaugmented coproperad $(\calC, \Delta, \varepsilon, \eta)$, we consider the \emph{infinitesimal decomposition map} defined by 
\[
	\begin{tikzcd}[column sep=huge, row sep=tiny]
		\Delta_{(1,1)} \ar[r,phantom,":" description] &  \oC 
		\ar[r, "\Delta"]  &
		\calC \boxtimes \calC \ar[r,"(\eps;\id)\boxtimes (\eps;\id)"] &
		 \oC \ibt \oC\ ,\\
		 & \I \ar[r, "\cong"] & \I\boxtimes \I \ .
	\end{tikzcd}
\]
where the  map $(\eps;\id)\boxtimes (\eps;\id)$ amounts to applying the counit $\varepsilon$ everywhere except for one place on the left-hand and on the right-hand sides. 
\end{definition}

\begin{remark}
In the case of conilpotent coproperads,  the induced infinitesimal decomposition map, produced in two successive steps, can be given directly by 
\[
\begin{tikzcd}[column sep=normal]
		\oC 
		\ar[r, "\Delta_{\oC}"]  &
		\G^c\big(\oC\big)\ar[r, two heads] &
		 \G^c\big(\oC\big)^{(2)} \cong \oC \ibt \oC\ \ .
	\end{tikzcd}
\]
One can summarise this process by 
$\begin{tikzcd}[column sep=small]
		\Delta_{\oC}  \ar[r, Rightarrow] & \Delta \ar[r, Rightarrow] & \Delta_{(1,1)} \ .
	\end{tikzcd}$
Notice that it is not possible in general to go the other way round: being able to split into 2 vertices does allow one to get all the splittings along graph with 2 levels and being able to split into 2 levels does not allow one to get all the splittings along any graph. The following cofree comonadic coproperad provides us with counter-examples. 
\end{remark}

By definition, the \emph{cofree comonadic coproperad} on an $\Sy$-bimodule $\M$ is given by the reduced graph module $\G^c(\M)$. 
\cref{prop:ComCopropCoprop} endows 
$\I \oplus \G^c(\M)$ with a coaugmented coproperad structure which is cofree among conilpotent coproperads. 
The coproperad structure $\Delta :\I\oplus \G^c(\M) \to \left(\I\oplus \G^c(\M)\right) \boxtimes \left(\I\oplus \G^c(\M)\right)$ amounts to splitting any graph along an horizontal line, see \cite[Section~2.8]{Vallette07} for more details.

\subsection{Bar-cobar adjunction}

\begin{definition}[Convolution product] 
	Let $(\calP, \gamma, \eta)$ be a properad and let $(\calC, \Delta, \varepsilon, \eta)$ be a coaugmented coproperad. The  \emph{convolution product} of two elements of $\Hom_{\Sy}(\calC, \calP)$ is defined by the following composite
	\[
	\begin{tikzcd}[row sep=tiny]
		f\star g \ar[r,phantom,":" description] & \oC \ar[r,"\Delta_{(1,1)}"] & \calC \ibt  \calC \ar[r,"f\ibt  g"] & \calP \ibt  \calP \ar[r,"\gamma_{(1,1)}"] & \calP\ , \\
		& \I \cong \I\boxtimes\I  \ar[r, "\eta\boxtimes \eta"]& \calC \boxtimes \calC \ar[r,"f\boxtimes  g"] & \calP\boxtimes \calP \ar[r,"\gamma"] & \calP\ .
	\end{tikzcd}
	\]
\end{definition}

\begin{proposition}[{\cite[Proposition~11]{MerkulovVallette09I}}]
For any dg properad $\calP$ and any dg coproperad $\calC$, the convolution product defines a dg Lie-admissible algebra 
\[\big(\Hom_{\Sy}(\calC, \calP), \partial, \star \big)\ , \] 
called the \emph{convolution algebra}.
\end{proposition}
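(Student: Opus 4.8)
The plan is to verify that the convolution product $\star$ on $\Hom_{\Sy}(\calC,\calP)$, together with the differential $\partial$ induced by the differentials of $\calC$ and $\calP$, satisfies the defining axioms of a dg Lie-admissible algebra: namely that $\partial$ is a derivation for $\star$ of degree $-1$ squaring to zero, and that the associator of $\star$ is right-symmetric under the $\Sy_3$-action (equivalently, that the antisymmetrisation $[f,g]\coloneqq f\star g - (-1)^{|f||g|}g\star f$ satisfies the graded Jacobi identity). Since this is \cite[Proposition~11]{MerkulovVallette09I}, I would recall the proof rather than reinvent it.

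First I would set up the compatibility we need between the infinitesimal decomposition map $\Delta_{(1,1)}$ of the coaugmented coproperad $\calC$ and the infinitesimal composition map $\gamma_{(1,1)}$ of the properad $\calP$. The key structural input is that both $\Delta_{(1,1)}$ and $\gamma_{(1,1)}$ are built from genuinely coassociative/associative data — $\Delta$ is coassociative, $\gamma$ is associative — restricted to the ``two-vertex'' (or ``two-level'') part $\M\ibt\M \cong \G(\M)^{(2)}$. Concretely, iterating $\Delta_{(1,1)}$ twice and projecting onto graphs with three distinguished vertices, one gets a map $\oC \to \bigoplus \calC\ibt\calC\ibt\calC$ that is symmetric under reordering only up to the combinatorics of which vertex is ``split off'' first; this is exactly the source of the Lie-admissibility (right-symmetry of the associator) rather than full associativity. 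Dually for $\gamma_{(1,1)}$. I would make this precise by expressing the triple convolution $(f\star g)\star h$ as the composite $\oC \to \G^c(\oC)^{(3)} \xrightarrow{f,g,h} \G(\calP)^{(3)} \to \calP$, where $\G(\M)^{(3)}$ denotes the three-vertex part, and similarly $f\star(g\star h)$ via a different partition of the same three-vertex graphs. The difference of the two associators is then governed entirely by the difference of the two partition schemes of a three-vertex connected directed graph, which is manifestly controlled by the $\Sy_3$-action permuting the three vertices.

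Then I would check that $\partial$ is a derivation: $\partial(f\star g) = \partial f \star g + (-1)^{|f|} f\star \partial g$. This follows because $\Delta_{(1,1)}$, $f\ibt g$, and $\gamma_{(1,1)}$ are all chain maps (the first because $\Delta$ is a morphism of dg $\Sy$-bimodules on the coproperad side, the last because $\gamma$ is a morphism of dg $\Sy$-bimodules on the properad side, and $\ibt$ is a bifunctor on $\dgVect$-enriched $\Sy$-bimodules satisfying the Leibniz rule), so their composite is compatible with the internal differentials, and the extra term in $\partial$ coming from the internal differentials of $\calC$ and $\calP$ distributes over the composite by naturality. That $\partial^2 = 0$ is immediate since it is induced by differentials on $\calC$ and $\calP$ which square to zero.

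The main obstacle — really the only non-formal point — is bookkeeping the signs and the precise $\Sy_3$-equivariance statement that upgrades ``the associator is a sum over three-vertex partitions'' into ``the associator is right-symmetric'', i.e.\ proving
\[
(f\star g)\star h - f\star(g\star h) = (-1)^{|g||h|}\big((f\star h)\star g - f\star(h\star g)\big)\ .
\]
Everything hinges on the observation that a connected directed graph with three labelled vertices admits exactly two ``nested two-level'' readings, and swapping the roles of $g$ and $h$ trades one reading for the other up to the Koszul sign; this is the content already used implicitly in \cref{prop:ComCopropCoprop} when comparing $(\Delta\boxtimes\id)\Delta$ with $(\id\boxtimes\Delta)\Delta$ through the associativity isomorphism $(\calC\boxtimes\calC)\boxtimes\calC \cong \calC\boxtimes(\calC\boxtimes\calC)$. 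I would therefore reduce the verification to that associativity constraint together with the associativity of $\gamma$, and invoke \cite[Proposition~11]{MerkulovVallette09I} for the remaining sign-tracking, which is routine but tedious.
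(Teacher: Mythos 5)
The paper gives no proof of this statement at all; it is imported verbatim from \cite[Proposition~11]{MerkulovVallette09I}, so the only comparison available is with the argument given there. Your general plan — check that $\partial$ is a square-zero degree $-1$ derivation for $\star$, then control the associator by connected three-vertex graphs arising from iterating $\Delta_{(1,1)}$ and $\gamma_{(1,1)}$ — is the right one, and the derivation part of your argument is fine.

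However, the identity you propose to prove at the decisive step,
\[
(f\star g)\star h - f\star(g\star h) = (-1)^{|g||h|}\bigl((f\star h)\star g - f\star(h\star g)\bigr)\ ,
\]
is the graded right-symmetry (pre-Lie) relation, and it is \emph{false} for properads; it holds only in the operadic case. Unwinding the definitions, $(f\star g)\star h - f\star(g\star h)$ consists of two families of terms: the three-vertex graphs in which $g$ and $h$ both sit directly above $f$ without being connected to each other (these are indeed symmetric in $g\leftrightarrow h$ up to Koszul sign, exactly as for operads), \emph{and}, with the opposite sign, those in which $f$ and $g$ both sit directly below $h$ without being connected to each other. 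The second family is nonempty precisely because properadic operations have several outputs, so in $f\star(g\star h)$ the block $\{g,h\}$ can attach to $f$ through $h$ alone; this family is symmetric in $f\leftrightarrow g$, not in $g\leftrightarrow h$, so the associator is not right-symmetric and your proposed ``key observation'' — that swapping $g$ and $h$ exchanges the two nested readings of a three-vertex graph — fails. What actually saves the statement is weaker: each of the two families is symmetric in \emph{some} pair of its three arguments, so the full signed alternation of the associator over $\Sy_3$ vanishes. That is Lie-admissibility, which is exactly what the proposition asserts and all that is needed for the commutator bracket to satisfy the graded Jacobi identity. Your opening sentence does record the correct equivalent formulation, but the concrete identity you reduce everything to is the wrong one, and pursuing it would stall the proof.
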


\begin{definition}[Twisting morphism] 
A \emph{twisting morphism} is a degree $-1$ solution to the Maurer--Cartan equation
\[\partial(\alpha) + \alpha \star \alpha =0\ .\]
\end{definition}

We denote their associated set by $\Tw(\calC, \calP)$. When the properad $\calP$ is augmented (respectively the coproperad $\calC$ is coaugmented), we require that the composite of a twisting morphism with the augmentation morphism (respectively the coaugmentation morphism) vanishes. 
The following two constructions represent the twisting morphism bifunctor. 

\begin{definition}[Bar construction]
The \emph{bar construction} of an augmented dg properad $(\calP, d_\calP, \gamma, \eta,  \varepsilon)$ is the  following quasi-cofree conilpotent dg coproperad: 
	\[
	\Bar\calP \coloneqq \left(\I\oplus \G^c\big(s\oP\big), d_1+d_2\right)\ ,
	\]
where $d_1$ is the unique coderivation extending the internal differential $d_\calP$ and where 
$d_2$ is the unique coderivation extending the infinitesimal composition map $\gamma_{(1,1)}$\ . 
\end{definition}

We refer the reader to \cite[Section~3.5]{MerkulovVallette09I} for more details. 

\begin{remark}\label{rem:BarEnd}
By convention, the bar construction of the endomorphism operad $\End_A$ is defined by the bar construction of its augmentation $\I\oplus \End_A$, i.e. $\Bar\End_A \coloneqq \left(\I\oplus \G^c\big(s\End_A\big), d_1+d_2\right)$ \ .
\end{remark}

\begin{definition}[Cobar construction]
The \emph{cobar construction} of a coaugmented dg coproperad $(\calC, d_\calC, \Delta, \varepsilon, \eta)$ is the  following (augmented) quasi-free dg properad: 
	\[
	\Cobar \calC \coloneqq \left(\G\big(s^{-1}\oC\big), d_1+d_2\right)\ ,	\]
where $d_1$ is the unique derivation extending the internal differential $d_\calC$ and where 
$d_2$ is the unique derivation extending the infinitesimal decomposition map $\Delta_{(1,1)}$\ . 
\end{definition} 

We refer the reader to \cite[Section~3.6]{MerkulovVallette09I} for more details. 

\medskip

From now on, we denote by $\dgprop$ the category of augmented dg properads and by $\mathsf{dg\ co}\allowbreak\mathsf{pro}
\allowbreak\mathsf{pe}
\allowbreak\mathsf{rads}
$ the category of 
 conilpotent dg coproperads. 

\begin{proposition}[Partial Rosetta stone {\cite[Proposition~17]{MerkulovVallette09I}}]\label{Rosetta}
There exist natural bijections 
	\[
	\Hom_{\dgprop}(\Cobar\calC,\calP) \cong \Tw(\calC,\calP) \cong \Hom_{\dgcoprop}(\calC,\Bar\calP)\ ,
	\]
	for conilpotent dg coproperads $\calC$.
\end{proposition}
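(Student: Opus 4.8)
The strategy is the standard one for bar-cobar adjunctions: exhibit both bijections as restrictions of the ``universal'' identifications of maps out of a quasi-free properad and into a quasi-cofree coproperad. First I would recall that since $\Cobar\calC = (\G(s^{-1}\oC), d_1 + d_2)$ is quasi-free on the $\Sy$-bimodule $s^{-1}\oC$, and the free properad functor $\G$ is left adjoint to the forgetful functor to $\Sy$-bimodules (\cref{prop:ProperadDefEqui} and its corollary), a morphism of graded properads $\Cobar\calC \to \calP$ is the same datum as a degree $0$ map of $\Sy$-bimodules $s^{-1}\oC \to \calP$, equivalently a degree $-1$ map $\alpha : \oC \to \calP$, which we extend by $0$ on $\I$ so that it kills the coaugmentation. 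Dually, since $\Bar\calP = (\I \oplus \G^c(s\oP), d_1 + d_2)$ is quasi-cofree conilpotent on $s\oP$, and the cofree conilpotent coproperad functor $\G^c$ (together with \cref{prop:ComCopropCoprop}) is right adjoint to the relevant forgetful functor, a morphism of coaugmented graded coproperads $\calC \to \Bar\calP$ out of a \emph{conilpotent} $\calC$ is the same datum as a degree $0$ map $\oC \to s\oP$, equivalently a degree $-1$ map $\beta : \oC \to \calP$ landing in the augmentation ideal, extended by $0$ on $\I$. Both universal properties thus produce $\Hom_{\Sy}(\calC,\calP)_{-1}$ modulo the (co)augmentation conditions, so it remains to pin down which maps correspond to \emph{differential} (co)properad morphisms.

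Next I would compute the compatibility-with-differentials condition on each side. On the cobar side, a graded properad morphism $\Phi_\alpha : \Cobar\calC \to \calP$ extending $\alpha$ commutes with the differentials if and only if it does so on generators $s^{-1}\oC$; unwinding $d = d_1 + d_2$ on $\Cobar\calC$ and $\Phi_\alpha$ being a properad morphism, the $d_\calC$-part of $d_1$ contributes $\alpha \circ d_\calC \pm \partial_\calP \circ \alpha = \partial(\alpha)$, and the $d_2$-part, which extends $\Delta_{(1,1)}$, contributes $\gamma_{(1,1)} \circ (\alpha \ibt \alpha) \circ \Delta_{(1,1)} = \alpha \star \alpha$ (using that $\Delta$ restricted to $\I$ is the trivial iso and $\gamma$ is the properad product). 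Hence $\Phi_\alpha$ is a dg morphism iff $\partial(\alpha) + \alpha \star \alpha = 0$, i.e. $\alpha \in \Tw(\calC,\calP)$. The bar side is formally dual: a graded coproperad morphism $\Psi_\beta : \calC \to \Bar\calP$ commutes with differentials iff the identity holds after projecting onto the cogenerators $s\oP$, and the same diagram chase — now using the comonadic/conilpotent structure of $\calC$ to control the coproduct and \cref{prop:ComCopropCoprop} to ensure the resulting data assembles into a genuine coproperad morphism — gives exactly $\partial(\beta) + \beta \star \beta = 0$. Both bijections are manifestly natural in $\calP$ and in $\calC$ since they are built from natural adjunction isomorphisms and from the natural transformations $\gamma_{(1,1)}$, $\Delta_{(1,1)}$.

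\textbf{Main obstacle.} The routine (co)derivation bookkeeping is not the real difficulty — that is handled exactly as in the operadic case \cite[Chapter 6]{LodayVallette12} and in \cite[Proposition 17]{MerkulovVallette09I}. The delicate point, and the reason the conilpotence hypothesis on $\calC$ is stated explicitly, is the bar side: one must check that an arbitrary degree $-1$ map $\beta : \oC \to \oP$ really does extend to a \emph{well-defined} morphism of coaugmented coproperads $\calC \to \Bar\calP$, and conversely that every such morphism arises this way. This requires the universal property of the cofree \emph{conilpotent} coproperad $\I \oplus \G^c(s\oP)$ with respect to conilpotent $\calC$, i.e. lifting $\beta$ through the comonadic structure $\Delta_{\oC} : \oC \to \G^c(\oC)$ via $\G^c(s\beta)$ — and checking that the resulting map intertwines $\Delta_{\oC}$ with $\Delta_{\G^c}$, which is precisely the content encoded by the commuting diagram \eqref{eq:CoalComonad} and \cref{prop:ComCopropCoprop}. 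A secondary subtlety, needing a convergence/filtration argument, is that the coderivation $d_2$ on $\Bar\calP$ and the sum defining $\beta \star \beta$ are finite on each element thanks to connectedness of graphs and the conilpotent filtration of $\calC$, so no completion issues arise; I would dispatch this by the weight grading on $\G^c$.
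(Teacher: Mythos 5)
Your argument is correct and is precisely the standard one: the paper itself gives no proof here but cites \cite[Proposition~17]{MerkulovVallette09I}, whose proof proceeds exactly as you describe, via the universal properties of the quasi-free properad $\Cobar\calC$ and of the quasi-cofree conilpotent coproperad $\Bar\calP$, reducing compatibility with the differentials $d_1+d_2$ on (co)generators to the Maurer--Cartan equation. You also correctly isolate the role of the conilpotence hypothesis on the bar side, so there is nothing to add.
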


Among others, this proves that the functors $\Bar$ and $\Cobar$ form a pair of adjoint functors:
\[\begin{tikzcd}
	\Cobar : \dgcoprop
\arrow[r, harpoon, shift left=0.9ex, "\perp"']
&
\arrow[l, harpoon,  shift left=0.9ex]
	\dgprop : \Bar \  .
\end{tikzcd}\]

In the sequel, we will mainly be interested by dg properads of the form $\Cobar \calC$. 
The counit of the bar-cobar adjunction provides us with a functorial cofibration resolution 
$\Cobar\Bar \calP\xrightarrow{\sim} \calP$ for dg properads $\calP$. Such a huge resolution can sometimes be simplified 
by considering a coaugmented dg sub-coproperad $\calC \hookrightarrow \Bar \calP$ equipped with twisting morphism $\calC \to \calP$ satisfying the Koszul property: $\Cobar \calC \xrightarrow{\sim} \calP$\ . We refer the reader to  \cite[Section~7]{Vallette07} for more details. 
In both cases, the category of $\Cobar \calC$-gebras deserves the name of \emph{homotopy $\calP$-gebras} and the purpose of the sequel is to show that it carries homotopy properties ($\infty$-morphisms, homotopy transfer theorem) that simply fail on the level of $\P$-gebras. 

\section{Infinity-morphisms of homotopy gebras}

The purpose of this section is to extend the  notion of $\infty$-morphism of homotopy algebras over an operad to  homotopy gebras over a properad. In the operadic case, one uses in a crucial way the notion of cofree coalgebras over the Koszul dual cooperad, as summarised on the left-hand column of the following table. Unfortunately, such a notion does not exist anymore on the properadic level. To bypass this difficulty, we introduce new notions summarised on the right-hand column of the table.

\medskip

\begin{center}
\begin{tabular}{|c|c|}
\hline
\rule{0pt}{10pt}{\sc Operads} &  {\sc Properads}\\
\hline
\rule{0pt}{10pt}$\calC$-comodules&
monoid $\S \calC$-comodules \\
\hline
\rule{0pt}{10pt}cofree $\calC$-coalgebras $\calC(A)$ &
bifree monoid $\S \calC$-comodules
$\S \calC \sq \S A \cong \S(\calC \boxtimes A)$ \\ 
\hline
\rule{0pt}{10pt}$\Hom_{\Sy}(\calC, \End_A)\cong \mathrm{Coder}(\calC (A))$ &
$\Hom_{\Sy}(\calC, \End_A)\cong \Bider(\S\calC \sq \S A)$\\
\hline
\rule{0pt}{10pt}
$\Omega \calC$-algebra structure $\cong \mathrm{Codiff}(\calC (A))$ & 
$\Omega \calC$-gebra structure $\cong \Bidiff(\S\calC \sq \S A)$ \\
\hline 
\rule{0pt}{10pt} 
$\infty$-morphism of $\Omega\calC$-algebras $\coloneqq$ 
&
$\infty$-morphism of $\Omega\calC$-gebras $\coloneqq$ 
\\
\rule{0pt}{10pt}  morphism of quasi-cofree dg $\calC$-coalgebras &
morphism of quasi-bifree dg monoid $\S \calC$-comodules
\\
\hline
\end{tabular}
\end{center}

\subsection{Monoid $\S \calC$-comodule}
Let $(\calC, \Delta, \eps)$ be a coproperad. By \cref{prop::S_permute_prod_connexe_bi},  $\S \calC$ carries a comonoid structure in the category 
$\big(\mathsf{nuComMon}_\otimes, \square,\k[\Sy] \big)$
of non-unital commutative $\otimes$-monoids, given by 
\[\begin{tikzcd}
	\S \calC  \ar[r,"\S(\Delta)"] & \S (\calC\boxtimes \calC)  \ar[r,"\cong"] &\S \calC \sq \S \calC \ . 
\end{tikzcd}\]
We denote its free product by $\nu : \S\calC\otimes \S\calC \to \S\calC$. These structure maps make $\S \calC$ like a  commutative bialgebra with respect to two different monoidal structures. 

\begin{definition}[Monoid $\S \calC$-comodule]
A \emph{monoid $\S \calC$-comodule} is a left comodule over the comonoid $\S \calC$ in the monoidal category $\big(\mathsf{nuComMon}_\otimes, \allowbreak  \square, \allowbreak \k[\Sy] \big)$ of non-unital commutative $\otimes$-monoids. 
\end{definition}

Such a structure amounts to a triple $(\M, \mu, \delta)$ where  $\mu : \M\otimes \M \to \M$ is a commutative associative product and where $\delta : \M \to \S\calC \sq \M$ is a coaction satisfying the following commutative diagram 

\[\begin{tikzcd}
\M \otimes \M \ar[rr,"\mu"] \ar[d, "\delta\otimes\delta"]&& \M \ar[d, "\delta"] \\
(\S\calC \sq \M) \otimes (\S\calC \sq \M) \ar[r,"\IL"] & 
(\S \calC \otimes \S\calC )  \sq (\M \otimes \M)
\ar[r,"\nu \otimes \mu"] &\  \S\calC \sq \M\ ,\\
\end{tikzcd}\]
where $\IL$ stands for the interchange law defined in \cref{eq:interchange_law}. 
A morphism of monoid $\S \calC$-comodules is a map of $\Sy$-bimodules $\M \to \N$ which is a  morphism of non-unital commutative $\otimes$-monoids commuting with the respective comodule structure maps. This forms a category denoted by $\SCcomod$. The assignment $\calC \mapsto \SCcomod$ is a functor, where any morphism $G : \calC \to \calD$ of coproperads induces a functor $\SCcomod \to \mathsf{mon}\textsf{-}\S\calD\textsf{-}\mathsf{comod}$ under 
$(\M, \mu, \delta)\mapsto (\M, \mu, (\S G\sq \id)\circ \delta)$\ . 

\medskip

The main example of monoid $\S \calC$-comodules is given by $\S\calC \sq \S \V \cong \S(\calC \boxtimes \V)$, 
for any $\Sy$-bimodule $\V$. It is obtained as the image of $\V$ under the composite of the free non-unital commutative $\otimes$-monoid functor followed by 
the cofree left $\S \calC$-$\square$-comodule functor. 
We call such an object a \emph{bifree monoid $\S\calC$-comodule}. 

\medskip

In the sequel, we will mainly be considering bifree monoid $\S\calC$-comodules $\S\calC \sq \S A$ on 
graded vector spaces $A$, i.e a graded $\Sy$-bimodule concentrated in arity $(1,0)$. 
This choice of  terminology  is motivated by the following property. 

\begin{lemma}\label{lem:bifree}
There is a natural bijection
\[
\Hom_{\SCcomod}(\S \calC \sq \S A, \S \calC \sq \S B)\cong \Hom_{\Sy}\big(\calC , \End^A_B\big)\ ,
\]
for coproperads $\calC$ and graded vector spaces $A$ and $B$.
\end{lemma}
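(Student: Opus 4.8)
The plan is to unwind both sides of the claimed bijection into the same data, namely a collection of $\Sy_k$-equivariant linear maps $\calC(k,\ell) \to \Hom(A^{\otimes \ell}, B^{\otimes k})$, and to check that the structural constraints on each side match up exactly. On the right-hand side, $\Hom_\Sy(\calC, \End^A_B)$ is by definition a morphism of $\Sy$-bimodules, which arity-wise is precisely such an equivariant family; there is nothing more to say about it. The work is therefore entirely on the left-hand side: a morphism of monoid $\S\calC$-comodules $F \colon \S\calC \sq \S A \to \S\calC \sq \S B$ is a map of $\Sy$-bimodules that (i) is a morphism of non-unital commutative $\otimes$-monoids and (ii) commutes with the $\S\calC$-coactions. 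I want to show such an $F$ is freely and faithfully determined by a single ``corestriction-then-restriction'' map, and that the two constraints (i) and (ii) together are equivalent to no constraint at all beyond $\Sy$-equivariance — i.e. the relevant piece is exactly $\Hom_\Sy(\calC, \End^A_B)$.

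Here is the order I would carry this out. First I would invoke the two universal properties packaged in \cref{prop::S_permute_prod_connexe_bi} and the discussion preceding \cref{lem:bifree}: $\S\calC \sq \S A = \S(\calC \boxtimes A)$ is the bifree monoid $\S\calC$-comodule, obtained as (cofree left $\S\calC$-$\square$-comodule) $\circ$ (free non-unital commutative $\otimes$-monoid) applied to $A$. Being a \emph{free} non-unital commutative $\otimes$-monoid morphism out of $\S A$ means $F$ is determined by its restriction to the generators $A$, i.e. by a map $A \to \S\calC \sq \S B$ of $\Sy$-bimodules; but $A$ is concentrated in arity $(1,0)$, and by \cref{lem:LEMMA}(1)–(2) one computes $(\S\calC\sq\S B)(1,0)\cong(\calC\boxtimes B)(1,0)$, which via the definition of $\boxtimes$ and $(\S B)(m,0)\cong B^{\otimes m}$ decomposes as $\bigoplus_{m}\calC(1,m)\otimes_{\Sy_m} B^{\otimes m}$ — wait, this is not yet the full inner-hom; so instead I would use the \emph{cofree comodule} universal property: a morphism into the cofree left $\S\calC$-$\square$-comodule $\S\calC \sq \S B$ that respects coactions is the same as an arbitrary $\otimes$-monoid map $\S\calC\sq\S A \to \S B$ into the cogenerators, i.e. (again by freeness of $\S A$) a map $\S A \to \S B$ of $\Sy$-bimodules landing appropriately, equivalently by \cref{lem:LEMMA}(4) an element of $\hom(\S A,\S B)=\End^A_B$ twisted by $\calC$. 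Concretely: $\mathrm{Hom}_{\SCcomod}(\S\calC\sq\S A,\S\calC\sq\S B)\cong \mathrm{Hom}_{\text{nuComMon}_\otimes}(\S\calC\sq\S A,\S B)$, and then $\S\calC\sq\S A\cong\S(\calC\boxtimes A)$ with source free on $\calC\boxtimes A$ gives $\cong\mathrm{Hom}_\Sy(\calC\boxtimes A,\S B)$, and finally the $\boxtimes$–$\sq$ adjunction packaged in the closed structure (\cref{prop:closedmono}) together with \cref{lem:LEMMA}(2),(4) rewrites $\mathrm{Hom}_\Sy(\calC\boxtimes A,\S B)\cong\mathrm{Hom}_\Sy(\calC\sq\S A,\S B)\cong\mathrm{Hom}_\Sy(\calC,\hom(\S A,\S B))\cong\mathrm{Hom}_\Sy(\calC,\End^A_B)$.

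Then I would verify naturality in $A$ and $B$, which is automatic since every identification used is natural, and check the bijection is compatible with the functoriality $\calC\mapsto\SCcomod$ in the obvious way (this will be needed later for the $\infty$-morphism formalism). I would also spell out, at least in a remark-style sentence, the explicit form of the correspondence: a monoid $\S\calC$-comodule morphism $F$ corresponds to the map $\calC\to\End^A_B$ given by the component of $F$ that takes one cogenerator-factor from $\S B$, reads off how it is hit by an element of $\calC$ acting on a product of $A$-generators, and discards the rest of the $\S\calC$–decoration; this is the direct properadic analogue of ``an $\infty$-morphism of coalgebras is determined by its corestriction to the cogenerators.''

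The main obstacle I expect is bookkeeping the interplay of the \emph{two} monoidal structures $\otimes$ and $\square$ at once: one must be careful that ``morphism of non-unital commutative $\otimes$-monoids'' and ``morphism of left $\S\calC$-$\square$-comodules'' can be imposed independently and that their conjunction is exactly captured by the single datum in $\End^A_B$, with no hidden compatibility (the interchange law $\IL$ in the comodule axiom square must be shown to be automatically respected once $F$ is built from an arbitrary $\calC\to\End^A_B$). This is where the duoidal/$\otimes$-braided structure recalled after \cref{prop:nuMonMonoiCat} does the real work, and where \cref{lem:LEMMA}(3) — $(\M\otimes\N)\sq\S A\cong(\M\sq\S A)\otimes(\N\sq\S A)$ — is the crucial compatibility that lets the free-monoid and cofree-comodule universal properties be applied simultaneously rather than one obstructing the other. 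Everything else is, as the paper likes to say, a straightforward computation.
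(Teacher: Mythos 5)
Your proposal is correct and follows essentially the same route as the paper: the same chain of identifications via the closed monoidal structure (\cref{prop:closedmono}), points (2) and (4) of \cref{lem:LEMMA}, and the universal properties of free $\otimes$-monoids and cofree $\S\calC$-comodules, together with the observation that these two universal properties restrict compatibly to the monoid-comodule setting. The brief detour (restricting to generators $A$ only) that you catch and abandon mid-argument is indeed a dead end, and your corrected chain matches the paper's proof read in the opposite direction.
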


\begin{proof}
Let us recall that Point~(2) of \cref{lem:LEMMA} asserts $\calC \boxtimes A \cong \calC \sq \S A$ and that 
Point~(4) of \cref{lem:LEMMA} asserts $\End^A_B\cong \hom(\S A, \S B)$. This implies by \cref{prop:closedmono}
\[\Hom_{\Sy}\big(\calC , \End^A_B\big)\cong \Hom_{\Sy}(\calC ,\hom(\S A, \S B)) \cong \Hom_{\Sy}(\calC\sq \S A, \S B)\cong \Hom_{\Sy}(\calC\boxtimes A, \S B)\ .\]
The universal property of free $\otimes$-monoids amounts to a natural bijection 
\[
\Hom_{\otimes\textsf{-}\mathsf{mon}}(\S (\calC \boxtimes A), \S B)\cong \Hom_{\Sy}(\calC\boxtimes A, \S B)\ 
\]
and the universal property of cofree $\S \calC$-comodules amounts to a natural bijection 
\[
\Hom_{\S \calC\textsf{-}\mathsf{comod}}(\S \calC \sq \S A, \S \calC \sq \S B)\cong \Hom_{\Sy}(\S \calC \sq \S A, \S B)\ .
\]
It remains to use the isomorphism $\S(\calC\boxtimes A)\cong \S \calC \sq \S A$ and to notice that
 the latter natural bijection restricts to morphisms of monoid $\S\calC$-comodules  on the left-hand side and to morphisms of monoids on the right-hand side. 
\end{proof}

\subsection{Properadic Rosetta stone}
Starting now from a dg coproperad $(\calC, d_\calC, \Delta, \eps)$, let us see how to extend the above notion to the  differential graded level. 

\begin{definition}[Biderivation]
A \emph{biderivation} of a monoid $\S\calC$-comodule $(\M, \mu, \delta)$ is an map $d : \M \to \M$ which is derivation with respect to the product $\mu$ and a coderivation with respect to 
the comodule structure $\delta$: 
\[
d\circ \mu = \mu \circ (d \otimes \id + \id \otimes d) 
\quad \text{and} \quad 
\delta \circ d =  (\id \sq d)\circ \lambda\ .
\]
\end{definition}

We denote the graded space of biderivations of a monoid $\S\calC$-comodule $\M$ by $\Bider(\M)$. 

\medskip

Given a dg $\Sy$-bimodule $(\V, d_\V)$, the differential, denoted slightly abusively by $d_{\S\V}$, induced by $\d_\V$ only on the bifree monoid $\S\calC$-comodule  
$\S\calC \sq \S \V \cong \S(\calC \boxtimes \V)$,  which is equal to the sum of   all the ways to apply $d_\V$ to all the elements coming from $\V$ but one each time, is a bidifferential. Notice however that the differential, denoted slightly abusively by $d_{\S\calC}$, induced by $\d_\calC$ fails to be a coderivation with respect to the coaction, so it cannot be a biderivation. 

\begin{lemma}\label{lem:BiDerHom}
There is a natural isomorphism of graded vector spaces 
\[
\Bider(\S \calC \sq \S A)\cong \Hom_{\Sy}(\calC , \End_A)\ ,
\]
where $\calC$ is a dg coproperad and $A$ a dg vector space.
\end{lemma}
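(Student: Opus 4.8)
The strategy is to deduce this from \cref{lem:bifree} by specialising $B = A$ and then identifying which morphisms of monoid $\S\calC$-comodules on the left-hand side correspond, under the forgetful-type analysis, to biderivations. Recall that \cref{lem:bifree} gives a natural bijection $\Hom_{\SCcomod}(\S\calC\sq\S A, \S\calC\sq\S B)\cong \Hom_{\Sy}(\calC, \End^A_B)$, built from the universal properties of the free $\otimes$-monoid functor and the cofree $\S\calC$-$\square$-comodule functor. The key point is that both of these universal properties are ``linear'' in the appropriate sense, so they pass to the setting where one replaces \emph{morphisms} by \emph{$(f,f)$-derivations} over a fixed morphism $f$, and in particular by derivations/coderivations over the identity.

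\textbf{Key steps.} First I would make precise the linearisation. A biderivation $d\colon \S\calC\sq\S A\to \S\calC\sq\S A$ is exactly a $(\id,\id)$-coderivation for the cofree $\S\calC$-comodule structure which is simultaneously a derivation for the free $\otimes$-monoid product $\mu$. By the universal property of the \emph{cofree} left $\S\calC$-comodule $\S\calC\sq\S A$ (which appears in the proof of \cref{lem:bifree} via $\Hom_{\S\calC\text{-}\mathsf{comod}}(\S\calC\sq\S A,\S\calC\sq\S B)\cong\Hom_{\Sy}(\S\calC\sq\S A,\S B)$), a coderivation over the identity is determined by its corestriction to the cogenerators $\S B$, i.e. by an arbitrary $\Sy$-bimodule map $\S\calC\sq\S A\to \S A$ (with $B=A$). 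Next, among such maps, imposing that $d$ be a derivation for the concatenation product $\mu$ on $\S\calC\sq\S A\cong\S(\calC\boxtimes A)$ forces it to be determined by its restriction to the generators $\calC\boxtimes A\cong\calC\sq\S A$ of the free $\otimes$-monoid, i.e. by an $\Sy$-bimodule map $\calC\sq\S A\to \S A$. Then I run the same chain of adjunction isomorphisms as in the proof of \cref{lem:bifree}: by Point~(4) of \cref{lem:LEMMA} and \cref{prop:closedmono},
\[
\Hom_{\Sy}(\calC\sq\S A,\S A)\cong\Hom_{\Sy}(\calC,\hom(\S A,\S A))\cong\Hom_{\Sy}(\calC,\End_A)\ ,
\]
which is the desired right-hand side. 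Finally I check that all maps are morphisms of graded vector spaces (the degree of $d$ matching the degree of the corresponding $\calC\to\End_A$), and that the bijections are natural in $\calC$; naturality follows from the naturality already established in \cref{lem:bifree} together with functoriality of the cofree/free constructions.

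\textbf{Main obstacle.} The genuinely delicate step is the ``decomposition'' lemma that a biderivation is freely and uniquely determined by an $\Sy$-bimodule map $\calC\sq\S A\to\S A$ — that is, checking that the derivation condition for $\mu$ and the coderivation condition for $\delta$ are compatible and together cut out exactly the data of a single such map, with no further constraint. Concretely one must verify that the unique $\mu$-derivation extending a given map on generators is automatically a $\delta$-coderivation precisely when the generating map factors through $\calC\boxtimes A$ in the way dictated by the cofree comodule structure; this is a diagram chase using the explicit form of $\delta$ on $\S(\calC\boxtimes A)$ (built from $\S(\Delta)$ and the interchange law $\IL$) and the compatibility of $\IL$ with the braiding recorded in \cref{prop:nuMonMonoiCat}. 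Everything else is a formal assembly of adjunctions already used in \cref{lem:bifree}, so I expect the proof to be short, with the bulk of the (routine but necessary) work hidden in this one verification.
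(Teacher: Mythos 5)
Your proposal is correct and follows essentially the same route as the paper: characterise coderivations of the cofree comodule by their corestriction to $\S A$, characterise derivations of the free $\otimes$-monoid by their restriction to the generators $\calC\boxtimes A$, run the adjunction chain $\Hom_{\Sy}(\calC\sq\S A,\S A)\cong\Hom_{\Sy}(\calC,\End_A)$, and check the two universal properties are compatible. The only cosmetic difference is that the paper makes the relevant module structure explicit (the action of $\S(\calC\boxtimes A)$ on $\S A$ via $\S(\varepsilon\boxtimes\id)$ followed by $\nu$) and then simply observes that the coderivation isomorphism restricts to derivations on both sides — the verification you single out as the main obstacle.
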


\begin{proof}
This proof is similar to the proof of \cref{lem:bifree}. 
We consider the following action of the non-unital commutative $\otimes$-monoid $\S (\calC \boxtimes A)$ on $\S A$:
\[\begin{tikzcd}[column sep=huge]
	\S (\calC \boxtimes A)\otimes \S A  \ar[r,"\S(\varepsilon \boxtimes \id)\otimes \id"] & \S A \otimes \S A   \ar[r,"\nu"] &\S A 
\end{tikzcd}\]
and we consider the set of derivations $\mathrm{Der}(\S (\calC \boxtimes A), \S A)$ with respect to this action. 
Since any derivation from a free commutative is characterized by the image of its generators, there exists a natural isomorphism 
\[
\mathrm{Der}(\S (\calC \boxtimes A), \S A)\cong \Hom_{\Sy}(\calC , \End_A)\ .
\]
Since any coderivation of a cofree comodule is characterized by its projection onto its cogenerators, there exists a natural isomorphism 
\[
\mathrm{Coder}(\S \calC \sq \S A)\cong \Hom_{\Sy}(\S \calC \sq \S A, \S A)\ .
\]
It remains to notice that this isomorphism restricts to derivations on both sides in order to get the required natural isomorphism. 
\end{proof}

\begin{remark}
Tracing through the above mentioned isomorphisms, the unique biderivation $d_\alpha$ associated to a map  $\alpha : \calC \to \End_A$, which is equivalent to a map still denoted by $\alpha : \calC \boxtimes A \to \S A$, is explicitly given by 
\begin{equation}\label{eq:dvarphi}
\begin{tikzcd}[column sep=large]
	\S \calC \sq \S A  \ar[r,"\S(\Delta)\sq \id"] &
	\S \calC \sq \S \calC \sq \S A \cong \S\calC \sq \S(\calC \boxtimes A)  \ar[r,"\id \sq \S(\varepsilon; \alpha)"] &
	\S\calC \sq \S(A; \S A) \ar[r,"\id\sq \widetilde{\nu}"] & \S\calC \sq \S A \ ,
\end{tikzcd}
\end{equation}
where  
\begin{itemize} 
	\item[$\diamond$]  $\S(A; \S A)$ is made up of sums of monomials with some elements from $A$ but one element from $\S A$;
	\item[$\diamond$] the map $\S(\varepsilon; \alpha)$ is the sum of all the ways to apply $\varepsilon$ to all the terms except one where we apply $\alpha$;
	\item[$\diamond$] $\widetilde{\nu}$ is the natural map coming from the concatenation product $\nu$ on $\S A$.
\end{itemize}
\end{remark}

We equipped the graded space of biderivations of a monoid $\S \calC$-comodule with the usual Lie bracket 
\[[d, d']\coloneqq d \circ d' - (-1)^{|d||d'|} d'\circ d
\ .\]
In the case of bifree $\S\calC$-comonoids $\S\calC \sq \S \V$, we consider the underlying differential 
$d_{\S\calC\sq \S \V}=d_{\S \calC}+d_{\S \V}$. Even if this latter one fails to be a biderivation, its adjoint operator 
$[d_{\S\calC\sq \S \V}, -]$ preserves biderivations. So it defines a square-zero derivation of the Lie algebra of biderivations. 

\begin{proposition}\label{prop:isodgLie}
For  coaugmented dg coproperads $\calC$ and dg vector spaces $A$, the natural isomorphism of \cref{lem:BiDerHom} induces a natural isomorphism of dg Lie algebras 
\[
\big(\Bider(\S \calC \sq \S A), [d_{\S\calC\sq \S A}, -Ê], [- , -]\big) \cong \big(\Hom_{\Sy}(\calC , \End_A), \partial,  [- , -]\big)\ ,
\]
where the Lie bracket on the right-hand side is obtained by skew-symmetrizing the Lie-admissible bracket $\star$\ . 
\end{proposition}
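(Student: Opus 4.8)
The plan is to upgrade the isomorphism of graded vector spaces from \cref{lem:BiDerHom} to an isomorphism of dg Lie algebras, which requires checking two compatibilities: that the brackets match and that the differentials match. For the bracket, one recalls that on the right-hand side the dg Lie-admissible structure $\star$ on $\Hom_\Sy(\calC,\End_A)$ is built from $\Delta_{(1,1)}$ on $\calC$ and $\gamma_{(1,1)}$ on $\End_A$, whereas on the left-hand side the Lie bracket is the commutator of biderivations. First I would trace through the explicit formula \eqref{eq:dvarphi} for the biderivation $d_\alpha$ attached to a map $\alpha : \calC \to \End_A$: composing $d_\alpha \circ d_\beta$ and projecting onto the cogenerators $\S A$, the comodule coassociativity of $\S(\Delta)$ together with the monoid structure produces exactly the terms of $\alpha \star \beta$ plus, via the concatenation maps $\widetilde\nu$, the terms where $\alpha$ and $\beta$ land in disjoint monomials — but the latter symmetrize away in the commutator $[d_\alpha,d_\beta]$, leaving precisely the skew-symmetrization of $\star$, i.e. $[\alpha,\beta]$. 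This is really the same bookkeeping as the classical operadic statement \cite[Section~10]{LodayVallette12}, only with $\boxtimes$ replacing $\circ$ and with an extra $\otimes$-monoid layer; the duoidal compatibility (interchange law) is what guarantees the disjoint-monomial terms are well-defined and that they cancel.

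Next I would handle the differential. Under the isomorphism of \cref{lem:BiDerHom}, a map $\alpha$ corresponds to $d_\alpha$, and the claim is that the operator $[d_{\S\calC\sq\S A},-]$ on the biderivation side corresponds to $\partial = [d_\calC,-] + [d_A,-]$, i.e. to the usual differential $\partial(\alpha) = d_{\End_A}\circ\alpha - (-1)^{|\alpha|}\alpha\circ d_\calC$ on the convolution complex. Here one decomposes $d_{\S\calC\sq\S A} = d_{\S\calC} + d_{\S A}$. The piece $[d_{\S A},-]$ is a genuine biderivation-level commutator, and chasing \eqref{eq:dvarphi} shows it corresponds to post-composition with $d_A$ (the internal differential of $\End_A$ on the target side), which is the $d_{\End_A}\circ\alpha$ part. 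The piece $[d_{\S\calC},-]$ is subtler because, as noted in the text, $d_{\S\calC}$ is itself \emph{not} a biderivation (it fails to be a coderivation for the coaction); nonetheless the text already asserts its adjoint preserves biderivations, and unwinding \eqref{eq:dvarphi} one sees that $[d_{\S\calC}, d_\alpha]$, projected to cogenerators, is the single term in which $d_\calC$ acts on the $\calC$-input before applying $\alpha$ — that is, precisely $-(-1)^{|\alpha|}\alpha\circ d_\calC$ — because the other contributions of $d_{\S\calC}$ to $d_\alpha\circ d_{\S\calC}$ and $d_{\S\calC}\circ d_\alpha$ cancel against each other by coderivation-type Leibniz bookkeeping on $\S(\Delta)$. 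Combining the two pieces gives exactly $\partial$.

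Finally I would note that naturality in $\calC$ and $A$ is inherited from the naturality already established in \cref{lem:BiDerHom}, and that both sides are genuinely dg Lie algebras (the right-hand side by the cited Lie-admissibility of $\star$ and the fact that skew-symmetrizing a Lie-admissible product yields a Lie bracket; the left-hand side because the commutator of biderivations is a biderivation and $[d_{\S\calC\sq\S A},-]$ is a square-zero derivation of this Lie algebra, as stated just before the proposition), so that once the two compatibilities above are verified the isomorphism of dg Lie algebras follows formally.

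I expect the main obstacle to be the second point: correctly accounting for the non-biderivation operator $d_{\S\calC}$ and showing that its adjoint, modulo the terms that cancel, realizes exactly the $\alpha \circ d_\calC$ term of $\partial$. The cancellation is a sign-sensitive Leibniz computation on $\S(\Delta)\sq\id$ with the $\otimes$-monoid concatenation layered on top, and getting the signs and the symmetrization factors right — while keeping track of which monomials are ``disjoint'' in the sense of the interchange law — is where the real work lies; the bracket compatibility, by contrast, is a fairly direct transcription of the operadic argument with $\circ$ replaced by $\boxtimes$.
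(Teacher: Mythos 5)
Your overall strategy coincides with the paper's: use the explicit formula \eqref{eq:dvarphi} for $d_\alpha$, recover $\alpha$ from $d$ by restricting to the generators $\calC\boxtimes A$ and projecting onto the cogenerators via $\S(\eps)\sq\id$, and check compatibility of brackets and of differentials separately. The one substantive discrepancy is in the mechanism you invoke for the bracket. You claim that $\overline{d_\alpha\circ d_\beta}$ equals $\alpha\star\beta$ \emph{plus} terms where $\alpha$ and $\beta$ land in disjoint monomials, and that these extra terms only disappear after skew-symmetrizing in the commutator. In fact the paper proves the stronger, pre-symmetrization identity $\overline{d\circ d'}=\overline{d}\star\overline{d'}$: since $d=d_{\overline{d}}$ applies $\overline{d}$ to exactly one $\otimes$-factor and the counit $\eps$ to all the others, and since $\eps$ vanishes on the coaugmentation coideal $\oC$, the composite $\bigl(\S(\eps\boxtimes\id)\bigr)\circ d$ is supported on $\S\bigl(A;\calC\boxtimes A\bigr)$ — monomials with at most one non-trivial $\calC$-factor. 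So every ``disjoint'' term dies individually under the projection; there is nothing left to cancel, and no symmetry of those terms needs to be verified. This matters: your route would require an additional (sign-sensitive) argument that the disjoint contributions to $d_\alpha\circ d_\beta$ and $d_\beta\circ d_\alpha$ agree, which you do not supply, whereas the paper's observation makes that step unnecessary and simultaneously yields compatibility with the full Lie-admissible product $\star$, not just its antisymmetrization.

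The same remark applies to your treatment of the differential. You say the unwanted contributions of $d_{\S\calC}$ to $d_\alpha\circ d_{\S\calC}$ and $d_{\S\calC}\circ d_\alpha$ ``cancel against each other''; in the paper they do not cancel pairwise — the term $\overline{d_{\S\calC}\circ d_\alpha}$ vanishes outright because $\eps\circ d_\calC=0$, leaving only $d_{\S A}\circ\overline{d}$ from the post-composition and $\overline{d}\circ\bigl(d_\calC\boxtimes\id+\id\boxtimes d_A\bigr)$ from the pre-composition, which together assemble into $\partial(\overline{d})$. (Also, be careful that the $d_{\S A}$-piece contributes the \emph{full} commutator with $d_A$ in $\End_A$, i.e.\ both pre- and post-composition with $d_A$ on the inputs and outputs, not only post-composition.) With these corrections your argument aligns with the paper's; the ``real work'' is not a delicate cancellation but the single observation that the coaugmentation forces $\eps|_{\oC}=0$ and $\eps\circ d_\calC=0$.
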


\begin{proof}
Tracing through the composition of isomorphisms in the other way round in the proof of \cref{lem:BiDerHom}, one can see that the isomorphism $\Bider(\S \calC \sq \S A)\cong \Hom_{\Sy}(\calC , \End_A)$ is given by 
$d\mapsto \overline{d}\coloneqq (\S(\eps)\sq \id)\circ d|_{\calC\boxtimes A}$\ ,
\[\begin{tikzcd}[column sep=large]
	\overline{d}\ : \ \calC \boxtimes A  \subset \S (\calC \boxtimes A)\cong \S \calC \sq \S A \ar[r,"d"] &
	\S \calC \sq \S A  \ar[r,"\S(\eps) \sq \id"] &
	\S\I \sq \S A\cong \S A\ .
\end{tikzcd}\]

Let us first prove the compatibility with respect to the differentials, that is 
$\partial\Big(\overline{d}\Big) =\overline{[d_{\S\calC\sq \S A}, d]}$\ .
It is straightforward to see that $\overline{d\circ d_{\S\calC\sq \S A}}=\overline{d}\circ d_{\S\calC\sq \S A}$ and since $\eps\circ d_\calC=0$, we have $\overline{d_{\S\calC\sq \S A}\circ d}=d_{\S A}\circ \overline{d}$\ . 

The compatibility with the respective Lie brackets follows from the relation $\overline{d\circ d'}=\overline{d}\star\overline{d'}$\ . Up to isomorphisms, the map $\overline{d\circ d'}$ is equal to 
\[\begin{tikzcd}[column sep=large]
 \calC \boxtimes A  \subset \S (\calC \boxtimes A) \ar[r,"{d'}"] &
\S (\calC \boxtimes A) \ar[r,"d"] & \S (\calC \boxtimes A) \ar[r,"\S(\eps\boxtimes\id)"] &
 \S A\ .
\end{tikzcd}\]
Since $d=d_{\overline{d}}$ and since the coproperad $\calC$ is coaugmented, one can see that the composite 
$\S(\eps\boxtimes\id) \circ d$ vanishes outside $\S(A; \calC\boxtimes A)$, the summand made up of sums of monomials with some elements from $A$ but one element from $\calC\boxtimes A$. Indeed, by the formula of $d_{\overline{d}}$, applying $\S(\eps\boxtimes\id) \circ d$ to at least two $\otimes$-concatenated elements from $\overline{\calC}\boxtimes A$ amounts to applying to at least one of them $\eps\boxtimes \id$, which is trivial. Let us now denote by $\mathrm{proj} : \S(\calC \boxtimes A) \twoheadrightarrow \S(A; \calC\boxtimes A)$ the canonical projection. 
Using again $d'=d_{\overline{d'}}$, one can see that the composite $\mathrm{proj}\circ d'|_{\calC\boxtimes A}$ is equal to 
\[\begin{tikzcd}[column sep=huge, row sep=tiny]
 \oC \boxtimes A   \ar[r,"\Delta_{(1,1)}\boxtimes \id"] &
  \big(\oC\ibt  \oC\big)\boxtimes A \cong \big(\oC\ibt \big(\oC\boxtimes A\big)\big)\boxtimes A  
  \ar[r,"\big(\id \ibt\overline{d'} \big)\boxtimes \id"] &
 \S\big(A; \oC\boxtimes A\big)\ ,\\
 \I \boxtimes A \cong A \ar[r,"d'"] & A \ar[r,"\cong"]&  \S\big(A; \I\boxtimes A\big)\ .
  \end{tikzcd}\]
Under the isomorphism $\Hom_{\Sy}(\calC\boxtimes A, \S A)\cong \Hom_{\Sy}(\calC , \End_A)$, this means that
\[
\overline{d\circ d'}= \S(\eps\boxtimes\id) \circ d\circ \mathrm{proj}\circ d'|_{\calC\boxtimes A}= \overline{d}\star\overline{d'}\ ,
\]
which concludes the proof. 
\end{proof}

\begin{definition}[Bidifferential]
A \emph{bidifferential} $d$ of a monoid $\S\calC$-comodule structure $(\M, d_\M, \mu, \delta)$ on a dg $\Sy$-bimodule is a degree $-1$ biderivation such that 
\[
\left(d_\M +d\right)^2=0
\ .\]
\end{definition}
We denote the graded space of bidifferentials of a monoid $\S\calC$-comodule $(\M, d_\M, \mu, \delta)$ by $\Bidiff(\M)$. 

\begin{definition}[Differential graded monoid $\S \calC$-comodule]\label{def:dgMonSCComod}
A \emph{differential graded monoid $\S \calC$-comodule} 
$(\M, d_\M+d, \mu, \delta)$ is a monoid $\S\calC$-comodule equipped with a bidifferential.
\end{definition}

We call $d_\M+d$ the total differential of a dg monoid $\S \calC$-comodule. 
A morphism of differential graded monoid $\S \calC$-comodules is a morphism of monoid $\S\calC$-comodules which commutes with the respective total differentials. This forms a category denoted by $\dgSCcomod$. 

\medskip

When the coproperad $\calC$ is coaugmented, we will mainly consider the case of bifree monoid $\S\calC$-modules $\S\calC \sq \S A \cong \S(\calC \boxtimes A)$ on dg vector spaces $A$, with underlying differential $d_{\S\calC\sq \S A}$. In this case, \emph{we require that bidifferentials vanish on $A$}, i.e. the following composite is trivial
\[\begin{tikzcd}[column sep=large]
	A \cong \I\sq A \ar[r,"\eta \sq \id"] & 
	\calC \sq A\subset \S\calC \sq \S A \ar[r,"d"] &
	\S \calC \sq \S A \ar[r,"\S(\eps)\sq \id"] &
	\S A \ar[r, two heads] & A \ . 
\end{tikzcd}\]
Such differential graded monoid $\S\calC$-module structure are called \emph{quasi-bifree}.

\begin{proposition}\label{prop:EndofRosetta}
For  coaugmented dg coproperads $\calC$ and dg vector spaces $A$, there is a natural bijection between bidifferentials on bifree module $\S\calC$-comodule 
$\S \calC \sq \S A$ and twisting morphisms from $\calC$ to $\End_A$:
\[
\Bidiff(\S \calC \sq \S A) \cong \Tw(\calC , \End_A)\ .
\]
\end{proposition}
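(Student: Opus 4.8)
The plan is to reduce the statement to the already-established isomorphism of dg Lie algebras in \cref{prop:isodgLie}, by identifying bidifferentials on both sides as Maurer--Cartan elements. First I would observe that a bidifferential $d$ on $\S\calC\sq\S A$ is by definition a degree $-1$ biderivation such that $(d_{\S\calC\sq\S A}+d)^2=0$; expanding the square using $d_{\S\calC\sq\S A}^2=0$ and $|d|=-1$, this condition becomes
\[
[d_{\S\calC\sq\S A}, d] + \tfrac12 [d,d] = 0\ ,
\]
where the bracket is the one on the Lie algebra of biderivations. In other words, $\Bidiff(\S\calC\sq\S A)$ is exactly the set of degree $-1$ elements of $\big(\Bider(\S\calC\sq\S A), [d_{\S\calC\sq\S A}, -], [-,-]\big)$ satisfying the Maurer--Cartan equation of that dg Lie algebra.

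Next I would transport this across the isomorphism of dg Lie algebras furnished by \cref{prop:isodgLie}. That proposition gives a natural isomorphism
\[
\big(\Bider(\S\calC\sq\S A), [d_{\S\calC\sq\S A}, -], [-,-]\big) \cong \big(\Hom_{\Sy}(\calC, \End_A), \partial, [-,-]\big)\ ,
\]
so it carries degree $-1$ Maurer--Cartan elements on the left bijectively onto degree $-1$ elements $\alpha$ of $\Hom_{\Sy}(\calC,\End_A)$ satisfying $\partial(\alpha) + \tfrac12[\alpha,\alpha]=0$. Since the Lie bracket on the right is the skew-symmetrisation of the Lie-admissible product $\star$, one has $\tfrac12[\alpha,\alpha]=\alpha\star\alpha$ for $\alpha$ of odd degree, so this is precisely the Maurer--Cartan equation $\partial(\alpha)+\alpha\star\alpha=0$ defining a twisting morphism $\calC\to\End_A$. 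Hence the isomorphism restricts to the claimed bijection $\Bidiff(\S\calC\sq\S A)\cong\Tw(\calC,\End_A)$.

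The one point requiring a little care is the coaugmentation/normalisation condition: on the twisting morphism side, when $\calC$ is coaugmented one requires that the composite of $\alpha$ with the coaugmentation $\eta:\I\to\calC$ vanishes, while on the comodule side the quasi-bifree convention demands that the bidifferential vanish on $A$, i.e.\ that the composite $A\cong\I\sq A\xrightarrow{\eta\sq\id}\calC\sq A\hookrightarrow\S\calC\sq\S A\xrightarrow{d}\S\calC\sq\S A\xrightarrow{\S(\eps)\sq\id}\S A\twoheadrightarrow A$ is trivial. I would check that under the explicit correspondence $d\mapsto\overline d$ from the proof of \cref{lem:BiDerHom} (and the formula \eqref{eq:dvarphi} for $d_\alpha$ in the other direction), this vanishing-on-$A$ condition matches exactly the vanishing of $\alpha\circ\eta$, using that $\overline d$ restricted to $\I\boxtimes A\cong A$ reads off precisely the component of $\alpha$ on $\calC(1,1)\supset\eta(\I)$. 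This compatibility of the two normalisations is the main (though still routine) obstacle; everything else is a direct transport of the Maurer--Cartan equation along \cref{prop:isodgLie}. Naturality in $\calC$ and $A$ is inherited from the naturality already asserted in \cref{lem:BiDerHom} and \cref{prop:isodgLie}.
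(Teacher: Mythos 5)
Your proposal is correct and follows essentially the same route as the paper: identify bidifferentials as degree $-1$ Maurer--Cartan elements of the dg Lie algebra of biderivations via $(d_{\S\calC\sq\S A}+d)^2=[d_{\S\calC\sq\S A},d]+\tfrac12[d,d]$, transport them along the isomorphism of \cref{prop:isodgLie}, and match the vanishing-on-$A$ normalisation with the vanishing-on-$\I$ condition for twisting morphisms. The extra details you supply (the identity $\tfrac12[\alpha,\alpha]=\alpha\star\alpha$ in odd degree and the explicit check of the normalisation via $\overline{d}|_{\I\boxtimes A}$) are exactly the points the paper leaves implicit.
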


\begin{proof}
The natural isomorphism of dg Lie algebras of \cref{prop:isodgLie} induces a natural isomorphism between the associated set of solutions to the respective Maurer--Cartan equations. Notice that the 
Maurer--Cartan equation for biderivations is equal to the defining relation for bidifferentials: $\left(d_{\S \calC\sq \S A} +d\right)^2=[d_{\S\calC\sq \S A}, d]+{\textstyle \frac12}[d,d]=0$\ . 
Under this correspondance, the vanishing condition on $\I$ for twisting morphisms 
is equivalent to the vanishing condition on $A$ for bidifferentials. 
\end{proof}

\begin{theorem}[Complete Rosetta stone]\label{thm:CompleteRosetta}
There exist natural bijections 
	\[
\begin{tikzcd}[column sep=small, row sep=tiny]
	\Hom_{\dgprop}(\Cobar\calC,\End_A)  \ar[r,phantom,"\cong" description] & 
	\Tw(\calC,\End_A) 	\ar[r,phantom,"\cong" description] & 
	\Hom_{\dgcoprop}(\calC,\Bar\End_A)  \ar[r,phantom,"\cong" description]& \Bidiff(\S \calC \sq \S A)\\
	F_\alpha \ar[r,leftrightarrow]& \alpha\ar[r,leftrightarrow]& G_\alpha \ar[r,leftrightarrow] & d_\alpha\ , 
\end{tikzcd}
\]
for conilpotent dg coproperads $\calC$. 
\end{theorem}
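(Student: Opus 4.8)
The strategy is to assemble the four bijections from results already established in the excerpt, so that the proof is essentially a bookkeeping statement chaining them together. The leftmost two bijections,
\[
\Hom_{\dgprop}(\Cobar\calC,\End_A) \cong \Tw(\calC,\End_A) \cong \Hom_{\dgcoprop}(\calC,\Bar\End_A),
\]
are precisely the partial Rosetta stone (\cref{Rosetta}), applied to the properad $\End_A$. One subtlety here is that $\End_A$ is not augmented in general (cf. the remark after \cref{def:dgprop} on the endomorphism properad); the fix, already anticipated by \cref{rem:BarEnd}, is that all the constructions on $\End_A$ are understood via its augmentation $\I\oplus\End_A$, and twisting morphisms to $\End_A$ are by definition those landing in $\End_A$, i.e. with vanishing composite with the augmentation $\I\oplus\End_A\to\I$. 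With that convention \cref{Rosetta} applies verbatim, and these first two bijections carry the names $F_\alpha\leftrightarrow\alpha\leftrightarrow G_\alpha$.

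The rightmost bijection $\Tw(\calC,\End_A)\cong\Bidiff(\S\calC\sq\S A)$, sending $\alpha$ to $d_\alpha$, is exactly \cref{prop:EndofRosetta}, which was itself obtained by taking Maurer--Cartan elements in the isomorphism of dg Lie algebras of \cref{prop:isodgLie}. So literally all four objects and all three maps needed for the statement have already been constructed individually. First I would recall \cref{Rosetta} for $\calP=\End_A$ (with the augmentation convention of \cref{rem:BarEnd}), then recall \cref{prop:EndofRosetta}, and then simply observe that composing these gives the displayed chain of natural bijections with the indicated correspondence of elements. The naturality in $\calC$ (for conilpotent dg coproperads) is inherited termwise, since each of \cref{Rosetta} and \cref{prop:EndofRosetta} is stated naturally.

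\textbf{Where the real work sits.} In this theorem essentially nothing is left to prove: it is a corollary packaging \cref{Rosetta}, \cref{prop:isodgLie} and \cref{prop:EndofRosetta}. The genuine content — identifying $\Hom_\Sy(\calC,\End_A)$ with biderivations, checking that the adjoint $[d_{\S\calC\sq\S A},-]$ is a square-zero derivation of the Lie algebra of biderivations, matching the Maurer--Cartan equations, and reconciling the vanishing condition on $A$ with the vanishing of the twisting morphism on $\I$ — has all been discharged in the preceding lemmas and propositions. If one wanted to say anything beyond ``combine the above'', the only point worth a sentence is the explicit description: unwinding the bijections, the biderivation attached to $F_\alpha$ (equivalently to $G_\alpha$, equivalently to $\alpha$) is the map $d_\alpha$ written out in \eqref{eq:dvarphi}, so that the four-fold correspondence $F_\alpha\leftrightarrow\alpha\leftrightarrow G_\alpha\leftrightarrow d_\alpha$ is compatible with the explicit formulas. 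The main (minor) obstacle is purely notational: making sure the augmentation-of-$\End_A$ convention is invoked so that \cref{Rosetta} is literally applicable.
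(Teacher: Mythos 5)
Your proposal is correct and coincides with the paper's own proof, which simply declares the theorem a direct corollary of the partial Rosetta stone (\cref{Rosetta}) applied to $\End_A$ and of \cref{prop:EndofRosetta}. Your additional remarks on the augmentation convention of \cref{rem:BarEnd} and on where the real content was discharged are accurate glosses rather than departures from the paper's argument.
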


\begin{proof}
This is a direct corollary of the partial Rosetta stone given in \cref{Rosetta} and \cref{prop:EndofRosetta}. 
\end{proof}

\subsection{Infinity-morphisms}\label{subsec:InftyMorph} Let $\calC$ be a coaugmented dg coproperad. 

\begin{definition}[$\infty$-morphism]\label{def:InftyMor}
An \emph{$\infty$-morphism} $A\rightsquigarrow B$ between  two $\Cobar \calC$-gebra structures $\alpha\in  \Tw(\calC,\End_A)$ and $\beta\in  \Tw(\calC,\End_B)$ is a morphism 
\[\left(\S\calC \square \S A, d_{\S\calC \sq \S A}+d_\alpha\right) \to \left(\S\calC \square \S B, d_{\S\calC \sq \S B}+d_\beta\right)\]
 of dg monoid $\S \calC$-comodules.
\end{definition}

By definition, $\Cobar \calC$-gebras together with $\infty$-morphisms form a category, which is isomorphic to the sub-category of quasi-bifree monoid $\S \calC$-comodules on dg vector spaces. We denote it by  $\infty\textsf{-}\Cobar\calC\textsf{-}\mathsf{gebras}$. 

\medskip 

In order to give a shorter description of $\infty$-morphisms, we need to introduce the following notions. 

\begin{definition}[Left and right infinitesimal composition products]
The \emph{left} and \emph{right infinitesimal composition products}
\[\M \libt  \N \qquad \text{and} \qquad \M \ribt  \N\ , \] 
are the sub-$\Sy$-bimodules of $(\I \oplus \M)\boxtimes \N$ and $\M\boxtimes (\I \oplus \N)$ made up of the linear parts in $\M$ and $\N$ respectively.
\end{definition}

\begin{definition}[Left and right infinitesimal decomposition map]
The \emph{left} and \emph{right infinitesimal decomposition maps} of a coproperad are defined respectively by 
\[\begin{tikzcd}[column sep=large, row sep=tiny]
\Cop{}{(*)} \ : \ \oC \arrow[r,"\Delta"] & 
 \calC \boxtimes \calC \arrow[r,"(\eps; \id)\boxtimes \id"] & \oC \libt  \calC\ , \\
\Cop{(*)}{} \ : \ \oC \arrow[r,"\Delta"] & 
 \calC \boxtimes \calC \arrow[r,"\id \boxtimes (\eps; \id)"] & \calC \ribt  \oC\ , 
\end{tikzcd}
\]
and in each case by $\I\cong \I \boxtimes \I$ on $\I$.
\end{definition}

\begin{remark}
Notice that when the coproperad is conilpotent, that is coaugmented and given by a comonadic coproperad, these maps are simply given by the composites with the projections onto the module of 2-level graphs with one vertex on the bottom level or the top level respectively. 
\end{remark}

These notions give rise to the following operations. 

\begin{definition}[Left and right actions]
Given $f\in \Hom_{\Sy}\big(\calC, \End^A_B\big)$, $\alpha \in \Hom_{\Sy}(\calC, \End_A)$, and $\beta\in \Hom_{\Sy}(\calC, \End_B)$, the \emph{left action}  of $\beta$ on $f$ and the \emph{right action} of $\alpha$ on $f$ are defined respectively by 
	\[
	\begin{tikzcd}[column sep=normal, row sep=tiny]
	\beta \lhd f  \ar[r,phantom,":" description] &
	\oC \arrow[r,"\Cop{}{(*)}"] &  \oC \libt \calC 
	\arrow[r,"\beta\libt f"] & \End_B \libt \End^A_B \arrow[r] & \End^A_B\ ,
\\
& \I \arrow[r,"\cong"] &  \I \boxtimes \I 
	\arrow[r,"\beta\boxtimes f"] & \End_B \boxtimes \End^A_B \arrow[r] & \End^A_B\ ,
\\
	f \rhd \alpha  \ar[r,phantom,":" description] &
	\oC \arrow[r,"\Cop{(*)}{}"] &  \calC \ribt \oC 
	\arrow[r,"f\ribt \alpha"] & \End^A_B \ribt \End_A \arrow[r] & \End^A_B \ ,\\
	& \I \arrow[r,"\cong"] &  \I \boxtimes \I 
	\arrow[r,"f\boxtimes \alpha"] & \End^A_B \boxtimes \End_A \arrow[r] & \End^A_B\ ,
	\end{tikzcd}
	\]
where the rightmost arrows are given by the usual composition of functions.	
\end{definition}

\begin{proposition}\label{prop:infmor}
The data of an $\infty$-morphism $F : (\S\calC \square \S A, d_\alpha) \to (\S\calC \square \S B, d_\beta)$ 
is equivalent to the data of a morphism 
$f : \calC \to  \End^A_B$ 
of $\Sy$-bimodules satisfying 
\begin{equation}\label{eq:Morph}
\partial (f)=
f  \rhd \alpha - \beta \lhd f\ .
\end{equation}
\end{proposition}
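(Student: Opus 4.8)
The strategy is to unfold both sides of the desired equivalence using the universal properties established in \cref{lem:bifree} and the explicit formula \eqref{eq:dvarphi}, and then to compare the two differentials via this dictionary. First I would invoke \cref{lem:bifree} to translate a morphism $F$ of monoid $\S\calC$-comodules into a map $f : \calC \to \End^A_B$ of $\Sy$-bimodules, or equivalently (by Point~(2) and Point~(4) of \cref{lem:LEMMA}) into a map $f : \calC \boxtimes A \to \S B$; this is a bijection on the underlying graded objects and already accounts for the ``data'' part of the statement. It then remains to show that, under this correspondence, the chain-map condition $F\circ(d_{\S\calC\sq\S A}+d_\alpha)=(d_{\S\calC\sq\S B}+d_\beta)\circ F$ is equivalent to equation~\eqref{eq:Morph}.

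Second, I would split the chain-map equation according to the part of the differential coming from the internal differentials and the part coming from $d_\alpha$, $d_\beta$. The terms $F\circ d_{\S\calC\sq\S A}$ and $d_{\S\calC\sq\S B}\circ F$, traced through the isomorphisms of \cref{lem:bifree}, produce precisely the two halves of $\partial(f)$ (the coproperad differential $d_\calC$ contributing one term and the differentials $d_A$, $d_B$ on $\S A$, $\S B$ contributing the others), exactly as in the proof of \cref{prop:isodgLie}; the fact that $F$ is required to be a morphism of monoid $\S\calC$-comodules (not just of $\Sy$-bimodules) is what forces $F$ to be determined by $f$ and what makes these terms close up correctly. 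The remaining terms $F\circ d_\alpha$ and $d_\beta \circ F$ are the heart of the computation: using the explicit formula~\eqref{eq:dvarphi} for $d_\alpha$ (resp. $d_\beta$), the fact that $F$ is a comodule morphism over $\S\calC$, and the coassociativity of $\Delta$, one computes that the composite $F\circ d_\alpha$, projected onto the cogenerators via $\S(\eps)\sq\id$ and restricted to $\calC\boxtimes A$, is given by first applying $\Cop{(*)}{}$ to split off one copy of $\calC$ on the right, then applying $f$ to the factor landing in $A$ and $\alpha$ to the distinguished copy of $\calC$, and finally composing functions --- that is, it is exactly $f\rhd\alpha$. Symmetrically, $d_\beta\circ F$ yields $\beta\lhd f$, the left copy of $\calC$ being split off by $\Cop{}{(*)}$ because $d_\beta$ acts on the outer $\S\calC$-part after $F$ has been applied. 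Collecting signs, the chain-map equation becomes $\partial(f)=f\rhd\alpha-\beta\lhd f$.

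The main obstacle is the bookkeeping in the last step: one must carefully track, in the cofree monoid $\S\calC$-comodule $\S\calC\sq\S A\cong\S(\calC\boxtimes A)$, which copy of $\calC$ produced by $\S(\Delta)$ ``survives'' after applying the counit $\eps$ everywhere but one place, and verify that this matches the one-sided infinitesimal decomposition maps $\Cop{}{(*)}$ and $\Cop{(*)}{}$ rather than the two-sided $\Delta_{(1,1)}$. This is where the asymmetry between $A$ and $B$ enters: on the source side the distinguished $\calC$ sits \emph{below} the map $f$ (whence $f\rhd\alpha$, using $\Cop{(*)}{}$), while on the target side it sits \emph{above} $f$ in the outer $\S\calC$-comodule structure (whence $\beta\lhd f$, using $\Cop{}{(*)}$). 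The requirement in the definition of quasi-bifree comodule that bidifferentials vanish on $A$, together with the coaugmentation of $\calC$, is exactly what guarantees that no lower-order terms spoil this identification, just as in the proof of \cref{prop:isodgLie}. Once this matching is verified, the signs are routine and the equivalence follows.
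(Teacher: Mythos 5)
Your proposal is correct and follows essentially the same route as the paper: the bijection on underlying data via \cref{lem:bifree}, the reduction of the chain-map condition to the (co)generators $\calC\boxtimes A$ using the comodule-morphism and biderivation properties, and the four-term matching of the internal differentials with $\partial(f)$ and of $F\circ d_\alpha$, $d_\beta\circ F$ with $f\rhd\alpha$, $\beta\lhd f$ via the explicit formulas for $d_\alpha$ and $F$. The only quibble is cosmetic: in the paper's graphical convention (inputs drawn on top) the distinguished vertex carrying $\alpha$ sits on the \emph{top} level, above the level carrying $f$, rather than below it as you say, but your identification of the one-sided decomposition maps $\Cop{(*)}{}$, $\Cop{}{(*)}$ and of which action corresponds to which side of the composition is exactly right.
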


\begin{proof}
Under \cref{lem:bifree}, the data of a  morphism $F : \S\calC \square \S A \to \S\calC \square \S B$ is equivalent to the data of a map of $\Sy$-bimodules $f : \calC \to \End^A_B$, explicitly given by 
\[\begin{tikzcd}[column sep=large]
	f\ : \ \calC \boxtimes A  \subset \S (\calC \boxtimes A)\cong \S \calC \sq \S A \ar[r,"F"] &
	\S \calC \sq \S B \ar[r,"\S(\eps) \sq \id"] &
	\S\I \sq \S B\cong \S B\ .
\end{tikzcd}\]
In the other way round, one recovers $F$ from $f$ by
\[\begin{tikzcd}[column sep=large]
F\ : \ 	\S \calC \sq \S A  \ar[r,"\S(\Delta)\sq \id"] &
	\S \calC \sq \S \calC \sq \S A \cong \S\calC \sq \S(\calC \boxtimes A)  \ar[r,"\S \calC \sq \S(f)"] &
	\S\calC \sq \S(\S B) \ar[r,"\id\sq \widetilde{\nu}"] & \S\calC \sq \S B \ .
\end{tikzcd}\]
It remains to show that the relation $\left(d_{\S\calC\sq \S B}+d_\beta\right) \circ F- F \circ \left(d_{\S\calC \sq \S A}+d_\alpha\right)=0$
is equivalent to the relation $\partial (f)-
f  \rhd \alpha + \beta \lhd f=0$ 
under these isomorphisms. Let us denote by $i : \calC \boxtimes A \hookrightarrow \S \calC \sq \S A$ the canonical inclusion.  Since $F$ is a morphism of monoid $\S\calC$-comodules and since $d_\alpha$ and $d_\beta$ are biderivations, the first relation holds if and only if the following composite vanishes 
\[(\S(\eps)\sq \id) \circ\left(\left(d_{\S\calC\sq \S B}+d_\beta\right) \circ F- F \circ \left(d_{\S\calC \sq \S A}+d_\alpha\right)\right)\circ i =0\ .\]
Under the isomorphism $\Hom_{\Sy}(\calC\boxtimes A, \S B)\cong \Hom_{\Sy}\big(\calC , \End^A_B\big)$, we claim  the following correspondances between the various terms 
\begin{eqnarray*}
-(\S(\eps)\sq \id) \circ F \circ d_{\S\calC\sq \S A}  \circ i
&\longleftrightarrow&  -f \circ d_\calC - \partial_A \circ f   \\
(\S(\eps)\sq \id)  \circ d_{\S\calC\sq \S B} \circ F  \circ i
&\longleftrightarrow& \partial_B \circ f\\
-(\S(\eps)\sq \id) \circ F \circ d_\alpha  \circ i
&\longleftrightarrow&    -f\rhd \alpha \\
(\S(\eps)\sq \id)  \circ d_\beta \circ F  \circ i
&\longleftrightarrow& \beta \lhd f\ ,
\end{eqnarray*}
where $\partial_A$ and $\partial_B$ stand respectively for the part of the differential of $\End_B^A$ made up of $d_A$ and $d_B$. The second correspondance relies on $\eps \circ d_\calC=0$. 
The sum of the first two terms on the right-hand side is equal to $\partial(f)$\ .
The first two correspondences only deal with the internal differentials of $\calC$, $A$ and $B$, in contrast with the other two which involve the algebraic structures $\alpha$ and $\beta$. The first correspondence is clear and the second one relies on $\eps \circ d_\calC=0$.

To get the third correspondance, one can apply the explicit formula \eqref{eq:dvarphi} using $\alpha$ for the biderivation $d_\alpha$ and the above explicit formula using $f$ for $F$. 
The composite on the left-hand side
 amounts to first applying the coproduct $\Delta$, then applying $f$ to every vertex of the bottom level and applying $\eps$ to every vertex except one which is mapped under $\alpha$ at the top level; this is nothing but the right action $f \rhd \alpha$. 
One can proceed similarly for the fourth correspondance. 
\end{proof}

\begin{remark}
As in the operadic case the operator $\lhd$ defines a left $\L_\infty$-module structure of the dg Lie algebra 
$\Hom_{\Sy}\big(\calC, \End_B\big)$ on 
$\Hom_{\Sy}\big(\calC, \End^A_B\big)$
and  the operator $\rhd$ defines a right $\L_\infty$-module structure of the dg Lie algebra 
$\Hom_{\Sy}\big(\calC, \End_A\big)$ on 
$\Hom_{\Sy}\big(\calC, \End^A_B\big)$. Equivalently, they endow 
\[ \Hom_{\Sy}\big(\calC, \End_B\big) \oplus 
\Hom_{\Sy}\big(\calC, \End^A_B\big)
\oplus \Hom_{\Sy}\big(\calC, \End_A\big)
\]
with an $\L_\infty$-algebra structure which extends the dg Lie algebra structures on 
$\Hom_{\Sy}\big(\calC, \End_A\big)$ and $\Hom_{\Sy}\big(\calC, \End_B\big)$. 
Given a Maurer--Cartan element $\alpha+\beta$, one can twist the above $\L_\infty$-algebra. The solutions concentrated in $f\in \Hom_{\Sy}\big(\calC, \End^A_B\big)$ to its Maurer--Cartan equation  
is equal to \cref{eq:Morph}. The advantage of such an interpretation is that it allows one to apply to $\infty$-morphisms of homotopy gebras all results and methods of the general deformation theory of $\L_\infty$-algebras, like the obstruction theory developed in \cref{sec:Obstruction}. 
With this interpretation, we will use the integration theory of $\L_\infty$-algebras to enrich simplicially the category 
of $\Cobar\calC$-gebras  with $\infty$-morphisms  
in a forthcoming paper. 
\end{remark}

\begin{proposition}[Naturality of $\infty$-morphisms]\label{prop:NatInfMorph}
Let $G : \calC \to \calD$ be a morphism of conilpotent dg coproperads, let 
$\alpha : \calD \to \End_A$ and $\beta : \calD \to \End_B$ be two $\Cobar \calD$-gebra structures, and let $f : \calD \to \End^A_B$ be an $\infty$-morphism from $\alpha$ to $\beta$. The composite 
\[
\begin{tikzcd}
Gf\ :  \ \calC \arrow[r,"G"] & 
\calD  \arrow[r,"f"]  & \End^A_B
\end{tikzcd}
\]
defines an $\infty$-morphism from the $\Cobar \calC$-gebra $\alpha G$ on $A$ to the $\Cobar \calC$-gebra $\beta G$ on $B$\ .
\end{proposition}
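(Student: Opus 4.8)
The plan is to verify that the composite $Gf : \calC \to \End^A_B$ satisfies the defining relation \eqref{eq:Morph} of \cref{prop:infmor}, namely $\partial(Gf) = (Gf) \rhd (\alpha G) - (\beta G) \lhd (Gf)$, given that $f$ satisfies $\partial(f) = f \rhd \alpha - \beta \lhd f$ as a map on $\calD$. The key observation is that a morphism of conilpotent dg coproperads $G : \calC \to \calD$ is by definition compatible with the decomposition maps, hence with the infinitesimal decomposition maps and with the left/right infinitesimal decomposition maps $\Cop{}{(*)}$ and $\Cop{(*)}{}$. That is, $\Cop{}{(*)}^{\calD} \circ G = (G \libt G) \circ \Cop{}{(*)}^{\calC}$ and similarly for $\Cop{(*)}{}$, where $G \libt G$ denotes the induced map $\oC \libt \calC \to \oD \libt \calD$; moreover $G$ commutes with the internal differentials $d_\calC$ and $d_\calD$.

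Granting this, the argument is a routine diagram chase. First I would record that $\partial(Gf) = \partial(f \circ G)$, and since $G$ is a chain map, $\partial(f \circ G) = \partial(f) \circ G$. Substituting the hypothesis on $f$ gives $\partial(Gf) = (f \rhd \alpha)\circ G - (\beta \lhd f)\circ G$. Next I would unfold the definitions of the left and right actions: $(f \rhd \alpha)\circ G$ is, on $\oC$, the composite $\oC \xrightarrow{\Cop{(*)}{}^{\calC}} \calC \ribt \oC \xrightarrow{G \ribt G} \calD \ribt \oD \xrightarrow{f \ribt \alpha} \End^A_B \ribt \End_A \to \End^A_B$, using the compatibility of $G$ with $\Cop{(*)}{}$. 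But $(f \ribt \alpha)\circ(G \ribt G) = (fG) \ribt (\alpha G) = (Gf) \ribt (\alpha G)$, so this composite is precisely $(Gf) \rhd (\alpha G)$. The same computation with $\Cop{}{(*)}$ in place of $\Cop{(*)}{}$ shows $(\beta \lhd f)\circ G = (\beta G) \lhd (Gf)$. The $\I$-component is handled identically using the canonical isomorphism $\I \cong \I \boxtimes \I$, which $G$ respects because it is a coproperad morphism. Combining, $\partial(Gf) = (Gf) \rhd (\alpha G) - (\beta G) \lhd (Gf)$, which by \cref{prop:infmor} is exactly the statement that $Gf$ is an $\infty$-morphism from $\alpha G$ to $\beta G$.

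The only genuine point requiring care — and thus the main (modest) obstacle — is checking that $\alpha G$ and $\beta G$ are indeed $\Cobar\calC$-gebra structures, i.e. genuine twisting morphisms in $\Tw(\calC, \End_A)$ and $\Tw(\calC, \End_B)$; this is where one uses that $G$ is a morphism of \emph{dg} coproperads so that the Maurer--Cartan equation $\partial(\alpha) + \alpha \star \alpha = 0$ pulls back along $G$ to $\partial(\alpha G) + (\alpha G)\star(\alpha G) = 0$, together with the vanishing of the composite with the coaugmentation. Alternatively, and perhaps more cleanly, one can phrase the entire proof on the level of bifree monoid $\S\calC$-comodules: a coproperad morphism $G$ induces a functor $\mathsf{mon}\text{-}\S\calD\text{-}\mathsf{comod} \to \SCcomod$ via corestriction $\delta \mapsto (\S G \sq \id)\circ \delta$, and one checks this functor sends the quasi-bifree object $(\S\calD \sq \S A, d_{\S\calD\sq\S A}+d_\alpha)$ to $(\S\calC \sq \S A, d_{\S\calC\sq\S A}+d_{\alpha G})$ and the $\infty$-morphism $F : \S\calD\sq\S A \to \S\calD\sq\S B$ to a morphism $\S\calC\sq\S A \to \S\calC\sq\S B$ of dg monoid $\S\calC$-comodules; functoriality of all constructions involved then gives the result immediately. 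I would present the diagram-chase version as the primary proof and perhaps remark on the functorial reformulation.
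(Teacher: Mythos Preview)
Your proof is correct and follows essentially the same approach as the paper: both verify the characterising equation of \cref{prop:infmor} by showing that $\partial(fG)-(fG)\rhd(\alpha G)+(\beta G)\lhd(fG)=\big(\partial(f)-f\rhd\alpha+\beta\lhd f\big)\,G=0$, using that a morphism of dg coproperads commutes with the differentials and the (infinitesimal) decomposition maps. The paper compresses your diagram chase into a single displayed equation and likewise remarks that one can alternatively argue directly from the definition via \cref{prop:EndofRosetta}, which is precisely your functorial reformulation in terms of bifree monoid $\S\calC$-comodules.
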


\begin{proof}
The natural bijections in the Rosetta Stone \cref{thm:CompleteRosetta} show that $\alpha G$ and $\beta G$ are $\Cobar \calC$-gebra structures. The fact that $Gf$ is an $\infty$-morphism can be proved directly from the definition and the natural bijection given in \cref{prop:EndofRosetta}. One can also prove it using the characterisation given in \cref{prop:infmor}: 
\begin{align*}
\partial(fG)-(fG)\rhd (\alpha G) + (\beta G)\lhd (fG)=
\big(\partial(f)-f\rhd \alpha + \beta \lhd f\big)\, G=0\ , 
\end{align*}
since $G$ is a morphism of dg coproperads.
\end{proof}

\begin{proposition}
Under the above isomorphism, the composite $G\circ F$ of two $\infty$-morphisms is given by 
\[\begin{tikzcd}
g\circledcirc f \ : \ \calC \arrow[r,"\Delta"] & 
\calC \boxtimes \calC \arrow[r,"g\boxtimes f"]  & 
\End^B_C \boxtimes \End_B^A \arrow[r]  & 
\End_C^A \  .
\end{tikzcd}\]
\end{proposition}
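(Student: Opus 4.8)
The plan is to run the same dictionary as in the proof of \cref{prop:infmor}: pass from the two $\infty$-morphisms to their generating data, compose on the comodule side, and read off the generating datum of the composite. Write $F\colon\S\calC\sq\S A\to\S\calC\sq\S B$ for the morphism of dg monoid $\S\calC$-comodules corresponding to $f\colon\calC\to\End^A_B$ and $G\colon\S\calC\sq\S B\to\S\calC\sq\S C$ for the one corresponding to $g\colon\calC\to\End^B_C$. The composite $G\circ F$ is again such a morphism, hence by \cref{lem:bifree} corresponds to a unique map $\calC\to\End^A_C$, and the claim is that it equals $g\circledcirc f$. Recall from the proof of \cref{lem:bifree} that this bijection sends a comodule morphism $H$ to the composite of $H$ with the counit projection $\S(\eps)\sq\id$, restricted to the generators $\calC\boxtimes A\hookrightarrow\S\calC\sq\S A$ and reinterpreted through $\calC\boxtimes A\cong\calC\sq\S A$ and $\End^A_C\cong\hom(\S A,\S C)$. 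Applied to $H=G\circ F$ and using that $(\S(\eps)\sq\id)\circ G$ is exactly the corestriction $\overline{G}$ of $G$ --- which, by the free-$\otimes$-monoid step of that same proof, is the multiplicative extension $\S(\calC\boxtimes B)\to\S C$ of $g$ --- the generating datum of $G\circ F$ is represented by $\overline{G}\circ F|_{\calC\boxtimes A}\colon\calC\boxtimes A\to\S C$.

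It then remains to unwind this map. A generator of $\calC\boxtimes A\cong\calC\sq\S A$ (\cref{lem:LEMMA}(2)) consists, by connectedness, of a single $\calC$-vertex $c$ with copies of $A$ plugged into its inputs. The reconstruction formula for $F$ from the proof of \cref{prop:infmor} first applies $\S(\Delta)$ to $c$, producing $\Delta(c)\in\calC\boxtimes\calC$; it then applies the multiplicative extension of $f$ to the top stratum (each top vertex $\mu_j$ together with its $A$-inputs being again a single-vertex generator) while leaving the bottom stratum $\theta_1,\dots,\theta_b$ fixed, landing in $\S\calC\sq\S B\cong\S(\calC\boxtimes B)$; finally $\overline{G}$ applies $g$ to each bottom piece $\theta_i$, whose $B$-inputs are the outputs of the $f(\mu_j)$'s. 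Since $\overline{G}$ and the extension of $f$ are multiplicative and $\Delta(c)$ is a two-level graph, this is, term by term over the summands of $\Delta(c)$, exactly the composition of functions of the $g(\theta_i)$'s with the $f(\mu_j)$'s along the internal edges --- i.e.\ the composite $\calC\xrightarrow{\Delta}\calC\boxtimes\calC\xrightarrow{g\boxtimes f}\End^B_C\boxtimes\End^A_B\to\End^A_C$. The degenerate summand $\I\subset\calC$, where $\Delta$ is the canonical isomorphism $\I\cong\I\boxtimes\I$, gives on both sides the composite of the restrictions of $g$ and $f$ to $\I$ and matches immediately.

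The conceptual content is minimal; the only step requiring care --- and the closest thing to an obstacle --- is this last identification, namely checking that the chain of canonical identifications $\S\calC\sq\S\calC\sq\S A\cong\S\calC\sq\S(\calC\boxtimes A)$ and $\S\calC\sq\S B\cong\S(\calC\boxtimes B)$, together with the symmetric-group actions on inputs and outputs and the translation between $\Hom_\Sy(\calC\boxtimes{-},\S{-})$ and $\Hom_\Sy(\calC,\End^{-}_{-})$, matches ``$f$ on the top stratum, $g$ on the bottom stratum of $\Delta(c)$'' with the composition-of-functions map $\End^B_C\boxtimes\End^A_B\to\End^A_C$, signs included. This is the same bookkeeping as the third and fourth correspondences in the proof of \cref{prop:infmor}, so I expect it to be notation-heavy but routine, with no genuine difficulty.
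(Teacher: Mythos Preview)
Your proposal is correct and follows essentially the same approach as the paper. The paper's own proof is a single sentence: it asserts that, using the formulas at the beginning of the proof of \cref{prop:infmor}, the composite $(\S(\eps)\sq\id)\circ G\circ F\circ i$ equals $g\circledcirc f$ under the isomorphism $\Hom_\Sy(\calC\boxtimes A,\S B)\cong\Hom_\Sy(\calC,\End^A_B)$; your write-up simply unpacks this assertion in detail.
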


\begin{proof}
This is a direct corollary of the formulas given at the beginning of the proof of \cref{prop:infmor}. Using the same notations, the composite 
$(\S(\eps)\sq \id) \circ  G \circ F  \circ i$ is equal to 
$g\circledcirc f$
under the isomorphism $\Hom_{\Sy}(\calC\boxtimes A, \S B)\cong \Hom_{\Sy}\big(\calC , \End^A_B\big)$. 
\end{proof}

\begin{remark}
In a forthcoming paper, we will show that the 2-colored dg properad which encodes the data of  two $\Cobar \calC$-gebra structures related by a (strict) morphism admits a quasi-free (cofibrant) resolution, which is actually the 
2-colored dg properad which encodes the data of  two $\Cobar \calC$-gebra structures related by a $\infty$-morphism. Such a result gives a first homotopical justification for the present definition of $\infty$-morphisms. 
\end{remark}

\subsection{Infinity-isomorphisms}
Since we require the dg coproperad $\calC$ to be coaugmented, one can single out a first component 
\[\begin{tikzcd}
f_{(0)} \ : \ \I  \arrow[r,hook] &  \I \oplus \oC \cong \calC  \arrow[r,"f"] & \End^A_B 
\end{tikzcd}\]
of every $\infty$-morphism.

\begin{definition}[$\infty$-isomorphism]
An \emph{$\infty$-isomorphism} is an $\infty$-morphism $f : A \rightsquigarrow B$ whose first component $f_{(0)} : A \xrightarrow{\cong} B$ is an isomorphism. 
\end{definition}

\begin{theorem}\label{prop:Inverse}
When $\calC$ is a conilpotent coproperad, the class of $\infty$-isomorphisms is the class of isomorphisms of the category $\infty\textsf{-}\Cobar\calC\textsf{-}\mathsf{gebras}$. 
\end{theorem}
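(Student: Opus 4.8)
The plan is to prove both inclusions by passing to the underlying morphisms of monoid $\S\calC$-comodules, using the explicit formula for the composite of two $\infty$-morphisms together with the conilpotency of $\calC$.

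First I would dispatch the easy inclusion: an isomorphism of the category $\infty\textsf{-}\Cobar\calC\textsf{-}\mathsf{gebras}$ is an $\infty$-isomorphism. Under the dictionary of the proof of \cref{prop:infmor}, the identity $\infty$-morphism of $(A,\alpha)$ corresponds to $\eta\circ\eps\colon\calC\to\End_A$ (apply the counit, then the properadic unit). Hence, if an $\infty$-morphism $f\colon(A,\alpha)\rightsquigarrow(B,\beta)$ admits a two-sided inverse $g$, then $g\circledcirc f=\eta\circ\eps$ and $f\circledcirc g=\eta\circ\eps$; restricting these identities to $\I\subset\calC$ and using $\Delta|_\I\colon\I\cong\I\boxtimes\I$ turns the left-hand sides into $g_{(0)}\circ f_{(0)}$ and $f_{(0)}\circ g_{(0)}$, and the right-hand sides into $\id_A$ and $\id_B$. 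Thus $f_{(0)}$ is an isomorphism of dg vector spaces, i.e. $f$ is an $\infty$-isomorphism. (This half uses only that $\calC$ is coaugmented.)

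For the converse, suppose $f_{(0)}\colon A\to B$ is an isomorphism. The crucial step is to forget the differentials and show that the associated morphism $F\colon\S\calC\sq\S A\to\S\calC\sq\S B$ of monoid $\S\calC$-comodules is invertible. By \cref{lem:bifree} this is the same as producing $g\in\Hom_\Sy(\calC,\End^B_A)$ with $g\circledcirc f=\eta\circ\eps=f\circledcirc g$, and I would build $g$ by induction on the weight of $\calC$, using the exhaustive filtration afforded by the conilpotency of $\calC$. One sets $g_{(0)}\coloneqq f_{(0)}^{-1}$; for $c\in\oC$ of weight $n$, isolating the primitive part of $\Delta(c)$ rewrites the desired identity $(g\circledcirc f)(c)=0$ as $g(c)\circ f_{(0)}^{\otimes\bullet}+g_{(0)}^{\otimes\bullet}\circ f(c)+(\text{terms involving }g\text{ in weight}<n)=0$, which can be solved for $g(c)$ precisely because $f_{(0)}$ is invertible. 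This produces $g$ with $g\circledcirc f=\eta\circ\eps$, and since $g_{(0)}$ is again an isomorphism the same recipe yields $h$ with $h\circledcirc g=\eta\circ\eps$, whence $h=h\circledcirc(g\circledcirc f)=(h\circledcirc g)\circledcirc f=f$ by the associativity and unitality of $\circledcirc$, so $f\circledcirc g=\eta\circ\eps$ as well. Through \cref{lem:bifree} and the composition formula, $g$ then corresponds to a morphism $G$ of monoid $\S\calC$-comodules with $G\circ F=\id$ and $F\circ G=\id$. (Equivalently, one may argue that $F$ preserves the exhaustive, bounded-below weight filtration on $\S\calC\sq\S A\cong\S(\calC\boxtimes A)$ induced by that of $\calC$, that the induced map on the associated graded is $\S(\id_\calC\boxtimes f_{(0)})$ and hence an isomorphism, and conclude that $F$ is invertible.)

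It remains to promote $G$ to an $\infty$-morphism. Since $F$ is a morphism of \emph{dg} monoid $\S\calC$-comodules it intertwines the total differentials $d_{\S\calC\sq\S A}+d_\alpha$ and $d_{\S\calC\sq\S B}+d_\beta$, and conjugating this relation by $G=F^{-1}$ shows that $G$ intertwines them too; so $G$ is a morphism of dg monoid $\S\calC$-comodules between the quasi-bifree comodules associated to $(B,\beta)$ and $(A,\alpha)$, that is an $\infty$-morphism in the sense of \cref{def:InftyMor}, and it is a two-sided inverse of $f$. I anticipate that the main difficulty lies in the invertibility of $F$ after forgetting differentials: one has to set up the weight filtration coming from the conilpotency of $\calC$, verify that $F$ respects it and that its associated graded is controlled by $f_{(0)}$, and invoke conilpotency to ensure the filtration is exhaustive and bounded below — this boundedness being exactly the ``$(1-x)^{-1}=1+x+x^2+\cdots$'' mechanism highlighted in the introduction. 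The $\Sy$-equivariance of the inductively constructed inverse, while routine, should also be checked.
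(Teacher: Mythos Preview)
Your argument is correct and takes a genuinely different route from the paper's proof.

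The paper constructs the inverse by an explicit closed formula: it sets $(f^{-1})_{(0)}\coloneqq f_{(0)}^{-1}$ and, on $\oC$, defines $f^{-1}$ as the composite $\widetilde{\G}^c(f)\circ\Delta_{\oC}$, where $\widetilde{\G}^c(f)$ applies $f$ to every vertex, labels every edge (including the leaves) by $f_{(0)}^{-1}$, and multiplies by $(-1)^{|\gg|}$. It then checks directly, via a graph-combinatorial cancellation argument using the comonadic coalgebra diagram \eqref{eq:CoalComonad}, that this is a two-sided inverse \emph{and} satisfies \cref{eq:Morph}. Your approach instead forgets the differentials, builds the left inverse $g$ recursively along the coradical filtration guaranteed by conilpotency, upgrades it to a two-sided inverse by the usual $h=h\circledcirc(g\circledcirc f)=(h\circledcirc g)\circledcirc f=f$ trick, and finally observes that the inverse of a dg map is automatically dg---so you never have to verify \cref{eq:Morph} for $g$ by hand. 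The paper's method buys an explicit formula for $f^{-1}$ (in the spirit of the later \cref{thm:HTT}); yours is more structural and sidesteps all the sign bookkeeping. One point to make precise in your write-up: the ``weight'' filtration you invoke is the coradical filtration coming from the comonadic decomposition $\Delta_{\oC}:\oC\to\G^c(\oC)=\bigoplus_{k\geq 1}\G^c(\oC)^{(k)}$, whose exhaustiveness follows because this direct sum forces $\Delta_{\oC}(c)$ to involve only finitely many graph sizes for each $c$; this is exactly where conilpotency enters and should be stated explicitly rather than left as ``weight $n$''.
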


\begin{proof}
Let $f$ be an $\infty$-isomorphism. We consider the map $f^{-1} : \calC \to \End^B_A$ defined by  $\left(f^{-1}\right)_{(0)}\coloneqq f_{(0)}^{-1}$ and by  
\begin{equation}
\begin{tikzcd}
\oC  \arrow[r,"\Delta_{\oC}"] & \G^c\Big(\oC\Big)   \arrow[r,"\widetilde{\G}^c(f)"] & \End^B_A\ ,
\end{tikzcd}
\end{equation}
where the image of an element of $\gg\Big(\oC\Big)$, for $\gg\in \bGs$, under the map $\widetilde{\G}^c(f)$ amounts to applying $f$ to all the vertices, to labeling all the edges including the leaves by $f_{(0)}^{-1}$ and by multiplying the result with the coefficient $(-1)^{|\gg|}$, where $|\gg|$ stands for the number of vertices. We claim that $f^{-1}$ is an $\infty$-morphism inverse to $f$. 

Let us first prove that $f^{-1}$ is right-inverse to $f$. The restriction of the composite 
$f^{-1}\circledcirc f$ to $\I$ amounts to sending $\id$ to $f^{-1}_{(0)}\circ f_{(0)}=\id_A$.  By definition, the restriction of the composite $f^{-1}\circledcirc f$ to $\oC$ amounts to considering the composite $\left(\Delta_{\oC}\boxtimes \id\right)\circ \Delta$ and then labeling the top vertices by $f$, the bottom vertices by $f^{-1}$ and add a sign corresponding to the total number of bottom vertices. By the definition of comonadic coproperads, the composite $\left(\Delta_{\oC}\boxtimes \id\right)\circ \Delta$ is equal to the bottom-left part of the diagram \eqref{eq:CoalComonad} composed with the projection onto the module made up of partitioned graphs that can be organised with a top level of vertices. The commutativity of this diagram shows that this composite is equal to $\Delta_{\oC}$ followed by the sum of all the possible ways to partition the vertices of a graph such that there is one non-empty top level made up of vertices. 
\begin{figure}[h!]
\begin{tikzpicture}
	\draw[thick] (0.2,-0.5) -- (0.2,5.5);
	\draw[thick] (0.5,0) -- (0.5,2);
	\draw[thick] (1,-0.5) -- (1,0) -- (1.5,1) --  (1,2) -- (1,5.5);
	\draw[thick] (1.5,4.5) -- (1.5,5.5);
	\draw[thick] (1.5,-0.5) -- (1.5,0) -- (2,1);
	\draw[thick] (6.2,-0.5) -- (6.2,1.5) -- (5.7,2);
	\draw[thick] (6.5,-0.5) -- (6.5,1.5) -- (6,2) ;
	\draw[thick] (6.3,2) -- (6.3,5.5);
	\draw[thick] (7.5,-0.5) -- (7.5,5.5);
	\draw[thick] (8.5,2) -- (8.5,5.5);
	\draw[thick]  (1.7,2.5) to[out=90,in=270] (5.8,4) -- (5.8,5.5);
	\draw[draw=white,double=black,double distance=2*\pgflinewidth,thick]  (5.5,2.5) to[out=90,in=270] (2.5,4) -- (2.5,5.5);
	\draw[dotted] (1.2,1.2) circle (1.9cm);
	\draw[dotted] (6.9,1.4) circle (2.1cm);
	\draw[dotted] (1.9,4.7) ellipse (1.4cm and 0.7cm);
	\draw[dotted] (6.7,4.7) ellipse (1.4cm and 0.7cm);
	\draw[fill=white] (0,0) rectangle (2,0.5);
	\draw[fill=white] (1,1) rectangle (3,1.5);
	\draw[fill=white] (0,2) rectangle (2,2.5);
	\draw[fill=white] (6,0) rectangle (8,0.5);
	\draw[fill=white] (6,1) rectangle (8,1.5);
	\draw[fill=white] (5.2,2) rectangle (6.5,2.5);
	\draw[fill=white] (7.2,2) rectangle (8.7,2.5);
	\draw[fill=white] (0.7,4.5) rectangle (3,5);
	\draw[fill=white] (5.5,4.5) rectangle (7.8,5);
	\draw[<->,red,thick] (1,2.2) -- (2,4.2);
	\draw[red] (1.6,3.4) node [left] {$\pm$};
\end{tikzpicture}
\end{figure} 

For any reduced graph $\gg\in \bGs$, let us consider one top vertex. It can appear twice in the image of these composites: either on the top level or on a sub-graph located on the bottom level. The resulting final operation in $\End_A$ is the same up to a minus sign.  This proves that the restriction of $f^{-1}\circledcirc f$ to $\oC$ is trivial.
The proof that $f^{-1}$ is left-inverse to $f$ is symmetric. 

Let us now prove in a similar way that $f^{-1}$ is an $\infty$-morphism. Using the notations and the result of \cref{prop:infmor}, we have to show that $\partial(f^{-1})=f^{-1} \rhd\beta - \alpha \lhd f^{-1}$\ . On $\I$, the left-hand side amounts to $d_A \circ f_{(0)}^{-1}-f_{(0)}^{-1}\circ d_B$, which vanishes since $f_{(0)}$ is a chain map. The right-hand side also vanishes since $\alpha|_{\I}=0$ and $\beta|_{\I}=0$ by the definition of twisting morphisms. On $\oC$ now, the left-hand side is equal to 
\[\partial(f^{-1})=\partial^B_A\circ f^{-1} - f^{-1}\circ d_{\oC}=
\widetilde{\G}^c\left(f; \partial^B_A\circ f\right)\circ \Delta_{\oC}-
\widetilde{\G}^c\left(f; f\circ d_{\oC}\right)\circ \Delta_{\oC}=
\widetilde{\G}^c\left(f; \partial(f)\right)\circ \Delta_{\oC}
\ , \]
where $\partial^B_A$ stands for the differential of $\End_A^B$ and where $\widetilde{\G}^c(f;g)$ stands for the same map as $\widetilde{\G}^c(f)$ but where one applies $f$ to all vertices except one to which one applies $g$. 
We use \cref{eq:Morph}: $\partial (f)=
f  \rhd \alpha - \beta \lhd f$. It remains to show 
\begin{align*}
&\widetilde{\G}^c\left(f; f  \rhd \alpha\right)\circ \Delta_{\oC}=-\alpha \lhd \left(
\widetilde{\G}^c\left(f\right)\circ \Delta_{\oC}
\right)=-\alpha \lhd f^{-1} \quad  \text{and}\\
&\widetilde{\G}^c\left(f; \beta \lhd f\right)\circ \Delta_{\oC}=-\left(
\widetilde{\G}^c\left(f\right)\circ \Delta_{\oC}
\right)\rhd \beta=-f^{-1} \rhd \beta \ .
\end{align*}
We use the same argument as above. The left-hand side of the first relation amounts to applying first the composite 
$\G^c\left(\id;\Delta_{(*,1)}\right) \circ \Delta_{\oC}$, which is equal to the bottom-left part of the diagram \eqref{eq:CoalComonad} composed with the projection onto the module made up partitioned graphs where all the vertices form one block except for one block which contains a 2-level sub-graph with only one vertex on its top level. 
Then, one applies $\alpha$ to this vertex and $f$ to all the other vertices; all the edges, including the leaves, are labeled by $f_{(0)}^{-1}$ except for the edges in the block which are labeled by the identity. 
Using the commutativity of the diagram \eqref{eq:CoalComonad}, the composite $\G^c\left(\id;\Delta_{(*,1)}\right) \circ \Delta_{\oC}$ is equal to $\Delta_{\oC}$ followed by the sum of all the possible ways to partition the vertices of a graph under the above shape. 
\begin{figure}[h!]
	\begin{tikzpicture}
	\draw[thick] (2,-0.8) -- (2,.2) to[out=90,in=270] (2.5,7) -- (2.5,8);
	\draw[thick] (2.8,0.2) -- (4.3,3) --  (4.3,3.5) -- (3.4,5);
	\draw[thick] (4.6,3.5) --  (3.7,5) -- (3.7,5.5) -- (4.2,6) -- (4.6,6.5) -- (4.6,8) ;
	\draw[thick] (3.3,5.5) to[out=90,in=270] (3.1,7);
	\draw[thick] (4,6.5) -- (3.4, 7) -- (3.4,8);
	\draw[thick] (2.3,-.8) -- (2.3,0.2) -- (3.3,2);
	\draw[thick] (4.7,-.8) -- (4.7,.2) -- (4.2,2);
	\draw[thick] (5.3,-.8) -- (5.3,.2) -- (5.9,2) -- (5.9,2.5) -- (5.7,3);
	\draw[thick] (7.1,-.8) -- (7.1,.2) -- (6.7,2);
	\draw[thick] (7.7,-.8) -- (7.7,.2);
	\draw[thick] (5.7,3.5) -- (5.3,5) -- (5.3,8);
	\draw[thick] (6.2,5.5) -- (6.2,8);
	\draw[thick] (5.9,5.5) -- (5.9,6.5);
	\draw[thick] (5.9,7.5) -- (5.9,8);
	\draw[thick] (7,6.5) -- (7,8);
	\draw[fill=white](1.7,-0.3) rectangle (3.2,0.2);
	\draw[fill=white](4.3,-.3) rectangle (5.7,0.2);
	\draw[fill=white](6.8,-.3) rectangle (8,0.2);
	\draw[fill=white](3,2) rectangle (4.5,2.5);
	\draw[fill=white](5.5,2) rectangle (7,2.5);
	\draw[fill=white](4,3) rectangle (6,3.5);
	\draw (3.75,2.2) node {$f$};
	\draw (6.25,2.2) node {$f$};
	\draw[<->,red,thick] (6.5,0.6) -- (6.5,2.2);
	\draw[red] (6.5,1.1) node [right] {$\pm$};
	\draw (5,3.2) node {$\varphi$};
	\draw[dotted] (5,2.5) ellipse (2.2cm and 1.8cm);	
	\draw[fill=white](3,5) rectangle (4.1,5.5);
	\draw[fill=white](5,5) rectangle (6.5,5.5);
	\draw[fill=white](3.5,6) rectangle (5,6.5);
	\draw[fill=white](5.6,6) rectangle (7.3,6.5);
	\draw[fill=white](2.2,7) rectangle (3.7,7.5);
	\draw[fill=white](5,7) rectangle (6.5,7.5);
	\end{tikzpicture}
\end{figure} 
We claim that only the summands where $\alpha$ is applied to a bottom vertex of a graph survive. 
Since any graph can appear above this vertex, the sign implies that we get exactly $-\alpha\lhd f^{-1}$\ . 
When there is a non-trivial vertex below the one, called $v$, where $\alpha$ applies, there are two cases: either this vertex appears in the same block as $v$ or not. In the end, these two elements produce the same map in $\End_A^B$ but with a different sign, so they cancel. 
\end{proof}

An important class of $\infty$-isomorphisms is given by the following notion. 

\begin{definition}[$\infty$-isotopy]
An \emph{$\infty$-isotopy} is an $\infty$-morphism $f : (A,\alpha) \rightsquigarrow (A,\beta)$ whose first component $f_{(0)}=\id : A \to A$ is the identity. 
\end{definition}

This notion admits the following homotopical interpretation: the dg properad which encodes 
the data of  two $\Cobar \calC$-gebra structures on the same underlying dg module related by an $\infty$-isotopy
is a cylinder for the cobar construction $\Cobar \calC$. This implies that the equivalence relation defined by being $\infty$-isotopic is equivalent to the left homotopy equivalence of morphisms $\Cobar \calC \to \End_A$ in the model category of dg properads \cite[Appendix~A]{MerkulovVallette09I}, see \cite[Section~5.2.3]{Fresse09ter}. 
The methods of \cite[Section~8]{Fresse10ter} show that it is also equivalent to the usual homotopy equivalence: the existence of a zig-zag of (strict) quasi-isomorphisms. We refer the reader to a forthcoming paper for a detailed exposition. 

\medskip

As we will see in the sequel, the following weaker notion carries suitable homotopy properties. For instance, they admit a ``homology'' inverse, see \cref{thm:InvInfQI}. 

\begin{definition}[$\infty$-quasi-isomorphism]
An \emph{$\infty$-quasi-isomorphism} is an $\infty$-morphism $f : A \rightsquigarrow B$ whose first component $f_{(0)} : A \xrightarrow{\cong} B$ is a quasi-isomorphism. 
\end{definition}

\section{The homotopy transfer theorem}\label{sec:HTT}

\subsection{Properadic Van der Laan morphism}

\begin{definition}[Contraction]
A \emph{contraction} of a chain complex $(A, d_A)$ is another chain complex $(H,d_H)$ equipped with chain maps $i$ and $p$ and a 
 homotopy $h$ of degree $1$
  \[
\begin{tikzcd}
(A,d_A)\arrow[r, shift left, "p"] \arrow[loop left, distance=1.5em,, "h"] & (H,d_H) \arrow[l, shift left, "i"]
\end{tikzcd} \ ,
\]
satisfying 
\begin{align*}
pi=\id_H\ , \quad ip -\id_A = d_Ah+hd_A\ , \quad 
hi=0\ , \quad ph=0\ , \quad \text{and} \quad h^2=0 \ .
\end{align*}
\end{definition}

We denote the projection $\pi\coloneqq ip$ and the identity of $A$ simply by $\id\coloneqq \id_A$. 
For any positive integer $n$, we consider the following symmetric homotopies 
$$ h_n:=\frac{1}{n!}\sum_{\sigma \in \Sy_n}  \sum_{k=1}^n \left(
\id^{\otimes (k-1)} \otimes h \otimes \pi^{\otimes (n-k)}
\right)^\sigma$$
from $\pi^{\otimes n}$ to $\id^{\otimes n}$. We denote generically by capital letters collections of map indexed by integers, like $\H$ for the collection of homotopies $\{h_n\}_{n\geqslant 1}$ and respectively  by $\II$, $\PP$, $\Pi$, and $\Id$ for the collections of maps $\{i^{\otimes n }\}_{n\geqslant 1}$, $\{p^{\otimes n }\}_{n\geqslant 1}$, $\{\pi^{\otimes n }\}_{n\geqslant 1}$, and $\{\id^{\otimes n }\}_{n\geqslant 1}$ \ .

\medskip

Let us recall from \cref{rem:BarEnd} that the underlying $\Sy$-bimodule of the bar construction of the endomorphism properad $\Bar\End_A$ is given  by 
$\I\oplus \G^c\big(s\End_A\big)$. 
We consider the set $\GsLev$ of non-trivial directed connected graphs with levels, where each level contains only one vertex.
\[
\begin{tikzpicture}[scale=0.9,baseline=(n.base)]
\node (n) at (1,1.5) {}; %point base
\draw[dotted] (-3,0) -- (2,0);
\draw[dotted] (-3,1) -- (2,1);
\draw[dotted] (-3,2) -- (2,2);
\draw[dotted] (-3,3) -- (2,3);
\draw[white,fill=white] (0.2,-0.2) rectangle (0.4,0.2);
\draw[fill=black] (0,0) circle (3pt) node[right] {$\scriptstyle{v}_1$};
\draw[white,fill=white] (1.11,0.8) rectangle (1.45,1.2);
\draw[fill=black] (1,1) circle (3pt) node[right] {$\scriptstyle{v}_2$};
\draw[white,fill=white] (-0.9,1.9) rectangle (-0.5,2.1);
\draw[fill=black] (-1,2) circle (3pt) node[right] {$\scriptstyle{v}_3$};
\draw[white,fill=white] (0.2,2.8) rectangle (0.4,3.2);
\draw[fill=black] (0,3) circle (3pt) node[right] {$\scriptstyle{v}_4$};
\draw[thick] (0,0) to[out=60,in=270] (1,1) to[out=120,in=280] (0,3) to[out=320,in=90] (1,1) ;
\draw[thick] (0,0) to[out=120,in=270] (-1,2) to[out=40,in=240] (0,3);
\draw[thick] (-1,2) to[out=60,in=270] (-1,4) node {};
\draw[thick] (-1,2) to[out=120,in=270] (-2,4) node {};
\draw[thick] (-1,2) to[out=250,in=90] (-1.5,-1) node {};
\draw[thick] (0,3) to[out=100,in=270] (0,4) node {};
\draw[thick] (0,3) to[out=60,in=270] (1,4) node {};
\draw[thick] (0,3) to[out=120,in=270] (-0.5,4) node {};
\draw[thick] (0,0) to[out=250,in=90]  (-0.5,-1) node {};
\draw[thick] (0,0) to[out=290,in=90]  (0.5,-1) node {};
\end{tikzpicture}
\]

\begin{definition}[Leveled graph module]
The \emph{leveled graph  module} is defined by 
\[
\GLev(\M) \coloneqq \bigoplus_{\gg \in \GsLev} \gg(\M) \ , 
\]
where $g(\M)\subset (\I \oplus \M)^{\boxtimes k}$, where $k=|\gg|$ the number of vertices of $\gg$, is the module generated by leveled graphs whose vertices are labeled by elements of $M$.
\end{definition}

\begin{definition}[Levelization morphism]
The \emph{levelization morphism} 
\[\lev \ : \G^c(\M)  \longrightarrow \GLev(\M) \]
sends a graph to the sum of all the ways to put all the vertices on levels. 
\end{definition}

\begin{remark}
The levelization process amounts to considering all the ways to refine the partial order on the set of vertices given by the underlying directed graph into a total order. 
\end{remark}

\begin{definition}[Map $\PHI$] 
The degree $-1$ linear map 
 \[\PHI: \GLev\left(s\End_A\right) \to \End_H\]
is defined by removing all the suspensions $s$, labeling the top level of leaves by $\II$, labeling any intermediate level with $\H$, and labeling the bottom level of leaves by $\PP$: 
\begin{equation}\label{eq:dessin_PHI}
		\begin{tikzpicture}[scale=0.9,baseline=(n.base)]
			\node (n) at (1,1.5) {}; %point base
			\draw[dotted] (-3,0) -- (2,0);
			\draw[dotted] (-3,1) -- (2,1);
			\draw[dotted] (-3,2) -- (2,2);
			\draw[dotted] (-3,3) -- (2,3);
			\draw[white, fill=white] (.1,-.05) rectangle (.6,.05);
			\draw[fill=black] (0,0) circle (3pt) node[right] {$\scriptstyle{s f}_1$};
			\draw[white, fill=white] (1.1,.95) rectangle (1.6,1.05);
			\draw[fill=black] (1,1) circle (3pt) node[right] {$\scriptstyle{sf}_2$};
			\draw[white, fill=white] (-0.9,1.95) rectangle (-.35,2.05);
			\draw[fill=black] (-1,2) circle (3pt) node[right] {$\scriptstyle{sf}_3$};
			\draw[white, fill=white] (.1,2.95) rectangle (.6,3.05);
			\draw[fill=black] (0,3) circle (3pt) node[right] {$\scriptstyle{sf}_4$};
			\draw[thick] (0,0) to[out=60,in=270] (1,1) to[out=120,in=280] (0,3) to[out=320,in=90] (1,1) ;
			\draw[thick] (0,0) to[out=120,in=270] (-1,2) to[out=40,in=240] (0,3);
			\draw[thick] (-1,2) to[out=60,in=270] (-1,4) node {};
			\draw[thick] (-1,2) to[out=120,in=270] (-2,4) node {};
			\draw[thick] (-1,2) to[out=250,in=90] (-1.5,-1) node {};
			\draw[thick] (0,3) to[out=100,in=270] (0,4) node {};
			\draw[thick] (0,3) to[out=60,in=270] (1,4) node {};
			\draw[thick] (0,3) to[out=120,in=270] (-0.5,4) node {};
			\draw[thick] (0,0) to[out=250,in=90]  (-0.5,-1) node {};
			\draw[thick] (0,0) to[out=290,in=90]  (0.5,-1) node {};
		\end{tikzpicture}
		\quad \overset{\PHI}{\longmapsto} \quad
		\begin{tikzpicture}[scale=0.9,baseline=(n.base)]
			\node (n) at (1,1.5) {}; %point base
			\draw[fill=black] (0,0) circle (3pt) node[right] {$\scriptstyle{f}_1$};
			\draw[fill=black] (1,1) circle (3pt) node[right] {$\scriptstyle{f}_2$};
			\draw[fill=black] (-1,2) circle (3pt) node[right] {$\scriptstyle{f}_3$};
			\draw[fill=black] (0,3) circle (3pt) node[right] {$\scriptstyle{f}_4$};
			\draw[thick] (0,0) to[out=60,in=270] (1,1) to[out=120,in=280] (0,3) to[out=320,in=90] (1,1) ;
			\draw[thick] (0,0) to[out=120,in=270] (-1,2) to[out=40,in=240] (0,3);
			\draw[thick] (-1,2) to[out=60,in=270] (-1,4) node[fill=white] {$\scriptstyle{i}$};
			\draw[thick] (-1,2) to[out=120,in=270] (-2,4) node[fill=white] {$\scriptstyle{i}$};
			\draw[thick] (-1,2) to[out=250,in=90] (-1.5,-1) node[fill=white] {$\scriptstyle{p}$};
			\draw[thick] (0,3) to[out=100,in=270] (0,4) node[fill=white] {$\scriptstyle{i}$};
			\draw[thick] (0,3) to[out=60,in=270] (1,4) node[fill=white] {$\scriptstyle{i}$};
			\draw[thick] (0,3) to[out=120,in=270] (-0.5,4) node[fill=white] {$\scriptstyle{i}$};
			\draw[thick] (0,0) to[out=250,in=90]  (-0.5,-1) node[fill=white] {$\scriptstyle{p}$};
			\draw[thick] (0,0) to[out=290,in=90]  (0.5,-1) node[fill=white] {$\scriptstyle{p}$};
			\draw[fill=white] (-2.1,0.3) rectangle (1.5,0.7);
			\draw (-0.3,0.5) node {$\scriptstyle{{h}}_3$};
			\draw[fill=white] (-2.1,1.3) rectangle (1.5,1.7);
			\draw (-0.3,1.5) node {$\scriptstyle{{h}}_4$};
			\draw[fill=white] (-2.1,2.3) rectangle (1.5,2.7);
			\draw (-0.3,2.5) node {$\scriptstyle{{h}}_5$};
		\end{tikzpicture}\ .
\end{equation}
\end{definition}

\begin{remark}
Notice that the map $\PHI$ produces signs due to the application of the Koszul sign rule when permuting elements. 
 \end{remark}

\begin{theorem}\label{prop:VanDerLaan}
The map $\varphi$ in $\Hom_{\bbS}(\B\End_A,\End_H)$ defined on non-trivial elements by the composition 
\[\begin{tikzcd}
\G^c\big(s\End_A\big) \arrow[r,"\lev"] & \GLev\big(s\End_A\big)  \arrow[r,"\PHI"] & \End_H
\end{tikzcd}\]
and by $\varphi|_\I\coloneqq 0$ 
is a twisting morphism.
\end{theorem}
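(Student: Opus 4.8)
The plan is to verify that $\varphi$ solves the Maurer--Cartan equation $\partial(\varphi)+\varphi\star\varphi=0$ in the convolution Lie-admissible algebra $\Hom_{\bbS}(\Bar\End_A,\End_H)$; since $\varphi$ has degree $-1$ its differential there is $\partial(\varphi)=\partial_{\End_H}\circ\varphi+\varphi\circ(d_1+d_2)$, where $d_1$ and $d_2$ are the two pieces of the differential of $\Bar\End_A$. All four terms vanish on the summand $\I$ (because $\varphi|_{\I}=0$ and $d_1$, $d_2$, $\Delta_{(1,1)}$ respect $\I$), so it suffices to evaluate the equation on an arbitrary element $\gg$ of $\G^c(s\End_A)$. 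Through the factorisation $\varphi=\PHI\circ\lev$ I would rewrite each of $\partial_{\End_H}\varphi(\gg)$, $\varphi(d_1\gg)$, $\varphi(d_2\gg)$ and $(\varphi\star\varphi)(\gg)$ as a signed sum, indexed by the ways of placing the vertices of $\gg$ on levels, of explicit morphisms of $\End_H$ obtained by decorating a levelised graph with the maps $i$, $p$, the homotopies $h_{\ell}$, the projectors $\pi=ip$, and the vertex operations $f_v\in\End_A$ (possibly differentiated). This is the properadic incarnation of the Van der Laan morphism computation of the operadic homotopy transfer theorem.

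The computational heart is the identity
\[
\bigl[d_A^{\otimes n},h_n\bigr]=\pi^{\otimes n}-\id^{\otimes n}\,,
\]
valid for all $n\geqslant 1$. It follows from the contraction relation $ip-\id_A=d_Ah+hd_A$ together with $[d_A,\pi]=0$ and $[d_A,\id]=0$: applying the derivation $[d_A^{\otimes n},-]$ of the composition product to the constituents of $h_n$ replaces the unique $h$ in each summand by $\pi-\id$, and $\sum_{k=1}^{n}\bigl(\id^{\otimes(k-1)}\otimes(\pi-\id)\otimes\pi^{\otimes(n-k)}\bigr)$ telescopes to $\pi^{\otimes n}-\id^{\otimes n}$; averaging over $\Sy_n$ is harmless since both maps are $\Sy_n$-equivariant. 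Because $i$ and $p$ are chain maps, the term $\partial_{\End_H}\varphi(\gg)$ is obtained from $\PHI(\lev\,\gg)$ by pushing $d_H$ through the leaf labels, turning it into $d_A$ acting on the interior; being a derivation of the composition along the graph, this $d_A$ lands either on a vertex operation or on one of the homotopies $h_{\ell}$ sitting on an internal level.

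The matching then goes as follows. The contributions where $d_A$ hits a vertex $v$ carry $[d_A,f_v]$ in that slot, which is the internal differential of $\End_A$; summed over levelisations and vertices they cancel $\varphi(d_1\gg)$, up to the suspension signs built into $d_1$. The contributions where $d_A$ hits a homotopy $h_{\ell}$ on an internal level become, by the displayed identity, $\PHI$ of the same levelised graph with $h_{\ell}$ replaced there by $\pi^{\otimes\ell}-\id^{\otimes\ell}$. Inserting $\pi=ip$ on the $\ell$ edges crossing that level is exactly the operation of cutting the graph into its part below and its part above the level and composing the two $\PHI$-decorations along those edges, since the lower part ends with $p$'s and the upper part starts with $i$'s; reorganising over all levelisations and all cutting levels, the genuine such terms reproduce $-(\varphi\star\varphi)(\gg)$, using that a levelisation of $\gg$ respecting a cut is the same datum as a pair of levelisations of the two connected halves appearing in $\Delta_{(1,1)}\gg$. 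Dually, inserting $\id$ glues the two adjacent vertices along the edges joining them, one of the infinitesimal compositions contracted by $d_2$, and the genuine such terms reproduce $-\varphi(d_2\gg)$. Adding up the four contributions yields $0$, and $\varphi|_{\I}=0$ completes the proof that $\varphi\in\Tw(\Bar\End_A,\End_H)$.

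The hard part is the bookkeeping in this last paragraph. One must keep track of all Koszul signs --- those of the desuspensions, those produced by $\PHI$ when reordering the odd generators $sf_v$ of a levelised graph, and those of the degree-$1$ homotopies --- and check that the combinatorial coefficients agree (the factor $\tfrac{1}{n!}$ in $h_n$ against the sums over symmetric groups coming from the levelisations, and the (co)invariance factors in the definitions of $\boxtimes$ and $\Delta_{(1,1)}$). The subtlest point is disposing of the degenerate contributions that are not ``genuine'': among the $\pi$-terms, those cuts of a levelisation whose lower part is disconnected, hence is not a summand of $\Delta_{(1,1)}\gg$; and among the $\id$-terms, those pairs of consecutive levels whose vertices are joined by no edge of $\gg$, hence are not contracted by $d_2$. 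These spurious contributions have to be shown to cancel in pairs, by swapping the order of the two offending levels inside the levelisation and comparing the induced signs, so that only the $\varphi\star\varphi$ and $\varphi\circ d_2$ terms survive.
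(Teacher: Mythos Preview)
Your overall strategy is correct and mirrors the paper's proof closely: evaluate the Maurer--Cartan equation on a graph, use the homotopy identity $[d_A^{\otimes n},h_n]=\pi^{\otimes n}-\id^{\otimes n}$ on each internal level, and match the $\id$-summands against $\varphi\circ d_2$ and the $\pi$-summands against $\varphi\star\varphi$. Your treatment of the spurious $\id$-terms by swapping two incomparable adjacent levels is also fine; this is essentially the content of the lemma from \cite{Vallette07} that the paper invokes, and a direct sign check confirms the cancellation.

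The genuine gap is your treatment of the spurious $\pi$-terms. Your proposed mechanism---pairing up levelisations by ``swapping the two offending levels'' and cancelling by sign---does \emph{not} work here. Concretely, take two disconnected vertices $v_1,v_2$ sitting below the $\pi$-level; the two levelisations $(v_1,v_2)$ and $(v_2,v_1)$ yield, after $\PHI$, the operations $(f_1\otimes\id)\circ h_{a_1+b_2}\circ(\id\otimes f_2)$ and $(\id\otimes f_2)\circ h_{b_1+a_2}\circ(f_1\otimes\id)$ (sandwiched between $\PP$ and $\II$). These are genuinely different maps because the symmetrised homotopy sits between the two operations and does not commute past them; no sign makes them equal. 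The paper handles this instead by proving a separate vanishing lemma (\cref{TechLem1}): the composite $\PHI\circ\lev$ applied to a horizontal concatenation of two reduced graphs is \emph{identically zero}. The proof of that lemma is not a pairwise cancellation but an inductive rewriting that pushes the homotopies upward using the relations
\[
(\id^{\otimes k}\otimes p^{\otimes l})h_{k+l}=h_k\otimes p^{\otimes l},\quad
h_k\otimes h_l=(h_k\otimes\id)h_{k+l}+h_{k+l}(\id\otimes h_l),\quad
(h_k\otimes\id)h_{k+l}=-h_{k+l}(h_k\otimes\id),
\]
and terminates with a factor $h_k\circ i^{\otimes k}=0$. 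These relations rely crucially on the side conditions $ph=0$, $h^2=0$, $hi=0$ of the contraction, which appear nowhere in your argument. Without them the statement is simply false: the map $\varphi$ is a twisting morphism only for a genuine contraction, not for an arbitrary homotopy retract. So the ``subtlest point'' you correctly flag requires a different tool than the one you propose.
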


\begin{definition}[Van der Laan map] 
We call $\varphi$ the \emph{Van der Laan twisting morphism} and we call the induced morphisms 
\[F_\varphi : \Cobar\Bar\End_A \to \End_H\qquad \text{and} \qquad G_\varphi : \Bar\End_A \to \Bar\End_H\]
of dg properads and coaugmented dg coproperads the \emph{Van der Laan morphisms}. 
\end{definition}

\begin{remark}
Given a contraction of $A$ onto $H$, there is no chance to produce a morphism of dg properads from $\End_A$ to $\End_H$ by simply 
 pulling back by $i$ and pushing forward by $p$. 
 On the other hand, the Van der Laan map provides us with a canonical \emph{$\infty$-quasi-isomophism} of (homotopy) properads from $\End_A$ to $\End_H$.
\end{remark}

\begin{proof}[{Proof of \cref{prop:VanDerLaan}}]

For any non-trivial graph $\gg\in \bGs$, an element of the graph module $\gg(s\End_A\big)$ 
can be written $\gg(sf_1, \ldots, s f_k)$, where $k\coloneqq |\gg|$ is the number of vertices of $\gg$ and where $f_1, \ldots, f_k\in \End_A$. Implicitly, we chose a planar representation of the graph $\gg$ where the vertices labeled from $1$ to $k$ can be read from left to right and from bottom to top. 
We denote by $\left(\widetilde{\gg}, \sigma\right)$ any levelization of the graph $\gg$ together with the induced permutation $\sigma$ of the way the vertices are read. Under such notations, the levelization is written 
\[\lev\big(\gg(sf_1, \ldots, s f_k)\big)=\sum_{\left(\widetilde{\gg}, \sigma\right)}
\varepsilon_\sigma\, \widetilde{\gg}\left(sf_{\sigma(1)}, \ldots, sf_{\sigma(k)}\right)\ ,
\]
where $\widetilde{\gg}\left(sf_{\sigma(1)}, \ldots, sf_{\sigma(k)}\right)$ is the left-hand side of Figure~\ref{eq:dessin_PHI} and 
where the sign $\varepsilon_\sigma$ comes from the permutation of terms $sf_i$ according to $\sigma$. 
Its composite with the map $\PHI$ gives 
\[\varphi\big(\gg(sf_1, \ldots, s f_k)\big)=\sum_{\left(\widetilde{\gg}, \sigma\right)}
\varepsilon_\sigma\, \widetilde{\gg}\left(\PP, f_{\sigma(1)}, \H, \ldots, \H, f_{\sigma(k)}, \II\right)
\ ,\]
using the slight but comprehensible extension of notation  $\widetilde{\gg}\left(\PP, f_{\sigma(1)}, \H, \ldots, \H, f_{\sigma(k)}, \II\right)$ for the right-hand side of Figure~\ref{eq:dessin_PHI}.

By definition, we need to prove the equation $\partial(\varphi) + \varphi \star \varphi =0$. To do so, we evaluate both terms on an element $\gg(sf_1, \ldots, s f_k)$.
The first term is equal to 
\[\partial(\varphi)\big(\gg(sf_1, \ldots, s f_k)\big)=
\partial_H\big(\varphi\big(\gg(sf_1, \ldots, s f_k)\big)\big)+
\varphi \big((d_1+d_2)\big(\gg(sf_1, \ldots, s f_k)\big)\big)\ , 
\]
where $\partial_H$ stands for the differential of $\End_H$. 
The sum of the two terms involving the underlying differentials, that is $\partial_H$ and $d_1$, is equal to the sum of the leveled graph composition 
$\varphi\big(\gg(sf_1, \ldots, s f_k)\big)$ with one differential $d_A$ labeling every input or output edge of every level labeled by an $h_n$: 
\begin{align*}
&\partial_H\big(\varphi\big(\gg(sf_1, \ldots, s f_k)\big)\big)+
\varphi \big(d_1\big(\gg(sf_1, \ldots, s f_k)\big)\big)=\\
&\sum_{i=1}^{k-1} \eps_i \sum_{\left(\widetilde{\gg}, \sigma\right)}
\eps_\sigma\, \widetilde{\gg}\big(\PP, f_{\sigma(1)}, \H, \ldots, f_{\sigma(i)}, \partial_A(\H), f_{\sigma(i+1)}, \ldots, \H, f_{\sigma(k)}, \II\big)=\\
&\sum_{i=1}^{k-1} \eps_i \sum_{\left(\widetilde{\gg}, \sigma\right)}
\eps_\sigma\, \widetilde{\gg}\left(\PP, f_{\sigma(1)}, \H, \ldots, f_{\sigma(i)}, \Pi-\Id, f_{\sigma(i+1)}, \ldots, \H, f_{\sigma(k)}, \II\right)\ ,
\end{align*}
where $\eps_i=(-1)^{|f_{\sigma(1)}|+\cdots +|f_{\sigma(i)}|+i-1}$. 
We claim that 
\begin{equation}\label{eqn:Rel1}\tag{i} 
\varphi \big(d_2\big(\gg(sf_1, \ldots, s f_k)\big)\big)=
\sum_{i=1}^{k-1} \eps_i
\sum_{\left(\widetilde{\gg}, \sigma\right)}
\eps_\sigma\, \widetilde{\gg}\left(\PP, f_{\sigma(1)}, \H, \ldots, f_{\sigma(i)}, \Id, f_{\sigma(i+1)}, \ldots, \H, f_{\sigma(k)}, \II\right)
\end{equation}
and that 
\begin{equation}\label{eqn:Rel2}\tag{ii} 
(\varphi\star \varphi)\big(\gg(sf_1, \ldots, s f_k)\big)=
-\sum_{i=1}^{k-1} \eps_i
\sum_{\left(\widetilde{\gg}, \sigma\right)}
\eps_\sigma\, \widetilde{\gg}\left(\PP, f_{\sigma(1)}, \H, \ldots, f_{\sigma(i)}, \Pi, f_{\sigma(i+1)}, \ldots, \H, f_{\sigma(k)}, \II\right)\ ,
\end{equation}
which would conclude the proof.

To prove Relation~\eqref{eqn:Rel1}, we use the fact, proved in \cite[Lemma~6.6]{Vallette07}, that the levelization map commutes with the part $d_2$ of the differential of the bar construction and the differential of the simplicial bar construction, which amounts to composing all pairs of levels of operations. Under the present notation, this means 
\[
\lev\big(d_2\big(\gg(sf_1, \ldots, s f_k)\big)\big)=
\sum_{i=1}^{k-1} \eps_i
\sum_{\left(\widetilde{\gg}, \sigma\right)}
\eps_\sigma\, \widetilde{\gg}\left( sf_{\sigma(1)}, \ldots, s\gamma\big(f_{\sigma(i)}, f_{\sigma(i+1)}\big), \ldots, sf_{\sigma(k)}\right)\ ,
\]
where $\gamma\big(f_{\sigma(i)}, f_{\sigma(i+1)}\big)$ means the composite of the two levels (possibly disconnected) with $f_{\sigma(i)}$ at the bottom and $f_{\sigma(i+1)}$ at the top. Composing with the map $\PHI$, this gives Relation~\eqref{eqn:Rel1}:
\begin{align*}
\varphi \big(d_2\big(\gg(sf_1, \ldots, s f_k)\big)\big)&=
\sum_{i=1}^{k-1} \eps_i
\sum_{\left(\widetilde{\gg}, \sigma\right)}
\eps_\sigma\, \widetilde{\gg}\left(\PP, f_{\sigma(1)}, \H, \ldots,\gamma\big(f_{\sigma(i)}, f_{\sigma(i+1)}\big), \ldots, \H, f_{\sigma(k)}, \II\right)\\
&=
\sum_{i=1}^{k-1} \eps_i
\sum_{\left(\widetilde{\gg}, \sigma\right)}
\eps_\sigma\, \widetilde{\gg}\left(\PP, f_{\sigma(1)}, \H, \ldots, f_{\sigma(i)},\Id,f_{\sigma(i+1)}, \ldots, \H, f_{\sigma(k)}, \II\right)\ .
\end{align*}
The left-hand side of Relation~\eqref{eqn:Rel2} amounts to cut in all possible ways the graph $g$ into 2 connected sub-graphs and then to apply $\varphi=\PHI \circ \lev$ to each of them. This produces the same result as first applying the levelisation map, considering only the levels (called admissible)  which cut the graph into 2 connected leveled-sub-graphs, labeling this level by $\Pi$, the other internal levels by $\H$ and the bottom and top levels by $\PP$ and $\II$ respectively. Under the present notations, this gives 
\begin{align*}
(\varphi\star \varphi)\big(\gg(sf_1, \ldots, s f_k)\big)=
-\sum_{i : \text{admissible}} \eps_i
\sum_{\left(\widetilde{\gg}, \sigma\right)}
\eps_\sigma\, \widetilde{\gg}\left(\PP, f_{\sigma(1)}, \H, \ldots, f_{\sigma(i)}, \Pi, f_{\sigma(i+1)}, \ldots, \H, f_{\sigma(k)}, \II\right)\ .
\end{align*}
The similar terms but coming from non-admissible cuts produce at least two sub-graphs below or above the cut. 
The image of the concatenation of reduced graphs  under the composite of the levelization morphism and the map $\PHI$ is trivial by \cref{TechLem1} below.
 In the end, we do get Relation~\eqref{eqn:Rel2}: 
\begin{align*}
(\varphi\star \varphi)\big(\gg(sf_1, \ldots, s f_k)\big)=
-\sum_{i=1}^{k-1} \eps_i
\sum_{\left(\widetilde{\gg}, \sigma\right)}
\eps_\sigma\, \widetilde{\gg}\left(\PP, f_{\sigma(1)}, \H, \ldots, f_{\sigma(i)}, \Pi, f_{\sigma(i+1)}, \ldots, \H, f_{\sigma(k)}, \II\right)\ .
\end{align*}
\end{proof}

\begin{lemma}\label{TechLem1}
The following composite is trivial 
\[\begin{tikzcd}
\G^c\big(s\End_A\big)^{\otimes 2}
 \arrow[r,"\mu"] 
 \ar[rrr, bend right=15, "0" description, ]
 & 
\ncG\big(s\End_A\big)
 \arrow[r,"\lev"] 
 & \ncGLev\big(s\End_A\big)  \arrow[r,"\PHI"] & \End_H\ ,
\end{tikzcd}\]
where $\ncG$ (resp. $\ncGLev$) stands for endofunctor made up of non-necessarily connected directed reduced (resp. leveled) graphs and 
where $\mu$ stands for the concatenation of graphs.
\end{lemma}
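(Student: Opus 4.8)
The plan is to unfold the three maps in the composite and to show that, once everything is written out, every resulting summand vanishes because of the side conditions $hi=0$, $ph=0$ and $h^2=0$ satisfied by the contraction.

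First I would fix connected reduced graphs $g_1,g_2\in\bGs$ together with labellings by $\End_A$. Then $\mu(g_1\otimes g_2)$ is the disjoint union $g_1\sqcup g_2$, and $\lev(g_1\sqcup g_2)$ is the signed sum, over all levelisations $\widetilde{g}$ of the disjoint union (all one-vertex-per-level graphs obtained by totally ordering the $k:=|g_1|+|g_2|$ vertices compatibly with both directed structures), of the corresponding leveled graphs. Applying $\PHI$ to such a $\widetilde{g}$ and expanding every symmetric homotopy $h_n$ occurring at an intermediate level into its defining sum $\tfrac1{n!}\sum_{\sigma}\sum_k\bigl(\id^{\otimes(k-1)}\otimes h\otimes\pi^{\otimes(n-k)}\bigr)^\sigma$, one writes $(\PHI\circ\lev\circ\mu)(g_1\otimes g_2)$ as a signed sum of composites, each composite being a word in the vertex operations, the maps $i$ on the top leaves, the maps $p$ on the bottom leaves, and, on each of the $k-1$ intermediate levels, exactly one occurrence of $h$ and otherwise copies of $\pi=ip$ and $\id$.

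The key step is the book-keeping along strands. Follow the unique edge of $\widetilde{g}$ carrying a given occurrence of $h$: using the easy relations $\pi i=i$, $p\pi=p$, $\pi h=iph=0$ and $h\pi=hip=0$, tracing that edge upwards shows that if it reaches an input leaf before meeting a vertex the composite factors through $h\circ i=0$, and tracing it downwards shows that if it reaches an output leaf before meeting a vertex the composite factors through $p\circ h=0$; two occurrences of $h$ on the same edge produce $h^2=0$ or $iph=0$. Hence in any composite that does not vanish on the nose each $h$ sits on an internal edge of $g_1\sqcup g_2$, no internal edge carries two $h$'s, and an internal edge carrying an $h$ carries exactly the operator $h$. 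Since $g_1$ and $g_2$ are disjoint, the internal edges split into those of $g_1$ and those of $g_2$, and an internal edge of $g_i$ is cut only by intermediate levels lying between the lowest and the highest vertex of $g_i$ in the levelisation.

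The final and most delicate part is to rule out, using the disconnectedness of $g_1\sqcup g_2$, that the $k-1$ intermediate levels can all be assigned an $h$ satisfying the above constraints: examining the bottommost and topmost vertices of each component — whose input, resp.\ output, edges can only be leaf‑edges — one shows by a Hall‑type counting argument that such an assignment always forces an $h$ onto a leaf‑edge of one of the two components, against the conclusion of the previous paragraph; should any surviving composites remain after this, they are matched in sign‑reversing pairs by the involution of $\lev(g_1\sqcup g_2)$ transposing two level‑adjacent vertices lying in different components. In either case every summand cancels, so $\PHI\circ\lev\circ\mu=0$. I expect the bulk of the work, and the main obstacle, to be this last combinatorial step, where the symmetrisation built into the $h_n$ and the Koszul signs produced by $\lev$ and by $\PHI$ both have to be handled with care.
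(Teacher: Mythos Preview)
Your initial reductions are sound: after expanding each $h_n$ and tracking strands, a summand survives only if every $h$ lands on an internal edge of $g_1\sqcup g_2$, each such edge carries at most one $h$, and an edge carrying an $h$ carries $\id$ at all other levels. The gap is in your final step, where both proposed mechanisms fail.

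The Hall-type counting argument does not go through once the connected pieces have genus. Take $g_1$ with two vertices $a,b$ joined by a \emph{double} edge and $g_2$ a single vertex $c$. In the levelisation $(b,c,a)$ there are two $\H$-levels and two internal edges $e_1,e_2$, each crossing both $\H$-levels, so an injective assignment of levels to internal edges exists and your counting obstruction vanishes. Your fallback involution, transposing two level-adjacent vertices from different components, does not preserve the set of surviving summands: in this same example only the levelisation $(b,c,a)$ contributes, and swapping either adjacent pair lands you in $(c,b,a)$ or $(b,a,c)$, both of which have an $\H$-level crossed by leaf strands only and hence contribute nothing. So there is nothing for your involution to pair against. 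The cancellation \emph{does} occur, but it is between the two $h$-placements within the single surviving levelisation (assign level~$1$'s $h$ to $e_1$ and level~$2$'s to $e_2$, versus the swap), and it comes from the Koszul sign picked up when the two copies of $h$ pass each other---not from swapping vertices.

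The paper avoids this combinatorics entirely. It never expands $h_n$; instead it invokes three identities among the symmetric homotopies themselves (from \cite[Lemma~7]{DSV16}):
\[
(\id^{\otimes k}\otimes p^{\otimes l})h_{k+l}=h_k\otimes p^{\otimes l},\qquad
h_k\otimes h_l=(h_k\otimes\id^{\otimes l})h_{k+l}+h_{k+l}(\id^{\otimes k}\otimes h_l),\qquad
(h_k\otimes\id^{\otimes l})h_{k+l}=-h_{k+l}(h_k\otimes\id^{\otimes l}).
\]
Using these as rewriting rules one pushes, level by level, an extra copy of $h$ from the bottom $\PP$ up through each intermediate $\H$ until it reaches the top, where the composite ends in $h_k\circ i^{\otimes k}=0$. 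This packages precisely the sign cancellations you would otherwise have to organise by hand, and the same toolkit is reused verbatim in the companion \cref{TechLem2}. If you want to salvage your approach, the honest route is to prove these three identities first (which \emph{is} an elementary expansion argument) and then run the paper's rewriting; attempting a direct bijective cancellation on the fully expanded sum would amount to reproving them in disguise, with all the attendant sign bookkeeping.
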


\begin{proof}
Let us recall, for instance from \cite[Lemma 7]{DSV16}, the following relations 
\begin{align}
& \left(\id^{\otimes k}\otimes p^{\otimes l}\right)h_{k+l} = h_{k}\otimes p^{\otimes l} \ , \tag{a} \label{Rel(a)}\\
& h_k\otimes h_l=\left(h_k\otimes \id^{\otimes l}\right)h_{k+l} + h_{k+l}\left(\id^{\otimes k}\otimes h_l\right) \ , \tag{b} \label{Rel(b)}\\
& \left(h_k\otimes \id^{\otimes l}\right)h_{k+l} =- h_{k+l}\left(h_k\otimes \id^{\otimes l}\right) \ , \tag{c} \label{Rel(c)}
\end{align}
for any $k,l\geqslant 1$. 
The image of an element of $\G^c\big(s\End_A\big)^{\otimes 2}$ under the composite $\PHI\circ\lev\circ\mu$  can be depicted as follows, where we only represent the non-trivial vertices. The top thick line represents the map $\II$, the bottom thick map represents the map $\PP$, the intermediate thin lines represent maps $\H$, and the dashed lines represent some sums of maps (made up of $h_n$'s).  
The above-mentioned relations are interpreted as rewriting rules in order pull from bottom to top some $h_n$. (We not make the various signs explicit.) 
\begin{align*}
&	%%% DESSIN 1
	\begin{tikzpicture}[scale=0.5,baseline=(n.base)]
	\node (n) at (2.5,2) {};
	\node at (0, -1) {};
	\draw[dotted] (2.5,-0.25) -- (2.5,4.75); %ligne verticale
	\draw[very thick] (-.5,-0.25) --(5.5,-.25); %P
	\draw[fill=white] (0,0) rectangle (2,0.5); % Boite 1 
	\draw (-.5,0.75) --(5.5,0.75) ; %H1
	\draw[fill=white] (3,1) rectangle (5,1.5); %Boite 2 
	\draw (-.5,1.75) --(5.5,1.75) ; %H2
	\draw[fill=white] (0,2) rectangle (2,2.5);%Boite3
	\draw (-.5,2.75) --(5.5,2.75) ;%H3
	\draw[fill=white] (0,3) rectangle (2,3.5);%B4
	\draw (-.5,3.75) --(5.5,3.75) ;%H4
	\draw[fill=white] (3,4) rectangle (5,4.5);%B5
	\draw[very thick] (-.5,4.75) --(5.5,4.75); %I 
	\end{tikzpicture}
\ = \
	%%% DESSIN 2
	\begin{tikzpicture}[scale=0.5,baseline=(n.base)]
	\node (n) at (2.5,2) {};
	\draw[dotted] (2.5,-0.25) -- (2.5,4.75); %ligne verticale
	\draw[very thick] (-.5,-0.25) -- (2.25,-.25) ; %P1
	\draw[very thick] (2.75,-0.25) -- (5.5,-.25) ; %P2
	\draw[fill=white] (0,0) rectangle (2,0.5); % Boite 1 
	\draw (-.5,0.75) --(5.5,0.75) ; %H1
	\draw[fill=white] (3,1) rectangle (5,1.5); %Boite 2 
	\draw (-.5,1.75) --(5.5,1.75) ; %H2
	\draw[fill=white] (0,2) rectangle (2,2.5);%Boite3
	\draw (-.5,2.75) --(5.5,2.75) ;%H3
	\draw[fill=white] (0,3) rectangle (2,3.5);%B4
	\draw (-.5,3.75) --(5.5,3.75) ;%H4
	\draw[fill=white] (3,4) rectangle (5,4.5);%B5
	\draw[very thick] (-.5,4.75) --(5.5,4.75); %I 
	\end{tikzpicture}
\stackrel{\eqref{Rel(a)}}{\longrightarrow}
	%%% DESSIN 3
	\begin{tikzpicture}[scale=0.5,baseline=(n.base)]
	\node (n) at (2.5,2) {};
	\draw[dotted] (2.5,-0.25) -- (2.5,4.75); %ligne verticale
	\draw[very thick] (-.5,-0.25) -- (2.25,-.25) ; %P1
	\draw[fill=white] (0,0) rectangle (2,0.5); % Boite 1 
	\draw[very thick] (2.75,0.75) -- (5.5,0.75) ; %P2
	\draw (-.5,0.75) --(2.25,0.75) node[right] {}; %H1.5
	\draw[fill=white] (3,1) rectangle (5,1.5); %Boite 2 
	\draw (-.5,1.75) --(5.5,1.75) ; %H2
	\draw[fill=white] (0,2) rectangle (2,2.5);%Boite3
	\draw (-.5,2.75) --(5.5,2.75) ;%H3
	\draw[fill=white] (0,3) rectangle (2,3.5);%B4
	\draw (-.5,3.75) --(5.5,3.75) ;%H4
	\draw[fill=white] (3,4) rectangle (5,4.5);%B5
	\draw[very thick] (-.5,4.75) --(5.5,4.75); %I 
	\end{tikzpicture}
\stackrel{\eqref{Rel(b)}}{\longrightarrow}
\end{align*}
\begin{align*}
&	%%% DESSIN 4
	\begin{tikzpicture}[scale=0.5,baseline=(n.base)]
	\node (n) at (2.5,2) {};
	\draw[dotted] (2.5,-0.25) -- (2.5,4.75); %ligne verticale
	\draw[very thick] (-.5,-0.25) -- (2.25,-.25) ; %P1
	\draw[fill=white] (0,0) rectangle (2,0.5); % Boite 1 
	\draw[very thick] (2.75,0.75) -- (5.5,0.75) ; %P2
	\draw[fill=white] (3,1) rectangle (5,1.5); %Boite 2 
	\draw[dashed] (-.5,1.75) --(5.5,1.75) ; %H2BIS
	\draw (2.75,2.25) --(5.5,2.25) ; %H2.5
	\draw[fill=white] (0,2) rectangle (2,2.5);%Boite3
	\draw (-.5,2.75) --(5.5,2.75) ;%H3
	\draw[fill=white] (0,3) rectangle (2,3.5);%B4
	\draw (-.5,3.75) --(5.5,3.75) ;%H4
	\draw[fill=white] (3,4) rectangle (5,4.5);%B5
	\draw[very thick] (-.5,4.75) --(5.5,4.75); %I 
	\end{tikzpicture}\stackrel{\eqref{Rel(c)}}{\longrightarrow}
 	%%% DESSIN 5
	\begin{tikzpicture}[scale=0.5,baseline=(n.base)]
	\node (n) at (2.5,2) {};
	\draw[dotted] (2.5,-0.25) -- (2.5,4.75); %ligne verticale
	\draw[very thick] (-.5,-0.25) -- (2.25,-.25) ; %P1
	\draw[fill=white] (0,0) rectangle (2,0.5); % Boite 1 
	\draw[very thick] (2.75,0.75) -- (5.5,0.75) ; %P2
	\draw[fill=white] (3,1) rectangle (5,1.5); %Boite 2 
	\draw[dashed] (-.5,1.75) --(5.5,1.75) ; %H2BIS
	\draw[fill=white] (0,2) rectangle (2,2.5);%Boite3
	\draw (-.5,2.75) --(5.5,2.75) ;%H3
	\draw (2.75,3.25) --(5.5,3.25) ; %H3.5
	\draw[fill=white] (0,3) rectangle (2,3.5);%B4
	\draw (-.5,3.75) --(5.5,3.75) ;%H4
	\draw[fill=white] (3,4) rectangle (5,4.5);%B5
	\draw[very thick] (-.5,4.75) --(5.5,4.75); %I 
	\end{tikzpicture}	
\stackrel{\eqref{Rel(b)}}{\longrightarrow}
%%% DESSIN 6
	\begin{tikzpicture}[scale=0.5,baseline=(n.base)]
	\node (n) at (2.5,2) {};
	\draw[dotted] (2.5,-0.25) -- (2.5,4.75); %ligne verticale
	\draw[very thick] (-.5,-0.25) -- (2.25,-.25) ; %P1
	\draw[fill=white] (0,0) rectangle (2,0.5); % Boite 1 
	\draw[very thick] (2.75,0.75) -- (5.5,0.75) ; %P2
	\draw[fill=white] (3,1) rectangle (5,1.5); %Boite 2 
	\draw[dashed] (-.5,1.75) --(5.5,1.75) ; %H2BIS
	\draw[fill=white] (0,2) rectangle (2,2.5);%Boite3
	\draw (-.5,2.75) --(5.5,2.75) ;%H3
	\draw[fill=white] (0,3) rectangle (2,3.5);%B4
	\draw[dashed] (-.5,3.75) --(5.5,3.75) ;%H4
	\draw[fill=white] (3,4) rectangle (5,4.5);%B5
	\draw (-.5,4.5) --(2.25,4.5) node[right] {};  %H4.5
	\draw[very thick] (-.5,4.75) --(5.5,4.75); %I 
	\draw[decorate, decoration={brace}] (-0.75,4.25) -- (-0.75,5) ;
	\draw (-0.8,4.625) node[left] {$\scriptstyle{0=}$};
	\end{tikzpicture}	
\end{align*}
In the end, each component contains at the top a composite of the form $h_k\circ  i^{\otimes k}=0$, which concludes the proof. 
\end{proof}

\subsection{Universal $\infty$-morphisms}
Recall that the counit of the bar-cobar adjunction provides us with a morphism of dg properads $\Cobar\Bar \End_A \longrightarrow \End_A$, which  can be interpreted as a canonical $\Cobar\Bar \End_A$-gebra structure on $A$. The Van der Laan morphism $F_\varphi : \Cobar\Bar \End_A \longrightarrow \End_H$ endows $H$ with a $\Cobar\Bar \End_A$-gebra structure.

\begin{definition}[Maps $\HHI$ and $\PHH$] 
The degree $0$ linear maps 
 \[\HHI \ :\  \GLev\left(s\End_A\right) \to \End^H_A \quad \text{and} \quad \PHH \ : \ \GLev\left(s\End_A\right) \to \End^A_H\]
are defined  by removing all the suspensions $s$, labeling the top level of leaves by $\II$ (resp. $\H$), labeling any intermediate level with $\H$, and labeling the bottom level of leaves by $\H$ (resp. $\PP$).  
\end{definition}

\begin{theorem}\label{prop:UnivInfMorph}\leavevmode
\begin{itemize}

\item[$\diamond$] The map $i_\infty$ in $\Hom_{\bbS}\big(\B\End_A,\End^H_A\big)$ defined on non-trivial elements by the composition 
\[\begin{tikzcd}
\G^c\big(s\End_A\big) \arrow[r,"\lev"] & \GLev\big(s\End_A\big)  \arrow[r,"\HHI"] & \End^H_A
\end{tikzcd}\]
and on $\I$ by $i$ 
is an $\infty$-morphism of $\Cobar \B \End_A$-gebras. 

\item[$\diamond$] The map $p_\infty$ in $\Hom_{\bbS}\big(\B\End_A,\End^A_H\big)$ defined on non-trivial elements by the composition 
\[\begin{tikzcd}
\G^c\big(s\End_A\big) \arrow[r,"\lev"] & \GLev\big(s\End_A\big)  \arrow[r,"\PHH"] & \End^A_H
\end{tikzcd}\]
and on $\I$ by $p$ 
is an $\infty$-morphism of $\Cobar \B \End_A$-gebras. 
\end{itemize}
\end{theorem}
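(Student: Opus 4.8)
The plan is to prove the two assertions in parallel and to follow the strategy of the proof of \cref{prop:VanDerLaan}. By \cref{prop:infmor} each statement reduces to a single identity in $\Hom_{\bbS}$. First I would identify the two gebra structures involved: the $\Cobar\B\End_A$-gebra structure on $A$ coming from the counit of the bar--cobar adjunction corresponds, under \cref{Rosetta} and \cref{thm:CompleteRosetta}, to the universal twisting morphism $\pi\colon\B\End_A\twoheadrightarrow s\End_A\xrightarrow{s^{-1}}\End_A$, and the structure on $H$ is the Van der Laan twisting morphism $\varphi=\PHI\circ\lev$. So it suffices to check
\[
\partial(i_\infty)=i_\infty\rhd\varphi-\pi\lhd i_\infty
\qquad\text{and}\qquad
\partial(p_\infty)=p_\infty\rhd\pi-\varphi\lhd p_\infty\ .
\]
On the coaugmentation $\I$ both sides vanish: $i_\infty|_\I=i$ and $p_\infty|_\I=p$ are chain maps, so $\partial$ kills them, while $\varphi|_\I=\pi|_\I=0$ kills the right-hand sides.

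On $\G^c(s\End_A)$ I would evaluate both sides on a generic element $\gg(sf_1,\dots,sf_k)$ and expand through the levelization exactly as for $\varphi$, writing $i_\infty\big(\gg(sf_1,\dots,sf_k)\big)=\sum_{(\widetilde{\gg},\sigma)}\varepsilon_\sigma\,\widetilde{\gg}(\H,f_{\sigma(1)},\H,\dots,\H,f_{\sigma(k)},\II)$, and symmetrically for $p_\infty$ with the two outer labels $\PP$ and $\H$ interchanged. Using that the levelization intertwines $d_1$ with the internal differentials and $d_2$ with the level-wise composition of operations (\cite[Lemma~6.6]{Vallette07}), together with the Leibniz rule and the contraction identities $\partial_A(h_n)=\pi^{\otimes n}-\id^{\otimes n}$ and the relations \eqref{Rel(a)}, \eqref{Rel(b)}, \eqref{Rel(c)}, the three contributions to $\partial(i_\infty)$ coming from the differential of $\End^H_A$ and from $i_\infty$ precomposed with $d_1$ and $d_2$ telescope — just as the $\partial_H$, $d_1$, $d_2$ contributions did for $\varphi$ in \cref{prop:VanDerLaan} — into a sum over leveled graphs with $\Pi-\Id$ inserted at each level carrying a symmetric homotopy (now the $k-1$ intermediate levels \emph{and} the bottom level of leaves, since $\HHI$ carries $\H$ there) plus an extra $\Id$ at every level produced by composing two adjacent vertex-levels, so that after cancellation $\partial(i_\infty)$ equals $\sum_{(\widetilde{\gg},\sigma)}\varepsilon_\sigma\big[\sum_{i=1}^{k-1}(\Pi \text{ at level } i)+(\Pi-\Id \text{ at the bottom leaves})\big]$.

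Next I would compute the right-hand side using the remark that for the conilpotent coproperad $\B\End_A$ the maps $\Cop{}{(*)}$ and $\Cop{(*)}{}$ are the projections onto two-level graphs with a single vertex on the bottom, resp. top, level. Then $i_\infty\rhd\varphi$ cuts $\gg$ into a connected top sub-graph and a (possibly empty) connected bottom sub-graph, applies $\varphi$ to the former and $i_\infty$ to the latter, and composes them; since $\varphi$ carries $\PP$ on its bottom leaves and $i_\infty$ carries $\II$ on its top leaves, the two pieces meet along a level labelled $\II\circ\PP=\Pi$, so $i_\infty\rhd\varphi$ reproduces exactly the $\Pi$-terms above (the empty-bottom cut, where $i_\infty$ reduces to $i$, supplying the $\Pi$ at the bottom leaves). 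Dually, $\pi\lhd i_\infty$ extracts a bottom vertex, to which $\pi$ restricts as the desuspension, leaving bare output strands, i.e.\ an $\Id$ at the bottom leaves; because in a leveled graph every level carries exactly one vertex, summing the extracted vertex over $\gg$ and then levelling the remainder amounts precisely to summing over all levelings with a prescribed bottom vertex, so this reproduces the $\Id$-term for every leveling. The contributions of cuts that disconnect a sub-graph vanish by the argument of \cref{TechLem1}, carried out with $\HHI$ (resp.\ $\PHH$) in place of $\PHI$: the vanishing composite $h_k\circ i^{\otimes k}=0$ (resp.\ $p^{\otimes k}\circ h_k=0$) still occurs at the outer level which is not being cut. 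Matching these monomials yields the first identity; the second is proved identically after exchanging top and bottom throughout, so that the boundary homotopy of $\PHH$ sits at the top level of leaves and the matching $\Pi$- and $\Id$-terms are supplied by $\varphi\lhd p_\infty$ and $p_\infty\rhd\pi$.

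The main obstacle is bookkeeping rather than conceptual: tracking the Koszul signs produced by $\PHI$, $\HHI$, $\PHH$ and checking that each leveled-graph monomial coming from one of the three pieces of $\partial$ is matched by exactly one monomial from one of the two action operators, while all genuinely disconnected configurations cancel via the relations \eqref{Rel(a)}--\eqref{Rel(c)}. Since this is a level-by-level refinement of the computations already carried out in \cref{prop:VanDerLaan} and \cref{TechLem1}, it requires care but no new idea; alternatively, one may phrase the whole verification in terms of the bidifferentials on the bifree monoid $\S\B\End_A$-comodules via \cref{thm:CompleteRosetta}, which clarifies the structure but does not shorten the sign chase.
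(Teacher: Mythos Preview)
Your overall strategy is exactly that of the paper: reduce to \cref{eq:Morph} via \cref{prop:infmor}, expand through the levelization, and match the $\Pi$- and $\Id$-terms produced by $\partial(i_\infty)$ against $i_\infty\rhd\varphi$ and $\epsilon\lhd i_\infty$ respectively. The $\partial$-computation you sketch is correct and parallels \cref{prop:VanDerLaan} closely.

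There is, however, a genuine gap in the matching step. Your description of $i_\infty\rhd\varphi$ as cutting $\gg$ into ``a connected top sub-graph and a (possibly empty) connected bottom sub-graph'' is incorrect: the right infinitesimal decomposition $\Cop{(*)}{}$ produces \emph{one} non-trivial piece on top (to which $\varphi$ is applied) and \emph{several} pieces on the bottom (to each of which $i_\infty$ is applied). When you place a $\Pi$ at level $i$ in a leveled version of $\gg$, the sub-graph below $\Pi$ is in general disconnected, and you must show that $\HHI\circ\lev$ applied to that disconnected global levelization coincides with the \emph{product} of $i_\infty$ on the individual connected components. This is not a vanishing statement---it is a factorisation statement, and it is precisely the content of \cref{TechLem2}. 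The same issue arises for $\epsilon\lhd i_\infty$: removing a bottom vertex from $\gg$ generally leaves a disconnected graph above it, and again one needs \cref{TechLem2} to identify the global $\HHI\circ\lev$ with the product of the componentwise $i_\infty$'s.

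Your proposed fix, ``the argument of \cref{TechLem1} carried out with $\HHI$ in place of $\PHI$'', does not work. The rewriting in \cref{TechLem1} is launched at the bottom by Relation~\eqref{Rel(a)}, which needs a level of $p$'s; for $\HHI$ the bottom level carries $\H$, so the first step already fails. More decisively, \cref{TechLem2} shows explicitly that $\HHI\circ\lev$ on a concatenation of two corollas equals $h_mf_1i^{\otimes p}\otimes h_nf_2i^{\otimes q}$, which is nonzero in general, so no vanishing lemma of the \cref{TechLem1} type can hold for $\HHI$. In short: \cref{TechLem1} is still used---but only for the $\varphi$-piece above the cut, where $\PHI$ genuinely appears---while the $i_\infty$-pieces below the cut require the new ingredient \cref{TechLem2}. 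Once you insert that lemma, the rest of your argument goes through; the sign bookkeeping is indeed, as you say, the only remaining labour.
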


\begin{proof}
This proof carries similar computations than the proof of \cref{prop:VanDerLaan}, so we follow the same notations. Let us prove that $i_\infty$ is an $\infty$-morphism. The proof that $p_\infty$ is an $\infty$-morphism is similar and  obtained by a vertical symmetry. We use the classical notation $\epsilon : \Bar \End_A \to \End_A$ for  the universal twisting morphism corresponding to the counit of adjunction, i.e. to the $\Cobar \Bar \End_A$-gebra structure on $A$. We need to prove that $i_\infty$ satisfies Equation~\eqref{eq:Morph}: 
\[\partial (i_\infty)=i_\infty  \rhd \varphi - \epsilon \lhd i_\infty\ .\]
We evaluate it on a generic element $\gg(sf_1, \ldots, s f_k)$ of $\G^c(s\End_A)$\ . 
Notice first that 
\[i_\infty\big(\gg(sf_1, \ldots, s f_k)\big)=\sum_{\left(\widetilde{\gg}, \sigma\right)}
\varepsilon_\sigma\, \widetilde{\gg}\left(\H, f_{\sigma(1)}, \H, \ldots, \H, f_{\sigma(k)}, \II\right)
\ ,\]
By the same argument as in the proof of  \cref{prop:VanDerLaan}, we get 
\begin{align*}
&\partial^H_A\big(i_\infty\big(\gg(sf_1, \ldots, s f_k)\big)\big)-
i_\infty \big(d_1\big(\gg(sf_1, \ldots, s f_k)\big)\big)=\\
&-\sum_{i=0}^{k-1} \eps_i \sum_{\left(\widetilde{\gg}, \sigma\right)}
\eps_\sigma\, \widetilde{\gg}\big(\H, f_{\sigma(1)}, \H, \ldots, f_{\sigma(i)}, \partial_A(\H), f_{\sigma(i+1)}, \ldots, \H, f_{\sigma(k)}, \II\big)=\\
&-\sum_{i=0}^{k-1} \eps_i \sum_{\left(\widetilde{\gg}, \sigma\right)}
\eps_\sigma\, \widetilde{\gg}\left(\H, f_{\sigma(1)}, \H, \ldots, f_{\sigma(i)}, \Pi-\Id, f_{\sigma(i+1)}, \ldots, \H, f_{\sigma(k)}, \II\right)\ ,
\end{align*}
where $\partial^H_A$ stands for the differential of $\End_A^H$ and where $\varepsilon_0\coloneqq 1$\ .
The argument proving Relation~\eqref{eqn:Rel1} in the proof of  \cref{prop:VanDerLaan} shows as well that 
\begin{equation*}
i_\infty \big(d_2\big(\gg(sf_1, \ldots, s f_k)\big)\big)=
\sum_{i=1}^{k-1} \eps_i
\sum_{\left(\widetilde{\gg}, \sigma\right)}
\eps_\sigma\, \widetilde{\gg}\left(\H, f_{\sigma(1)}, \H, \ldots, f_{\sigma(i)}, \Id, f_{\sigma(i+1)}, \ldots, \H, f_{\sigma(k)}, \II\right)\ .
\end{equation*}
So the left-hand side of Equation~\eqref{eq:Morph} is equal to 
\begin{eqnarray*}
\partial(i_\infty)\big(\gg(sf_1, \ldots, s f_k)\big)&=&-\sum_{i=0}^{k-1} \eps_i \sum_{\left(\widetilde{\gg}, \sigma\right)}
\eps_\sigma\, \widetilde{\gg}\big(\H, f_{\sigma(1)}, \H, \ldots, f_{\sigma(i)}, \Pi, f_{\sigma(i+1)}, \ldots, \H, f_{\sigma(k)}, \II\big)\\
&&- \sum_{\left(\widetilde{\gg}, \sigma\right)}
\eps_\sigma\, \widetilde{\gg}\left(\Id,f_{\sigma(1)}, \H, \ldots,  \H, f_{\sigma(k)}, \II\right)\ .
\end{eqnarray*}
We claim that the first term on the right-hand side is equal to 
$i_\infty  \rhd \varphi$ and the second term on the right-hand side is equal to  $- \epsilon \lhd i_\infty$. 

The first term of the right-hand side is similar to the right-hand side of Relation~\eqref{eqn:Rel2} in the proof of  \cref{prop:VanDerLaan}, except that the bottom map is $\H$ or $\Pi$ here instead of $\PP$. The same arguments apply as well. First, the part above the map $\Pi$ can  contain only one non-trivial connected graph by \cref{TechLem1}, thereby corresponding to a single application of $\varphi$. Then, the part below the map $\Pi$ is equal to the bottom-left composite of the commutative diagram of \cref{TechLem2}, i.e. one considers first the global levelization of possibly disconnected graphs and then one applies the map $\HHI$. By \cref{TechLem2}, this is equal to first applying the levelization map to all the connected components and then applying to each of them the map $\HHI$; such a composite amounts to applying the map $i_\infty$ to all the connected components below the map $\Pi$. 
This proves the first claim. 

The second term on the right-hand side is equal to a first sum indexed by a choice of a bottom vertex and then a second sum which amounts to levelize the remaining above vertices. The label of the bottom vertex corresponds to applying once the map $\epsilon$.  By \cref{TechLem2}, applying the map $\HHI$ after the global levelization of the above possibly disconnected graph is equal to applying the levelization map to each of its connected components first and then applying to each of them the map $\HHI$; this corresponds to applying the map $i_\infty$ to each connected component. This proves the second claim and concludes the proof. 
\end{proof}

\begin{lemma}\label{TechLem2}
For any $k\geqslant 1$, the following diagram is commutative
	\[
	\begin{tikzcd}
		\G^c(s\End_A)^{\otimes k} \ar[r,"\lev^{\otimes k}"] \ar[d,"\mu^{k-1}"'] 
		& \GLev (s\End_A)^{\otimes k} \ar[r,"\HHI^{\otimes k}"]
		& \left(\End^H_A\right) ^{\otimes k} \ar[d,"\mu^{k-1}"]  \\
		\ncG(s\End_A) \ar[r,"\lev"] 
		& \ncGLev (s\End_A) \ar[r,"\HHI"]
		& \End^H_A\ , 
	\end{tikzcd} 
	\]
where we use the same notations as in \cref{TechLem1}.
\end{lemma}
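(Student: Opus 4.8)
The plan is to prove that the map $\Psi \coloneqq \HHI\circ\lev\colon\ncG(s\End_A)\to\End^H_A$ is multiplicative for the two concatenation products, i.e.\ that $\Psi\circ\mu^{k-1}=\mu^{k-1}\circ\Psi^{\otimes k}$, where the right-hand $\mu^{k-1}$ is the iterated horizontal concatenation of operations in $\End^H_A$. Since both concatenations are associative, I would first reduce by induction on $k$ to the case $k=2$: it is then enough to show that $\Psi(X\sqcup Y)=\Psi(X)\otimes\Psi(Y)$ for all (possibly disconnected) graphs $X,Y\in\ncG(s\End_A)$, with $X\sqcup Y$ the disjoint union.

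Next I would use the elementary combinatorial fact that a levelization of $X\sqcup Y$ is the same datum as a levelization $\widetilde X$ of $X$, a levelization $\widetilde Y$ of $Y$, and a shuffle $\xi$ interleaving the two totally ordered vertex sets. Hence $\lev(X\sqcup Y)=\sum_{\widetilde X,\widetilde Y}\sum_\xi\pm\,\xi(\widetilde X,\widetilde Y)$, with signs as in the proof of \cref{prop:VanDerLaan}, and summing over $\widetilde X,\widetilde Y$ shows that it is enough to establish, for each fixed pair $\widetilde X,\widetilde Y$, the refined identity
\begin{equation*}
\sum_\xi\ \pm\,\HHI\bigl(\xi(\widetilde X,\widetilde Y)\bigr)\ =\ \pm\,\HHI(\widetilde X)\otimes\HHI(\widetilde Y)\,.\tag{$\star$}
\end{equation*}

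I would prove $(\star)$ by induction on the number of vertices of $\widetilde Y$, singling out at each step its bottom vertex $v$. In any shuffle $\xi$ all the out-edges of $v$ are output legs of $Y$, hence of $X\sqcup Y$, and the homotopies $h_n$ lying below $v$ act on these legs without ever interacting with a vertex of $X$. The successive applications of the relations \eqref{Rel(a)}--\eqref{Rel(c)} of \cref{TechLem1}, used as rewriting rules, then bring the problem into inductive form: relation \eqref{Rel(c)} moves the homotopies attached to the output strands of $v$ down to the bottom level, relation \eqref{Rel(b)} splits the resulting bottom homotopy $h_m$ as $h_{m'}\otimes h_{m''}$, separating the output legs of $X$ from those of $Y$, and relation \eqref{Rel(a)} together with its $i$-counterpart $h_{k+l}\circ(i^{\otimes k}\otimes\id^{\otimes l})=i^{\otimes k}\otimes h_l$ (proven as in \cite[Lemma~7]{DSV16}) handles the interface with the leaves. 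The sum over the shuffles that place $v$ below a prescribed initial segment of $\widetilde X$ then collapses, by the induction hypothesis applied to $\widetilde X$ and to $\widetilde Y$ with $v$ deleted, to $\pm\,\HHI(\widetilde X)\otimes\HHI(\widetilde Y)$; the signs are propagated exactly as in the proof of \cref{prop:VanDerLaan}. Summing back over $\widetilde X,\widetilde Y$ gives $\Psi(X\sqcup Y)=\Psi(X)\otimes\Psi(Y)$, and the general $k$ follows.

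The hard part lies entirely in $(\star)$: one has to match, shuffle by shuffle, the levels of the combined leveled graph with those of the two separate leveled graphs, and to carry the Koszul signs correctly through the successive rewritings by \eqref{Rel(a)}--\eqref{Rel(c)}. Everything else — the reduction to $k=2$ and the shuffle description of $\lev$ on disjoint unions — is formal.
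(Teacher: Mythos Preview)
Your proposal uses the same toolkit as the paper—the reduction to $k=2$, the shuffle description of levelizations of a disjoint union, and the identities \eqref{Rel(a)}–\eqref{Rel(c)}—but organizes the induction differently. The paper does \emph{not} fix leveled graphs $\widetilde X,\widetilde Y$ and induct on $|\widetilde Y|$; instead it inducts on the total number $N$ of vertices and, crucially, works from the \emph{right-hand side}. Starting from $\HHI(\widetilde{\gg_1})\otimes\HHI(\widetilde{\gg_2})$, whose bottom layer is $h_p\otimes h_q$, the paper applies the combined identity $h_p\otimes h_q=h_{p+q}(\id^{\otimes p}\otimes h_q)-h_{p+q}(h_p\otimes \id^{\otimes q})$ (that is, \eqref{Rel(b)}+\eqref{Rel(c)}) once. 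This immediately produces two terms in which the bottom vertex of $\widetilde{\gg_1}$, respectively of $\widetilde{\gg_2}$, has been pulled down to the global bottom level; the induction hypothesis on $N$ vertices then identifies each term with the corresponding half of $\HHI\circ\lev\circ\mu$, split according to whether the global bottom vertex lies on the left or the right.

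Your one-sided scheme—always peeling off the bottom vertex $v$ of $\widetilde Y$—forces you to track where $v$ sits among the vertices of $\widetilde X$ in each shuffle, which is precisely the ``initial segment'' bookkeeping you allude to. That sum over positions of $v$ is the crux, and your sketch does not explain why it collapses; pushing the homotopies on $v$'s output strands past the $\widetilde X$-vertices below $v$ via \eqref{Rel(c)} produces correction terms, and one has to see that these reassemble correctly. The paper's two-sided extraction from the right-hand side sidesteps this entirely: one application of \eqref{Rel(b)}+\eqref{Rel(c)} at the very bottom, no shuffle-by-shuffle manipulation needed. Your route can presumably be made to work, but the paper's is shorter and avoids the step you flagged as hard.
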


\begin{proof}
	For $k=1$, it is trivial. Let us prove the statement for any $k\geqslant 2$ by induction on the number $N\geqslant 2$ of vertices of the  graphs of $\G^c(s\End_A)^{\otimes k}$. 
	
	For $N=2$, we have two horizontally concatenated vertices, and so $k=2$. For $f_1\in \End_A(m,p)$ and $f_2\in \End_A(n,q)$, a direct computation gives 
	\begin{align*}
	&\HHI  \circ   \lev \circ \mu\left(sf_1 \otimes sf_2\right) =
	\HHI\left( 
	(sf_1\otimes \id^{\otimes n})(\id^{\otimes p}\otimes sf_2) +(-1)^{(|f_1|+1)(|f_2|+1)} 
	(\id^{\otimes m}\otimes sf_2)(sf_1\otimes \id^{\otimes q})
	\right)\\
	&= h_{m+n}(f_1\otimes \id^{\otimes n})h_{p+n}(\id^{\otimes k}\otimes f_2) i^{\otimes (p+q)}
	+(-1)^{(|f_1|+1)(|f_2|+1)} 
	h_{m+n}(\id^{\otimes m}\otimes f_2)h_{m+q}(f_1\otimes \id^{\otimes q})
	i^{\otimes (p+q)}\\
	& = h_{m+n}(f_1\otimes \id^{\otimes n})(i^{\otimes p}\otimes h_n)(\id^{\otimes p}\otimes f_2i^{\otimes q})
	+(-1)^{(|f_1|+1)(|f_2|+1)} 
	h_{m+n}(\id^{\otimes m}\otimes f_2)(h_m\otimes i^{\otimes q})(f_1i^{\otimes p}\otimes \id^{\otimes q})\\
	& = (-1)^{|f_1|}h_{m+n}(\id^{\otimes m}\otimes h_n) (f_1 i^{\otimes p}\otimes f_2i^{\otimes q})
	-(-1)^{|f_1|} 
	h_{m+n}(h_m\otimes \id^{\otimes n}) (f_1 i^{\otimes p}\otimes f_2i^{\otimes q})\\
	& =(-1)^{|f_1|}(h_m\otimes h_n)(f_1 i^{\otimes p}\otimes f_2i^{\otimes q})
	=h_mf_1 i^{\otimes p}\otimes h_nf_2i^{\otimes q}
	= \mu \circ \HHI^{\otimes 2}  \circ   \lev^{\otimes 2} \left(sf_1 \otimes sf_2\right)\ ,
	\end{align*}
	where the third equality comes from Relation~\eqref{Rel(a)} of the proof of \cref{TechLem1}, applied to $i$ instead of $p$, and where the fifth equality comes from Relation~\eqref{Rel(b)}+\eqref{Rel(c)}. 
	
	Suppose now that the induction hypothesis holds up to $N\geqslant 2$, that is
	\[
	\mu^{k-1}\circ \HHI^{\otimes k}\circ \lev^{\otimes k} 
	 (\gamma_1 \otimes \cdots \otimes \gamma_k)
	= \HHI\circ\lev\circ \mu^{k-1}(\gamma_1 \otimes \cdots \otimes \gamma_k)\ ,
	\]
	for  
	$\gamma_1, \cdots, \gamma_k\in \G^c(s\End_A)$ with a total number of vertices equal to $N$. Let us prove that the result still holds for $N+1$, by induction on $k\geqslant 2$. For $k=2$, 
	let $\gamma_1\coloneq \gg_1(sf^1_1, \ldots, sf^1_m)\in \G^c(s\End_A)$ and $\gamma_2\coloneq \gg_2(sf^2_1, \ldots, sf^2_n) \in \G^c(s\End_A)$ such that $m+n=N+1$. Using the same conventions as above,
	 we have 
	\begin{align*}
	&\mu\circ\big(\HHI\circ \lev(\gamma_1) \otimes\HHI\circ\lev(\gamma_2)\big)= \\
	&\sum_{\left(\widetilde{\gg_1}, \sigma_1\right)\atop \left(\widetilde{\gg_2}, \sigma_2\right)}
	\varepsilon_{\sigma_1}\varepsilon_{\sigma_2}\, 
	\widetilde{\gg_1}\left(\H, f^1_{\sigma_1(1)}, \H, \ldots, \H, f^1_{\sigma_1(m)}, \II\right)\otimes
	\widetilde{\gg_2}\left(\H, f^2_{\sigma_2(1)}, \H, \ldots, \H, f^2_{\sigma_2(n)}, \II\right)=\\
	&\sum_{\left(\widetilde{\gg_1}, \sigma_1\right)\atop \left(\widetilde{\gg_2}, \sigma_2\right)}
	\varepsilon_{\sigma_1}\varepsilon_{\sigma_2}(-1)^{|f^1|+m-1}\, (h_p\otimes h_q)\left(
	\widetilde{\gg_1}\left(f^1_{\sigma_1(1)}, \H, \ldots, \H, f^1_{\sigma_1(m)}, \II\right)\otimes
	\widetilde{\gg_2}\left(f^2_{\sigma_2(1)}, \H, \ldots, \H, f^2_{\sigma_2(n)}, \II\right)\right)\ , 
	\end{align*}
	where $|f^1|=|f^1_1|+\cdots+|f^1_m|$, where $p$ and $q$ are respectively the numbers of outputs of the graphs $\gg_1$ and $\gg_2$. Using Relation~\eqref{Rel(b)}+\eqref{Rel(c)} of the proof of \cref{TechLem1}, we get  
	\begin{align*}
	&\mu\circ\big(\HHI\circ \lev(\gamma_1) \otimes\HHI\circ\lev(\gamma_2)\big)= \\
	&\sum_{\left(\widetilde{g_1}, \sigma_1\right)\atop \left(\widetilde{g_2}, \sigma_2\right)}
	\varepsilon_1\, 
	h_{p+q}\left(f^1_{\sigma_1(1)}\otimes \id^{\otimes q}\right)
	\left(
	\widetilde{g_1}'\left(\H, f^1_{\sigma_1(2)}, \H, \ldots, \H, f^1_{\sigma_1(m)}, \II\right)\otimes
	\widetilde{g_2}\left(\H, f^2_{\sigma_2(1)}, \H, \ldots, \H, f^2_{\sigma_2(n)}, \II\right)\right)+ \\
	&\sum_{\left(\widetilde{g_1}, \sigma_1\right)\atop \left(\widetilde{g_2}, \sigma_2\right)}
	\varepsilon_2\, 
	h_{p+q}\left(\id^{\otimes p}\otimes f^2_{\sigma_2(1)}\right)
	\left(
	\widetilde{g_1}\left(\H, f^1_{\sigma_1(1)}, \H, \ldots, \H, f^1_{\sigma_1(m)}, \II\right)\otimes
	\widetilde{g_2}'\left(\H, f^2_{\sigma_2(2)}, \H, \ldots, \H, f^2_{\sigma_2(n)}, \II\right)\right)\ ,
	\end{align*}
	where $\varepsilon_1=\varepsilon_{\sigma_1}\varepsilon_{\sigma_2}$, where 
	$\varepsilon_2=\varepsilon_{\sigma_1}\varepsilon_{\sigma_2}(-1)^{(|f^2_{\sigma_2(1)}|+1)(|f^1|+m)}$, and 
	where $\widetilde{g_1}'$ and 
	$\widetilde{g_2}'$ stand respectively for the possibly disconnected leveled graphs obtained by removing the bottom vertex labeled by $f^1_{\sigma_1(1)}$ and by $f^2_{\sigma_2(1)}$. Using the induction hypothesis to the corresponding labeled subgraphs $\gamma_1'\otimes \gamma_2$ and $\gamma_1\otimes \gamma_2'$ which have $N$ vertices, we get
	\begin{align*}
	\mu\circ\big(\HHI\circ \lev(\gamma_1) \otimes\HHI\circ\lev(\gamma_2)\big)&
	= \sum_{\left(\widetilde{\gg}, \sigma\right)\atop \mathrm{bot}(\widetilde{\gg})=1}\varepsilon_\sigma\, 
	\widetilde{\gg}\left(\H, f^1_{\sigma(1)}, \H, \ldots,  \II\right) +
	\sum_{\left(\widetilde{\gg}, \sigma\right)\atop \mathrm{bot}(\widetilde{\gg})=2}\varepsilon_\sigma\, 
	\widetilde{\gg}\left(\H, f^2_{\sigma(1)}, \H, \ldots,  \II\right)\\
	&=\HHI\circ\lev\circ \mu(\gamma_1\otimes \gamma_2)\ ,
	\end{align*} 
	where the graph $\gg\coloneq \gg_1\otimes \gg_2$ is the concatenation of $\gg_1$ and $\gg_2$ and where 
	$\mathrm{bot}(\widetilde{\gg})=1$ or $\mathrm{bot}(\widetilde{\gg})=2$ mean that the bottom vertex of the leveled graph $\widetilde{\gg}$ lies on the left-hand side (1) or on the right-hand side (2). 
	We now suppose that the result holds true for $k\geqslant 2$. If we have $k+1$ horizontally concatenated connected graphs, then we apply the above argument to the pair of graphs made up of the first $k$ concatenated graphs and the last one. We conclude with the induction hypothesis. 
\end{proof}

\subsection{The homotopy transferred structure}
We can pullback the above mentioned universal structures (Van der Laan twisting morphism and universal $\infty$-morphisms)
by any $\Cobar \calC$-gebra structure on $A$
 to get the following homotopy transfer theorem. 

\begin{theorem}[Homotopy Transfer Theorem]\label{thm:HTT}
Given a $\Cobar \calC$-gebra structure $\alpha \in \Tw(\calC, \End_A)$ on $A$ and a contraction from $A$ onto $H$, the composite 
\[\begin{tikzcd}[column sep=normal]
	\oC\arrow[r,"G_\alpha"] &  \Bar\End_A \arrow[r,"\varphi"] & \End_H 
\end{tikzcd}\]
defines a $\Cobar \calC$-gebra structure on $H$ and the composites 
\[
\begin{tikzcd}[column sep=normal, row sep=tiny]
	& \oC\arrow[r,"G_\alpha"] &  \Bar\End_A \arrow[r,"i_\infty"] & \End^H_A & \text{and} & 
\oC\arrow[r,"G_\alpha"] &  \Bar\End_A \arrow[r,"p_\infty"] & \End^A_H
\end{tikzcd}\]
are  two $\infty$-quasi-isomorphisms from $H$ to $A$ extending $i$ and respectively from $A$ to $H$ extending $p$.
\end{theorem}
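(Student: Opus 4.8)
The plan is to deduce the theorem formally by pulling back the universal constructions of \cref{prop:VanDerLaan} and \cref{prop:UnivInfMorph} along the coproperad morphism classifying $\alpha$, using the naturality statement \cref{prop:NatInfMorph}. First I would recall that, by the partial Rosetta stone \cref{Rosetta} applied to $\calP=\End_A$, the $\Cobar\calC$-gebra structure $\alpha\in\Tw(\calC,\End_A)$ corresponds to a morphism $G_\alpha\colon\calC\to\Bar\End_A$ of conilpotent dg coproperads, characterised by $\alpha=\epsilon\circ G_\alpha$, where $\epsilon\colon\Bar\End_A\to\End_A$ is the universal (counit) twisting morphism, i.e. the $\Cobar\Bar\End_A$-gebra structure on $A$ coming from the counit of the bar--cobar adjunction.

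For the first assertion, I would observe that precomposition with the dg coproperad morphism $G_\alpha$ is a morphism of convolution dg Lie-admissible algebras $-\circ G_\alpha\colon\Hom_{\Sy}(\Bar\End_A,\End_H)\to\Hom_{\Sy}(\calC,\End_H)$: since $G_\alpha$ commutes with the decomposition maps and with the differentials, one has $(fG_\alpha)\star(gG_\alpha)=(f\star g)G_\alpha$ and $\partial(fG_\alpha)=\partial(f)G_\alpha$, so $-\circ G_\alpha$ sends Maurer--Cartan elements to Maurer--Cartan elements. Applying this to the Van der Laan twisting morphism $\varphi$ shows that $\varphi\circ G_\alpha$ is a twisting morphism $\calC\to\End_H$, that is, a $\Cobar\calC$-gebra structure on $H$. (Equivalently, one composes the Van der Laan coproperad morphism $G_\varphi\colon\Bar\End_A\to\Bar\End_H$ with $G_\alpha$ and invokes \cref{Rosetta} again; the two viewpoints agree.)

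For the two $\infty$-morphisms, I would simply apply \cref{prop:NatInfMorph} to $G_\alpha\colon\calC\to\Bar\End_A$ and to the universal $\infty$-morphisms $i_\infty,p_\infty$ of \cref{prop:UnivInfMorph}, which are $\infty$-morphisms of $\Cobar\Bar\End_A$-gebras between the Van der Laan structure $\varphi$ on $H$ and the counit structure $\epsilon$ on $A$. By \cref{prop:NatInfMorph}, the composite $i_\infty\circ G_\alpha\colon\calC\to\End^H_A$ is an $\infty$-morphism from the $\Cobar\calC$-gebra $\varphi G_\alpha$ on $H$ to the $\Cobar\calC$-gebra $\epsilon G_\alpha=\alpha$ on $A$, and $p_\infty\circ G_\alpha\colon\calC\to\End^A_H$ is an $\infty$-morphism from $(A,\alpha)$ to $(H,\varphi G_\alpha)$. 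Since $G_\alpha$ is a morphism of coaugmented coproperads it restricts to the identity on $\I$, so the first components of $i_\infty G_\alpha$ and $p_\infty G_\alpha$ are respectively $i$ and $p$; both are quasi-isomorphisms because they belong to a contraction ($pi=\id_H$ and $ip-\id_A=d_Ah+hd_A$ force $i$ and $p$ to induce mutually inverse isomorphisms in homology). Hence both composites are $\infty$-quasi-isomorphisms, extending $i$ and $p$ respectively, as claimed.

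The argument is essentially formal once \cref{prop:VanDerLaan}, \cref{prop:UnivInfMorph} and \cref{prop:NatInfMorph} are available; the only point requiring care is the bookkeeping identifying $\epsilon\circ G_\alpha$ with $\alpha$ and keeping track of the source and target of the transferred $\infty$-morphisms, which is immediate from the construction of the bijection in \cref{Rosetta}. The genuine difficulty of the homotopy transfer theorem has thus been isolated into the universal statements — in particular into the combinatorial lemmas \cref{TechLem1} and \cref{TechLem2} on the leveled graph module — and into the Rosetta stone, so no substantial obstacle remains at this stage.
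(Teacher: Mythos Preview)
Your proof is correct and follows exactly the same approach as the paper, which states simply that the theorem is ``a direct corollary of \cref{prop:NatInfMorph} applied to \cref{prop:VanDerLaan} and \cref{prop:UnivInfMorph}''. You have merely unpacked this one-line proof with helpful detail---in particular the identification $\alpha=\epsilon\circ G_\alpha$ and the verification that the first components are $i$ and $p$---but the logical skeleton is identical.
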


\begin{proof}
This is a direct corollary of \cref{prop:NatInfMorph} applied to \cref{prop:VanDerLaan} and \cref{prop:UnivInfMorph}. 
\end{proof}

\begin{definition}[Transferred structure]
The new $\Cobar\calC$-gebra structure obtained on $H$ under the above theorem is called 
the \emph{transferred $\Cobar\calC$-gebra structure}. Since it is related to the original $\Cobar\calC$-gebra structure  on $A$ by $\infty$-quasi-isomorphisms, it is \emph{weakly equivalent} to it. 
 \end{definition}
 
Notice that the transferred $\Cobar\calC$-gebra structure on $H$ is equal to 
	\[
	\begin{tikzcd}[column sep=normal]
	\oC\arrow[r,"\Delta_{\oC}"] & \G^c\big(\oC\big) \arrow[r,"\G^c(s\alpha)"] & \Bar\End_A \arrow[r,"\varphi"] & \End_H 
	\end{tikzcd}
	\]
	in terms of a twisting morphism in $\Tw(\calC,\End_H)$.
This comes from the fact that the morphism $G_\alpha : \calC \to \Bar \End_A$ of conilpotent dg coproperads associated to the twisting morphism $\alpha$ by \cref{thm:CompleteRosetta} is equal on non-trivial elements to 
the composite $\G^c(s\alpha)\circ \Delta_{\oC}$. Since $s\alpha$ is of degree $0$ and since the map $\PHI$ in 
the composite $\varphi=\PHI\circ \lev$ 
does not produce any extra sign, the only sign comes the comonadic decomposition map  $\Delta_{\oC}$ followed by the levelisation map. 

\begin{remark}
	This theorem generalizes the operadic one of \cite[Appendix~B]{GCTV12} and  \cite[Theorem 10.3.1]{LodayVallette12}). 
\end{remark}

\begin{remark} Notice that labeling all the internal edges of a graphs with the contracting homotopy $h$, as done in the particular case of homotopy unimodular Lie bialgebras in \cite{Merkulov10}, cannot produce the general homotopy transfer formula. This does not produce homotopies with the correct homological degree: 
for instance, the homotopy for the involutivity relation of an involutive Lie bialgebra, see \cref{subsec:IBL}, would then become of degree $2$, instead of $1$. 
\end{remark}

\subsection{Applications}

\begin{theorem}[Homological invertibility of $\infty$-quasi-isomorphisms]\label{thm:InvInfQI}
	Let $A$ and $B$ be two $\Omega\calC$-gebras and let 
	$f:A \overset{\sim}{\rightsquigarrow} B$ be an $\infty$-quasi-isomorphism. There exists an $\infty$-quasi-isomorphism $g:B \overset{\sim}{\rightsquigarrow} A$ 
whose first component induces 	the homology inverse of the first component of $f$. 
\end{theorem}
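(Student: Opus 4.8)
The plan is to mimic the classical operadic proof (as in \cite[Section~10.4]{LodayVallette12}), reducing the homological inversion of an $\infty$-quasi-isomorphism to the combination of $\infty$-isotopies (which are invertible by \cref{prop:Inverse}) and the homotopy transfer theorem (\cref{thm:HTT}). First I would choose a contraction of $(A,d_A)$ onto its homology $H_\bullet(A)$ and a contraction of $(B,d_B)$ onto $H_\bullet(B)$; write $i_A,p_A,h_A$ and $i_B,p_B,h_B$ for the structure maps. Applying \cref{thm:HTT} to the given $\Omega\calC$-gebra structures $\alpha$ on $A$ and $\beta$ on $B$, I transport them to $\Omega\calC$-gebra structures $\alpha_H$ on $H_\bullet(A)$ and $\beta_H$ on $H_\bullet(B)$, together with $\infty$-quasi-isomorphisms $p^A_\infty:A\overset{\sim}{\rightsquigarrow} H_\bullet(A)$, $i^A_\infty:H_\bullet(A)\overset{\sim}{\rightsquigarrow} A$ extending $p_A,i_A$, and similarly $p^B_\infty, i^B_\infty$ for $B$.

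Next I would produce the composite $\infty$-quasi-isomorphism
\[
\begin{tikzcd}
H_\bullet(A) \ar[r, "i^A_\infty", rightsquigarrow] & A \ar[r, "f", rightsquigarrow] & B \ar[r, "p^B_\infty", rightsquigarrow] & H_\bullet(B)
\end{tikzcd}
\]
using the composition formula $g\circledcirc f$ established just after \cref{prop:infmor}; call it $F:H_\bullet(A)\overset{\sim}{\rightsquigarrow} H_\bullet(B)$. Its first component is $p_B\circ f_{(0)}\circ i_A=H_\bullet(f_{(0)})$, which is an \emph{isomorphism} of chain complexes with zero differential since $f$ is an $\infty$-quasi-isomorphism. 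Therefore $F$ is an $\infty$-isomorphism, and by \cref{prop:Inverse} it admits a two-sided inverse $F^{-1}:H_\bullet(B)\overset{\sim}{\rightsquigarrow} H_\bullet(A)$ in the category $\infty\textsf{-}\Cobar\calC\textsf{-}\mathsf{gebras}$, whose first component is $H_\bullet(f_{(0)})^{-1}$. Finally I set
\[
\begin{tikzcd}
g\ :\ B \ar[r, "p^B_\infty", rightsquigarrow] & H_\bullet(B) \ar[r, "F^{-1}", rightsquigarrow] & H_\bullet(A) \ar[r, "i^A_\infty", rightsquigarrow] & A\ ,
\end{tikzcd}
\]
again a composite of $\infty$-morphisms, hence an $\infty$-morphism; its first component is $i_A\circ H_\bullet(f_{(0)})^{-1}\circ p_B$, which on homology induces $H_\bullet(p_B)\circ H_\bullet(f_{(0)})^{-1}\circ H_\bullet(i_A)$. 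Since $i_A,p_B$ are homology isomorphisms inverse to $p_A,i_B$ respectively, this equals $H_\bullet(f_{(0)})^{-1}$, so $g$ is an $\infty$-quasi-isomorphism inducing the homology inverse of $f_{(0)}$, as required.

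The routine steps are the verifications that the relevant composites are well-defined $\infty$-morphisms (immediate from the composition formula) and the bookkeeping identifying first components (a short diagram chase using $p_Ai_A=\id$, $p_Bi_B=\id$ and the definition of a contraction). The main obstacle I anticipate is making sure \cref{prop:Inverse} applies cleanly: it requires $\calC$ to be \emph{conilpotent}, so one should either assume $\calC$ conilpotent throughout this subsection (consistent with \cref{thm:HTT}, where the transferred structure is phrased via $\Delta_{\oC}$ and $G_\alpha$), or note that $\Omega\calC$-gebras for a general coaugmented $\calC$ can be replaced by $\Omega\mathbf{B}\calP$-gebras for the relevant properad $\calP$, where conilpotence holds. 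A secondary point to handle carefully is that the first component $H_\bullet(f_{(0)})$ of $F$ is genuinely an isomorphism of the \emph{underlying graded vector spaces} (it is, being an isomorphism on homology of complexes with zero differential), which is exactly the hypothesis needed to invoke the characterisation of $\infty$-isomorphisms; once this is in place the rest is formal.
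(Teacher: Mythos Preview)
Your proposal is correct and follows essentially the same approach as the paper: choose contractions onto homology, apply \cref{thm:HTT} to get $i_\infty$ and $p_\infty$, observe that the composite $p^B_\infty\circledcirc f\circledcirc i^A_\infty$ is an $\infty$-isomorphism between homologies, invert it via \cref{prop:Inverse}, and sandwich the inverse between $p^B_\infty$ and $i^A_\infty$. Your remark that conilpotence of $\calC$ is required for \cref{prop:Inverse} is apt and is an implicit standing assumption in this part of the paper.
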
	

\begin{proof}
Since we are working over a field, we can choose once and for all a contraction of the chain complex $A$ (respectively $B$) into its homology $\H(A)$ (respectively $\H(B)$). Let us denote by $\alpha$ and $\beta$ respectively, the $\Cobar \calC$-gebra structures on $A$ and $B$. 
	By the homotopy transfer theorem \ref{thm:HTT}, the following composite of $\infty$-quasi-isomorphisms
	\[
	\begin{tikzcd}
	\mathrm{H}(A) \ar[r,squiggly,"i_\infty G_\alpha"] 
	& A \ar[r,squiggly,"f"]
	& B \ar[r,squiggly,"p_\infty G_\beta"]
	& \mathrm{H}(B)
	\end{tikzcd}
	\]
	is an $\infty$-isomorphism. By \cref{prop:Inverse}, it admits an inverse $\infty$-isomorphism $\widetilde{g} :\mathrm{H}(B) \overset{\cong}\rightsquigarrow \mathrm{H}(A)$. In the end, the following composite 
	\[
	\begin{tikzcd}
	g \ : \ 
	B \ar[r,squiggly,"p_\infty G_\beta"] 
	& \mathrm{H}(B) \ar[r,squiggly,"\widetilde{g}"]
	& \mathrm{H}(A) \ar[r,squiggly,"i_\infty G_\alpha"]
	& A
	\end{tikzcd} 
	\]
	produces the required $\infty$-quasi-isomorphism. 
\end{proof}

\begin{corollary}
	Let $A$ and $B$ be two $\Omega\calC$-gebras. If there exists a zig-zag of quasi-isomorphisms of 
	$\Omega\calC$-gebras 
	\[
	\begin{tikzcd}
	A \ar[r,"\sim"] 
	& \bullet 
	& \bullet \ar[l,"\sim"'] \ar[r,dotted, no head]
	&\bullet
	& \bullet \ar[l,"\sim"'] \ar[r,"\sim"] 
	& B
	\end{tikzcd} 
	\]
	then there exists an $\infty$-quasi-isomorphism $A \overset{\sim}{\rightsquigarrow} B$ (and  
	a $\infty$-quasi-isomorphism $B \overset{\sim}{\rightsquigarrow} A$) whose first component induces a homology 	isomorphism. 
\end{corollary}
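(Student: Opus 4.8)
\emph{Proof sketch.} The plan is to straighten the given zig-zag into a single chain of $\infty$-quasi-isomorphisms all pointing in the same direction, using \cref{thm:InvInfQI} to reverse the wrong-way arrows, and then to compose them.

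First I would observe that any strict morphism $\phi\colon A\to B$ of $\Omega\calC$-gebras can be regarded as an $\infty$-morphism. Under \cref{prop:infmor} it corresponds to the map $f\colon\calC\to\End^A_B$ whose first component $f_{(0)}\colon\I\to\End^A_B$ is $\phi$ and which vanishes on $\oC$. For such an $f$, \cref{eq:Morph} reduces on $\I$ to the assertion that $\phi$ is a chain map, and on $\oC$ --- since $f$ vanishes there while $\alpha$ and $\beta$ vanish on $\I$ --- to the assertion that $\phi$ intertwines the two structure maps $\alpha$ and $\beta$; as $\Omega\calC$ is quasi-free on $s^{-1}\oC$, this is exactly the condition that $\phi$ be a morphism of $\Omega\calC$-gebras. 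Hence a strict quasi-isomorphism of $\Omega\calC$-gebras is, in particular, an $\infty$-quasi-isomorphism. Next I would recall that $\infty$-morphisms compose via $g\circledcirc f$, that the first component of a composite is the composite of the first components, and therefore that a composite of $\infty$-quasi-isomorphisms is again an $\infty$-quasi-isomorphism whose first component induces on homology the composite of the corresponding homology isomorphisms.

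Then, starting from the zig-zag, I would apply \cref{thm:InvInfQI} to each backward-pointing arrow: viewing $\bullet\xleftarrow{\ \sim\ }\bullet'$ as an $\infty$-quasi-isomorphism $\bullet'\overset{\sim}{\rightsquigarrow}\bullet$, we obtain an $\infty$-quasi-isomorphism $\bullet\overset{\sim}{\rightsquigarrow}\bullet'$ whose first component induces the inverse homology isomorphism. Replacing every backward arrow by this inverse and composing the whole resulting chain of forward $\infty$-quasi-isomorphisms yields an $\infty$-quasi-isomorphism $A\overset{\sim}{\rightsquigarrow}B$, whose first component induces a homology isomorphism, being a composite of homology isomorphisms. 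Applying \cref{thm:InvInfQI} once more to it (or straightening the zig-zag in the opposite direction) produces the desired $B\overset{\sim}{\rightsquigarrow}A$.

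There is no genuinely hard step: everything rests on \cref{thm:InvInfQI} together with the composition of $\infty$-morphisms. The only point needing verification is the identification of strict morphisms of $\Omega\calC$-gebras with a class of $\infty$-morphisms, that is, the unravelling of \cref{eq:Morph} described above; this is the mild obstacle, and it follows directly from \cref{prop:infmor} and the fact that a morphism out of a quasi-free properad is determined by its restriction to the generators.
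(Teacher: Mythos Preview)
Your proof is correct and follows essentially the same approach as the paper, which records only the one-line justification ``This is a direct corollary of \cref{thm:InvInfQI}.'' You have simply made explicit the steps the paper leaves implicit: that strict morphisms of $\Omega\calC$-gebras are $\infty$-morphisms concentrated in the $\I$-component, that \cref{thm:InvInfQI} reverses the wrong-way arrows, and that the resulting chain of $\infty$-quasi-isomorphisms may be composed.
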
	

\begin{proof}
This is a direct corollary of \cref{thm:InvInfQI}. 
\end{proof}

\begin{remark}
In the operadic case, one can use the rectification \cite[Section~11.4]{LodayVallette12} to prove the converse property. 
Since the rectification of homotopy algebras over an operad relies on the free algebra construction, it cannot hold as such for homotopy gebras over a properad. So we do not know whether the fondamental equivalence ``zig-zig of quasi-isomorphisms''-``$\infty$-quasi-isomorphism'' still holds on the properadic level. 
\end{remark}

\section{Obstruction theory}\label{sec:Obstruction}

Let us suppose that the conilpotent dg coproperad $\calC$ is weight graded, i.e. $\calC=\bigoplus_{n\geqslant 0} \calC_{(n)}$, with $\calC_{(0)}=\I$, with a differential which lowers this weight by $-1$: $d_\calC :  \calC_{(n)} \to \calC_{(n-1)}$\ . This is the case when $\calC$ is the Koszul dual $\P^{\ac}$ of an inhomogenous quadratic properad \cite[Appendix~A]{GCTV12} or when it is the bar construction $\Bar \P$ of an augmented properad. 
Under this assumption, we can apply the standard methods \emph{mutatis mutandis} to develop the obstruction theory for $\infty$-morphisms of $\Cobar \calC$-gebras, as it was done in \cite[Appendix~A]{Vallette14} on the operadic level. 

\subsection{Main result}
Recall from \Cref{prop:infmor} that an $\infty$-morphism $f: (A, \alpha) \rightsquigarrow (B, \beta)$ between two $\Omega\calC$-gebras  is a map $f:\calC\rightarrow \End^A_B$ satisfying Equation~\eqref{eq:Morph}: 
\begin{equation*}
	\partial(f) = f \rhd \alpha - \beta \lhd f\ .
\end{equation*}
For any $n\geqslant 0$, we denote by 
\[\alpha_{(n)}\coloneqq \alpha|_{\calC_{(n)}}, \quad \beta_{(n)}\coloneqq \beta|_{\calC_{(n)}} \quad \text{and}\quad f_{(n)}\coloneqq f|_{\calC_{(n)}}\ ,\] the respective restrictions. Recall that $\alpha_{(0)}=0$ and $\beta_{(0)}=0$. 
Using these notations, \Cref{eq:Morph}, once evaluated on each $\calC_{(n)}$, 
 splits with respect to the weight grading and becomes equivalent to the following system of equations
\begin{equation}\label{eq::eqaution4Vallette}
	\partial^A_B f_{(n)} -f_{(n-1)}  d_\calC = \sum_{k=1}^n \left( f_{(\leqslant n-k)} \rhd \alpha_{(k)} - \beta_{(k)} \lhd f_{(\leqslant n-k)} \right)\ ,
\end{equation}
 indexed by $n\geqslant 0$. 

\begin{theorem}\label{thm:Obstr}
Let $\calC$ be a weight graded conilpotent dg coproperad and let $\alpha\in \Tw(\calC,\End_A)$ and $\beta\in \Tw(\calC,\End_B)$ be two $\Omega\calC$-gebra structures. Suppose that we are given $f_{(0)},\ldots,f_{(n-1)}$ satisfying \Cref{eq::eqaution4Vallette} up to $n-1$. The element
	\[
		\widetilde{f}_{(n)}\coloneqq \sum_{k=1}^n \left( f_{(\leqslant n-k)} \rhd \alpha_{(k)} - \beta_{(k)} \lhd f_{(\leqslant n-k)} \right) +f_{(n-1)} d_\calC  
	\] 
	is a cycle in the chain complex $\left( \Hom_{\bbS}\big(\calC,\End^A_B\big), \big(\partial^A_B\big)_* \right)$. Therefore, there exists an element $f_{(n)}$ satisfying \Cref{eq::eqaution4Vallette} at weight $n$ if and only if the cycle $\widetilde{f}_{(n)}$ is a boundary element.
\end{theorem}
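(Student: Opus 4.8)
The plan is the standard obstruction-theoretic argument: one wants to show that $\widetilde{f}_{(n)}$ is a cycle for the differential $(\partial^A_B)_*$, and then the final "if and only if" is immediate from the fact that a cycle in a chain complex is the boundary of a chain precisely when its class vanishes in homology — here we are solving $(\partial^A_B)_* f_{(n)} = \widetilde{f}_{(n)}$ for $f_{(n)}$, so a solution exists iff $\widetilde{f}_{(n)}$ is a boundary. So the entire content is the cycle condition $\partial^A_B \widetilde{f}_{(n)} = 0$ (note the differential of $\calC$ does not enter the target complex, but $\widetilde{f}_{(n)}$ itself involves $d_\calC$ applied to the lower-weight pieces).

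**Key steps.** First I would record that the full Equation~\eqref{eq:Morph} is the Maurer--Cartan equation of the $\L_\infty$-algebra $\Hom_{\bbS}(\calC,\End_B)\oplus\Hom_{\bbS}(\calC,\End^A_B)\oplus\Hom_{\bbS}(\calC,\End_A)$ twisted by $\alpha+\beta$, as explained in the remark following \Cref{prop:infmor}; equivalently, it is \eqref{eq::eqaution4Vallette} split by weight. Second, I would assume $f_{(0)},\dots,f_{(n-1)}$ solve \eqref{eq::eqaution4Vallette} in weights $\leqslant n-1$, and apply $\partial^A_B$ to the definition of $\widetilde f_{(n)}$. Third — the computational heart — I would expand $\partial^A_B\bigl(f_{(\leqslant n-k)}\rhd\alpha_{(k)} - \beta_{(k)}\lhd f_{(\leqslant n-k)} + f_{(n-1)}d_\calC\bigr)$ using: (a) the Leibniz rule for $\partial^A_B$ with respect to the operations $\rhd$ and $\lhd$ (these come from the $\L_\infty$-module structures, so $\partial^A_B$ is a derivation for them up to the differentials of $\alpha$, $\beta$); (b) the Maurer--Cartan equations $\partial\alpha + \tfrac12[\alpha,\alpha]=0$ and likewise for $\beta$, restricted to each weight $\calC_{(k)}$, i.e. $\partial^A_A\alpha_{(k)} = \alpha_{(k-1)}d_\calC - \tfrac12\sum[\alpha,\alpha]_{(k)}$; (c) the induction hypothesis, which lets me replace $\partial^A_B f_{(j)}$ by $f_{(j-1)}d_\calC + \sum(f\rhd\alpha - \beta\lhd f)_{(j)}$ for every $j\leqslant n-1$; and (d) the relation $d_\calC^2=0$ together with the coassociativity-type compatibilities between $\Delta_{(1,1)}$ (which underlies $\rhd$, $\lhd$) and $d_\calC$. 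After substituting everything, the terms must cancel in pairs exactly as in the $\L_\infty$ Maurer--Cartan calculus: the quadratic-in-$\alpha$, quadratic-in-$\beta$, and mixed $\alpha$-$\beta$ contributions organize into the $\L_\infty$-relations, and the $d_\calC$ terms telescope using $d_\calC^2=0$. Fourth, I would conclude $\partial^A_B\widetilde f_{(n)}=0$, hence $\widetilde f_{(n)}$ is a cycle, and therefore $f_{(n)}$ solving \eqref{eq::eqaution4Vallette} at weight $n$ exists iff $[\widetilde f_{(n)}]=0$ in $\rmH\bigl(\Hom_{\bbS}(\calC,\End^A_B),(\partial^A_B)_*\bigr)$, i.e. iff $\widetilde f_{(n)}$ is a boundary.

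**Main obstacle.** The bookkeeping in step three is the only real difficulty: keeping track of the Koszul signs produced by the suspensions and by the operators $\rhd$, $\lhd$ (which are built from $\Delta_{(1,1)}$ and composition in $\End$), and making sure the weight-splitting of the twisted $\L_\infty$ Maurer--Cartan equation really is \eqref{eq::eqaution4Vallette} term by term. The cleanest way to avoid grinding through this by hand is to invoke the remark after \Cref{prop:infmor}: the triple of convolution Hom-complexes carries an $\L_\infty$-structure extending the two dg Lie algebras, $\alpha+\beta$ is Maurer--Cartan in it, Equation~\eqref{eq:Morph} is the Maurer--Cartan equation of the twist concentrated in the middle summand $\Hom_{\bbS}(\calC,\End^A_B)$, and then the statement is precisely the classical obstruction lemma for solving Maurer--Cartan equations weight by weight in a complete (here weight-graded) $\L_\infty$-algebra, exactly as in \cite[Appendix~A]{Vallette14}. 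I would phrase the proof so that the hands-on verification and the conceptual $\L_\infty$ argument are both available, flagging that the former is a routine (if lengthy) sign computation and the latter reduces everything to a known lemma.
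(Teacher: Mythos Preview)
Your proposal is correct and follows essentially the same route as the paper: apply $\partial^A_B$ to $\widetilde f_{(n)}$, distribute, feed in the weight-split Maurer--Cartan equations for $\alpha$ and $\beta$, substitute the induction hypothesis for each $\partial^A_B f_{(j)}$ with $j\leqslant n-1$, and use $d_\calC^2=0$ together with the compatibility of $d_\calC$ with $\rhd$ and $\lhd$.

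The one place where the paper is more explicit than your outline is the endgame. After all substitutions, what survives are precisely the quadratic terms you allude to, and the paper isolates the three identities that kill them:
\[
(f;f\rhd\alpha)\rhd\alpha = f\rhd(\alpha\star\alpha), \qquad
\beta\lhd(f;\beta\lhd f)=(\beta\star\beta)\lhd f, \qquad
(f;\beta\lhd f)\rhd\alpha = \beta\lhd(f;f\rhd\alpha).
\]
These are proved directly from the comonadic coproperad axiom (the commutative square~\eqref{eq:CoalComonad}), in the same spirit as \cref{prop:ComCopropCoprop}; this is where the conilpotency hypothesis actually enters. Your phrase ``organize into the $\L_\infty$-relations'' is exactly right, but the paper chooses to verify these three relations by hand rather than appeal to the abstract $\L_\infty$ framework you propose as a shortcut. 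Either route works; the paper's hands-on version has the advantage of being self-contained, while your alternative via the twisted $\L_\infty$-algebra of the remark after \cref{prop:infmor} reduces everything to the standard obstruction lemma from \cite[Appendix~A]{Vallette14}.
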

\begin{proof}
	Let us prove that $\partial^A_B  \widetilde{f}_{(n)}=0$; the second statement is straightforward. Distributing the differentials everywhere, we get 
\begin{align*}
	\partial^A_B \widetilde{f}_{(n)}  = & 
	\sum_{k=1}^n \left( \big(f;\partial^A_B  f\big)_{(\leqslant n-k)} \rhd \alpha_{(k)} + \beta_{(k)} \lhd \big(f;\partial^A_B  f\big) _{(\leqslant n-k)} 
	+ f_{(\leqslant n-k)} \rhd (\partial_A \alpha_{(k)}) - (\partial_B\beta_{(k)}) \lhd f_{(\leqslant n-k)}
	 \right) \\
	&   + \partial^A_B  f_{(n-1)}d_\calC \ ,
\end{align*} 
	where $\big(f;\partial_B^A f\big)$ is our standard notation for $f$ applied everywhere except at one place where 
	$\partial_B^A f$ is applied. Similarly to \eqref{eq:Morph}, the  Maurer--Cartan equation satisfied by $\alpha$ and $\beta$ also splits with respect to the weight grading:
	\[
		\partial_A\alpha_{(n)} = - \sum_{k+l=n\atop k,l \geqslant 1} \alpha_{(k)}\star  \alpha_{(l)} - \alpha_{(n-1)}d_\calC 
		\quad \mbox{ and } \quad 
		\partial_B\beta_{(n)} = - \sum_{k+l=n\atop k,l \geqslant 1} \beta_{(k)}\star  \beta_{(l)} - \beta_{(n-1)}d_\calC \ . 
	\]
Using them and the induction hypothesis for $f_{(0)},\ldots,f_{(n-1)}$, we get by substitution
	\begin{align*}
		\partial^A_B\widetilde{f}_{(n)}  = &
\Big( (f;f \rhd \alpha) \rhd \alpha - (f;\beta \lhd f ) \rhd \alpha +(f;f d_\calC)\rhd \alpha \\
&+\beta \lhd (f;f \rhd \alpha)-\beta \lhd (f;\beta \lhd f)  
		+\beta \lhd (f;f d_\calC )
		 \\
		&    -f\rhd (\alpha\star  \alpha) - f \rhd (\alpha d_\calC) + (\beta\star  \beta) \lhd f + (\beta d_\calC) \lhd f 
		 \\
		& +  (f \rhd \alpha) d_\calC + (\beta  \lhd f)d_\calC + f(d_\calC )^2\Big)|_{\calC_{(n)}}\ , 
	\end{align*}
dropping the weight indices for simplicity; we will not need them for the rest of the proof. 
Since $\calC$ is a dg coproperad coming from a  comonadic dg coproperad, we have 
\[(f \rhd \alpha) d_\calC = f \rhd (\alpha d_\calC)  - \big(f;  f d_\calC\big)\rhd \alpha
\quad \text{and} \quad 
(\beta \lhd f) d_\calC = (\beta d_\calC) \lhd f  -   \beta \lhd  \big(f;  f d_\calC\big)\ .
\]
Only the following terms remain
	\begin{align*}
		\partial^A_B\widetilde{f}_{(n)}  = &
\Big( (f;f \rhd \alpha) \rhd \alpha - (f;\beta \lhd f ) \rhd \alpha  +\beta \lhd (f;f \rhd \alpha)  
		-\beta \lhd (f;\beta \lhd f)  \\ & 
   -f\rhd (\alpha\star  \alpha)  + (\beta\star  \beta) \lhd f \Big)|_{\calC_{(n)}}
		 \ .
	\end{align*}
We claim that the following identities hold
\begin{equation}\label{eqn:Id1}
\tag{a} (f;f \rhd \alpha) \rhd \alpha =   f \rhd ( \alpha \star  \alpha)\ ; 
\end{equation}
\begin{equation}\label{eqn:Id2}
\tag{b} \beta \lhd (f;\beta \lhd f)=(\beta\star  \beta) \lhd f\ ;
\end{equation}
\begin{equation}\label{eqn:Id3}
\tag{c} (f;\beta \lhd f ) \rhd \alpha = \beta \lhd (f;f \rhd \alpha)\ ,
\end{equation}

Equation~\eqref{eqn:Id1} follows from the same argument as in the proof of \cref{prop:ComCopropCoprop}. One is applying here twice the comonadic decomposition map in order to produce all the graphs with two top vertices labeled by $\alpha$ and a bottom level of vertices labeled by $f$. When the two top vertices do not sit one above the other and can be vertically switched, the corresponding terms cancel due to the sign convention since the degree of $\alpha$ is equal to $-1$. Only remain the terms where the two vertices do sit one above the other, which is obtained by $ f \rhd ( \alpha \star  \alpha)$. Equation~\eqref{eqn:Id2} is the vertical symmetry of Equation~\eqref{eqn:Id1}; so it holds as well. Equation~\eqref{eqn:Id3} is once again proved using a similar argument than the one of  \cref{prop:ComCopropCoprop}. Composing the comonadic decomposition map one way or another produces the same kind of graphs: a middle level of vertices labeled by $f$, a top vertex labeled by $\alpha$, and a bottom vertex labeled by $\beta$. 
\end{proof}

\subsection{Application}

\begin{proposition}
Let $\calC$ be a weight graded conilpotent dg coproperad. 
Let $(A, d_A, \alpha)$ be a $\Cobar \calC$-gebra structure and let $(B, d_B, 0)$ be an an acyclic chain complex equipped with trivial $\Cobar \calC$-gebra structure. Any map of chain complexes $A\to B$ extends to an $\infty$-morphism $(A, d_A, \alpha)\rightsquigarrow (B, d_B, 0)$. 
\end{proposition}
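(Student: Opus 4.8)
The plan is to use the obstruction theory of \cref{thm:Obstr}, exploiting that the target $\Cobar\calC$-gebra structure is trivial ($\beta=0$) and that $B$ is acyclic. So we seek a map $f : \calC \to \End^A_B$ satisfying the system \eqref{eq::eqaution4Vallette}, which since $\beta=0$ simplifies at each weight $n$ to
\[
\partial^A_B f_{(n)} - f_{(n-1)}d_\calC = \sum_{k=1}^n f_{(\leqslant n-k)}\rhd \alpha_{(k)}\ .
\]
First I would set $f_{(0)}\coloneqq g$, the given chain map $A\to B$; the weight-$0$ equation reads $\partial^A_B f_{(0)}=0$, which holds precisely because $g$ is a chain map. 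Then I would construct $f_{(n)}$ for $n\geqslant 1$ by induction, at each stage invoking \cref{thm:Obstr}: the obstruction cocycle
\[
\widetilde{f}_{(n)}=\sum_{k=1}^n f_{(\leqslant n-k)}\rhd\alpha_{(k)}+f_{(n-1)}d_\calC
\]
is a cycle in $\bigl(\Hom_\bbS(\calC,\End^A_B),(\partial^A_B)_*\bigr)$, and one must show it is a boundary so that a suitable $f_{(n)}$ exists.

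The key point is that this chain complex is acyclic in the relevant degrees. Indeed $\End^A_B(m,n)=\Hom(A^{\otimes n},B^{\otimes m})$; since $B$ is acyclic and we work over a field, $B^{\otimes m}$ is acyclic for every $m\geqslant 1$, hence $\Hom(A^{\otimes n},B^{\otimes m})$ is acyclic as a chain complex (the dual/hom of a bounded-below acyclic complex of vector spaces against anything stays acyclic, again using that $\k$ is a field of characteristic $0$ so everything is projective/flat). Arity-wise, $\Hom_\bbS(\calC,\End^A_B)$ is a product over $(m,n)$ of (invariants of) such hom-complexes, so it is acyclic as well. Consequently every cycle is a boundary, and the obstruction in \cref{thm:Obstr} vanishes automatically at each weight. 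This produces, weight by weight, the desired $f$, and by \cref{prop:infmor} the resulting $f$ is an $\infty$-morphism $(A,d_A,\alpha)\rightsquigarrow(B,d_B,0)$ whose first component $f_{(0)}$ is the prescribed chain map.

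The one subtlety I would be careful about is the bookkeeping of degrees and the precise statement of acyclicity: $(\partial^A_B)_*$ is the internal differential on $\Hom_\bbS(\calC,\End^A_B)$ coming only from $d_A$, $d_B$ (not from $d_\calC$), and one needs the complex $\bigl(\Hom_\bbS\bigl(\calC_{(n)},\End^A_B\bigr),(\partial^A_B)_*\bigr)$ to be acyclic for each fixed weight $n$ — which follows from acyclicity of $B$ componentwise as above. There is no finiteness issue because we do not need a quasi-isomorphism, only existence of $f_{(n)}$; and there is no convergence issue because \cref{thm:Obstr} is applied one weight at a time and $\calC$ is conilpotent and weight graded. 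The main obstacle is thus purely the verification that tensoring and homming an acyclic complex of $\k$-vector spaces preserves acyclicity in each arity, which is standard; once that is in place the induction runs without further choices being constrained.

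\begin{proof}
By \cref{prop:infmor}, since $\beta=0$, an $\infty$-morphism $(A,d_A,\alpha)\rightsquigarrow (B,d_B,0)$ is the same as a map $f:\calC\to\End^A_B$ of $\Sy$-bimodules with $\partial(f)=f\rhd\alpha$, which by \cref{eq::eqaution4Vallette} amounts to the system
\[
\partial^A_B f_{(n)} - f_{(n-1)}d_\calC = \sum_{k=1}^n f_{(\leqslant n-k)}\rhd \alpha_{(k)}\ ,\qquad n\geqslant 0\ .
\]
Set $f_{(0)}$ to be the given chain map $A\to B$; the $n=0$ equation is $\partial^A_B f_{(0)}=0$, which holds. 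Assume $f_{(0)},\dots,f_{(n-1)}$ have been constructed. Since $B$ is acyclic and $\k$ is a field, each $B^{\otimes m}$ is acyclic, hence each $\Hom\bigl(A^{\otimes n},B^{\otimes m}\bigr)$ is acyclic; taking the arity-wise (co)invariants, the chain complex $\bigl(\Hom_\bbS\bigl(\calC_{(n)},\End^A_B\bigr),(\partial^A_B)_*\bigr)$ is acyclic. By \cref{thm:Obstr} the obstruction $\widetilde f_{(n)}$ is a cycle in this complex, hence a boundary, so there exists $f_{(n)}$ satisfying the equation at weight $n$. This completes the induction and yields the desired $\infty$-morphism extending the given chain map.
\end{proof}
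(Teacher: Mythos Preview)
Your proof is correct and follows essentially the same approach as the paper: argue that acyclicity of $B$ forces the obstruction complex $\bigl(\Hom_\bbS(\calC,\End^A_B),(\partial^A_B)_*\bigr)$ to be acyclic, then apply \cref{thm:Obstr} iteratively starting from $f_{(0)}$ equal to the given chain map. The paper's own proof is a terse two-line version of exactly this argument.
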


\begin{proof}
Since $(B, d_B)$ is acyclic, so is the chain complex $\left( \Hom_{\bbS}\big(\calC,\End^A_B\big), \big(\partial^A_B\big)_* \right)$\ . We can then can apply iteratively \cref{thm:Obstr} starting from $f_{(0)}$ equal to the original chain map. 
\end{proof}

\begin{remark}
We will use \cref{thm:Obstr} in a following paper dealing with a model category structure on dg monoid $\S\calC$-comodules to the one on dg $\calC$-coalgebras given in \cite{Vallette14}.
\end{remark}

\section{Examples}

\subsection{Algebras and coalgebras over an operad}
Recall that a biderivation $\S \calC \sq \S A\to \S \calC \sq \S A$ of a bifree monoid $\S \calC$-comodule generated by a graded vector space $A$ is equivalent to a map $\calC \boxtimes A\to  \S A$, by \cref{lem:BiDerHom}. 
When $\calC$ is a (coaugmented) cooperad, that is a coproperad concentrated in arities $(1,n)$ for $n\in \NN$, such a data amounts to a map $\calC(A)\cong \calC \boxtimes A \to  A$, that is to a coderivation of the cofree $\calC$-coalgebra $\calC(A)$. In this operadic case, the notion of bidifferential of the monoid $\S \calC$-comodule $\S \calC \sq \S A$ 
coincides with the notion of codifferential of the cofree $\calC$-coalgebra $\calC(A)$:
\[\Bidiff(\S \calC \sq \S A)\cong \Codiff(\calC(A))\ .\]

This way the properadic Rosetta stone \cref{thm:CompleteRosetta} applied to cooperads recovers exactly the operadic Rosetta stone \cite[Theorem~10.1.13]{LodayVallette12}. As a direct consequence, the present notion of $\infty$-morphism of $\Omega \calC$-(al)gebras given in \cref{subsec:InftyMorph} agrees with the operadic one, see  \cite[Section~10.2]{LodayVallette12}. 

\medskip

The operadic part, i.e. the part of arities $(1,n)$ for $n\in \NN$, of the Homotopy Transfer Theorem as established in \cref{sec:HTT} also recovers the operadic formulas, see \cite[Section~10.3]{LodayVallette12} and references therein. 
Applying the maps $\PHI$ or $\HHI$ to a tree with vertices labeled by elements of $s\End_A$ actually produces the same underlying tree with internal edges labeled by the contracting homotopy $h$, see \cite[Proof of Lemma~6]{DSV16}: nearly all the terms coming from the labelling of levelled trees by levels of homotopies cancel. So the present properadic approach (\cref{thm:HTT}) applied to a cooperad $\calC$ produces the exact same formulas for the homotopy transferred structure and the extension $i_\infty$ into an $\infty$-morphism given in \cite[Theorem~10.3.1]{LodayVallette12}. 
Applying the map $\PHH$ produces levelled trees and thus the formula for the extension $p_\infty$ into an $\infty$-morphism is precisely the one given in \cite[Proposition~10.3.9]{LodayVallette12}. 

\medskip

Encoding categories of coalgebras by a ``reversed'' operad, that is a properad concentrated in arities $(n, 1)$ for $n\in \NN$, one automatically gets the exact same homotopical properties for homotopy coalgebras, see for instance \cite{LGL18, CPRNW19} for seminal examples of applications. 

\subsection{The genus 0 case}
Let $\calC$ be a (coaugmented) codioperad, that is a genus 0 coproperad: the image of the structure map 
$\Delta_{\oC} : \oC \to \G^c\big(\oC\big)$ lands in the summand made up of genus 0 graphs only. In this case, the same collapsing phenomenon as above in the operadic case holds true since the argument of \cite[Proof of Lemma~6]{DSV16} applies as well: the homotopy transferred structure 
of \cref{thm:HTT} is actually given by the genus 0 graphs produced by $\Delta_{\oC}$ with internal edges labeled by the contracting homotopy $h$. Notice however that the many cancelations that appear here do not take place anymore when one applies the maps $\HHI$ or $\PHH$; thus the 
 extensions $i_\infty$ and  $p_\infty$ into a $\infty$-morphisms are still made up of genus 0 levelled graphs. 
 
 \medskip

This case covers the example of homotopy Lie bialgebras which are the gebras encodes by the properad 
$\Omega \big(\calS^c \Frob_\diamond^*\big)$, where $\calS^c$ stands for  the suspension coproperad  and where $\Frob_{\diamond}$ stands for the properad encoding Frobenius bialgebras satisfying the condition $\mu \circ \Delta=0$. We refer the reader to \cite{Vallette07} and to the next section for more details. 

\subsection{The Koszul case}\label{subsec:KD}
Let $\calP(E,R)=\G(E)/(R)$ be a quadratic properad, see \cite[Section~2.9]{Vallette07}. Its Koszul dual cooperad $\calP^{\ac}\coloneqq \calC(sE, s^{2}R)$ is a conilpotent coproperad which provides us with a canonical morphism of dg properads $\Omega \calP^{\ac} \to \calP$. When this latter one is a quasi-isomorphism, the properad $\calP$ is called \emph{Koszul}, see \cite[Section~7]{Vallette07}. In this case, the cobar construction $\calP_\infty\coloneqq\Omega \calP^{\ac}$ is the minimal model of $\calP$ and the homotopy theory of $\calP_{\infty}$-gebras is equivalent to the homotopy theory of $\calP$-gebras. 

\medskip

The present theory of $\infty$-morphisms and the homotopy transfer theorem applies to all $\calP_\infty$-gebras (actually  to all $\Omega \calP^{\ac}$-gebras, the properad $\calP$ being Koszul or not). The most difficult part lies now in the description of the Koszul dual coproperad $\calP^{\ac}$. 

\begin{definition}[Suspension properad]
The \emph{suspension properad} $\calS$ is the quadratic properad defined by 
two skew-symmetric binary operations of degree $1$
\[
	\vcenter{\hbox{\begin{tikzpicture}[baseline=1.8ex,scale=0.15]
	\draw (0,4) -- (2,2);
	\draw (4,4) -- (2,2);
	\draw (2,2) -- (2,0);
	\draw (0,4) node[above] {$\scriptscriptstyle{1}$};
	\draw (4,4) node[above] {$\scriptscriptstyle{2}$};
	\draw (2,0) node[below] {$\scriptscriptstyle{1}$};
	\draw[fill=white] (2,2) circle (8pt);
	\end{tikzpicture}}}
	= -
	\vcenter{\hbox{\begin{tikzpicture}[baseline=1.8ex,scale=0.15]
	\draw (0,4) -- (2,2);
	\draw (4,4) -- (2,2);
	\draw (2,2) -- (2,0);
	\draw (0,4) node[above] {$\scriptscriptstyle{2}$};
	\draw (4,4) node[above] {$\scriptscriptstyle{1}$};
	\draw (2,0) node[below] {$\scriptscriptstyle{1}$};
	\draw[fill=white] (2,2) circle (8pt);
	\end{tikzpicture}}}
	\qquad \mbox{and} \qquad
	\vcenter{\hbox{\begin{tikzpicture}[baseline=1.8ex,scale=0.15]
	\draw (2,4) -- (2,2);
	\draw (4,0) -- (2,2);
	\draw (2,2) -- (0,0);
	\draw[fill=white] (2,2) circle (8pt);
	\draw (2,4) node[above] {$\scriptscriptstyle{1}$};
	\draw (4,0) node[below] {$\scriptscriptstyle{2}$};
	\draw (0,0) node[below] {$\scriptscriptstyle{1}$};
	\end{tikzpicture}}}
	= - 
	\vcenter{\hbox{\begin{tikzpicture}[baseline=1.8ex,scale=0.15]
	\draw (2,4) -- (2,2);
	\draw (4,0) -- (2,2);
	\draw (2,2) -- (0,0);
	\draw (2,4) node[above] {$\scriptscriptstyle{1}$};
	\draw (4,0) node[below] {$\scriptscriptstyle{1}$};
	\draw (0,0) node[below] {$\scriptscriptstyle{2}$};
	\draw[fill=white] (2,2) circle (8pt);
	\end{tikzpicture}}}
	\]
which satisfy the following relations
		\begin{equation*}
	\vcenter{\hbox{\begin{tikzpicture}[baseline=0.5ex,scale=0.15]
	\draw (0,4) --(4,0);
	\draw (4,4) -- (2,2);
	\draw (4,0) -- (4,-2);
	\draw (8,4) -- (4,0);
	\draw (0,4) node[above] {$\scriptscriptstyle{1}$};
	\draw (4,4) node[above] {$\scriptscriptstyle{2}$};
	\draw (8,4) node[above] {$\scriptscriptstyle{3}$};
	\draw (4,-2) node[below] {$\scriptscriptstyle{1}$};
	\draw[fill=white] (2,2) circle (8pt);
	\draw[fill=white] (4,0) circle (8pt);
	\end{tikzpicture}}}
	  = - 
	\vcenter{\hbox{\begin{tikzpicture}[baseline=0.5ex,scale=0.15]
	\draw (0,4) --(4,0);
	\draw (4,4) -- (6,2);
	\draw (4,0) -- (4,-2);
	\draw (8,4) -- (4,0);
	\draw (0,4) node[above] {$\scriptscriptstyle{1}$};
	\draw (4,4) node[above] {$\scriptscriptstyle{2}$};
	\draw (8,4) node[above] {$\scriptscriptstyle{3}$};
	\draw (4,-2) node[below] {$\scriptscriptstyle{1}$};
	\draw[fill=white] (6,2) circle (8pt);
	\draw[fill=white] (4,0) circle (8pt);
	\end{tikzpicture}}} 
	\ , \qquad
	\vcenter{\hbox{\begin{tikzpicture}[baseline=0.5ex,scale=0.15]
	\draw (0,0) --(4,4);
	\draw (4,0) -- (2,2);
	\draw (4,4) -- (4,6);
	\draw (8,0) -- (4,4);
	\draw (0,0) node[below] {$\scriptscriptstyle{1}$};
	\draw (4,0) node[below] {$\scriptscriptstyle{2}$};
	\draw (8,0) node[below] {$\scriptscriptstyle{3}$};
	\draw (4,6) node[above] {$\scriptscriptstyle{1}$};
	\draw[fill=white] (2,2) circle (8pt);
	\draw[fill=white] (4,4) circle (8pt);
	\end{tikzpicture}}}
	 = - 
	\vcenter{\hbox{\begin{tikzpicture}[baseline=0.5ex,scale=0.15]
	\draw (0,0) --(4,4);
	\draw (4,0) -- (6,2);
	\draw (4,4) -- (4,6);
	\draw (8,0) -- (4,4);
	\draw (0,0) node[below] {$\scriptscriptstyle{1}$};
	\draw (4,0) node[below] {$\scriptscriptstyle{2}$};
	\draw (8,0) node[below] {$\scriptscriptstyle{3}$};
	\draw (4,6) node[above] {$\scriptscriptstyle{1}$};
	\draw[fill=white] (6,2) circle (8pt);
	\draw[fill=white] (4,4) circle (8pt);
	\end{tikzpicture}}}
	\ , \quad \text{and} \quad
	\vcenter{\hbox{\begin{tikzpicture}[baseline=2ex,scale=0.15]
	\draw (0,0) --(2,2);
	\draw (4,0) -- (2,2);
	\draw (2,2) -- (2,4);
	\draw (4,6) -- (2,4);
	\draw (0,6) -- (2,4);
	\draw (0,0) node[below] {$\scriptscriptstyle{1}$};
	\draw (4,0) node[below] {$\scriptscriptstyle{2}$};
	\draw (0,6) node[above] {$\scriptscriptstyle{1}$};
	\draw (4,6) node[above] {$\scriptscriptstyle{2}$};
	\draw[fill=white] (2,2) circle (8pt);
	\draw[fill=white] (2,4) circle (8pt);
	\end{tikzpicture}}}
	 = - 
	\vcenter{\hbox{\begin{tikzpicture}[baseline=0.5ex,scale=0.15]
	\draw (0,2) -- (2,0);
	\draw (4,2) -- (2,0);
	\draw (4,2) -- (6,0);
	\draw (4,4) -- (4,2);
	\draw (2,0) -- (2,-2);
	\draw[fill=white] (2,0) circle (8pt);
	\draw[fill=white] (4,2) circle (8pt);
	\draw (0,2) node[above] {$\scriptscriptstyle{1}$};
	\draw (4,4) node[above] {$\scriptscriptstyle{2}$};
	\draw (2,-2) node[below] {$\scriptscriptstyle{1}$};
	\draw (6,0) node[below] {$\scriptscriptstyle{2}$};
	\end{tikzpicture}}}\ .
	\end{equation*}
\end{definition}

\begin{lemma}\label{lem:SusProp}
The suspension properad is spanned by skew-symmetric corollas with genus, i.e. its component of arity $(m,n)$, for $m,n \geqslant 1$,  is isomorphic to 
$$\calS(m,n)\cong \bigoplus_{g\geqslant 0} \sgn_{\Sy_m}\otimes  \k s^{n+m+2g-2}  \otimes \sgn_{\Sy_n^{\mathrm{op}}}\ ,$$
where $\sgn$ stands for the signature representation. 
Its properadic composition maps 
amounts to the usual isomorphisms between tensors of suspensions.
\end{lemma}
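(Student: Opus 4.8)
The plan is to reduce the statement to a normal-form computation, the ``odd'' analogue of the classical surface-theoretic description of the (non-unital, non-counital) Frobenius properad. First I would settle the degree and equivariance bookkeeping. By \cref{prop:ProperadDefEqui}, the free properad $\G(E)$ on the generating $\Sy$-bimodule $E$ of the two binary operations is spanned by classes of directed connected graphs whose vertices are labelled either by the ``product'' generator $\mu$ of arity $(1,2)$ or by the ``coproduct'' generator $\Delta$ of arity $(2,1)$; each such vertex is trivalent and of homological degree $1$. Since both generators are skew-symmetric in their pair of like legs and all three defining relations are antisymmetric, for $m,n\geqslant 1$ the action of $\Sy_m\times\Sy_n^{\mathrm{op}}$ on every graded component of $\calS(m,n)$ factors through the one-dimensional signature representation, so it suffices to identify the underlying graded vector space. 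A connected graph on $v$ trivalent vertices with $m$ outer outputs, $n$ outer inputs and $e$ internal edges satisfies $3v=m+n+2e$, and has genus (first Betti number) $g=e-v+1$; eliminating $e$ gives $v=m+n+2g-2$, so such a graph lies in homological degree exactly $m+n+2g-2$. Hence $\calS(m,n)$ is concentrated in the degrees $m+n+2g-2$, $g\geqslant 0$, and the lemma reduces to showing that each of these components is one-dimensional.

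The core of the argument is to produce a normal form. This is the odd counterpart of the well-known fact that the arity-$(m,n)$ space of the (non-unital, non-counital) Frobenius properad is $\bigoplus_{g\geqslant 0}\k$, freely indexed by connected oriented surfaces of genus $g$ with $m+n$ labelled boundary circles --- equivalently, by pair-of-pants decompositions modulo the moves generated by (co)associativity and the Frobenius compatibility. I would rerun that argument while tracking all signs through the Koszul rule, the three generating vertices now living in degree $1$: the $\mu$-associativity relation contracts any tree-shaped connected sub-graph of $\mu$-vertices to a standard $(1,k)$-fan, the $\Delta$-coassociativity relation does the dual for $\Delta$-vertices, and the third relation --- together with the moves it generates upon combination with (co)associativity and skew-symmetry --- lets one slide a $\mu$-vertex past an adjacent $\Delta$-vertex and re-route the incident edges. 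One then checks that this rewriting terminates (order graphs first by number of vertices, then by number of ``$\Delta$-below-$\mu$'' local configurations) and is locally confluent, so that a diamond-lemma argument produces a unique normal form $\theta^{(g)}_{m,n}$ for every connected graph of arity $(m,n)$ and genus $g$; equivalently, one exhibits a single standard representative $\theta^{(g)}_{m,n}$, checks that it is reachable from every connected arity-$(m,n)$ genus-$g$ graph, and notes that two normal forms must coincide since the surface they encode is determined by $(m,n,g)$. This gives $\dim\calS(m,n)_{m+n+2g-2}=1$, and hence the displayed isomorphism of $\Sy_m\times\Sy_n^{\mathrm{op}}$-modules. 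I expect this to be the main obstacle: one must show that the three stated relations already collapse each $(m,n,g)$-graded piece to a single line, i.e.\ that the defining ideal $(R)$ carries no hidden extra generators, and the sign bookkeeping for the odd generators has to be handled carefully --- for instance so that the ``eye'' graph $\mu\circ\Delta\in\calS(1,1)$ of genus $1$ neither vanishes nor acquires a two-dimensional space.

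It remains to identify the composition map. I would send the normal-form generator $\theta^{(g)}_{m,n}$ to the canonical basis element of $\sgn_{\Sy_m}\otimes\k s^{n+m+2g-2}\otimes\sgn_{\Sy_n^{\mathrm{op}}}$ and check that, under this identification, the properadic composition $\calS\boxtimes\calS\to\calS$ becomes the obvious map that cancels the paired suspensions along the connecting edges and re-indexes the genus. This follows from the additivity of the Euler characteristic along a connected two-level gluing combined with the degree count above, which already forces both the arity and the homological degree of a composite to be as predicted; the signs match because in either description they are dictated by the Koszul sign rule applied to the degree-$1$ vertices and edges. This final verification should be routine.
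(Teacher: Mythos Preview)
Your proposal is correct and follows essentially the same strategy as the paper: reduce every connected binary graph of arity $(m,n)$ and genus $g$ to a single normal form. The paper's proof is much terser --- it simply names the normal form explicitly (a right comb of $\mu$'s, followed by $g$ ``diamonds'' $\mu\circ\Delta$, followed by a left reversed comb of $\Delta$'s) and asserts that the three relations reduce any graph to this shape --- whereas you frame the same reduction as a terminating, locally confluent rewriting system and invoke the diamond lemma. Your degree and equivariance bookkeeping, and your identification of the composition map, are exactly what is needed; the only thing missing is that you never write down the concrete normal form $\theta^{(g)}_{m,n}$, which the paper does and which makes the ``reachable from every graph'' check immediate.
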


\begin{proof}
The proof is straightforward: under the above mentioned relations, any binary graph of genus $g$ is equal to a right comb composed with a sequence of $g$ simple ``diamonds'' and then a left reversed comb:
\[
\vcenter{\hbox{\begin{tikzpicture}[scale=0.3]
	\draw (3,1) node[above] {$\scriptscriptstyle{1}$} --(4,0);
	\draw (4,0) -- (6,2);
	\draw (5,1) -- (4,2);
	\draw (4,2) node[above] {$\scriptscriptstyle{2}$};
	\draw[dotted] (5,1) -- (7,3);
	\draw (7,3) -- (9,5) node[above]{$\scriptscriptstyle{n}$} ;
	\draw (8,4) -- (7,5);
	\draw (4,0) -- (4,-1);
	\draw  (4,-1) -- (3,-2) -- (4,-3) -- (5,-2) -- (4,-1);
	\draw  (4,-3) -- (4, -4);
	\draw[dotted] (4,-4) -- (4,-5);
	\draw  (4,-5) -- (4, -6);
	\draw  (4,-6) -- (3,-7) -- (4,-8) -- (5,-7) -- (4,-6);
	\draw (2,-11)-- (4,-9) -- (5,-10);
	\draw (3,-10)--(4,-11);
	\draw (5,-10) node[below]{$\scriptscriptstyle{m}$}; 
	\draw[dotted] (2,-11) -- (1,-12);
	\draw[decorate,decoration={brace,raise=0.1cm}] (6,-1) -- (6,-8) node[above=1.05cm,right=0.2cm] {$\scriptscriptstyle{g}$};
	\draw (4,-8) -- (4,-9);
	\draw (1,-12) -- (-1,-14);
	\draw (0,-13) -- (1,-14);
	\draw (-1,-14) node[below]{$\scriptscriptstyle{1}$}; 
	\draw (1,-14) node[below]{$\scriptscriptstyle{2}$}; 	
	\draw[fill=white] (4,-8) circle (5pt) ;
	\draw[fill=white] (3,-10) circle (5pt) ;
	\draw[fill=white](4,-9)  circle (5pt);
	\draw[fill=white] (0,-13) circle (5pt);
	\draw[fill=white] (4,-6) circle (5pt);
	\draw[fill=white] (4,0) circle (5pt);
	\draw[fill=white] (8,4) circle (5pt);
	\draw[fill=white] (5,1) circle (5pt)  ;
	\draw[fill=white] (4,-1) circle (5pt);
	\draw[fill=white] (4,-3) circle (5pt);
	\end{tikzpicture}}}
\]
\end{proof}

We denote by $\oH$ the arity-wise tensor product of $\Sy$-bimodules, also known as the Hadamard tensor product. 
Recall that the free properad $\G(E)$ admits three gradings: the number $n$ of inputs, the number $m$ of outputs,  and the genus $g$ of the underlying graph. Notice that the space $R$ of relations of a binary quadratic properad $\calP(E, R)$ is homogenous with respect to these three gradings. This induces two arity gradings and a genus grading on the binary quadratic properad $\calP(E, R)$. We consider  the arity-wise  and genus wise tensor product of  binary quadratic properads, that we denote by $\oG$.

\begin{definition}[Koszul dual properad]
The \emph{Koszul dual properad} of a binary quadratic properad $\calP(E,R)$   
is the properad 
\[
\calP^!\coloneqq  \calS \oG  \left(\calP^{\ac}\right)^* \ .
\]
\end{definition}

Notice that this is well a duality functor: $\left(\calP^!\right)^!\cong \calP$. 

\begin{proposition}\label{prop:KosDualProp}
The Koszul dual properad of a finitely generated binary quadratic properad $\calP(E,R)$
admits the following binary quadratic presentation:
\[  
\calP^!\cong \calP\big(s^{-1}\calS\oH E^*, R^\perp\big)
\ . \]
\end{proposition}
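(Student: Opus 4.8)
The plan is to trace through the definition $\calP^! := \calS \oG (\calP^{\ac})^*$ and identify each piece concretely. First I would recall that the Koszul dual cooperad is by definition $\calP^{\ac} = \calC(sE, s^2R)$, the cofree conilpotent coproperad on $sE$ restricted by the corelations $s^2R$; in low weight this means $(\calP^{\ac})_{(0)} = \I$, $(\calP^{\ac})_{(1)} = sE$, and $(\calP^{\ac})_{(2)} = \G^c(sE)^{(2)}/ s^2R = (sE \ibt sE)/s^2 R$, while in arbitrary weight it is cut out by the quadratic corelations. Dualizing arity-wise, $(\calP^{\ac})^*$ is a properad (by the remark following the definition of coproperad) which is again quadratic with generators $(sE)^* \cong s^{-1}E^*$ and relations obtained by orthogonality: $\big((sE\ibt sE)/s^2R\big)^* \cong (s^{-1}E^* \ibt s^{-1}E^*)$ cut out by $\big(s^{-2}R\big)^\perp = s^{-2}R^\perp$ inside the dual of the weight-two part. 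This is the standard quadratic-duality computation; the only subtlety is bookkeeping the suspensions, which I would handle using \cref{lem:SusProp}.

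Next I would incorporate the Hadamard twist by $\calS$. By \cref{lem:SusProp}, $\calS(m,n) \cong \bigoplus_{g\geq 0}\sgn_{\Sy_m}\otimes \k\, s^{n+m+2g-2}\otimes \sgn_{\Sy_n^{\mathrm{op}}}$, and its properadic compositions are just the canonical isomorphisms of tensor products of suspensions. Taking the genus-wise and arity-wise Hadamard product $\oG$ with $(\calP^{\ac})^*$ therefore has the effect, on generators (arity $(2,1)$ or $(1,2)$, genus $0$, weight $1$), of tensoring $s^{-1}E^*$ with the degree-$1$ line $\calS(2,1)_{g=0} = \calS(1,2)_{g=0}$ twisted by the signature; that is, the space of generators of $\calP^!$ is $\calS\oH E^*$ placed in the appropriate degree, which after the global $s^{-1}$ bookkeeping is exactly $s^{-1}\calS\oH E^*$ as claimed. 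On the weight-two part, the Hadamard product with $\calS$ again only shifts degrees and tensors by signs, so the relations of $\calP^!$ are the image of $R^\perp$ under this degree/sign shift, i.e.\ they span the subspace denoted $R^\perp$ inside $\G(s^{-1}\calS\oH E^*)^{(2)}$. I would make the identification of $\calS$-twisting with the classical "operadic suspension" precise by the same argument as in \cite[Section~7.2]{LodayVallette12}, adapted to properads using that $\calS$ is concentrated in binary genus-graded corollas.

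Finally I would assemble: since both $\calP^{\ac}{}^*$ and $\calS$ are quadratic (the former by the quadratic-duality step, the latter by definition) and $\oG$ of two binary quadratic properads is binary quadratic with generators the $\oH$-product of generators and relations the "$\oH$-product" of the relation spaces (this is where one uses that $R$ is homogeneous for all three gradings, as noted just before the definition of $\oG$), one concludes $\calP^! \cong \calP(s^{-1}\calS\oH E^*, R^\perp)$. The main obstacle I anticipate is purely one of signs and suspension degrees: getting the orthogonal relation space to come out as literally $R^\perp$ (with the correct internal degree so that the pairing $\calP^!(m,n)\otimes \calP(n,m)\to \k$ is the perfect one making $(\calP^!)^!\cong\calP$) requires carefully composing the suspension isomorphisms from \cref{lem:SusProp} with the linear duality, and checking that the genus grading is respected throughout. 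The underlying combinatorics — that the weight-two part of a free properad on binary generators is $E\ibt E$ and dualizes to $E^*\ibt E^*$ — is routine and can be cited from \cite[Section~2.9]{Vallette07}; I would not grind through it.
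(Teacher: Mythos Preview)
Your overall strategy---dualise $\calP^{\ac}$ to a quadratic properad, then twist by $\calS$---is the same as the paper's, and the paper's proof is in fact extremely terse: it simply identifies $\calS \oG (\calP^{\ac})^*$ with the Manin white product $\calS \bigcirc (\calP^{\ac})^*$ (citing \cite{Vallette08}) and reads off the quadratic presentation from the known formula for white products, exactly as in \cite[Proposition~7.2.1]{LodayVallette12}. Two points in your write-up need correction, though.

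First, your description of the weight-two part of $\calP^{\ac}$ is inverted. The Koszul dual coproperad $\calC(sE, s^2R)$ is a \emph{sub}-coproperad of the cofree conilpotent coproperad on $sE$, so $(\calP^{\ac})_{(2)} = s^2R$, not the quotient $(sE\ibt sE)/s^2R$. Your conclusion that $(\calP^{\ac})^*$ is quadratic on $s^{-1}E^*$ with relations $s^{-2}R^\perp$ is nevertheless correct: dualising the inclusion $s^2R \hookrightarrow sE\ibt sE$ gives a surjection $s^{-1}E^*\ibt s^{-1}E^* \twoheadrightarrow s^{-2}R^*$ with kernel $s^{-2}R^\perp$.

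Second, and more substantively, your assertion that ``$\oG$ of two binary quadratic properads is binary quadratic with generators the $\oH$-product of generators and relations the $\oH$-product of the relation spaces'' is not true in general: the free-properad functor does not commute with the Hadamard product, so $\G(E)\oG\G(F)$ is typically larger than $\G(E\oH F)$. What makes it work here is precisely \cref{lem:SusProp}: since $\calS$ is one-dimensional in each arity and genus, tensoring by it is invertible and the natural map $\G(\calS\oH E^*)\to \calS\oG\G(E^*)$ is an isomorphism. This is exactly the statement that $\calS\oG(-)$ coincides with the Manin white product $\calS\bigcirc(-)$, which is the step the paper singles out. Naming the white product has the advantage of packaging all of the sign and suspension bookkeeping you flag as ``the main obstacle'' into a single citation.
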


\begin{proof} The argument are \emph{mutatis mutandis} similar to the proof of 
\cite[Proposition~7.2.1]{LodayVallette12}. One first notices that the Koszul dual properad is actually by the Manin white product of the two properads $\calS$ and  $\left(\calP^{\ac}\right)^*$, see \cite{Vallette08}. 
Then, from the definition of Manin white product, one gets the required quadratic presentation: 
\[\calP^!=  \calS \oG  \left(\calP^{\ac}\right)^*\cong \calS \bigcirc  \left(\calP^{\ac}\right)^*
\cong \calP\big(s^{-1}\calS\oH E^*, R^\perp\big) \ . \]
\end{proof}

The canonical object of the Koszul duality theory is the Koszul dual coproperad $\calP^{\ac}$; the Koszul dual properad $\calP^!$ is not essential. It is just introduced as a practical way to compute the Koszul dual coproperad: one first computes the Koszul dual properad using the presentation given in \cref{prop:KosDualProp} and then one uses 
\[\calP^{\ac}\cong \calS^c\oG \big(\calP^!\big)^*\ ,\] 
where $\calS^c$ stands for the suspension coproperad defined on the same underlying $\Sy$-bimodule as the properad $\calS$ with decomposition maps dual to the composition maps. For any finitely generated binary quadratic properad $\calP$, we use the following simple notation the coproperadic suspension
\[  
\calS^c \calP^*\coloneqq \calS^c\oG  \calP^*
\ . \]

\subsection{Example: Involutive Lie bialgebras up to homotopy}\label{subsec:IBL}

In \cite{CFL11}, Cieliebak--Fukaya--Lat\-sch\-ev developed a notion of involutive Lie bialgebra up to homotopy together with $\infty$-morphisms between them, and related homotopy properties. 
We also refer the reader to \cite{Hajek18} for more details. 
We recover the notions and results of \cite{CFL11} as a particular case of our general theory.
However the present explicit formula for the homotopy transfer theorem is new. 
Such results plays a seminal role in symplectic field theory, 
string topology, cyclic homology, and Lagrangian Floer theory 
 \cite{CFL11} and are deeply connected with ribbons graphs and moduli spaces of curves \cite{MW15}.

\medskip

Recall that an \emph{involutive Lie bialgebra} $(A, [\, , ], \delta)$ is a Lie bialgebra satisfying the extra relation 
$[\, , ]\circ \delta=0$.

\begin{definition}[Properad $\IBL$]
	The properad $\IBL$ of involutive Lie bialgebras is generated by two skew-symmetric operations of degree $0$
	\[
	\vcenter{\hbox{\begin{tikzpicture}[baseline=1.8ex,scale=0.15]
	\draw (0,4) -- (2,2);
	\draw (4,4) -- (2,2);
	\draw[fill=black] (2,2) circle (6pt);
	\draw (2,2) -- (2,0);
	\draw (0,4) node[above] {$\scriptscriptstyle{1}$};
	\draw (4,4) node[above] {$\scriptscriptstyle{2}$};
	\draw (2,0) node[below] {$\scriptscriptstyle{1}$};
	\end{tikzpicture}}}
	= -
	\vcenter{\hbox{\begin{tikzpicture}[baseline=1.8ex,scale=0.15]
	\draw (0,4) -- (2,2);
	\draw (4,4) -- (2,2);
	\draw[fill=black] (2,2) circle (6pt);
	\draw (2,2) -- (2,0);
	\draw (0,4) node[above] {$\scriptscriptstyle{2}$};
	\draw (4,4) node[above] {$\scriptscriptstyle{1}$};
	\draw (2,0) node[below] {$\scriptscriptstyle{1}$};
	\end{tikzpicture}}}
	\qquad \mbox{and} \qquad
	\vcenter{\hbox{\begin{tikzpicture}[baseline=1.8ex,scale=0.15]
	\draw (2,4) -- (2,2);
	\draw (4,0) -- (2,2);
	\draw (2,2) -- (0,0);
	\draw[fill=black] (2,2) circle (6pt);
	\draw (2,4) node[above] {$\scriptscriptstyle{1}$};
	\draw (4,0) node[below] {$\scriptscriptstyle{2}$};
	\draw (0,0) node[below] {$\scriptscriptstyle{1}$};
	\end{tikzpicture}}}
	= - 
	\vcenter{\hbox{\begin{tikzpicture}[baseline=1.8ex,scale=0.15]
	\draw (2,4) -- (2,2);
	\draw (4,0) -- (2,2);
	\draw (2,2) -- (0,0);
	\draw[fill=black] (2,2) circle (6pt);
	\draw (2,4) node[above] {$\scriptscriptstyle{1}$};
	\draw (4,0) node[below] {$\scriptscriptstyle{1}$};
	\draw (0,0) node[below] {$\scriptscriptstyle{2}$};
	\end{tikzpicture}}}
	\]
which satisfy the following relations
	\begin{align*}
&	\vcenter{\hbox{\begin{tikzpicture}[baseline=0.5ex,scale=0.15]
	\draw (0,4) --(4,0);
	\draw (4,4) -- (2,2);
	\draw (4,0) -- (4,-2);
	\draw (8,4) -- (4,0);
	\draw (0,4) node[above] {$\scriptscriptstyle{1}$};
	\draw (4,4) node[above] {$\scriptscriptstyle{2}$};
	\draw (8,4) node[above] {$\scriptscriptstyle{3}$};
	\draw (4,-2) node[below] {$\scriptscriptstyle{1}$};
	\draw[fill=black] (2,2) circle (6pt);
	\draw[fill=black] (4,0) circle (6pt);
	\end{tikzpicture}}}
	  +  
	\vcenter{\hbox{\begin{tikzpicture}[baseline=0.5ex,scale=0.15]
	\draw (0,4) --(4,0);
	\draw (4,4) -- (2,2);
	\draw (4,0) -- (4,-2);
	\draw (8,4) -- (4,0);
	\draw (0,4) node[above] {$\scriptscriptstyle{2}$};
	\draw (4,4) node[above] {$\scriptscriptstyle{3}$};
	\draw (8,4) node[above] {$\scriptscriptstyle{1}$};
	\draw (4,-2) node[below] {$\scriptscriptstyle{1}$};
	\draw[fill=black] (2,2) circle (6pt);
	\draw[fill=black] (4,0) circle (6pt);
	\end{tikzpicture}}}
	  +  
	\vcenter{\hbox{\begin{tikzpicture}[baseline=0.5ex,scale=0.15]
	\draw (0,4) --(4,0);
	\draw (4,4) -- (2,2);
	\draw (4,0) -- (4,-2);
	\draw (8,4) -- (4,0);
	\draw (0,4) node[above] {$\scriptscriptstyle{3}$};
	\draw (4,4) node[above] {$\scriptscriptstyle{1}$};
	\draw (8,4) node[above] {$\scriptscriptstyle{2}$};
	\draw (4,-2) node[below] {$\scriptscriptstyle{1}$};
	\draw[fill=black] (2,2) circle (6pt);
	\draw[fill=black] (4,0) circle (6pt);
	\end{tikzpicture}}}
	  =   0 
	 \ , \qquad
	\vcenter{\hbox{\begin{tikzpicture}[baseline=0.5ex,scale=0.15]
	\draw (0,0) --(4,4);
	\draw (4,0) -- (2,2);
	\draw (4,4) -- (4,6);
	\draw (8,0) -- (4,4);
	\draw (0,0) node[below] {$\scriptscriptstyle{1}$};
	\draw (4,0) node[below] {$\scriptscriptstyle{2}$};
	\draw (8,0) node[below] {$\scriptscriptstyle{3}$};
	\draw (4,6) node[above] {$\scriptscriptstyle{1}$};
	\draw[fill=black] (2,2) circle (6pt);
	\draw[fill=black] (4,4) circle (6pt);
	\end{tikzpicture}}}
	  +   
	\vcenter{\hbox{\begin{tikzpicture}[baseline=0.5ex,scale=0.15]
	\draw (0,0) --(4,4);
	\draw (4,0) -- (2,2);
	\draw (4,4) -- (4,6);
	\draw (8,0) -- (4,4);
	\draw (0,0) node[below] {$\scriptscriptstyle{2}$};
	\draw (4,0) node[below] {$\scriptscriptstyle{3}$};
	\draw (8,0) node[below] {$\scriptscriptstyle{1}$};
	\draw (4,6) node[above] {$\scriptscriptstyle{1}$};
	\draw[fill=black] (2,2) circle (6pt);
	\draw[fill=black] (4,4) circle (6pt);
	\end{tikzpicture}}}
	  +   
	\vcenter{\hbox{\begin{tikzpicture}[baseline=0.5ex,scale=0.15]
	\draw (0,0) --(4,4);
	\draw (4,0) -- (2,2);
	\draw (4,4) -- (4,6);
	\draw (8,0) -- (4,4);
	\draw (0,0) node[below] {$\scriptscriptstyle{3}$};
	\draw (4,0) node[below] {$\scriptscriptstyle{1}$};
	\draw (8,0) node[below] {$\scriptscriptstyle{2}$};
	\draw (4,6) node[above] {$\scriptscriptstyle{1}$};
	\draw[fill=black] (2,2) circle (6pt);
	\draw[fill=black] (4,4) circle (6pt);
	\end{tikzpicture}}}
	  =   0  \ ,
\\	
&	\vcenter{\hbox{\begin{tikzpicture}[baseline=2ex,scale=0.15]
	\draw (0,0) --(2,2);
	\draw (4,0) -- (2,2);
	\draw (2,2) -- (2,4);
	\draw (4,6) -- (2,4);
	\draw (0,6) -- (2,4);
	\draw (0,0) node[below] {$\scriptscriptstyle{1}$};
	\draw (4,0) node[below] {$\scriptscriptstyle{2}$};
	\draw (0,6) node[above] {$\scriptscriptstyle{1}$};
	\draw (4,6) node[above] {$\scriptscriptstyle{2}$};
	\draw[fill=black] (2,2) circle (6pt);
	\draw[fill=black] (2,4) circle (6pt);
	\end{tikzpicture}}}
	  -  
	\vcenter{\hbox{\begin{tikzpicture}[baseline=0.5ex,scale=0.15]
	\draw (0,2) -- (2,0);
	\draw (4,2) -- (2,0);
	\draw (0,2) -- (-2,0);
	\draw (0,4) -- (0,2);
	\draw (2,0) -- (2,-2);
	\draw[fill=black] (2,0) circle (6pt);
	\draw[fill=black] (0,2) circle (6pt);
	\draw (0,4) node[above] {$\scriptscriptstyle{1}$};
	\draw (4,2) node[above] {$\scriptscriptstyle{2}$};
	\draw (-2,0) node[below] {$\scriptscriptstyle{1}$};
	\draw (2,-2) node[below] {$\scriptscriptstyle{2}$};
	\end{tikzpicture}}}
	  +  
	\vcenter{\hbox{\begin{tikzpicture}[baseline=0.5ex,scale=0.15]
	\draw (0,2) -- (2,0);
	\draw (4,2) -- (2,0);
	\draw (0,2) -- (-2,0);
	\draw (0,4) -- (0,2);
	\draw (2,0) -- (2,-2);
	\draw[fill=black] (2,0) circle (6pt);
	\draw[fill=black] (0,2) circle (6pt);
	\draw (0,4) node[above] {$\scriptscriptstyle{2}$};
	\draw (4,2) node[above] {$\scriptscriptstyle{1}$};
	\draw (-2,0) node[below] {$\scriptscriptstyle{1}$};
	\draw (2,-2) node[below] {$\scriptscriptstyle{2}$};
	\end{tikzpicture}}}
	  -  
	\vcenter{\hbox{\begin{tikzpicture}[baseline=0.5ex,scale=0.15]
	\draw (0,2) -- (2,0);
	\draw (4,2) -- (2,0);
	\draw (4,2) -- (6,0);
	\draw (4,4) -- (4,2);
	\draw (2,0) -- (2,-2);
	\draw[fill=black] (2,0) circle (6pt);
	\draw[fill=black] (4,2) circle (6pt);
	\draw (0,2) node[above] {$\scriptscriptstyle{1}$};
	\draw (4,4) node[above] {$\scriptscriptstyle{2}$};
	\draw (2,-2) node[below] {$\scriptscriptstyle{1}$};
	\draw (6,0) node[below] {$\scriptscriptstyle{2}$};
	\end{tikzpicture}}}
	  +  
	\vcenter{\hbox{\begin{tikzpicture}[baseline=0.5ex,scale=0.15]
	\draw (0,2) -- (2,0);
	\draw (4,2) -- (2,0);
	\draw (4,2) -- (6,0);
	\draw (4,4) -- (4,2);
	\draw (2,0) -- (2,-2);
	\draw[fill=black] (2,0) circle (6pt);
	\draw[fill=black] (4,2) circle (6pt);
	\draw (0,2) node[above] {$\scriptscriptstyle{2}$};
	\draw (4,4) node[above] {$\scriptscriptstyle{1}$};
	\draw (2,-2) node[below] {$\scriptscriptstyle{1}$};
	\draw (6,0) node[below] {$\scriptscriptstyle{2}$};
	\end{tikzpicture}}}
	  =   0 \ , \quad \text{and} \quad 
	\vcenter{\hbox{\begin{tikzpicture}[baseline=0.9ex,scale=0.15]
	\draw (2,6) -- (2,4) -- (0,2) -- (2,0) -- (2,-2);
	\draw (2,4) -- (4,2) -- (2,0);
	\draw[fill=black] (2,4) circle (6pt);
	\draw[fill=black] (2,0) circle (6pt);
	\draw (2,6) node[above] {$\scriptscriptstyle{1}$};
	\draw (2,-2) node[below] {$\scriptscriptstyle{1}$};
	\end{tikzpicture}}}
	= 0 \ .
\end{align*}
	
Recall that a \emph{Frobenius bialgebra} $(A, \mu, \Delta)$ is a commutative algebra equipped with a cocommutative coproduct, which a morphism of modules. 
	
\end{definition}
\begin{definition}[Properad Frob]
	The 	properad $\Frob$ encoding Frobenius bialgebras is generated by two symmetric operations of degree $0$
	\[
	\vcenter{\hbox{\begin{tikzpicture}[baseline=1.8ex,scale=0.15]
	\draw (0,4) -- (2,2);
	\draw (4,4) -- (2,2);
	\draw (2,2) -- (2,0);
	\draw (0,4) node[above] {$\scriptscriptstyle{1}$};
	\draw (4,4) node[above] {$\scriptscriptstyle{2}$};
	\draw (2,0) node[below] {$\scriptscriptstyle{1}$};
	\draw[fill=white] (2,2) circle (8pt);
	\end{tikzpicture}}}
	= 
	\vcenter{\hbox{\begin{tikzpicture}[baseline=1.8ex,scale=0.15]
	\draw (0,4) -- (2,2);
	\draw (4,4) -- (2,2);
	\draw (2,2) -- (2,0);
	\draw (0,4) node[above] {$\scriptscriptstyle{2}$};
	\draw (4,4) node[above] {$\scriptscriptstyle{1}$};
	\draw (2,0) node[below] {$\scriptscriptstyle{1}$};
	\draw[fill=white] (2,2) circle (8pt);
	\end{tikzpicture}}}
	\qquad \mbox{and} \qquad
	\vcenter{\hbox{\begin{tikzpicture}[baseline=1.8ex,scale=0.15]
	\draw (2,4) -- (2,2);
	\draw (4,0) -- (2,2);
	\draw (2,2) -- (0,0);
	\draw[fill=white] (2,2) circle (8pt);
	\draw (2,4) node[above] {$\scriptscriptstyle{1}$};
	\draw (4,0) node[below] {$\scriptscriptstyle{2}$};
	\draw (0,0) node[below] {$\scriptscriptstyle{1}$};
	\end{tikzpicture}}}
	= 
	\vcenter{\hbox{\begin{tikzpicture}[baseline=1.8ex,scale=0.15]
	\draw (2,4) -- (2,2);
	\draw (4,0) -- (2,2);
	\draw (2,2) -- (0,0);
	\draw[fill=white] (2,2) circle (8pt);
	\draw (2,4) node[above] {$\scriptscriptstyle{1}$};
	\draw (4,0) node[below] {$\scriptscriptstyle{1}$};
	\draw (0,0) node[below] {$\scriptscriptstyle{2}$};
	\end{tikzpicture}}}
	\]
which satisfy the following relations
	\begin{equation*}
	\vcenter{\hbox{\begin{tikzpicture}[baseline=0.5ex,scale=0.15]
	\draw (0,4) --(4,0);
	\draw (4,4) -- (2,2);
	\draw (4,0) -- (4,-2);
	\draw (8,4) -- (4,0);
	\draw (0,4) node[above] {$\scriptscriptstyle{1}$};
	\draw (4,4) node[above] {$\scriptscriptstyle{2}$};
	\draw (8,4) node[above] {$\scriptscriptstyle{3}$};
	\draw (4,-2) node[below] {$\scriptscriptstyle{1}$};
	\draw[fill=white] (2,2) circle (8pt);
	\draw[fill=white] (4,0) circle (8pt);
	\end{tikzpicture}}}
	\quad  = \quad
	\vcenter{\hbox{\begin{tikzpicture}[baseline=0.5ex,scale=0.15]
	\draw (0,4) --(4,0);
	\draw (4,4) -- (6,2);
	\draw (4,0) -- (4,-2);
	\draw (8,4) -- (4,0);
	\draw (0,4) node[above] {$\scriptscriptstyle{1}$};
	\draw (4,4) node[above] {$\scriptscriptstyle{2}$};
	\draw (8,4) node[above] {$\scriptscriptstyle{3}$};
	\draw (4,-2) node[below] {$\scriptscriptstyle{1}$};
	\draw[fill=white] (6,2) circle (8pt);
	\draw[fill=white] (4,0) circle (8pt);
	\end{tikzpicture}}} 
	\ , \qquad
	\vcenter{\hbox{\begin{tikzpicture}[baseline=0.5ex,scale=0.15]
	\draw (0,0) --(4,4);
	\draw (4,0) -- (2,2);
	\draw (4,4) -- (4,6);
	\draw (8,0) -- (4,4);
	\draw (0,0) node[below] {$\scriptscriptstyle{1}$};
	\draw (4,0) node[below] {$\scriptscriptstyle{2}$};
	\draw (8,0) node[below] {$\scriptscriptstyle{3}$};
	\draw (4,6) node[above] {$\scriptscriptstyle{1}$};
	\draw[fill=white] (2,2) circle (8pt);
	\draw[fill=white] (4,4) circle (8pt);
	\end{tikzpicture}}}
	\quad = \quad
	\vcenter{\hbox{\begin{tikzpicture}[baseline=0.5ex,scale=0.15]
	\draw (0,0) --(4,4);
	\draw (4,0) -- (6,2);
	\draw (4,4) -- (4,6);
	\draw (8,0) -- (4,4);
	\draw (0,0) node[below] {$\scriptscriptstyle{1}$};
	\draw (4,0) node[below] {$\scriptscriptstyle{2}$};
	\draw (8,0) node[below] {$\scriptscriptstyle{3}$};
	\draw (4,6) node[above] {$\scriptscriptstyle{1}$};
	\draw[fill=white] (6,2) circle (8pt);
	\draw[fill=white] (4,4) circle (8pt);
	\end{tikzpicture}}}
	\ , \quad \text{and} \quad
	\vcenter{\hbox{\begin{tikzpicture}[baseline=2ex,scale=0.15]
	\draw (0,0) --(2,2);
	\draw (4,0) -- (2,2);
	\draw (2,2) -- (2,4);
	\draw (4,6) -- (2,4);
	\draw (0,6) -- (2,4);
	\draw (0,0) node[below] {$\scriptscriptstyle{1}$};
	\draw (4,0) node[below] {$\scriptscriptstyle{2}$};
	\draw (0,6) node[above] {$\scriptscriptstyle{1}$};
	\draw (4,6) node[above] {$\scriptscriptstyle{2}$};
	\draw[fill=white] (2,2) circle (8pt);
	\draw[fill=white] (2,4) circle (8pt);
	\end{tikzpicture}}}
	\quad = \quad
	\vcenter{\hbox{\begin{tikzpicture}[baseline=0.5ex,scale=0.15]
	\draw (0,2) -- (2,0);
	\draw (4,2) -- (2,0);
	\draw (4,2) -- (6,0);
	\draw (4,4) -- (4,2);
	\draw (2,0) -- (2,-2);
	\draw[fill=white] (2,0) circle (8pt);
	\draw[fill=white] (4,2) circle (8pt);
	\draw (0,2) node[above] {$\scriptscriptstyle{1}$};
	\draw (4,4) node[above] {$\scriptscriptstyle{2}$};
	\draw (2,-2) node[below] {$\scriptscriptstyle{1}$};
	\draw (6,0) node[below] {$\scriptscriptstyle{2}$};
	\end{tikzpicture}}}\ .
	\end{equation*}
\end{definition}

\begin{proposition}\label{prop:IBLac}\leavevmode

	\begin{enumerate}
		\item The two properads $\IBL$ and $\Frob$ are Koszul dual to one another:
\[
\IBL^!\cong \Frob \quad \text{and} \quad \Frob^!\cong \IBL\ .
\]		
		\item The Koszul dual coproperad  is isomorphic to the suspension coproperad :
		\[\IBL^{\ac}\cong \calS^c \Frob^*\cong \calS^c\ .\]
		It admits basis elements of the following form
		\[
		\vcenter{\hbox{\begin{tikzpicture}[scale=0.25]
	\draw (2,6) node[above] {$\scriptscriptstyle{k}$};
	\draw (-2,6) node[above] {$\scriptscriptstyle{1}$};
	\draw (-1,6) node[above] {$\scriptscriptstyle{2}$};
	\draw (0.5,6) node[above] {$\scriptscriptstyle{\cdots}$};
	\draw (0,4) -- (-2,6) -- (0,4) -- (-1,6) -- (0,4) -- (2,6);
	\draw[fill=black] (0,4) circle (5pt);
	\draw (0,4) -- (2,2) -- (0,0) -- (-2,2) -- (0,4);
	\draw (0,4) -- (-1.2,2) -- (0,0);
	\draw (0,4) -- (1.2,2) -- (0,0);
	\draw (0,2) node {$\scriptscriptstyle{g}$};
	\draw (0,0) -- (-2,-2) -- (0,0) -- (-1,-2) -- (0,0) -- (2,-2);
	\draw[fill=black] (0,0) circle (5pt);
	\draw (2,-2) node[below] {$\scriptscriptstyle{l}$};
	\draw (-2,-2) node[below] {$\scriptscriptstyle{1}$};
	\draw (-1,-2) node[below] {$\scriptscriptstyle{2}$};
	\draw (0.5,-2) node[below] {$\scriptscriptstyle{\cdots}$};
\end{tikzpicture}}}
\ \coloneqq  \
	\vcenter{\hbox{\begin{tikzpicture}[scale=0.3]
	\draw (3,1) node[above] {$\scriptscriptstyle{1}$} --(4,0);
	\draw (4,0) -- (6,2);
	\draw (5,1) -- (4,2);
	\draw (4,2) node[above] {$\scriptscriptstyle{2}$};
	\draw[dotted] (5,1) -- (7,3);
	\draw (7,3) -- (9,5) node[above]{$\scriptscriptstyle{k}$} ;
	\draw[fill=black] (4,0) circle (4pt);
	\draw[fill=black] (8,4) circle (4pt);
	\draw[fill=black] (5,1) circle (4pt)  ;
	\draw (8,4) -- (7,5);
	\draw (4,0) -- (4,-1);
	\draw[fill=black] (4,-1) circle (4pt);
	\draw  (4,-1) -- (3,-2) -- (4,-3) -- (5,-2) -- (4,-1);
	\draw[fill=black] (4,-3) circle (4pt);
	\draw  (4,-3) -- (4, -4);
	\draw[dotted] (4,-4) -- (4,-5);
	\draw  (4,-5) -- (4, -6);
	\draw[fill=black] (4,-6) circle (4pt);
	\draw  (4,-6) -- (3,-7) -- (4,-8) -- (5,-7) -- (4,-6);
	\draw[fill=black] (4,-8) circle (4pt) ;
	\draw[fill=black] (3,-10) circle (4pt) ;
	\draw[fill=black](4,-9)  circle (4pt);
	\draw[fill=black] (0,-13) circle (4pt);
		\draw (2,-11)-- (4,-9) -- (5,-10);
	\draw (3,-10)--(4,-11);
	\draw (5,-10) node[below]{$\scriptscriptstyle{l}$}; 
	\draw[dotted] (2,-11) -- (1,-12);
	\draw[decorate,decoration={brace,raise=0.1cm}] (6,-1) -- (6,-8) node[above=1.05cm,right=0.2cm] {$\scriptscriptstyle{g}$};
	\draw (4,-8) -- (4,-9);
	\draw (1,-12) -- (-1,-14);
	\draw (0,-13) -- (1,-14);
	\draw (-1,-14) node[below]{$\scriptscriptstyle{1}$}; 
	\draw (1,-14) node[below]{$\scriptscriptstyle{2}$}; 	
	\end{tikzpicture}}}
		\]
		with $k, l\geqslant 1$ and $g\geqslant 0$. They are denoted by  $c_{k,l,g}$, where  $c_{1,1,0}$ is the identity, and their degree is equal to $|c_{k,l,g}|=k+l+2g-2$. 
		
		\item The infinitesimal coproduct $\Delta_{(1,1)}$ of $\IBL^{\ac}\cong \calS^c$ is given on basis elements, for $k+l+g\geqslant 3$, by:
		\[
			\Delta_{(1,1)}(c_{k,l,g}) = 
			\sum_{{\substack{
						r\geqslant 1 \\
						g'+g''+r-1=g \\ 
						k'+k''-r=k \\
						l'+l''-r=l\\
						k'+l'+g'\geqslant 3\\
						k''+l''+g''\geqslant 3
					}}} 
		\sum_{{\substack{\sigma\in Sh^{-1}_{k'-r,k''}\\
						\tau \in Sh_{l',l''-r}}}}
		\varepsilon\, \sgn(\sigma)\sgn(\tau)
		\, {}^\tau(c_{k',l',g'} \underset{r}{\circ} c_{k'',l'',g''})^\sigma\ ,
		\]
		with $\varepsilon=(-1)^{\frac{(r-1)(r-2)}{2}+(k'-r)(k''-r)+(l'-r)(l''-r)+(k'-r)(l''-r)}$, 
		where $\underset{r}{\circ}$ denotes 
		the composite of the $r$ last outputs of $c_{k'',l'',g''}$ along the $r$ first inputs of $c_{k',l',g'}$ and where $Sh_{a,b}$ denotes the set of $(a,b)$-shuffles.
		
		\item The image an element $c_{k,l,g}$ under the coproperadic decomposition map $\Delta$ is equal to the sum (including a sign) of 2-level connected graphs with vertices labeled with elements $c_{k',l',g'}$ such that the total number of inputs is equal to $k$, the total number of outputs is equal to $l$ and the total genus, including the one of the vertices ($g'$) is equal to $g$. 
		
	\end{enumerate}
\end{proposition}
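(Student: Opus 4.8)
The plan is to derive everything from \cref{prop:KosDualProp} and the classical Koszul-duality computation for Lie bialgebras of \cite{Vallette07}.

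For~(1), I would apply \cref{prop:KosDualProp} to $\IBL=\calP(E,R)$, where $E$ is one-dimensional, skew-symmetric, in degree~$0$, and concentrated in arities $(1,2)$ and $(2,1)$. Since $\calS$ is one-dimensional in degree~$1$ in those two arities, $s^{-1}\calS\oH E^*$ is again one-dimensional in degree~$0$ there, but now carrying the \emph{symmetric} $\Sy_2$-action (the two sign twists cancel); this is precisely the generating space of $\Frob$. It then remains to identify, inside $\G(s^{-1}\calS\oH E^*)^{(2)}$, the orthogonal complement $R^\perp$ with the space of relations of $\Frob$. As all these spaces are homogeneous for the (inputs, outputs, genus)-trigrading, one checks this grading piece by grading piece. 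In genus~$0$ one is matching Jacobi, co-Jacobi and the Drinfeld cocycle against associativity, coassociativity and the Frobenius relation, which is exactly the genus-$0$ content of the Koszul dual of $\BiLie$, see \cite{Vallette07}. In genus~$\geqslant 1$ the space $\G(s^{-1}\calS\oH E^*)^{(2)}$ is nonzero only in arity $(1,1)$ and genus~$1$, where it is one-dimensional and spanned by the diamond $\mu\Delta$; on the $\IBL$-side this line is spanned by the involutivity relation $[\,,]\circ\delta$, so $R^\perp$ vanishes there, matching the absence of a genus-$1$ relation in $\Frob$. Hence $\IBL^!\cong\Frob$, and $\Frob^!\cong\IBL$ follows from the biduality $(\calP^!)^!\cong\calP$.

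Point~(2): by construction $\IBL^{\ac}\cong\calS^c\oG(\IBL^!)^*\cong\calS^c\oG\Frob^*$ using~(1), which is the first isomorphism. For the identification with $\calS^c$, I would first make $\Frob$ explicit: using associativity, coassociativity and the Frobenius relation, every connected graph built from the two generators with $m$ outputs, $n$ inputs and genus $g$ is rewritten into the single normal form appearing in the proof of \cref{lem:SusProp} (a comb, then $g$ diamonds, then a reversed comb), and one checks this rewriting produces no further collapse, so that $\Frob(m,n)\cong\bigoplus_{g\geqslant 0}\k$; as the two generators are symmetric, each normal form is $\Sy_m\times\Sy_n^{\mathrm{op}}$-invariant and sits in degree~$0$. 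Dually, $\Frob^*$ is, genus-wise and arity-wise, the trivial $\Sy$-bimodule in degree~$0$, so the genus-wise and arity-wise tensor product with $\calS^c$ leaves $\calS^c$ unchanged. By \cref{lem:SusProp} the resulting coproperad is one-dimensional in degree $k+l+2g-2$ in arity $(k,l)$ and genus $g$, with basis the graph $c_{k,l,g}$, and $c_{1,1,0}$ is the trivial graph, i.e. the identity, of degree~$0$.

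For~(3) and~(4): since $\calS^c$ is a trivial coproperad, whose decomposition only reorganizes suspensions, the decomposition map $\Delta$ and its infinitesimal part $\Delta_{(1,1)}$ on $\IBL^{\ac}\cong\calS^c\oG\Frob^*$ are the linear duals of the composition $\gamma$ and the infinitesimal composition $\gamma_{(1,1)}$ of $\Frob$, twisted by the suspension signs of $\calS$. I would first observe that $\gamma_{(1,1)}$ sends the composite of $r\geqslant 1$ outputs of $c''_{k'',l'',g''}$ into $r$ inputs of $c'_{k',l',g'}$ to the normal form $c_{k,l,g}$ with $k=k'+k''-r$, $l=l'+l''-r$, $g=g'+g''+r-1$, independently of the chosen $r$-subsets and the chosen matching; hence $\Frob\ibt\Frob$ in a fixed arity and genus is the direct sum, over the composition types $(k',l',g',k'',l'',g'',r)$ satisfying these constraints and over the shuffles that interleave the external legs, of one-dimensional spaces all mapped onto $c_{k,l,g}$. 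Dualizing then produces $\Delta_{(1,1)}(c_{k,l,g})$ as the stated sum of the two-vertex graphs ${}^\tau(c_{k',l',g'}\underset{r}{\circ}c_{k'',l'',g''})^\sigma$, the non-triviality constraints $k'+l'+g'\geqslant 3$ and $k''+l''+g''\geqslant 3$ expressing that neither factor is the identity, and the scalars $\sgn(\sigma)\sgn(\tau)$ and $\varepsilon$ being produced by carrying the Koszul sign rule through the signature representations of $\calS$ and through the reordering of the suspensions glued along the $r$ edges (whence, in particular, the $\tfrac{(r-1)(r-2)}{2}$-term). Point~(4) is obtained identically from the full composition $\gamma$ of $\Frob$, which sends a connected two-level graph with normal-form labels to the normal form of the total arity whose genus is the sum of the genera of the vertices plus the genus of the underlying two-level graph; dualizing gives exactly the stated description of $\Delta$, and it refines $\Delta_{(1,1)}$ as it should, since $\IBL^{\ac}$ is conilpotent. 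The conceptual steps being routine, I expect the real difficulties to lie in the normal-form (confluence) argument establishing $\Frob(m,n)\cong\bigoplus_{g\geqslant 0}\k$ without hidden collapses, and above all in the explicit determination of the sign $\varepsilon$, which forces one to track the Koszul signs arising simultaneously from the Koszul-duality pairing, the signature representations, and the suspension reorderings.
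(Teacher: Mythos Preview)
Your proposal is correct and follows essentially the same strategy as the paper. For (1) you apply \cref{prop:KosDualProp} and identify $R^\perp$ arity-by-arity and genus-by-genus, for (2) you reduce $\Frob$ to its normal forms via the argument of \cref{lem:SusProp}, and for (4) you derive the full $\Delta$ from the previous points --- all exactly as in the paper, only with more detail supplied. The one presentational difference is in (3): the paper computes the infinitesimal composition directly in the properad $\calS$ (obtaining the sign $\varepsilon$ from the defining relations of $\calS$) and then dualizes, whereas you separate this into the combinatorics coming from $\Frob$ and the signs coming from $\calS$; since $\calS\oG\Frob\cong\calS$, these two computations are literally the same, and your own cautionary remark about the sign bookkeeping is well taken.
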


\begin{proof}\leavevmode
\begin{enumerate}
\item This point is straightforward application of \cref{prop:KosDualProp}. 

\item One can see that the properad $\Frob$ admits a basis similar to that of the suspension properad $\calS$, by the same argument as in the proof of \cref{lem:SusProp}. 

\item Let us denote by $m_{k,l,g}$ the basis element of the suspension properad $\calS$ made up of  a right comb of arity $k$ composed with a sequence of $g$ simple ``diamonds'' and then a left reversed comb of arity $l$. 
Using the defining quadratic relations of $\calS$, 
one can see that the composite $\underset{r}{\circ}$ of the $r$ last outputs of $m_{k'',l'',g''}$ along the $r$ first inputs of 
$m_{k',l',g'}$ is equal to 
\[ 
m_{k',l',g'} \underset{r}{\circ} m_{k'',l'',g''} = (-1)^{\frac{(r-1)(r-2)}{2}+(k'-r)(k''-r)+(l'-r)(l''-r)+(k'-r)(l''-r)} \, m_{k, l, g}\ ,
\]
where $k=k'+k''-r$, $l=l'+l''-r$, and $g=g'+g''+r-1$. The infinitesimal coproduct of the coproperad $\calS^c$ is the categorical dual of this composite of the properad $\calS$; so it carries the same form and signs. 
\item This point is a straightforward consequence of the previous results. 
\end{enumerate}
\end{proof}

\begin{remark}
The first point can be interpreted as saying that symplectic field theory \cite{EGH00} is Koszul dual to (2-dimensional) topological quantum field theory \cite{Atiyah88}. 
\end{remark}

\begin{theorem}[{\cite{CamposMerkulovWillwacher14}}]
	The properads $\IBL$ and $\Frob$ are Koszul. 
\end{theorem}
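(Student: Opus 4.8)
The plan is to deduce both statements from the Koszulness of a single one of the two properads. Since $\IBL$ and $\Frob$ are Koszul dual to one another by \cref{prop:IBLac}, and since a finitely generated binary quadratic properad is Koszul if and only if its Koszul dual coproperad is Koszul (see \cite[Section~7]{Vallette07}), it suffices to prove that $\IBL$ is Koszul, i.e. that the canonical twisting morphism $\kappa : \IBL^{\ac} \to \IBL$ induces a quasi-isomorphism $\Cobar \IBL^{\ac} \xrightarrow{\sim} \IBL$. By \cref{prop:IBLac}, the Koszul dual coproperad $\IBL^{\ac}\cong \calS^c$ is one-dimensional in each biarity $(k,l)$ and each genus $g$, spanned by the corolla $c_{k,l,g}$ with $|c_{k,l,g}|=k+l+2g-2$, and it carries no internal differential (both generators and relations of $\IBL$ sit in degree $0$). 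Hence $\Cobar \IBL^{\ac} = \big(\G\big(s^{-1}\overline{\IBL^{\ac}}\big), d_2\big)$ is the free properad on one generator in each biarity $(k,l)$ and genus $g$ with $(k,l,g)\neq(1,1,0)$, the differential $d_2$ being the unique derivation extending the infinitesimal decomposition map $\Delta_{(1,1)}$ computed in \cref{prop:IBLac}(3).

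Next I would analyse $H_\bullet\big(\Cobar\IBL^{\ac}\big)$ by means of the filtration of $\G\big(s^{-1}\overline{\IBL^{\ac}}\big)$ by the integer $p(\Gamma)\coloneqq \sum_{v\in\Gamma} g(v)$, the total genus carried by the \emph{vertices} of a graph $\Gamma$, so that the total genus of $\Gamma$ equals $p(\Gamma)$ plus the first Betti number of the underlying graph (which the differential of a free properad preserves). Inspecting the formula of \cref{prop:IBLac}(3), splitting a vertex $c_{k,l,g}$ into $c_{k',l',g'}\underset{r}{\circ}c_{k'',l'',g''}$ obeys $g'+g''=g-(r-1)$, so $p$ is weakly decreasing under $d_2$; the associated graded differential $d_0$ therefore retains exactly the $p$-preserving single-edge splittings ($r=1$). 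Thus $(E^0,d_0)$ is a cobar-type complex built only from genus-zero (dioperadic) decompositions of genus-decorated corollas, and it decomposes, according to $p$ and the Betti number, into pieces governed by the cobar construction of the Koszul dual codioperad of the Lie bialgebra dioperad, whose acyclicity is the classical dioperadic Koszulness of Lie bialgebras (see \cite{MerkulovVallette09II} and references therein). From this one reads off $E^1$, concentrated in the bidegrees that span $\IBL$. (The filtration is exhaustive, bounded below, and finite in each fixed weight, so the spectral sequence converges.)

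The main obstacle is the control of the higher differentials of this spectral sequence: on $E^1$ one is left with a complex of genus-decorated connected graphs whose residual differential glues two vertices along $r\geq 2$ edges at once, redistributing the stored genus, and one must show this complex carries no homology beyond the classes already accounting for $\IBL$. This is precisely the technical core of \cite{CamposMerkulovWillwacher14}: one reinterprets the relevant complex as an auxiliary graph complex and produces an explicit contracting homotopy of ``attach/detach a leg at a distinguished vertex'' type, the delicate points being the sign bookkeeping and the verification that the homotopy respects the filtration so that the spectral sequence still converges to the homology of $\Cobar\IBL^{\ac}$. Granting this acyclicity, the spectral sequence degenerates at the expected page, giving $H_\bullet\big(\Cobar\IBL^{\ac}\big)\cong \IBL$; hence $\IBL$, and therefore its Koszul dual $\Frob$, is Koszul. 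Since this argument is carried out in full in \cite{CamposMerkulovWillwacher14} and our conventions are matched to theirs through \cref{prop:KosDualProp} and \cref{prop:IBLac}, it is enough here to invoke that reference.
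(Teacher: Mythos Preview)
The paper does not actually give a proof of this statement: it is stated as a theorem with the citation \cite{CamposMerkulovWillwacher14} and no argument is supplied. Your proposal therefore goes well beyond what the paper does, providing a plausible outline of the spectral sequence strategy (filtering by internal genus, reducing the associated graded to the dioperadic Koszulness of Lie bialgebras, and then handling the higher differentials via the graph-complex techniques of Campos--Merkulov--Willwacher). Since you yourself correctly defer the technical core to \cite{CamposMerkulovWillwacher14} at the end, your proposal is compatible with the paper's treatment and in fact more informative; but note that in the paper's context a bare citation is all that is expected, so your sketch should be regarded as supplementary commentary rather than a replacement proof.
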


This implies that
	\[
		\IBL_\infty\coloneqq \Omega\, \IBL^{\ac}\cong \Omega\,  \calS^c  \overset{\sim}{\twoheadrightarrow} \IBL 
	\]
	is the minimal model and a cofibrant resolution of the properad $\IBL$. 
	
\begin{proposition}[Homotopy involutive Lie bialgebra]
An $\IBL_\infty$-gebra structure  on a differential graded vector space $A$ is a collection of skew-symmetric operations 
\[\mu_{k,l,g} : A^{\wedge k} \to A^{\wedge l}\ ,\]
of degree $|\mu_{k,l,g}|=k+l+2g-3$, 
for  $k, l\geqslant 1$, $g\geqslant 0$, and $k+l+g\geqslant 3$, satisfying 
\[
			\partial(\mu_{k,l,g}) = 
			\sum_{{\substack{
						r\geqslant 1 \\
						g'+g''+r-1=g \\ 
						k'+k''-r=k \\
						l'+l''-r=l
					}}} 
		\sum_{{\substack{\sigma\in Sh^{-1}_{k'-r,k''}\\
						\tau \in Sh_{l',l''-r}}}}
\varepsilon\, \sgn(\sigma)\sgn(\tau)
		\, {}^\tau(c_{k',l',g'} \underset{r}{\circ} c_{k'',l'',g''})^\sigma\ , 
		\]
where $\varepsilon=(-1)^{\frac{(r-1)(r-2)}{2}+(k'-r)(k''-r)+(l'-r)(l''-r)+(k'-r)(l''-r)+k'+l'+2g'+1}$. 		
\end{proposition}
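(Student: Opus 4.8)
The plan is to unwind the definition of an $\IBL_\infty$-gebra on $A$ — namely a morphism of dg properads $\Omega\, \IBL^{\ac}\cong \Omega\, \calS^c \to \End_A$ — using the universal properties of the free properad and of the cobar construction, together with the explicit description of $\IBL^{\ac}\cong \calS^c$ supplied by \cref{prop:IBLac}. First I would note that $\IBL$ carries no internal differential, so neither does its Koszul dual coproperad $\IBL^{\ac}\cong\calS^c$; hence the component $d_1$ of the cobar differential vanishes and $\IBL_\infty = \big(\G(s^{-1}\overline{\calS^c}),\, d_2\big)$, where $d_2$ is the unique derivation extending the infinitesimal decomposition map $\Delta_{(1,1)}$ of $\calS^c$. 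By the universal property of the free properad, a morphism of graded properads $\G(s^{-1}\overline{\calS^c})\to\End_A$ is the same datum as a morphism of $\Sy$-bimodules $s^{-1}\overline{\calS^c}\to\End_A$. By \cref{prop:IBLac}~(2) the $\Sy$-bimodule $\overline{\calS^c}$ has basis $\{c_{k,l,g}\}$ with $k,l\geqslant 1$, $g\geqslant 0$, $k+l+g\geqslant 3$, each $c_{k,l,g}$ spanning a copy of $\sgn_{\Sy_l}\otimes\k s^{k+l+2g-2}\otimes\sgn_{\Sy_k^{\mathrm{op}}}$; so $\Sy$-equivariance shows that such a morphism $F$ is precisely a family of operations $\mu_{k,l,g}\coloneqq F(s^{-1}c_{k,l,g})\in\Hom(A^{\otimes k},A^{\otimes l})$, necessarily graded skew-symmetric in the $k$ inputs and in the $l$ outputs — that is, maps $\mu_{k,l,g}\colon A^{\wedge k}\to A^{\wedge l}$ — and of degree $|c_{k,l,g}|-1 = k+l+2g-3$.

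Next I would impose that $F$ be a chain map. Writing $\partial$ for the differential of $\End_A$ induced by $d_A$, both $\partial\circ F$ and $F\circ d_2$ are $(F,F)$-derivations from $\G(s^{-1}\overline{\calS^c})$ to $\End_A$, hence coincide as soon as they coincide on the generators $s^{-1}c_{k,l,g}$; thus $F$ is a morphism of dg properads if and only if $\partial(\mu_{k,l,g}) = F\big(d_2(s^{-1}c_{k,l,g})\big)$ for every $(k,l,g)$. Substituting the formula of \cref{prop:IBLac}~(3) for $\Delta_{(1,1)}(c_{k,l,g})$ into $d_2$, then applying $F$ (which turns the two vertices $c_{k',l',g'}$, $c_{k'',l'',g''}$ into the operations $\mu_{k',l',g'}$, $\mu_{k'',l'',g''}$) and composing them along the $r$ prescribed edges in $\End_A$ together with the block permutations $\sigma$, $\tau$ and their signatures, one obtains exactly a sum of terms ${}^\tau(\mu_{k',l',g'}\underset{r}{\circ}\mu_{k'',l'',g''})^\sigma$ with the index set of the statement. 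The combinatorial sign $(-1)^{\frac{(r-1)(r-2)}{2}+(k'-r)(k''-r)+(l'-r)(l''-r)+(k'-r)(l''-r)}$ is inherited verbatim from $\Delta_{(1,1)}$, while the stated sign $\varepsilon$ differs from it by the factor $(-1)^{k'+l'+2g'+1}$ produced by the two desuspensions $s^{-1}\ibt s^{-1}$ built into $d_2$ (equivalently, by the Koszul signs relating the convolution product of \cref{thm:CompleteRosetta} to composition in $\End_A$). Collecting these signs yields the displayed equation.

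The identifications coming from \cref{prop:IBLac}, the $\Sy$-equivariance argument and the derivation argument are all routine. The one genuinely delicate point, which I expect to be the main obstacle, is the sign computation in the last step: tracking the Koszul signs contributed by the desuspensions $s^{-1}$ in $d_2$, the block-permutation signatures $\sgn(\sigma)$, $\sgn(\tau)$, and the graded skew-symmetry conventions on $A^{\wedge k}$ and $A^{\wedge l}$, so that they combine precisely into the exponent $\tfrac{(r-1)(r-2)}{2}+(k'-r)(k''-r)+(l'-r)(l''-r)+(k'-r)(l''-r)+k'+l'+2g'+1$ appearing in $\varepsilon$.
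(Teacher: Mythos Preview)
Your proposal is correct and follows essentially the same approach as the paper: both reduce the claim to Points~(2) and~(3) of \cref{prop:IBLac}. The only difference is cosmetic --- the paper phrases the argument via the twisting morphism bijection $\Tw(\calS^c,\End_A)$ from the Rosetta stone, whereas you unfold the equivalent description as a morphism of dg properads $\Omega\,\calS^c\to\End_A$ and argue with the universal property of the free properad; your version is more detailed on the sign bookkeeping, which the paper simply declares a ``direct corollary''.
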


\begin{proof}
Recall that an $\IBL_\infty$-gebra structure on a differential graded vector space $A$ amounts to a twisting morphism 
$\alpha \in \Hom_{\Sy}(\IBL^{\ac}, \End_A)$. This result is thus a direct corollary of Points (2) and (3) of \cref{prop:IBLac}. 
\end{proof}

\begin{remark}\leavevmode
\begin{itemize}
\item[$\diamond$] Restricting to the genus 0 part, that is $g=0$, one gets the notion of a homotopy Lie bialgebra. 

\item[$\diamond$] Restricting to the operadic part, that is $l=1$ and $g=0$, on gets the notion of a homotopy Lie algebra, under the exact same form including the sign as in \cite[Proposition~10.1.7]{LodayVallette12}.
\end{itemize}
\end{remark}

The notion a homotopy  involutive Lie bialgebra given here, which actually comes from 
\cite{Vallette07}, agrees with the one given in \cite{CFL11}.
We recover its properties established in \emph{loc. cit.} as the particular case of our general theory applied to the coproperad $\calC=\IBL^{\ac}\cong \calS^c$, as summarized in the table below.

Let $F:A \rightsquigarrow B$ be an $\infty$-morphism: the condition \eqref{eq:Morph} stated in \Cref{prop:infmor} on the associated morphism $f:\calS^c  \to \End_B^A$ amounts to
\[
f \rhd \alpha - \beta \lhd f  =\partial^A_B \circ f \ ,
\]
since $d_{\calS^c}=0$.
We claim that this condition is equivalent to the one appearing in \cite[Definition~2.9]{CFL11} where the authors define the notion of an $\IBL_\infty$-morphism. Let us first give a dictionary between the two papers: the first part of the table explains the correspondence between the notions of $\IBL_\infty$-structures and the second part the correspondence between the notions of $\infty$-morphisms. 
\medskip

\begin{center}
	\begin{tabular}{|c|c|}
		\hline
		\rule{0pt}{10pt}{\sc Notation of the present paper } &  {\sc Notations of \cite{CFL11}}\\
		\hline
		\rule{0pt}{10pt} $A$ (underlying object)&
		 $C$\\
		\hline
		\rule{0pt}{10pt}$\S \calS^c \sq \S  A$&
		$E C$ \\
		\hline

		\rule{0pt}{10pt}$\mu_{k,l,g}=\alpha(c_{k,l,g})$ and $d_A$&
		$\frakp_{k,l,g}$ and $\frakp_{1,1,0}$ \\
		\hline
		\rule{0pt}{10pt} bidifferential $d_{\alpha}$ &
		$\hat{\frakp}:EC \to EC$\\
		\hline
		\rule{0pt}{10pt} $(d_\alpha)^2=0$ (see \Cref{prop:EndofRosetta}) & 
		$\hat{\frakp} \circ \hat{\frakp}=0$ (see \cite[Equation~(2.2)]{CFL11}) \\
		\hline
		\rule{0pt}{10pt} $(A,\alpha : \calS^c \to \End_A)$ $\IBL_\infty$-algebra &
		$\left(C,\{\frakp_{k,l,g}\}_{k,l\in \NN^*, g\in\NN}\right)$\\
		\hline
		\hline 
		\rule{0pt}{10pt} $f: \calS^c \boxtimes A \to \S B$&
		$\{\frakf_{k.l.g}\}: EC^+ \to EC^-$\\
		\hline
		\rule{0pt}{10pt} $f_{(0)}: A \to B$&
		$\frakf_{1,1,0}: C^+ \to C^-$\\
		\hline
		\rule{0pt}{10pt}  $f \rhd \alpha + f\circ d_{\S A} $& 
		the connected part of $e^\frakf\hat{\frakp}^+$\\
		\hline
		\rule{0pt}{10pt} $ f \rhd \alpha - \beta \lhd f  =\partial^A_B \circ f $
		&
		the connected part of $e^\frakf\hat{\frakp}^+ - \hat{\frakp}^- e^\frakf =0$ \\
		\rule{0pt}{10pt} (see \Cref{prop:infmor})& (see \cite[Definition 2.9]{CFL11}) \\
		\hline
	\end{tabular}
\end{center}

\medskip

Under this correspondence, Theorem~1.2  of  \cite{CFL11}, dealing with the ```quasi''-invertibility of $\infty$-quasi-isomorphisms,  is a  particular case  of \Cref{thm:InvInfQI}. Notice that the proof for the associativity relation for the composite of $\infty$-morphisms in \cite[Remark~2.13]{CFL11} is cumbersome and thus not given there. With the present approach, such a relation is automatic. 

\begin{remark}
Regarding the last two lines of the above table, terms involving disconnected graphs appear in \cite{CFL11}. However the authors show in \cite[Lemma 2.10]{CFL11} that their condition can be actually be reduced to connected compositions. In the present paper, the condition  involves only connected graphs. Such a phenomenon can be explained conceptually as follows. The Koszul dual coproperad $\IBL^{\ac}\cong \calS^c$ admits a cocommutative coalgebra structure,  
with respect to the monoidal product $\ot$, given by 
\[
c_{k,l,g}\mapsto \sum_{{\substack{
						k'+k''=k \\
						l'+l''=l\\
 						g'+g''=g 
					}}} 
					(-1)^{(l''-1)(k'+2g'+1)}\, c_{k',l',g'} \ot c_{k'',l'',g''}\ .
\]
This cocommutative coproduct induces the commutative product on the convolution algebra $\Hom_{\Sy}(\allowbreak\IBL^{\ac} , \End_A)$ used in  \cite{CFL11}. Notice that, up to suspensions, this latter space is isomorphic to the quantized space of polynomial functions $\widehat{\S}(A\oplus A^*)[[\hbar]]$  on the symplectic manifold $A\oplus A^*$, see \cite{DCTT10} for more details. Under this identification, the above-mentioned commutative product is nothing but the product of functions, structure also known as the Weyl algebra. 
In the present language, this extra algebraic structure allows the authors of \cite{CFL11} to work directly with $\calS^c\boxtimes A$ instead of $\S(\calS^c\boxtimes A)$. 
Notice however that such a richer algebraic structure is specific to the case $\calP=\IBL$ and that it unfortunately does not hold in general for a quadratic properad $\calP$. 
\end{remark}

	In \cite[Section 3]{CFL11}, the authors develop an obstruction theory with many aspects. First, applied to the extension of a linear map between $\IBL_\infty$-gebras into an $\infty$-morphism, this corresponds to the present obstruction theory of  \cref{sec:Obstruction}. We use an induction on the weight $n$, which is equal to $k+l+2g-2$ in the notations of \emph{loc. cit.}. The authors use a similar induction on triples $(k,l,g)$ for an ordering mainly based on the natural ordering of the integers $k+l+2g$. Then, one part of the obstruction theory of \cite[Proposition~3.1]{CFL11} provides us with the existence of a transferred homotopy involutive Lie bialgebra structure on the underlying homology groups of a homotopy involutive Lie bialgebra. This form of the homotopy transfer theorem \cite[Theorme~1.3]{CFL11} is a  particular case of \Cref{thm:HTT}. Since its proof in \emph{loc. cit.} is based on obstruction theory, one can only get inductive formulas. 
On the other hand, the present algebraic approach gives us the following general explicit closed formula. 

\begin{theorem}[Homotopy transfer theorem for $\IBL_\infty$-gebras]\label{thm:HTTIBL}
Let $\{\mu_{k,l,g}\}$ be a homotopy involutive Lie bialgebra structure on a chain complex $A$. The transferred homotopy involutive Lie bialgebra structure $\{\nu_{k,l,g}\}$ on any contraction $H$ is given by
\[
\nu_{k,l,g}=\sum_{\gg\in \Gs^{\mathrm{lev}}_{k,l,g}} \varepsilon_\gg\,  
\gg\left(\PP, \mu_{k_1,l_1,g_1}, \H, \ldots, \H, \mu_{k_N,l_N,g_N}, \II  \right)\ ,
\]
where the sum runs over directed connected leveled graphs with $k$ inputs and $l$ outputs and with vertices indexed by non-negative integers $g_1, \ldots, g_N$ satisfying $g_1+\cdots+g_N+\mathrm{genus}(\gg)=g$ and 
where the sign $\varepsilon_\gg$ is equal to the sign obtained when composing the graph $\gg$ into the element $c_{k,l,g}$ in the suspension properad $\calS$. 

The same formula, replacing the label $p$, on the output leaves, by $h$, gives  
the $\infty$-quasi-isomorphism from $H$ to $A$ extending $i$. And the same formula,  with a sign change, 
replacing the label $i$, on the input leaves, by $h$, gives  
the $\infty$-quasi-isomorphism from $A$ to $H$ extending $p$.
\end{theorem}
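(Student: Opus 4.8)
The plan is to specialise the general Homotopy Transfer Theorem \cref{thm:HTT} to the conilpotent dg coproperad $\calC=\IBL^{\ac}\cong\calS^c$ of \cref{prop:IBLac}, whose differential vanishes, and then to make every map occurring in it explicit on the basis elements $c_{k,l,g}$. Recall that an $\IBL_\infty$-gebra structure $\{\mu_{k,l,g}\}$ on $A$ is precisely a twisting morphism $\alpha\in\Tw(\calS^c,\End_A)$ with $\alpha(c_{k,l,g})=\mu_{k,l,g}$. By \cref{thm:HTT} and the explicit description of $G_\alpha$ given right after it, the transferred structure on $H$ is the twisting morphism
\[
\begin{tikzcd}[column sep=normal]
\oC\ar[r,"\Delta_{\oC}"] & \G^c\big(\oC\big)\ar[r,"\G^c(s\alpha)"] & \Bar\End_A\ar[r,"\varphi"] & \End_H\ ,
\end{tikzcd}
\]
with $\varphi=\PHI\circ\lev$ (and $\varphi|_\I=0$), while the two claimed $\infty$-quasi-isomorphisms are obtained by replacing $\varphi$ with the maps $i_\infty$ and $p_\infty$, which on non-trivial graphs are $\HHI\circ\lev$ and $\PHH\circ\lev$ and on the trivial graph are $i$ and $p$.

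First I would evaluate this composite on $c_{k,l,g}$. Iterating the infinitesimal decomposition of Point~(3) of \cref{prop:IBLac} — equivalently, using that $\Delta_{\oC}$ splits an operation in all possible ways and that, by \cref{lem:SusProp}, every connected graph composes in the suspension properad $\calS$ to a (signed) corolla with genus — one sees that $\Delta_{\oC}(c_{k,l,g})$ is the sum, over all directed connected graphs $\gg$ with vertices labelled by $c_{k_i,l_i,g_i}$ having total input arity $k$, total output arity $l$ and $g_1+\cdots+g_N+\mathrm{genus}(\gg)=g$, of the corresponding labelled graph carrying the sign $\varepsilon_\gg$ dual to the composite of $\gg$ in $\calS$. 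Applying $\G^c(s\alpha)$ replaces each label $c_{k_i,l_i,g_i}$ by $s\mu_{k_i,l_i,g_i}\in s\End_A$ and produces no sign, since the degree count $|\mu_{k,l,g}|=k+l+2g-3$ against $|c_{k,l,g}|=k+l+2g-2$ makes $s\alpha$ of degree $0$. Applying $\lev$ replaces each such graph by the sum over its levelisations $(\widetilde\gg,\sigma)$ with the Koszul sign $\varepsilon_\sigma$, and $\PHI$ then deletes the suspensions and decorates the input leaves with $\II$, the intermediate levels with $\H$ and the output leaves with $\PP$ without further sign, as recorded after \cref{thm:HTT}. Collecting terms, each leveled connected graph $\gg\in\Gs^{\mathrm{lev}}_{k,l,g}$ with the prescribed genus data occurs once, with total sign $\varepsilon_\gg\,\varepsilon_\sigma$, which is exactly the sign obtained by composing $\gg$ level by level into $c_{k,l,g}$ inside $\calS$; this is the asserted formula for $\nu_{k,l,g}$.

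For the two $\infty$-quasi-isomorphisms I would rerun the identical computation, the only difference being in the final step: $\HHI$ agrees with $\PHI$ except that it labels the output leaves by $\H$ instead of $\PP$, which gives the formula for the $\infty$-quasi-isomorphism $H\rightsquigarrow A$ extending $i$; and $\PHH$ agrees with $\PHI$ except that it labels the input leaves by $\H$ instead of $\II$, together with the extra Koszul sign coming from the vertical reflection used in \cref{prop:UnivInfMorph} to deduce $p_\infty$ from $i_\infty$ (equivalently, from the degree $1$ of $h$), which gives the formula for the $\infty$-quasi-isomorphism $A\rightsquigarrow H$ extending $p$. That these maps are $\infty$-quasi-isomorphisms, and that the first composite genuinely defines a $\Cobar\calS^c$-gebra structure on $H$, is already part of \cref{thm:HTT}.

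The only genuine work is the sign identification: one must check that the sign $\varepsilon_\gg$ attached by $\Delta_{\oC}$ — defined dually to the reorganisations of tensor powers of a degree-one suspension that constitute the composition of $\calS$ (\cref{lem:SusProp}, Point~(3) of \cref{prop:IBLac}) — combined with the levelisation sign $\varepsilon_\sigma$, reproduces the structure constant of the level-by-level composite in $\calS$. This is entirely a matter of matching the Koszul-sign conventions of \cref{lem:SusProp}, of the infinitesimal coproduct in \cref{prop:IBLac} and of the map $\lev$; since $\calS^c$ is arity- and genus-wise linearly dual to $\calS$ and every map in the composite is a dual of an operadic-type composition, the coefficient of each leveled graph is forced, so no computation beyond unwinding these conventions is needed.
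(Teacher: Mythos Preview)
Your proof is correct and follows essentially the same approach as the paper's: both specialise \cref{thm:HTT} to $\calC=\IBL^{\ac}\cong\calS^c$, write the transferred structure as the composite $\PHI\circ\lev\circ\G^c(s\alpha)\circ\Delta_{\oC}$, evaluate on $c_{k,l,g}$ via (iterated) Point~(3) of \cref{prop:IBLac}, and identify the sign by noting that $s\alpha$ has degree $0$ and $\PHI$ contributes no extra sign. Your treatment is simply more explicit about the sign bookkeeping and about the $i_\infty$/$p_\infty$ cases than the paper's brief argument.
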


\begin{figure}[h!]
\[\nu_{5,4,6}=\sum_{\gg\in \Gs^{\mathrm{lev}}_{5,4,6}}\ -\ 
		\begin{tikzpicture}[scale=0.9,baseline=(n.base)]
			\node (n) at (1,1.5) {}; %point base
			\draw[fill=black] (0,0) circle (3pt) node[right] {$\, \scriptstyle{\mu_{2,3,1}}$};
			\draw[fill=black] (1,1) circle (3pt) node[right] {$\, \scriptstyle{\mu_{2,1,0}}$};
			\draw[fill=black] (-1,2) circle (3pt) node[right] {$\, \scriptstyle{\mu_{3,2,2}}$};
			\draw[fill=black] (0,3) circle (3pt) node[right] {$\, \scriptstyle{\mu_{3,3,1}}$};
			\draw[thick] (0,0) to[out=60,in=270] (1,1) to[out=120,in=280] (0,3) to[out=320,in=90] (1,1) ;
			\draw[thick] (0,0) to[out=120,in=270] (-1,2) to[out=40,in=240] (0,3);
			\draw[thick] (-1,2) to[out=60,in=270] (-1,4) node[fill=white] {$\scriptstyle{i}$};
			\draw[thick] (-1,2) to[out=120,in=270] (-2,4) node[fill=white] {$\scriptstyle{i}$};
			\draw[thick] (-1,2) to[out=250,in=90] (-1.5,-1) node[fill=white] {$\scriptstyle{p}$};
			\draw[thick] (0,3) to[out=100,in=270] (0,4) node[fill=white] {$\scriptstyle{i}$};
			\draw[thick] (0,3) to[out=60,in=270] (1,4) node[fill=white] {$\scriptstyle{i}$};
			\draw[thick] (0,3) to[out=120,in=270] (-0.5,4) node[fill=white] {$\scriptstyle{i}$};
			\draw[thick] (0,0) to[out=250,in=90]  (-0.8,-1) node[fill=white] {$\scriptstyle{p}$};
			\draw[thick] (0,0) to[out=290,in=90]  (0.8,-1) node[fill=white] {$\scriptstyle{p}$};			
			\draw[thick] (0,0) to[out=260,in=90]  (0,-1) node[fill=white] {$\scriptstyle{p}$};			
			\draw[fill=white] (-2.1,0.3) rectangle (1.5,0.7);
			\draw (-0.3,0.5) node {$\scriptstyle{{h}}_3$};
			\draw[fill=white] (-2.1,1.3) rectangle (1.5,1.7);
			\draw (-0.3,1.5) node {$\scriptstyle{{h}}_4$};
			\draw[fill=white] (-2.1,2.3) rectangle (1.5,2.7);
			\draw (-0.3,2.5) node {$\scriptstyle{{h}}_5$};
		\end{tikzpicture}\]
	\caption{An element in the transferred $\IBL_\infty$-structure.}
		\label{fig:TransStr}
\end{figure} 

\begin{proof}
This is a direct application of \cref{thm:HTT}. Recall that if we denote the original structure by a twisting morphism $\alpha$, then the transferred structure is given by the following composite
	\[
	\begin{tikzcd}[column sep=normal]
	\overline{\calS^c}\arrow[r,"\Delta_{\overline{\calS^c}}"] & \G^c\big(\overline{\calS^c}\big) \arrow[r,"\G^c(s\alpha)"] & \G^c\big(s\End_A\big) 
	 \arrow[r,"\lev"] & \GLev\big(s\End_A\big)  \arrow[r,"\PHI"] & \End_H\ .
	\end{tikzcd}
	\]
Applied to a basis element $c_{k,l,g}$, one gets the above mentioned formula by (iterating) Point~(3) of \cref{prop:IBLac}. 
Since $s\alpha$ is of degree $0$ and since the map $\PHI$ does not produce any extra sign, the only sign comes the comonadic decomposition map of the suspension coproperad $\calS^c$ followed by the levelisation map. 
\end{proof}

\cref{thm:HTTIBL} can be applied to all the examples given in \cite{CFL11}. For instance, it produces  explicit formulas 
for the homotopy involutive Lie bialgebra structure on the dual cyclic bar construction 
of a finite dimensional cyclic differential graded associative algebra \cite[Theorem~1.7]{CFL11} and 
of the de Rham cohomology of a closed oriented manifolds \cite[Theorem~1.11]{CFL11}. 
We hope that such explicit formulas will allow one to solve the conjectures mentioned in the introduction of \emph{loc. cit.} since one might be able to solve this way some divergence issues present there. As explained in the introduction of \cite{CFL11}, ``this should give explicit formulas for $\IBL_\infty$-gebra structure on $S^1$-equivariant symplectic cohomology, which  is essentially equivalent to symplectic field theory''.

\subsection{Other examples}
Without going into further details here, let us mention where the present properadic homotopical calculus can be fruitfully applied. The other known Koszul properads are the ones encoding respectively 

\begin{itemize}
\item[$\diamond$] (involutive) Frobenius bialgebras which play a key role in Poincar\'e duality \cite{Yalin18}, 

\item[$\diamond$] infinitesimal bialgebras \cite{Aguiar00} which is the structure carried by the classical bar construction of associative algebras \cite[Section~2.2.1]{LodayVallette12}, 

\item[$\diamond$] double Lie and double Poisson bialgebras \cite{Leray19protoII} which induce 
Poisson structures in non-commutative algebraic geometry \cite{VDB08} and which play a structural role in Lie theory (Goldman bracket and Turaev cobracket on surfaces of genus zero and the Kashiwara--Vergne problem) \cite{AKNN18}, 

\item[$\diamond$] quantum Airy structures \cite{KontsevichSoibelman18}, see also \cite{ABCO17}, which faithfully encodes the algebraic structures present in topological recursion according to Kontsevich--Soibelman. 
\end{itemize}

Notice that the last properad has not yet been proved to be Koszul, but this is the subject of a forthcoming paper.

\medskip

The classical notion of a (associative and coassociative) bialgebra is encoded by a properad $\mathrm{BiAss}$ which fails to be Koszul since its presentation is  not quadratic. As a consequence, its minimal model is so far out of reach, see \cite[Section~3.3]{MerkulovVallette09II}. Even if one could make this latter one explicit, its space of generators would form a homotopy coproperad, so one cannot expect from this a ``strict'' category but rather an 
 $\infty$-category of ``homotopy bialgebras''.
Instead, one can define a suitable  notion of \emph{a homotopy bialgebra} as a gebra over the bar-cobar resolution $\Omega \B\, \mathrm{BiAss}$. Applying the results of the present paper, we get automatically a notion of an $\infty$-morphism and thus a category of homotopy bialgebras together with their homotopy properties like the homotopy transfer theorem. With this approach, it would be interesting to study related seminal problems like the deformation quantization of Lie bialgebras as done in \cite{MerkulovWillwacher16}. 

\bibliographystyle{alpha}
\bibliography{bib}

\end{document}